\title{Stable isoperimetric ratios and the Hodge Laplacian of hyperbolic manifolds}
\author{Cameron Gates Rudd}
\DeclareSymbolFont{bbold}{U}{bbold}{m}{n}
\DeclareSymbolFontAlphabet{\bb}{bbold}
\newcommand\RedeclareMathOperator{%
  \@ifstar{\def\rmo@s{m}\rmo@redeclare}{\def\rmo@s{o}\rmo@redeclare}%
}
\newcommand\rmo@redeclare[2]{%
  \begingroup \escapechar\m@ne\xdef\@gtempa{{\string#1}}\endgroup
  \expandafter\@ifundefined\@gtempa
     {\@latex@error{\noexpand#1undefined}\@ehc}%
     \relax
  \expandafter\rmo@declmathop\rmo@s{#1}{#2}}
\newcommand\rmo@declmathop[3]{%
  \DeclareRobustCommand{#2}{\qopname\newmcodes@#1{#3}}%
}
\theoremstyle{plain}
\newtheorem{thm}{Theorem}[section]
\newtheorem{bigthm}{Theorem}
\newtheorem{mainthm}{Theorem}
\newtheorem{lem}[thm]{Lemma}
\newtheorem{prop}[thm]{Proposition}
\newtheorem*{cor}{Corollary}
\declaretheorem[style=remark]{remark}
\DeclareMathOperator{\scl}{{\tt{scl}}}
\let\fill\relax
\DeclareMathOperator{\fill}{{\tt{fill}}}
\DeclareMathOperator{\vol}{vol}
\DeclareMathOperator{\inj}{inj}
\DeclareMathOperator{\diam}{diam}
\DeclareMathOperator{\len}{{\tt{len}}}
\DeclareMathOperator*{\esssup}{ess\,sup}
\DeclareMathOperator{\s}{{\text{star}}}
\DeclareMathOperator*{\supp}{supp}
\newcommand{\norm}[1]{\left\lVert#1\right\rVert}
\newcommand{\Q}{{\mathbb Q}}
\newcommand{\st}{\text{st}}
\newcommand{\Z}{{\mathbb Z}}
\newcommand{\e}{\varepsilon}
\newcommand{\G}{\Gamma}
\renewcommand{\H}{\mathbb H}
\newcommand{\R}{\mathbb R}
\renewcommand{\d}{ \partial}
\date{}
\begin{document}
\maketitle
\begin{abstract}
    We show that for a closed hyperbolic 3-manifold, the size of the first eigenvalue of the Hodge Laplacian acting on coexact 1-forms is comparable to an isoperimetric ratio relating geodesic length and stable commutator length with comparison constants that depend polynomially on the volume and on a lower bound on injectivity radius, refining estimates of Lipnowski and Stern. We use this estimate to show that there exist sequences of closed hyperbolic 3-manifolds with injectivity radius bounded below and volume going to infinity for which the 1-form Laplacian has spectral gap vanishing exponentially fast in the volume.
 \end{abstract}

 \section{Introduction}
The spectrum of the Hodge Laplacian is a fundamental and well studied geometric invariant of Riemannian manifolds. The Hodge theorem partitions the positive spectrum into exact and coexact eigenvalues. For differential forms of degree one, the exact eigenvalues contain exactly the data of the Laplacian acting on functions and are well understood. The coexact spectrum however is considerably more mysterious. Recently, the first coexact eigenvalue of the Hodge Laplacian of a closed hyperbolic 3-manifold has been related to other aspects of its geometry and topology. For the function Laplacian, the first eigenvalue is known to be comparable to the square of the isoperimetric Cheeger constant. In this paper, we derive a similar estimate for the first coexact eigenvalue, building on work of Lipnowski and Stern in \cite{LS} motivated by torsion growth in finite covers. We use this new estimate to construct the first examples of hyperbolic 3-manifolds with coexact eigenvalues exponentially small compared to volume.

Given a hyperbolic 3-manifold $M$, it is natural to try to extract information about $M$ from its finite covers. A deep and interesting conjecture of Bergeron-Venkatesh, L{\^{e}, and L\"uck (see \cite{BV}, \cite{thang}, and \cite{wolf}) asks in part whether the volume of $M$ can be found by studying the torsion in the homology of a family of finite covers of $M$. In studying this question, Bergeron, \c Seng\"un, and Venkatesh in \cite{BSV} relate the growth rate of the cardinality of the torsion in the first homology of a tower of covers of a closed \textit{arithmetic} hyperbolic 3-manifold to the spectrum of the Laplacian on 1-forms. In particular, they prove the following theorem, where the technical definitions are given below.

\begin{thm} (\cite{BSV}) If a sequence $M_n\to M_0$ of congruence covers of an arithmetic hyperbolic 3-manifold $M_0$ satisfies the few small eigenvalues, small Betti numbers, and simple cycles conditions, then the log torsion growth rate is proportional to the volume. In particular, one has \[\lim\limits_{n\to\infty} \frac{\log\left|H_1(M_n;\Z)_{\emph{torsion}}\right|}{\vol(M_n)} = \frac{1}{6\pi}.\]
\end{thm}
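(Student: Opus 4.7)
The plan is to express the log torsion via the Cheeger--M\"uller theorem as a combination of zeta-regularized determinants of Hodge Laplacians, compare this spectral quantity to the $L^2$ analytic torsion of $\H^3$, and then use the three technical hypotheses to control the error terms in the limit.

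First, Cheeger--M\"uller identifies the Reidemeister torsion of $M_n$ (with trivial $\R$-coefficients) with the analytic torsion $\prod_q (\det' \Delta_q)^{(-1)^q q/2}$. In odd dimension this torsion encodes $|H_1(M_n;\Z)_{\text{torsion}}|$ up to regulator terms depending on integral bases for the free parts of $H_*(M_n;\Z)$. The small Betti numbers hypothesis bounds the number of such regulator terms, while the simple cycles hypothesis supplies bases via short geodesic representatives whose Gram determinants grow polynomially in $\vol(M_n)$. After dividing by $\vol(M_n)$ and taking logs, the problem reduces to showing
\[
\frac{1}{2\vol(M_n)} \sum_q (-1)^{q+1} q \log \det{}' \Delta_q^{(n)} \longrightarrow \frac{1}{6\pi},
\]
the right-hand side being (up to sign) the $L^2$ analytic torsion density of $\H^3$ computed by Lott.

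The main analytic step is a Benjamini--Schramm style convergence of the normalized spectral measures of $\Delta_q^{(n)}$ to the Plancherel measure on $\H^3$. Writing $\log \det{}'\Delta_q$ via the heat trace, one splits the contribution into an ultraviolet piece (small heat times) controlled uniformly by pointwise heat kernel bounds thanks to the injectivity radius lower bound; a bulk piece governed by local convergence of congruence covers, which holds in the arithmetic setting; and a large-time piece dominated by the small eigenvalues of $\Delta_q^{(n)}$. The few small eigenvalues hypothesis is precisely the quantitative input needed to make the last piece $o(\vol(M_n))$ after normalization.

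The hardest step is the small eigenvalue regime for coexact $1$-forms: a single coexact eigenvalue of size $e^{-c\,\vol(M_n)}$ contributes $\sim \vol(M_n)$ to $\log \det{}'\Delta_1$ and would destroy the limit. The few small eigenvalues hypothesis is exactly the axiomatization of the control required on these dangerous eigenvalues; verifying it in specific families of covers is the deep open problem that the present paper's stable isoperimetric estimate is designed to address.
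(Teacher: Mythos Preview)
This theorem is not proved in the paper at all; it is quoted verbatim from \cite{BSV} with a citation, and the paper only lists the three hypotheses and then moves on to its own results. There is therefore no ``paper's own proof'' to compare your proposal against.

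That said, your sketch is a faithful high-level outline of the argument in \cite{BSV}: Cheeger--M\"uller to pass from torsion to analytic torsion, the regulator terms controlled by the small Betti numbers and simple cycles hypotheses, the heat-trace splitting with the bulk governed by local (Benjamini--Schramm type) convergence, and the large-time piece controlled by the few small eigenvalues hypothesis. Your final paragraph also correctly identifies why the coexact $1$-form eigenvalue is the dangerous case and how this connects to the motivation for the present paper. As a summary of the cited result this is fine, but it is not something to be compared to the paper under review, which simply invokes the statement.
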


The conditions appearing in Theorem 1.1. are:

\begin{enumerate}
    \item (Few small eigenvalues) For all $\e>0$ there is a $c>0$ such that $$\limsup_{n\to\infty} \frac{1}{\vol(M_n)}\sum\limits_{0<\lambda<c}|\log\lambda| \leq \e,$$
where $\lambda$ runs over the eigenvalues of the 1-form Laplacian on $M_n$.

    \item  (Small Betti numbers) $b_1(M_n) = o\left(\frac{\vol(M_n)}{\log \vol(M_n)}\right)$.

    \item (Simple cycles) There exists a constant $C$ depending on $M_0$ such that $H_2(M_n;\Z)$ admits a basis of surfaces $[S_i]$ with bounded Thurston norm $||[S_i]||_{Th}\ll \vol(M)^C$.
\end{enumerate}

Bergeron, \c{S}eng\"un, and Venkatesh’s theorem provides a version of L\"uck approximation for the limiting $L^2$-torsion of sequences of manifolds that satisfy these conditions. However, no examples of such a sequence are known to exist. Encouraged by the Betti number approximation theorem of \cite{samurai}, one would hope the above conditions also imply the convergence of $L^2$-torsion for families of manifolds that Benjamini-Schramm converge to $\H^3$. Examples of Brock and Dunfield show that Benjamini-Schramm convergence itself is insufficient \cite{BDE}.

Previous work has focused on the simple cycles condition. In their paper, Bergeron, \c Seng\"un, and Venkatesh conjecture the simple cycles condition is satisfied for all arithmetic congruence covers, and ask in contrast if there are families of closed hyperbolic 3-manifolds with injectivity radius bounded below and volume going to infinity that do not satisfy it. This question was answered by Brock and Dunfield in \cite{BD}, where they construct a sequence of closed hyperbolic 3-manifolds $W_n$ with injectivity radius bounded below and volume going to infinity such that $H_2(W_n;\Z)\cong \Z$ for every $n$ and for which the Thurston norm of the generator grows exponentially in the volume.

In this paper, we continue the study of the few small eigenvalues condition initiated by Lipnowski and Stern in \cite{LS}. There, it is shown that for a family of covers of a triangulated closed hyperbolic $n$-manifold, the first positive eigenvalue of the Laplacian acting on 1-forms is comparable to a certain isoperimetric ratio, where the comparison constants depend on the volume of the cover, the geometry of the base, and the specific triangulation. In this paper, we prove that for closed hyperbolic 3-manifolds satisfying a uniform lower bound on injectivity radius, such a comparison can be done with universal constants and polynomial dependence on volume (Theorems A and B, with only Theorem A requiring the restriction to dimension 3). We then leverage the comparison in Theorem A to show that a specific family of closed hyperbolic 3-manifolds have first positive eigenvalue of the 1-form Laplacian vanishing exponentially fast in the volume (Theorem C). The manifolds in Theorem C are not covers of a fixed base, so the estimates of \cite{LS} do not apply.

Another motivation for this work comes from recent work of Lin and Lipnowski in \cite{LL}, where for closed rational homology 3-spheres they leverage a relationship between the first eigenvalue of the Hodge Laplacian acting on coexact 1-forms and irreducible solutions to the Seiberg-Witten equations to determine if certain spaces are $L$-spaces. In particular, if a closed rational homology 3-sphere $M$ is not an $L$-space, then the first eigenvalue $\lambda$ of the Hodge Laplacian acting on coexact 1-forms satisfies $\lambda\leq 2$. Using a version of the Selberg trace formula, they relate this to the complex length spectrum of $M$. Numerical methods can then be used to verify if $\lambda \leq 2$. While the constants in our eigenvalue estimates are rather opaque, so that the isoperimetric ratio we study is not, at least presently, capable of certifying that $\lambda\leq 2$, the relationship between the stable isoperimetric ratio and whether a space is an $L$-space remains tantalizing.

\subsection{Results}
The isoperimetric ratio we study relates the topological complexity of a surface with boundary to the geometric length of its boundary. One can view the extremal value of this ratio as an analogue of the two-dimensional Cheeger constant. The topological complexity measure is given by stable commutator length. The stable commutator length of a nullhomologous loop $\gamma$ in a manifold $M$, denoted $\scl(\gamma)$, is defined to be \[\scl(\gamma) = \inf\limits_{m\geq 1}\frac{{\tt{cl}}(\gamma ^{m})}{m},\] where ${\tt{cl}}(\gamma)$ is the word length of $\gamma$ in the commutator subgroup of $\pi_1M$ with generating set all commutators. Topologically, stable commutator length corresponds to the stable complexity of a surface bounding a nullhomologous loop. Denote the subgroup of rationally nullhomologous loops by $\Gamma_{\Q}’ = \ker\left(\pi_1M\to H_1(M;\Q)\right)$.
Then, for $\gamma\in \Gamma’_{\Q}$, one has $$\scl(\gamma) = \inf\left\{\frac{\chi_-(S)}{2m}~:~S \text{ with }\d S = \gamma^m,~S\text{ is connected}\right\},$$ where for connected surfaces $\chi_-(S) = \max\{0,-\chi(S)\}$. One can think of stable commutator length as a relative version of the Thurston norm. See the monograph \cite{Calegari} for a detailed exposition of $\scl$.
Stable commutator length is closely related to area, by Gauss-Bonnet, providing some justification for the following nomenclature. Define the stable isoperimetric constant of $M$ to be \[\rho(M) = \inf\limits_{\gamma\in \Gamma’_\Q\setminus\{1\}}\frac{|\gamma|}{\scl(\gamma)}.\]

One can also define the stable area of a rationally nullhomologous loop $\gamma\in \Gamma’_{\Q}$ to be the infimal normalized area of a surface bounding a power of $\gamma$: $$\text{sArea}(\gamma) = \inf\limits_{\d S = \gamma^n}\frac{\text{Area}(S)}{n}.$$ This leads to another notion of stabilized isoperimetric ratio using stable area in place of stable commutator length: $$\rho_{\text{Area}}(M) = \inf\limits_{\gamma\in\Gamma’_{\Q}}\frac{|\gamma|}{\text{sArea}(\gamma)}.$$
Stable area is related to stable commutator length by $$\text{sArea}(\gamma) \leq 4\pi\scl(\gamma).$$ Thus, $$\rho(M) \leq 4\pi \rho_{\text{Area}}(M).$$
Very little is known generally about how geodesic length relates to stable commutator length in hyperbolic manifolds, though estimates relating length, stable area, and stable commutator length for short curves in hyperbolic manifolds have been obtained by Calegari in \cite{length}.

Our first theorem relates the coexact spectral gap to stable isoperimetric ratios of arbitrary nullhomologous curves.

\begin{bigthm}\label{thm:A}
Let $M$ be a closed hyperbolic 3-manifold with injectivity radius bounded below by $\e>0$ and let $\lambda$ denote the first eigenvalue of the Hodge Laplacian acting on coexact 1-forms. Then there is a constant $A = A(\e)$ such that for any nontrivial element $\gamma \in \Gamma’_\Q$, one has $$\sqrt{\lambda} \leq A \vol(M) \frac{|\gamma|}{ \scl( \gamma)},$$ where $|\gamma|$ denotes the geodesic length of $\gamma$.
\end{bigthm}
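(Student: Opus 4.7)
The plan is to construct an explicit coexact $1$-form from a near-extremal bounding surface of $\gamma$ and bound $\sqrt{\lambda}$ by its Rayleigh quotient (the ``Buser direction'' of the isoperimetric comparison). Fix $\eta>0$ and choose $m\geq 1$ along with an immersed surface $S\subset M$ such that $\d S=\gamma^m$ and $\chi_-(S)/(2m)<\scl(\gamma)+\eta$. Using dimension three, I homotope $S$ (rel boundary) to a pleated surface with geodesic boundary; its intrinsic hyperbolic metric has curvature $\leq -1$, so by Gauss--Bonnet $\text{Area}(S)=2\pi\chi_-(S)$. Then I build the smoothed Poincar\'e dual $\nu_S=\psi\cdot d\phi_S$, where $\phi_S$ is a smooth step across $S$ of normal width $\delta\sim\e$ and $\psi$ is a smooth cutoff vanishing in a tubular neighborhood of $\gamma^m$ of comparable width. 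Direct calculations give $\|\nu_S\|_{L^2}^2\sim\text{Area}(S)/\delta$, while $d\nu_S$ is supported in an annular tube around $\gamma^m$ with $\|d\nu_S\|_{L^2}^2\sim m|\gamma|/\delta^2$.

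Let $\alpha_S$ denote the coexact part of $\nu_S$ in the Hodge decomposition. Since $d\alpha_S=d\nu_S$, the Rayleigh bound for the coexact Laplacian yields $\sqrt{\lambda}\leq\|d\nu_S\|_{L^2}/\|\alpha_S\|_{L^2}$. The heart of the argument is a lower bound on $\|\alpha_S\|_{L^2}$, controlling how much of the test form is absorbed by the closed complement. The harmonic projection is estimated using the Poincar\'e dual identity $\langle\nu_S,h\rangle\approx\int_S h$ combined with uniform $L^\infty$ bounds on $L^2$-normalized harmonic $1$-forms via elliptic regularity (valid with $\inj(M)\geq\e$). The exact projection $du$ is bounded via $\|du\|_{L^2}^2\leq\|d^*\nu_S\|_{L^2}^2/\lambda_1^{\text{func}}(M)$, where $\lambda_1^{\text{func}}$ admits a Cheeger-type lower bound polynomial in $\vol(M)^{-1}$. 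Inserting these estimates, using $\text{Area}(S)\leq 4\pi m\scl(\gamma)+o(m)$ from the pleated representative, and letting $\eta\to 0$ together with $m\to\infty$ should produce the claimed inequality with $A=A(\e)$.

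The main obstacle is the Hodge-projection lower bound: achieving the correct polynomial dependence on $\vol(M)$ requires quantitative control of both the function-Laplacian spectral gap (where the volume enters via a Cheeger-type estimate) and the projections of the test form onto closed forms. The dimension-three hypothesis is used only to invoke pleated surfaces for the Gauss--Bonnet area estimate; the remainder of the argument works in all dimensions, consistent with the paper's remark that only Theorem A is restricted to dimension three.
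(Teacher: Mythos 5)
Your approach --- smoothing the Poincar\'e dual of a near-extremal bounding surface and bounding the Rayleigh quotient --- is genuinely different from the paper's, but as written it has a scaling gap that prevents it from recovering the stated estimate. With the estimates you propose, $\|\nu_S\|_{L^2}^2\sim\text{Area}(S)/\delta$ and $\|d\nu_S\|_{L^2}^2\sim m|\gamma|/\delta^2$, the Rayleigh bound only gives
\[
\sqrt{\lambda}\;\lesssim\;\frac{\|d\nu_S\|_2}{\|\nu_S\|_2}\;\sim\;\sqrt{\frac{m|\gamma|/\delta^2}{\text{Area}(S)/\delta}}\;\sim\;\frac{1}{\sqrt{\delta}}\sqrt{\frac{|\gamma|}{\scl(\gamma)}},
\]
i.e.\ $\lambda\lesssim|\gamma|/\scl(\gamma)$, not $\sqrt{\lambda}\lesssim|\gamma|/\scl(\gamma)$. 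The problem is that the $L^2$ norm of a Thom form concentrated on a surface of area $A$ scales like $\sqrt{A}$ when the surface has bounded local multiplicity; you need a lower bound on $\|\nu_S\|_2$ proportional to $\text{Area}(S)$ itself, not its square root.

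Such a bound would in principle be forced: as $m\to\infty$, $\text{Area}(S)\sim m\scl(\gamma)$ grows without bound in a manifold of fixed volume, so the immersion must have local multiplicity growing like $m$, and a Cauchy--Schwarz argument (heuristically $\int_M k^2\geq(\int_M k)^2/\vol(M)$ applied to the local sheet count $k$) gives $\|\nu_S\|_2\gtrsim\text{Area}(S)/\sqrt{\vol(M)}$. But $\nu_S$ is a signed 1-form, and sheets passing through the same ball with opposite normal coorientation can cancel; turning the heuristic into a genuine lower bound on $\|\nu_S\|_2$ requires either ruling out cancellation (which the pleated-surface geometry alone does not obviously do) or working with the unsigned quantity $|\nu_S|$, and you have not addressed this. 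The harmonic and exact projection estimates, which you state rather loosely, would also need to be sharp enough not to swallow the gain.

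The paper avoids all of these pitfalls by working combinatorially rather than analytically, and by using a different dual characterization of $\scl$. Instead of building a test form from a near-extremal surface, they fix a deeply embedded triangulation $K$, take a cellular 1-cycle $a$ dual to $\gamma$, and let $\omega\in d^*_WC^2(K)$ be \emph{the} coexact Whitney 1-cochain with $d\omega=\Phi^{-1}(a)$, where $\Phi$ is Poincar\'e duality. The cancellation problem disappears because the $\ell^1$ (Gromov) norm $\|\omega\|_G=\|\Phi(\omega)\|_G$ is by definition a sum of absolute values, and since $\Phi(\omega)$ is a 2-chain bounding $\gamma$, Bavard duality (the Gersten filling-norm form of $\scl$) gives directly $\scl(\gamma)\leq 4N\|\omega\|_G$. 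Then the combinatorial $\ell^1$-$\ell^2$ inequality $\|\omega\|_G\leq B\sqrt{\vol(M)}\,\|\omega\|_2$ (Proposition~\ref{prop: 3.1}) produces the lower bound $\|\omega\|_2\gtrsim\scl(\gamma)/\sqrt{\vol(M)}$ with the full power of $\scl$, at the price of only a $\sqrt{\vol(M)}$ loss. The upper bound $\|d\omega\|_2\leq D\|d\omega\|_G=D\|a\|_G\lesssim|\gamma|$ comes straight from the cellular-approximation length bound (Proposition~\ref{prop: length comparison}), and the Whitney--Hodge comparison (Proposition~\ref{prop:4.2}) passes from the Whitney Rayleigh quotient to $\lambda$. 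No surface in $M$ is ever constructed. One final remark: you use pleated surfaces and Gauss--Bonnet as the only dimension-three input, which is reasonable, but note that the paper's dimension-three restriction comes instead from the Poincar\'e duality $C^2(K)\to C_1(K^*)$ pairing 2-cochains with 1-cycles, so the role of dimension is conceptually different in the two arguments.
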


Theorem A has the following obvious corollary:

\begin{cor}
  Let $M$ be a hyperbolic $3$-manifold with injectivity radius bounded below by $\e>0$ and let $\lambda$ be the first eigenvalue of the Hodge Laplacian acting on coexact 1-forms. Then for the constant $A = A(\e)$ from Theorem A, $$ \sqrt{\lambda} \leq A \vol(M)\rho(M).$$
\end{cor}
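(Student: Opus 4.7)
The plan is essentially tautological: Theorem A gives the bound $\sqrt{\lambda} \leq A\vol(M) |\gamma|/\scl(\gamma)$ for \emph{every} nontrivial $\gamma \in \Gamma'_\Q$, with a constant $A = A(\e)$ depending only on the injectivity radius lower bound. Since the left-hand side does not depend on $\gamma$, I will simply take the infimum of the right-hand side over all nontrivial $\gamma \in \Gamma'_\Q$. By the definition
\[
\rho(M) = \inf_{\gamma \in \Gamma'_\Q \setminus \{1\}} \frac{|\gamma|}{\scl(\gamma)},
\]
this yields the claimed inequality $\sqrt{\lambda} \leq A\vol(M)\rho(M)$ with the same constant $A(\e)$ as in Theorem A.

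The only genuinely substantive point, and the closest thing to an obstacle, is checking that the infimum defining $\rho(M)$ is taken over a nonempty set so that the bound is nonvacuous. This is immediate for closed hyperbolic 3-manifolds: $\pi_1 M$ is nonabelian, so it contains nontrivial commutators, and any commutator lies in the commutator subgroup and hence in $\Gamma'_\Q = \ker(\pi_1 M \to H_1(M;\Q))$. Therefore the proposal consists of one line of reasoning — apply Theorem A, pass to the infimum, invoke the definition of $\rho(M)$ — and no further work is required.
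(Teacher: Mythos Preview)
Your proposal is correct and matches the paper's approach exactly: the paper simply labels this an ``obvious corollary'' of Theorem~A without further proof, and your argument---take the infimum over all nontrivial $\gamma \in \Gamma'_\Q$ and invoke the definition of $\rho(M)$---is precisely the intended one-line deduction. Your remark that $\Gamma'_\Q \setminus \{1\}$ is nonempty (since $\pi_1 M$ is nonabelian) is a nice sanity check the paper leaves implicit.
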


The analogue of Theorem A in \cite{LS} studies a cochain version of the Hodge Laplacian introduced by Dodziuk in \cite{Dodziuk} for triangulated manifolds. This chochain Laplacian is called the Whitney Laplacian, and is induced by the Hodge Laplacian by embedding cochains into the $L^2$-de Rham complex.

\begin{thm} (\cite{LS} Theorem 1.4) Let $M_0$ be a closed hyperbolic $n$-manifold. Let $K_0$ be a sufficiently fine triangulation. Let $M$ be a finite cover of $M_0$. Let $\lambda_W(M)_{d^*}$ be the first coexact eigenvalue for the Whitney cochain Laplacian associated to the pullback of the triangulation $K_0$ to $M$. Then if some multiple of $\gamma\in\pi_1(M)$ bounds a surface, then $$\left(\frac{\scl(\gamma)}{|\gamma|}\right)^2 \leq W_{M_0}\frac{\vol(M)}{\lambda_W(M)_{d^*}},$$
for a constant $W_{M_0}$ depending on the triangulation $K_0$.
\end{thm}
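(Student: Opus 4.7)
The strategy is to apply the variational characterization
\[
\lambda_W(M)_{d^*} \;=\; \inf\Bigl\{\|d\alpha\|_W^2/\|\alpha\|_W^2 \;:\; \alpha \in C^1(K),\ \alpha \text{ coexact},\ \alpha \neq 0\Bigr\}
\]
with a test cochain $\alpha$ manufactured from a surface $S$ satisfying $\partial S = \gamma^m$. After a simplicial approximation, I replace $S$ by a 2-chain $c_S \in C_2(K)$ whose combinatorial $\ell^1$-size is at most $C_1(K_0)\chi_-(S)$; uniformity of this comparison across the tower is available because $K$ is the pullback of the fixed triangulation $K_0$. Let $\hat{c}_S \in C^2(K)$ be the cochain dual to $c_S$, and let $\alpha$ be the unique coexact 1-cochain whose image $d\alpha$ is the Whitney-orthogonal projection of $\hat{c}_S$ onto exact 2-cochains.

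The key identity combines simplicial Stokes with the Whitney projection property:
\[
m\,\alpha(\gamma) \;=\; \alpha(\partial c_S) \;=\; (d\alpha)(c_S) \;=\; \langle d\alpha, \hat{c}_S\rangle_W \;=\; \|d\alpha\|_W^2.
\]
Viewing $\gamma$ as a simplicial 1-chain of combinatorial mass $\sim |\gamma|$, Cauchy--Schwarz in the Whitney inner product gives $|\alpha(\gamma)| \leq C_2(K_0)\sqrt{|\gamma|}\,\|\alpha\|_W$, and combining with the identity above,
\[
\|d\alpha\|_W^2 \;\leq\; C_2(K_0)\,m\sqrt{|\gamma|}\,\|\alpha\|_W,
\]
so the Rayleigh quotient satisfies $\|d\alpha\|_W^2/\|\alpha\|_W^2 \leq C_2(K_0)\,m\sqrt{|\gamma|}/\|\alpha\|_W$. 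The remaining step is to establish a lower bound of the form $\|\alpha\|_W \geq \chi_-(S)^2/\bigl(C_3(K_0)\vol(M)\,m|\gamma|^{3/2}\bigr)$, after which rearrangement yields $\|d\alpha\|_W^2/\|\alpha\|_W^2 \leq W_{M_0}\vol(M)(m|\gamma|/\chi_-(S))^2$, and passing to the infimum over $(S,m)$ converts $\chi_-(S)/(2m)$ into $\scl(\gamma)$ to produce the stated inequality.

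The main obstacle is precisely this last lower bound on $\|\alpha\|_W$. Morally, $d\alpha$ is the Hodge projection of $\hat{c}_S$ and hence inherits the spatial ``spread'' of $c_S$, whose $\ell^1$-mass is $\chi_-(S)$; coexactness of $\alpha$ then forces its $L^2$-mass to be distributed over a region of $M$ of volume comparable to $\vol(M)$, producing a gain of $\sqrt{\vol(M)}$ when trading pointwise for $L^2$ bounds. Making this precise requires a cochain-level Poincar\'e inequality on the cover---in effect, a heat-kernel estimate for the Whitney Laplacian that uses the pullback structure $M \to M_0$ essentially. This dependence on $\sqrt{\vol(M)}$, rather than on the larger $\vol(M)$ that would follow from na\"ive norm comparisons, is exactly the feature that distinguishes the Lipnowski--Stern Whitney cochain estimate from the smooth Hodge bound of Theorem~A and is the true heart of the argument.
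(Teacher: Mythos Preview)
This theorem is quoted from \cite{LS} and not re-proved in the present paper; the closest thing here is the proof of Theorem~A, which is the analogue in dimension $3$ with the smooth Whitney map. That proof runs in the \emph{opposite} direction from your proposal, and your approach has a genuine gap.

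The paper never starts from a surface $S$ bounding $\gamma^m$. Instead it begins with $\gamma$ alone: take a cellular cycle $a$ for $\gamma$ in $K^*$, apply Poincar\'e duality $\Phi:C^2(K)\to C_1(K^*)$ to obtain the exact $2$-cochain $\Phi^{-1}(a)$, and let $\omega$ be the unique \emph{coexact} primitive, $d\omega=\Phi^{-1}(a)$. The crucial observation is that $A:=\Phi(\omega)$ is then a $2$-chain with $\partial A=a$, i.e.\ a filling of $\gamma$ manufactured by the Hodge theory itself. Bavard's theorem gives $\scl(\gamma)=4\fill(\gamma)\le 4N\|A\|_G=4N\|\omega\|_G$; the norm comparison $\|\omega\|_G\le B\sqrt{\vol(M)}\|\omega\|_2$ together with the trivial spectral inequality $\|\omega\|_2\le\|d\omega\|_2/\sqrt{\lambda_W}$ and $\|d\omega\|_2\lesssim\|d\omega\|_G=\|a\|_G\lesssim|\gamma|$ then yields the claim. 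Stable commutator length enters only through the filling-norm side of Bavard duality, applied to the \emph{output} chain $A$.

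Your scheme inverts this: you input a near-optimal $S$ and try to extract a test cochain $\alpha$. The identity $m\,\alpha(\gamma)=\|d\alpha\|_W^2$ is fine, but to finish you need the lower bound $\|\alpha\|_W\gtrsim \chi_-(S)^2/\bigl(\vol(M)\,m\,|\gamma|^{3/2}\bigr)$, which you argue only heuristically. This is a real gap: nothing in the construction ties $\|\alpha\|_W$ to $\chi_-(S)$ from \emph{below}. All you know is $\|d\alpha\|_W\le\|\hat c_S\|_W$, an upper bound, and the ``spread'' and heat-kernel remarks do not supply a mechanism for the specific lower bound you need. The paper's route avoids this entirely because it uses $\lambda_W\|\omega\|_2^2\le\|d\omega\|_2^2$ as an \emph{upper} bound on $\|\omega\|_2$, which through Poincar\'e duality becomes an upper bound on $\|A\|_G$ and hence on $\scl(\gamma)$; no lower bound on any test-cochain norm is ever required.
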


Under a well behaved sequence of subdivisions, Dodziuk and Patodi in \cite{dP} showed the spectrum of the Whitney Laplacian converges to the spectrum of the Hodge Laplacian. For a fixed but very fine triangulation, the eigenvalue comparison is somewhat delicate. In this setting, Lipnowski and Stern relate the Whitney Laplacian’s first coexact eigenvalue to the Hodge Laplacian’s first coexact eigenvalue in the following way:

\begin{thm} (\cite{LS} Theorem 1.5)	 Let $M_0$ be a closed hyperbolic $n$-manifold. Let $K_0$ be a sufficiently fine triangulation of $M_0$. Let $M$ be a finite cover of $M_0.$ Let $\lambda_W(M)_{d^*}$ be the first coexact eigenvalue for the Whitney cochain Laplacian associated to the pullback of the triangulation $K_0$ to $M$. Then, $$\frac{1}{\lambda_W(M)_{d^*}}\leq \max\left\{\frac{4G_{M_0}^2\vol(M)}{\lambda_{d^*}(M)}, G_{M_0}^2C_{M_0}^2\vol(M)\right\}.$$
The constants $C_{M_0}$ and $G_{M_0}$ depend only on $K_0$.
\end{thm}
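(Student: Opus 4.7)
The plan is to bound $1/\lambda_W(M)_{d^*}$ from above by feeding a coexact Whitney eigencochain into the smooth setting via the Whitney embedding $W\colon C^{*}(K) \to \Omega^{*}(M)$ and then comparing the resulting Rayleigh quotient with that for the smooth Hodge Laplacian. The guiding principle is that $W$ and the de Rham integration map $R$ form an approximate homotopy equivalence whose defects are controlled by constants depending only on the fixed fine base triangulation $K_0$ and, after pulling back simplex-by-simplex to a cover $M$, by an overall factor of $\vol(M)$ coming from the simplex count.

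Let $c$ be a coexact eigencochain realizing $\lambda_W = \|\delta c\|_W^2/\|c\|_W^2$, and set $\omega := Wc$. Since $W$ intertwines $\delta$ with $d$, we have $d\omega = W(\delta c)$. The comparison estimates between the Whitney inner product and the $L^2$ inner product (whose loss is encoded by $G_{M_0}$ and which hold uniformly over covers because the triangulation of $M$ is the pullback of $K_0$) allow one to translate between $\|c\|_W, \|\delta c\|_W$ and $\|\omega\|_{L^2}, \|d\omega\|_{L^2}$ up to factors of $G_{M_0}$ and $\vol(M)$. After this bookkeeping, it suffices to estimate the smooth ratio $\|\omega\|_{L^2}^2/\|d\omega\|_{L^2}^2$ from above.

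Hodge decompose $\omega = \omega_{cl} + \omega_{ce}$ with $\omega_{cl}\in\ker d$ and $\omega_{ce}\in\operatorname{im}(d^{*})$. Then $d\omega = d\omega_{ce}$, and the smooth coexact spectral gap yields
\[
\|d\omega\|_{L^2}^2 \;=\; \|d\omega_{ce}\|_{L^2}^2 \;\geq\; \lambda_{d^{*}}(M)\,\|\omega_{ce}\|_{L^2}^2.
\]
I then split by which piece of $\omega$ carries the majority of the $L^2$-mass. In the \emph{coexact-dominated} regime $\|\omega_{ce}\|_{L^2}^2 \geq \tfrac12\|\omega\|_{L^2}^2$, one immediately obtains $\|\omega\|_{L^2}^2 \leq 2\|d\omega\|_{L^2}^2/\lambda_{d^{*}}(M)$, which after re-expressing in Whitney norms produces the first term $4G_{M_0}^{2}\vol(M)/\lambda_{d^{*}}(M)$ in the stated maximum. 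In the \emph{closed-dominated} regime $\|\omega_{cl}\|_{L^2}^2 \geq \tfrac12\|\omega\|_{L^2}^2$, one exploits that $c$ is Whitney-orthogonal to every closed cochain, which translates through $W$ into $\omega \perp_{L^2} W(\ker\delta)$ and forces $\omega_{cl}$ to be orthogonal in $\ker d$ to the finite-dimensional subspace $W(\ker\delta)\subset\ker d$. Since $W$ induces the de Rham isomorphism from $\ker\delta/\delta C^{0}$ onto $H^{1}_{dR}(M)$ and since piecewise-linear functions approximate smooth functions up to errors controlled by the mesh of $K_0$, this orthogonality forces a bound of the form $\|\omega_{cl}\|_{L^2}^2 \leq C_{M_0}^{2}\,\vol(M)\,\|d\omega\|_{L^2}^2$, giving the second term $G_{M_0}^{2}C_{M_0}^{2}\vol(M)$ in the maximum.

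The hard part will be the closed-dominated case: one must produce a uniform reverse-Poincar\'e estimate showing that any Whitney 1-form whose closed part is $L^{2}$-orthogonal to $W(\ker\delta)$ has its closed part controlled by $\|d\omega\|_{L^2}$, despite the fact that $d\omega_{cl}=0$ itself gives no information. This is precisely what the constant $C_{M_0}$ is designed to encode, and the proof must exploit both the finite dimensionality of $W(\mathcal{H}_W)$ (forcing the harmonic component of $\omega_{cl}$ to vanish for $K_0$ sufficiently fine) and a quantitative comparison between $d^{*}$ applied to Whitney forms and the cochain coboundary adjoint, with the $\vol(M)$-factor arising as the only non-uniform contribution when interpolation estimates on $K_0$ are pulled back to the cover.
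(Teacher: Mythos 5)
Your outline is reasonable at the level of strategy, but it stops exactly where the theorem's content begins. The coexact-dominated case is fine: $\|d\omega\|_{2}^{2}=\|d\omega_{ce}\|_{2}^{2}\ge \lambda_{d^*}(M)\|\omega_{ce}\|_{2}^{2}$ is a correct use of the variational characterization. The problem is the closed-dominated case. There you assert that orthogonality of $\omega=Wc$ to $W(\ker\delta)$ ``forces a bound of the form $\|\omega_{cl}\|_{2}^{2}\le C_{M_0}^{2}\vol(M)\|d\omega\|_{2}^{2}$,'' and then in your final paragraph you acknowledge that producing this reverse-Poincar\'e estimate is ``the hard part'' that ``the proof must'' carry out. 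No mechanism is supplied, and the obvious candidates run into real obstructions: writing $\omega_{cl}=h+d\phi$, the harmonic piece does not vanish for fine $K_0$ (it is only small, and quantifying that already requires a Dodziuk--Patodi-type approximation uniform over covers), while controlling $\|d\phi\|_2$ via $\|d\phi\|_2^2=\langle\omega, d\phi-dWg\rangle$ and an interpolation estimate requires Sobolev control on $\phi$, hence on $d^*\omega=d^*(Wc)$ --- which for the standard (non-smoothed) Whitney map is not even an $L^2$ object, and in any case is not controlled by $\|dWc\|_2$. Since the second term in the maximum, and the constant $C_{M_0}$ itself, exist precisely to encode this estimate, the proof as written is missing its central step. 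Two smaller inaccuracies: the translation between $\|c\|_W$ and $\|Wc\|_{L^2}$ costs nothing if the Whitney inner product is the $L^2$ pullback (and costs only a volume-independent constant if it is combinatorial, since the simplexwise comparisons are uniform and of bounded multiplicity), so the $\vol(M)$ factors do not arise from ``the simplex count'' in the norm bookkeeping but from global $\ell^1$--$\ell^2$ comparisons and length/diameter bounds.

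For context: this statement is quoted from \cite{LS} and is not proved in the present paper, so there is no in-text argument to compare against line by line. The paper's own analogue, Proposition \ref{prop:4.2}, handles the same difficulty (that a Whitney-coexact cochain need not map to an $L^2$-coexact form) by a different route that avoids your case split entirely: using Lemma \ref{lem:4.1} it realizes $\|c\|_2$ as $\int_a W_\beta(c)$ for an exact chain $a$ of unit dual norm, observes that the closed part of $W_\beta(c)$ integrates to zero over exact chains so that $\|c\|_2=\int_a p(W_\beta(c))$, and then bounds this by $\len(a)\,\|p(W_\beta(c))\|_\infty$ together with the $L^\infty$--$L^2$ comparison for the coexact projection (Proposition \ref{prop: 3.11}) and the length bound $\len(a)\lesssim\sqrt{\vol(M)}$ from Proposition \ref{prop: 3.1}. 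That argument produces a single term with no maximum; if you want to complete your version instead, the estimate you must actually prove is the one you deferred.
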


In the course of proving Theorem A, we too require a comparison of this sort. By using a smoothed version of the Whitney embedding of cochains into the de Rham complex and triangulations with uniformly controlled geometry (these are called deeply embedded triangulations and are introduced in Section 2), we prove the following Whitney-Hodge eigenvalue comparison.

\begin{thm} Let $M$ be a closed hyperbolic 3-manifold with $\inj(M)>\e$. Let $\lambda$ denote the first coexact eigenvalue for the Hodge Laplacian acting on $1$-forms and let $\lambda_W$ denote the first coexact eigenvalue for the smoothened Whitney Laplacian on 1-cochains associated to a deeply embedded triangulation. There is a constant $G = G(\e)$ such that $$\lambda \leq G \vol(M)\lambda_W.$$
\end{thm}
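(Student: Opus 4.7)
The plan is a test form argument: produce a coexact smooth 1-form from an optimal eigencochain for $\lambda_W$ and apply the min-max characterization of $\lambda$. Let $c \in C^1(K)$ be a cochain-coexact 1-cochain with $\|W(c)\|_{L^2}^2 = 1$ and $\|dW(c)\|_{L^2}^2 = \lambda_W$, realizing the first coexact eigenvalue of the smoothened Whitney Laplacian. Set $\omega := W(c)$ and Hodge-decompose $\omega = \omega_e + \omega_h + \omega_{co}$. Because $W$ is a chain map ($dW = Wd$) and $d$ annihilates exact and harmonic forms, $d\omega_{co} = W(dc)$ and $\|d\omega_{co}\|_{L^2}^2 = \lambda_W$. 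Since $\omega_{co}$ is coexact, it is a valid test form for the first coexact eigenvalue of the Hodge Laplacian, so
\[
\lambda \leq \frac{\|d\omega_{co}\|_{L^2}^2}{\|\omega_{co}\|_{L^2}^2} = \frac{\lambda_W}{\|\omega_{co}\|_{L^2}^2}.
\]
The theorem reduces to the lower bound $\|\omega_{co}\|_{L^2}^2 \geq 1/(G(\e)\vol(M))$.

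I then bound the closed component $\omega - \omega_{co} = \omega_e + \omega_h$ separately. For the harmonic part, cochain-coexactness implies $c$ is orthogonal in the smoothened Whitney inner product to all closed cochains, including the Whitney-harmonic representatives of $H^1(M;\R)$. On a deeply embedded triangulation, these representatives $L^2$-approximate smooth $L^2$-harmonic 1-forms with error depending only on $\e$, yielding $\|\omega_h\|^2 \leq C_1(\e)$. For the exact part, write $\omega_e = d\phi$ with $\Delta_0 \phi = d^*\omega$; since $d^*_W c = 0$, we have $d^*\omega = (d^* W - W d^*)(c)$ is pure commutator error, which is precisely the reason the smoothening is built into $W$, giving an $L^2$-bound by a constant depending only on $\e$. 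Inverting the scalar Laplacian via $\omega_e = d\Delta_0^{-1} d^*\omega$ then produces a bound on $\|\omega_e\|^2$ governed by a Poincar\'e-type constant for functions on $M$, which for a closed hyperbolic 3-manifold with $\inj(M) \geq \e$ admits a polynomial-in-volume expression through a diameter bound $\diam(M) \lesssim_\e \vol(M)$.

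The main obstacle is the delicate bookkeeping needed to keep the combined volume dependence linear (as claimed) rather than a larger polynomial. A crude combination of the commutator bound and the Poincar\'e inequality yields $\|\omega_e\|^2 \lesssim_\e \vol(M)^N$ for some $N > 0$, which is far too weak to conclude that $\|\omega_{co}\|^2$ is bounded below on the scale $1/\vol(M)$ when $\vol(M)$ is large. The proof will require exploiting finer structural features of the smoothened Whitney embedding on deeply embedded triangulations; most plausibly, one shows that the commutator $(d^*W - Wd^*)c$, when applied to a cochain-coexact eigencochain, has $L^2$-norm bounded by $\sqrt{\lambda_W}$ times a constant depending only on $\e$, so that $\|\omega_e\|^2$ is controlled by $\lambda_W$ times a polynomial in $\vol(M)$. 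This echoes the strategy in \cite{LS} but is sharper because of the uniform injectivity radius control and the smoothened embedding. Once this refined commutator estimate is in place, combining it with the harmonic bound produces $\|\omega_{co}\|^2 \geq 1/(G(\e)\vol(M))$ and completes the proof via the reduction above.
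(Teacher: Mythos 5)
Your reduction to lower-bounding $\|\omega_{co}\|_2^2$ matches the paper's starting point, but from there the approaches diverge and, as you yourself concede, the one you propose does not close. Bounding the closed part of $\omega$ directly runs into exactly the difficulty you diagnose: the Poincar\'e constant for functions on $M$ costs a polynomial in $\vol(M)$, the harmonic-approximation bound $\|\omega_h\|^2 \leq C_1(\e)$ is vacuous once $C_1(\e)\geq 1$ (by Pythagoras $\|\omega_h\|^2\leq \|\omega\|^2=1$ automatically), and even the optimistic refined commutator estimate you float would at best recover a max-type inequality in the spirit of Lipnowski--Stern's Theorem 1.5, not the clean linear-in-$\vol(M)$ bound asserted here. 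No version of the ``bound $\omega_e$ and $\omega_h$ separately'' strategy appears in the paper, and you have not supplied the estimate that would be needed to make it work.

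The idea you are missing is a duality trick that avoids estimating the closed component at all. The paper's Lemma 4.1 produces, for a Whitney-coexact cochain $f$, an \emph{exact} chain $a\in\partial C_2(K)$ of unit dual norm $\|a\|_2^*=1$ with $\|f\|_2 = \int_a W_\beta(f)$. Because $a$ is a boundary, Stokes' theorem annihilates every closed form under $\int_a$, so $\int_a\omega = \int_a p(\omega)$ with $p$ the orthogonal projection onto coexact forms; the exact and harmonic pieces never need to be controlled. One then estimates $\|\omega\|_2 = \int_a p(\omega) \leq \len(a)\,\|p(\omega)\|_\infty$, applies the pointwise control of Proposition 3.11 to convert $\|p(\omega)\|_\infty$ to $\|p(\omega)\|_2$ with a constant depending only on $\e$, and uses Proposition 3.1 together with $\|a\|_2^*=1$ to bound $\len(a)$ by a constant times $\sqrt{\vol(M)}$. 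Squaring gives $\|p(\omega)\|_2^2 \geq \|\omega\|_2^2 / (G\vol(M))$ directly, and the linear dependence on $\vol(M)$ drops out for free. Without this dual-chain construction, the bookkeeping obstruction you identify is real, and the proposal does not establish the claimed inequality.
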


The smoothened version of the Whitney map needed for the above proposition is obtained by replacing the barycentric coordinates associated to a triangulation with certain smooth partitions of unity indexed by the vertices of a triangulation (an idea of Dodziuk’s \cite{dodzuik2}), which we call barycentric partitions of unity. One can then show that the forms built from these pieces have well behaved Hodge decompositions. This analysis leads to the above eigenvalue comparison for the smoothened Whitney Laplacian constructed from a barycentric partition of unity.

Our second theorem uses the isoperimetric constant $\rho(M)$ to provide a lower bound on the first coexact eigenvalue of the 1-form Laplacian.

\begin{bigthm} Let $M$ be a closed hyperbolic $n$-manifold with $\inj(M) > \e$. Let $\lambda$ be the first positive eigenvalue for the Hodge Laplacian acting on coexact 1-forms and let $H > \lambda$. Then there is a constant $P(H,\e,n)>0$ such that $$ \frac{P\rho(M)}{\vol(M)^{7/2+1/n}}\leq\sqrt{\lambda}.$$
\end{bigthm}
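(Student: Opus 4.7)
The plan is to mimic the classical Cheeger inequality proof for functions, but in the setting of coexact 1-forms and the isoperimetric ratio $\rho(M)$. The idea is that a small eigenvalue of the Hodge Laplacian on coexact 1-forms forces the existence of a nullhomologous 1-cycle $\gamma$ with small $|\gamma|/\scl(\gamma)$, witnessing smallness of $\rho(M)$.

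First I would fix a coexact eigen 1-form $\omega$ satisfying $\Delta \omega = \lambda \omega$ with $\|\omega\|_{L^2} = 1$. Using $\inj(M) > \e$ together with the \emph{a priori} bound $\lambda < H$, standard elliptic regularity (Moser iteration, or iterated Sobolev embedding) gives a uniform pointwise bound $\|\omega\|_{L^\infty} \leq C_1(H, \e, n)$. Since $d\omega$ is an eigen 2-form of the Hodge Laplacian with the same eigenvalue $\lambda$ and $\|d\omega\|_{L^2} = \sqrt{\lambda}$, the analogous estimate gives $\|d\omega\|_{L^\infty} \leq C_2(H, \e, n)\sqrt{\lambda}$. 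The polynomial $\vol(M)$-dependence in the final estimate accumulates partially through the Sobolev constants that enter at this step.

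Next I would transfer $\omega$ to a cochain $c$ on a deeply embedded triangulation of $M$ via the smoothened Whitney map developed earlier in the paper. The Hodge--Whitney spectral comparison controls the resulting discrete Rayleigh quotient $\|\delta c\|/\|c\|$ in terms of $\sqrt{\lambda}$, up to polynomial factors in $\vol(M)$. Passing to the dual cell structure and taking the geometric boundary of the chain dual to $c$ (or the analogous dual object in general dimension $n$) produces a nullhomologous 1-cycle $\gamma$ by construction, whose geodesic length $|\gamma|$ is bounded by $\sqrt{\lambda}$ times polynomial factors in $\vol(M)$; the linear filling inequality in the hyperbolic $n$-manifold is what contributes the $\vol(M)^{1/n}$ factor.

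Finally, for this short nullhomologous $\gamma$ I would apply Calegari's estimates from \cite{length} for short curves in hyperbolic manifolds to obtain a lower bound for $\scl(\gamma)$ in terms of $|\gamma|$ and the bounded geometry constants. Putting the inequalities together yields
\[
\frac{|\gamma|}{\scl(\gamma)} \;\leq\; \frac{1}{P}\,\sqrt{\lambda}\,\vol(M)^{7/2+1/n},
\]
and hence the theorem. The main obstacle is the second step, namely producing a geometric 1-cycle from analytic data attached to $\omega$ while preserving enough information to control $|\gamma|$ by $\sqrt{\lambda}$; the Hodge--Whitney comparison is essential in translating analytic smallness of $\|d\omega\|_{L^2}$ into combinatorial smallness of $\|\delta c\|$, and the bookkeeping of the polynomial volume corrections through Sobolev constants, filling radii, and the combinatorics of the triangulation is the technical heart of the proof.
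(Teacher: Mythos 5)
Your approach has a genuine gap that makes the central step impossible. You propose to extract from the eigenform a nullhomologous closed geodesic $\gamma$ with $|\gamma|$ bounded by $\sqrt{\lambda}$ times a polynomial in $\vol(M)$, and then to bound $\scl(\gamma)$ from below. But a nontrivial closed geodesic in $M$ always satisfies $|\gamma|\geq 2\e$ (since $\inj(M)>\e$), so as $\lambda\to 0$ such a curve simply cannot exist; a length bound of the form $|\gamma|\lesssim \sqrt{\lambda}\cdot\vol(M)^k$ would eventually force $\gamma$ to be trivial. The fix is that one should \emph{not} try to produce a single short curve from $\omega$. The paper's argument instead starts from the Lipnowski--Stern fundamental domain estimate (Proposition 5.1), which bounds $\|\eta\|_2^2$ by a sum involving $\max_i\bigl|\int_{\gamma_i}\eta\bigr|$ over the side-pairing geodesics $\gamma_i$ of a fundamental domain. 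The connection to $\scl$ enters through the de Rham quasimorphism bound (Lemma 5.4): for rationally nullhomologous $\gamma$, $\bigl|\int_\gamma\eta\bigr|\leq 2\pi\|d\eta\|_\infty\scl(\gamma)$, so $\bigl|\int_\gamma\eta\bigr|\leq 2\pi|\gamma|\,\|d\eta\|_\infty\,\rho(M)^{-1}$. One never needs $\gamma$ to be short --- the division by $|\gamma|$ exposes $\rho(M)^{-1}$ directly. The side-pairing curves $\gamma_i$ need not be nullhomologous, which is why the paper subtracts a harmonic form $h$ (Proposition 5.5) and does the cellular chain manipulation (decomposing the cellular approximation $a$ as a harmonic chain plus a boundary, and reassembling the boundary into a nullhomologous loop $\beta$); your proposal omits this entirely.

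There is a second structural issue. You route the argument through the Whitney--Hodge eigenvalue comparison and Poincar\'e duality on the dual cell complex, which is how Theorem A (the upper bound) is proved, and which the paper explicitly restricts to dimension $3$. Theorem B is stated for all $n\geq 3$, and the paper's proof of Theorem B never invokes the Whitney Laplacian or Poincar\'e duality. Finally, Calegari's estimates and Sabourau's systolic inequality do appear at the end of the paper's proof, but for the opposite purpose from what you intend: they are used to bound $\rho(M)$ from \emph{above} by $\mu(1+\mu\vol(M)^{1/n})$, so that the factor $(1+\rho(M))$ in the denominator of the intermediate estimate can be absorbed into the $\vol(M)^{1/n}$ term, not to bound $\scl$ of a short curve from below. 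Your Moser-iteration/Sobolev step is correct in spirit (it corresponds to Proposition 5.8, quoted from Lipnowski--Stern) and the idea of tracking polynomial volume corrections through Sobolev constants is sound, but the curve-producing step that carries the whole argument does not work as stated.
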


Theorem B corresponds to Theorem 1.5 below from \cite{LS}, which uses stable area in place of stable commutator length. Note that Theorem B above remains true if one replaces $\rho(M)$ with $\rho_{\text{Area}}(M)$.

\begin{thm}(\cite{LS} Theorem 1.3 )  Let $M_0$ be a closed hyperbolic $n$-manifold and let $M$ be a finite cover of $M_0.$ Then there are constants $A_0$ and $C$, where $A_0$ is depends only on $M_0$ and $C$ is a constant that is uniformly bounded when the injectivity radius of $M$ is bounded below and $\lambda_1^1(M)$ is bounded above, for which $$\frac{1}{\lambda_1^1(M)_{d^*}}\leq A_0C^2\vol(M)^{3/2}\diam(M)^2 \left(1+\rho_{\emph{Area}}(M)^{-1}\right).$$
\end{thm}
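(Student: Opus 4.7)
The plan is to prove the contrapositive: starting from a coexact eigenform $\omega$ with $\|\omega\|_{L^2}=1$, $\Delta\omega=\lambda\omega$, and $\lambda<H$, I would construct a rationally nullhomologous loop $\gamma\in\Gamma'_{\Q}$ with $|\gamma|/\scl(\gamma)\leq C\vol(M)^{7/2+1/n}\sqrt{\lambda}$; taking the infimum over such $\gamma$ then yields the stated upper bound on $\rho(M)$. The construction is modeled on Lipnowski and Stern's approach but uses a deeply embedded triangulation $K$ of $M$ (afforded by the injectivity bound) together with the uniform Whitney--Hodge comparison proved as the preceding theorem, so that no constants depend on a fixed base manifold.

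The analytic input is a Stokes calculation: for any rationally nullhomologous $\gamma$ with $\partial S=m\gamma$, one has $m\int_\gamma \omega=\int_S d\omega$. Standard elliptic regularity (using $\lambda<H$ and the injectivity bound) gives a pointwise bound $\|d\omega\|_\infty\leq C\sqrt{\lambda}\,\vol(M)^a$ for an explicit exponent $a$. Hence $\text{sArea}(\gamma)\geq |\int_\gamma\omega|/(C\sqrt{\lambda}\,\vol(M)^a)$, and combining with the Gauss--Bonnet inequality $\text{sArea}(\gamma)\leq 4\pi\scl(\gamma)$ recorded in the introduction converts this into a matching lower bound on $\scl(\gamma)$.

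The heart of the proof is exhibiting a short, rationally nullhomologous $\gamma$ with $|\int_\gamma\omega|$ bounded below. Integrating $\omega$ along edges of $K$ gives a 1-cochain $c$ to which the preceding Whitney--Hodge theorem applies, yielding $\|c\|_{\ell^2}$ comparable to $\|\omega\|_{L^2}=1$ up to polynomial-in-$\vol$ factors and $\|\delta c\|_{\ell^2}$ correspondingly small. Using an $\ell^2$ Hodge decomposition on cochains to write $c\approx\delta f$ and applying pigeonhole across the $O(\vol(M))$ vertices of $K$ produces a vertex function $f$ whose oscillation exceeds $c\,\vol(M)^{-1/2}$. Joining the extremal vertices by a path $\sigma$ in the 1-skeleton of length polynomial in $\vol(M)$ (with the exponent $1/n$ arising from a careful analysis of the combinatorial geometry of deeply embedded triangulations) and closing to a rationally nullhomologous cycle yields $\gamma$ with $|\gamma|\leq C\vol(M)^{1/n}$ and $|\int_\gamma\omega|\geq c\,\vol(M)^{-1/2}$. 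Substituting into the $\scl$ lower bound and bookkeeping the polynomial factors from the elliptic estimates and the Whitney--Hodge comparison produces the exponent $7/2+1/n$.

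The principal obstacle is the closing step: the return path must be chosen so that its integral of $\omega$ does not cancel $c(\sigma)$, and the resulting loop must land in the kernel of $\pi_1(M)\to H_1(M;\Q)$ without inflating its length past $\vol(M)^{1/n}$. The natural remedy is to prescribe a short integral basis for $H_1(M;\Z)/\mathrm{torsion}$ in advance and project $\gamma$ onto its kernel; obtaining such a basis with length polynomial in $\vol(M)$ (for instance via a net of $\e/2$-balls together with a controlled spanning tree of the dual graph) is where several of the factors in the final exponent arise.
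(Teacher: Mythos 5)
The statement you were asked to prove is the cited Lipnowski--Stern inequality (Theorem~1.5 in the paper's numbering); the paper does not reprove it, but instead proves Theorem~B, its analogue with $\rho$ in place of $\rho_{\text{Area}}$, uniform dependence on $\e$ rather than on a fixed base $M_0$, and the single polynomial factor $\vol^{7/2+1/n}$. Your proposal follows the Theorem~B template. The paper's proof of Theorem~B (Section 5) takes a quite different route from yours. It begins with the $L^2$-estimate for coclosed $1$-forms on a fundamental domain (Proposition~5.1, cited from [LS]), which bounds $\|\eta\|_2^2$ in terms of $\mathrm{Area}(\partial\mathcal D)$, $\|\eta\|_\infty$, $\|d\eta\|_\infty$, and $\max_i\left|\int_{\gamma_i}\eta\right|$ over the side-pairing geodesics $\gamma_i$. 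The new content is Proposition~5.5, which bounds that integral term from \emph{above}: one approximates $\gamma_i$ cellularly, decomposes the cellular cycle into a harmonic part plus a boundary $\partial S$, assembles a nullhomologous loop $\beta$ from $\partial S$, and applies the quasimorphism/Bavard bound $\left|\int_\beta(\eta-h)\right|\le 2\pi\|d\eta\|_\infty\scl(\beta)\le 2\pi\|d\eta\|_\infty\rho(M)^{-1}|\beta|$. Then Proposition~5.8 ($L^\infty$-to-$L^2$ comparison for eigenforms), Calegari's length--$\scl$ estimate, and Sabourau's systolic inequality yield the clean exponent. No specific ``efficient'' curve is ever produced.

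Your approach instead tries to build a short rationally nullhomologous $\gamma$ with $\left|\int_\gamma\omega\right|$ bounded \emph{below}, and this runs into two serious obstructions. First, the Hodge-decomposition step ``$c\approx\delta f$'' (i.e.\ the integral cochain is approximately the coboundary of a vertex $0$-cochain $f$) is self-defeating: if $c$ is close to a coboundary then $\int_a c\approx\int_a\delta f=0$ for every cellular cycle $a$ by telescoping, so the very structure you extract to detect oscillation of $f$ forces the loop integral you need to be small, not large. (It is also not the right decomposition: a coexact eigenform is orthogonal to exact forms, so its integral cochain should have small exact part, not be dominated by it.) This is why the paper never tries to go in your direction --- it bounds the $\int_{\gamma_i}\eta$ term from above, and it is exactly at that upper bound that $\rho(M)^{-1}$ enters. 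Second, the pigeonhole over $O(\vol)$ vertices produces two vertices where $f$ differs by $\gtrsim\vol^{-1/2}$, but those vertices can be as far apart as $\diam(M)\sim\vol(M)$ (the diameter bound of Lemma~5.3 is sharp), so there is no way in general to connect them by a path of length $\vol^{1/n}$; the proposed remedy via a short homology basis addresses rational nullhomology but not the length of the connecting path, nor the cancellation of the integral on closing. In short, the ``closing step'' you flag as the principal obstacle is in fact fatal to this line of attack, and the paper circumvents it by working with the prescribed side-pairing loops of a controlled fundamental domain rather than constructing a curve from the eigenform.
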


Our approach to proving Theorems A and B is grounded in the following dual characterizations of $\scl$. By Bavard duality, stable commutator length is related to the defect norm for quasimorphisms and the Gersten filling norm for singular 1-chains. The Gersten filling norm for a nullhomologous loop $\gamma$ is given by the infimal $\ell^1$-norm of a singular 2-chain whose boundary is a fundamental cycle for $\gamma^m$, normalized by $m$. A quasimorphism for a group $\Gamma$ is a map from $\Gamma\to \R$ that is nearly a homomorphism in the sense that its coboundary is a bounded map on $\Gamma^2$. The defect of a quasimorphism is the sup norm of its coboundary. Bavard duality says $$\scl(\gamma) = 4\fill(\gamma) = \frac{1}{2}\sup\limits_q \frac{q(\gamma)}{D(q)},$$ where the supremum is over all quasimorphisms $q$ and $D(q)$ is the defect of $q$.

One can therefore use the characterization of $\scl$ as a filling norm when bounding it from above and, similarly, the quasimorphism point of view when bounding it from below. For Theorem A, we relate the filling norm to the spectrum of the Hodge Laplacian via the Whitney Laplacian and Poincar\'e duality (which forces us to restrict to dimension 3). For Theorem B, we use de Rham quasimorphisms, which are given by integrating coclosed forms over geodesics. Studying the de Rham quasimorphism of a coexact eigenform gives the connection to the spectrum of the Hodge Laplacian.

Our methods primarily differ from \cite{LS} in that instead of studying covers of a fixed manifold with a specific triangulation, we use that closed hyperbolic manifolds with injectivity radius greater than some $\e>0$ can all be triangulated so that the simplices come from a compact collection, and instead of using an $L^2$ discretization of the eigenvalue problem, we use a smooth discretization. The local structure of these triangulations can then be compared in a uniform way, thereby allowing us to relate various combinatorial and geometric norms. By working in the smooth setting instead of the $L^2$ setting, we are able to make use of geometric estimates that require higher regularity. This leads to the more direct Whitney-Hodge eigenvalue comparison of Proposition 1.4.

As an application of the spectral gap estimate of Theorem A, we modify the construction in \cite{BD} to construct a family of closed hyperbolic manifolds for which we have control over the stable isoperimetric constant and which have injectivity radius uniformly bounded below.

\begin{bigthm}\label{thm:C} There is a family $\{W_n\}$ of closed hyperbolic 3-manifolds with injectivity radius bounded below by some $\e>0$ and volume growing linearly in $n$ such that the 1-form Laplacian spectral gap vanishes exponentially fast in relation to volume: $$\sqrt{\lambda(W_n)}\leq B \vol(W_n)e^{-r\vol(W_n)}$$ where $r$ and $B$ are positive constants and $\lambda(W_n)$ is the first positive eigenvalue of the 1-form Laplacian on $W_n$.
\end{bigthm}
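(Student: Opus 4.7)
The plan is to apply Theorem~\ref{thm:A}: it suffices to construct a family $\{W_n\}$ of closed hyperbolic $3$-manifolds with $\inj(W_n)>\e$ and volume linear in $n$, together with nontrivial classes $\gamma_n\in\Gamma_\Q'$ satisfying $|\gamma_n|\leq L$ uniformly and $\scl(\gamma_n)\geq c\,e^{r\vol(W_n)}$ for positive constants $c,r,L$. Theorem~\ref{thm:A} then immediately delivers
\[ \sqrt{\lambda(W_n)}\leq A(\e)\vol(W_n)\cdot\frac{|\gamma_n|}{\scl(\gamma_n)}\leq \frac{AL}{c}\vol(W_n)\,e^{-r\vol(W_n)}, \]
which is the conclusion of Theorem~\ref{thm:C}.

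For the construction, I would modify the Brock--Dunfield family of \cite{BD}. Their manifolds arise as hyperbolic Dehn fillings on a fixed cusped hyperbolic $3$-manifold $N$, have a uniform lower injectivity radius bound, satisfy $H_2\cong\Z$, and have Thurston norm of the generator growing exponentially in volume. I propose to replace $N$ by a hyperbolic augmentation of itself, drilling out an auxiliary unknotted closed geodesic to introduce an extra cusp, then perform the same Dehn fillings on the original cusps as in \cite{BD} while filling the extra cusp along a long slope whose core geodesic $\gamma_n$ is rationally null-homologous in the resulting closed manifold $W_n$. By the hyperbolic Dehn surgery theorem, for slopes of sufficiently large length $W_n$ is hyperbolic with $\inj(W_n)>\e$, the length of $\gamma_n$ tends to $0$ as $n\to\infty$, and $\vol(W_n)$ remains linear in $n$. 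The null-homology condition is a rational linear constraint on the filling slopes that should be solvable alongside the \cite{BD} constraints, possibly after drilling further cusps to obtain enough parametric freedom.

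The main step is the exponential lower bound on $\scl(\gamma_n)$, obtained by comparing Seifert surfaces for $\gamma_n^k$ to the Brock--Dunfield class. Given a singular surface $S$ with $\d S=\gamma_n^k$, remove a tubular neighborhood of $\gamma_n$ and view the remainder as a relative $2$-cycle in the Dehn-filled link exterior. Capping off $S$ with an explicit bounded-complexity surface supported near $\gamma_n$, or equivalently pairing $S$ with the cohomology class Poincaré dual to the Brock--Dunfield generator, produces a closed cycle $\widetilde{S}$ in $W_n$ whose homology class is a nonzero integer multiple of the Brock--Dunfield generator with multiplicity proportional to $k$. The exponential Thurston norm lower bound of \cite{BD} then forces $\chi_-(S)\geq c' k\,e^{r\vol(W_n)}$ for some $c'>0$, and dividing by $2k$ yields the desired exponential lower bound on $\scl(\gamma_n)$.

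The main obstacle is reconciling two competing conditions on $\gamma_n$: it must be rationally null-homologous, so that $\scl(\gamma_n)$ is defined and Theorem~\ref{thm:A} applies, yet the closed surface $\widetilde{S}$ produced by capping off a Seifert surface for $\gamma_n^k$ must represent a nonzero, exponentially norm-large class in $H_2(W_n;\Z)$. These conditions are not a priori contradictory---the vanishing of $[\gamma_n]$ concerns absolute $H_1$, while the capping-off argument exploits relative cycles and the way the auxiliary cusp interacts with $H_2$---but verifying their compatibility inside the modified Brock--Dunfield construction is the technical heart of the argument. A potential alternative is Bavard duality: constructing a quasimorphism on $\pi_1(W_n)$ with bounded defect and exponentially large value on $\gamma_n$, built from the exponential geometric behavior of the Brock--Dunfield surfaces in the universal cover, would bypass some of the topological subtleties at the cost of new analytic work.
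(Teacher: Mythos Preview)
Your high-level plan---produce $\gamma_n$ with uniformly bounded length and exponentially large $\scl$, then invoke Theorem~\ref{thm:A}---is exactly the paper's strategy. The difficulties lie in your specific construction, where there are two genuine problems.

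First, there is an internal contradiction in your Dehn-filling setup. You fill the auxiliary cusp along a long slope and assert both that $\inj(W_n)>\e$ and that the core geodesic $\gamma_n$ has length tending to $0$. But a short closed geodesic forces small injectivity radius: $\inj(W_n)\leq\tfrac{1}{2}|\gamma_n|\to 0$. So either you lose the uniform injectivity bound (and with it the constant $A(\e)$ in Theorem~\ref{thm:A}), or $|\gamma_n|$ does not tend to $0$ and you need another mechanism to control it. The paper avoids Dehn filling entirely: $W_n$ is built by gluing two copies of a fixed hyperbolic manifold with totally geodesic boundary (Thurston's tripus) along a high power of a pseudo-Anosov, and the model-manifold theorem of \cite{BMNS} supplies a $K$-biLipschitz comparison with a fixed model, giving both the injectivity bound and a uniform length bound on a fixed curve $\gamma\subset\partial W^+$.

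Second, your capping-off argument for the $\scl$ lower bound is, as you yourself flag, the crux, and it is not resolved. If $\gamma_n$ is rationally null-homologous, there is no canonical ``small'' cap: any Seifert surface for $\gamma_n$ is already a candidate cap, and if the Thurston norm of the Brock--Dunfield class is large, such caps are themselves complex, so you cannot bound $\chi_-(\widetilde S)$ in terms of $\chi_-(S)$ alone. Nor is it clear why the closed cycle you produce should be a \emph{nonzero} multiple of the generator. The paper replaces this with a different mechanism (Proposition~\ref{prop:6.2}): for a curve $\gamma$ in the totally geodesic boundary $S$ of a compact hyperbolic piece, one has $\|[\gamma]\|_{s,S}\leq D\,\scl(\gamma)$, proved via the coarea formula in a collar of $S$. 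An algebraic condition on the action of the pseudo-Anosov on $H_1(S)$ then forces any admissible surface for $\gamma$ to cross into the far cap $M^-$, where its trace on $\partial M^-$ is homologous to $f^n_*[\gamma]$ and hence has exponentially large stable norm. This is what converts exponential homological stretching into an exponential $\scl$ bound, without any capping or appeal to Thurston norm of closed classes.
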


The manifolds $W_n$ in Theorem $C$ are obtained by taking a hyperbolic 3-manifold with totally geodesic boundary and gluing it to itself using a particular psuedo-Anosov with several useful properties. By \cite{BMNS}, this family has geometry that up to bounded error can be understood in terms of a simple model family and therefore has the desired injectivity radius lower bound and linear volume growth.

Using this model family, we show that one can find curves with uniformly bounded length whose stable commutator length grows exponentially in the volume. This task is rather delicate, as it is quite difficult to ensure that the \emph{stable} commutator length of a given loop is large--- one typically does not know if passing to a power of the loop causes a drastic simplification in commutator length. To overcome this, we require an algebraic condition on the gluing map to control which surfaces bound the loops we study. We then use the following homological isoperimetric estimate to control stable commutator length.

 \begin{thm} Let $M$ be a compact oriented hyperbolic 3-manifold with totally geodesic boundary $\d M = S$. Let $\gamma$ be a geodesic multicurve in $S$ that is rationally nullhomologous in $M$. Let $||\cdot||_{s,S}$ be the stable norm on $H_1(S)$ induced by the Riemannian metric on $S.$ Then there is a constant $D> 0$ depending only on $M$ such that $$||[\gamma]||_{s,S}\leq D\scl_M(\gamma).$$ \end{thm}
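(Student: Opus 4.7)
The plan is to deduce the inequality from Bavard duality by constructing, for each cohomology class $\alpha \in H^1(S;\R)$, a homogeneous quasimorphism on $\pi_1(M)$ whose defect is controlled by a norm of $\alpha$ and whose value on $\gamma$ computes $\alpha([\gamma])$. This reduces the problem to controlling a ``de Rham'' style quasimorphism built from an extension of a harmonic form on $S$.

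Fix $\alpha \in H^1(S;\R)$ and let $\omega$ be its harmonic representative on $S$. Because $S$ is totally geodesic in $M$, there is a Fermi collar $U \cong S \times [0,\e_0)$ with $\e_0$ depending only on $M$, and in these coordinates the ambient metric agrees with $dr^2 + g_S$ to first order in $r$. Pick a cutoff $\chi \in C^\infty_c([0,\e_0))$ with $\chi(0)=1$, and set $\tilde\omega = \chi(r) \pi^*\omega$ on $U$ and $\tilde\omega = 0$ elsewhere, where $\pi \colon U \to S$ is projection along normal geodesics. Since $\omega$ is closed on $S$, $d\tilde\omega = \chi'(r)\, dr \wedge \pi^*\omega$. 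A uniform metric comparison in the collar then yields
\[
\|d\tilde\omega\|_{L^\infty(M)} \leq C_1 \|\omega\|_{L^\infty(S)}
\]
for a constant $C_1 = C_1(M)$.

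Next, define $q_\alpha \colon \pi_1(M) \to \R$ by $q_\alpha(g) = \int_{g^*} \tilde\omega$, where $g^*$ is the closed geodesic in $M$ representing the conjugacy class of $g$. Because $M$ has totally geodesic boundary it coincides with its convex core, so $g^*$ exists and is unique. The standard argument, Stokes applied to a geodesic triangle lifted into $\tilde M \subset \H^3$ combined with the Gauss--Bonnet bound $\pi$ on the area of any hyperbolic geodesic triangle, shows $q_\alpha$ is a homogeneous quasimorphism with
\[
D(q_\alpha) \leq \pi \|d\tilde\omega\|_\infty \leq C_2 \|\omega\|_\infty, \qquad C_2 = C_2(M).
\]
Because $S$ is totally geodesic, each component of the multicurve $\gamma$ is already a closed geodesic of $M$, and $\tilde\omega|_S = \omega$, so $q_\alpha(\gamma) = \int_\gamma \omega = \alpha([\gamma])$. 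Bavard duality then gives
\[
\scl_M(\gamma) \geq \frac{q_\alpha(\gamma)}{2 D(q_\alpha)} \geq \frac{\alpha([\gamma])}{2 C_2 \|\omega\|_\infty}.
\]
Taking the supremum over $\alpha \in H^1(S;\R)\setminus\{0\}$ produces the norm on $H_1(S;\R)$ dual to $\alpha \mapsto \|\omega_\alpha\|_\infty$. Since $H^1(S;\R)$ is finite dimensional, this norm is equivalent to $\|\cdot\|_{s,S}$ with comparison constant depending only on $S$ (hence only on $M$), and assembling the constants yields $\|[\gamma]\|_{s,S} \leq D\, \scl_M(\gamma)$ with $D = D(M)$.

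The main technical step is the controlled extension of $\omega$: obtaining $\|d\tilde\omega\|_\infty \leq C_1 \|\omega\|_\infty$ with a constant independent of $\alpha$ relies on pulling $\omega$ back from $S$ (so the only contribution to $d\tilde\omega$ is the normal derivative of $\chi$) and on the Fermi metric being uniformly close to a product near the totally geodesic boundary. Once this bound is in place, the de Rham quasimorphism construction and the Bavard duality step are routine, and the totally geodesic hypothesis ensures that curves in $S$ remain geodesics in $M$ so that $q_\alpha(\gamma)$ reads off $\alpha([\gamma])$ exactly rather than up to an error.
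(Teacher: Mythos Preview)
Your argument is correct and takes a genuinely different route from the paper's. The paper works on the \emph{filling} side of Bavard duality: given a near-extremal admissible surface $\Sigma_m$ for $\gamma$, it straightens a triangulation to bound $\mathrm{Area}(\Sigma_m)$ by $3\pi\chi_-(\Sigma_m)$, then applies a coarea argument in a collar of $S$ (Lemma~6.1) to locate a level-set multicurve $\gamma_t$ in $S$ that is homologous to $m\gamma$ and has length controlled by $\mathrm{Area}(\Sigma_m)$; this directly bounds $m\|[\gamma]\|_{s,S}$ by a constant times $\chi_-(\Sigma_m)$. You instead work on the \emph{quasimorphism} side, extending each harmonic $\omega$ on $S$ to a smooth $\tilde\omega$ on $M$ by a radial cutoff in a Fermi collar, observing that convexity of $\tilde M\subset\H^3$ (from the totally geodesic boundary) makes the geodesic-triangle Stokes bound go through, and then invoking Bavard duality together with the fact that $\gamma\subset S$ is already the $M$-geodesic in its class.

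Both approaches lean on the totally geodesic hypothesis in essential but different places: the paper uses it so that geodesic lengths in $S_t$ dominate those in $S$, while you use it to guarantee $\gamma^*=\gamma$ and to get convexity of $\tilde M$ for the defect bound. Your proof is shorter and avoids the coarea lemma entirely; the paper's approach is more hands-on and yields an explicit geometric witness (the curve $\gamma_t$) for the stable-norm bound, which fits the style of the surrounding surface-intersection arguments in Section~6. One small remark: your defect constant $\pi$ may be off by a factor of $2$ depending on the homogenization convention, but this is absorbed into $D$ and does not affect the argument.
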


 Theorem A then implies the first positive eigenvalue vanishes exponentially fast. We stress that while the manifolds $W_n$ are constructed from the same basic building blocks, they are not otherwise related to one another. In particular, they are not finite covers of some fixed base manifold, so the estimates of \cite{LS} do not apply; it is therefore vital that the constants appearing in Theorem A depend only on an injectivity radius lower bound.

\subsection{Brief Outline} In \hyperref[sec:2]{Section 2}, we show existence and study the local properties of the triangulations we use throughout the paper. We also introduce the smooth Whitney cochain map used to define the approximation of the Laplacian. In \hyperref[sec:3]{Section 3}, we relate various chain and cochain norms and compare these to various geometric norms.  In \hyperref[sec:4]{Section 4}, we compare the eigenvalues of the approximation of the Laplacian to the genuine eigenvalues of the Laplacian, then use this comparison and the estimates from \hyperref[sec:3]{Section 3} to  prove Theorem A. In \hyperref[sec:5]{Section 5}, we prove Theorem B. Finally, in \hyperref[sec:6]{Section 6}, we compare the homological length and stable commutator length of certain curves and construct the example of Theorem C. \hyperref[sec:6]{Section 6} makes use of Theorem A, but is otherwise independent from the rest of the paper.

\begin{remark}
Throughout the paper, numerous constants are used. Constants defined inside proofs have no meaning outside the local setting of the proof. The letter $C$ is repeatedly reused in Sections 2 and 3 to denote a constant coming from a Sobolev type estimate and can at any time be taken to be the maximum among all constants denoted by $C$. Such constants depend only on injectivity radius and local choices of things like bump functions, unless specifically noted.
\end{remark}

\section*{Acknowledgments}  Countless thanks are due to my advisor Nathan Dunfield for suggesting the questions considered in this paper and for invaluable support. Thanks to Michael Lipnowski and Joel Hass for answering some questions. Thanks to the anonymous referee for helpful comments that improved this paper. This work was partially supported by US NSF grant DMS-1811156.

\section{Triangulations and Whitney forms}
\label{sec:2}

The purpose of this section is to outline the basic properties of the triangulations we use in this paper and how the Whitney map relates these triangulations to the de Rham complex.

\subsection{Deeply embedded triangulations}

In this section we study certain triangulations, called deeply embedded triangulations, of hyperbolic manifolds with injectivity radius bounded below that enjoy useful combinatorial and geometric properties that will facilitate the estimates in sections 3 through 5. While we focus on the hyperbolic setting, we give an account motivated by potential generalizations to the variable negative curvature setting.

The triangulations we use are generally obtained via Delaunay complexes associated to collections of points. To obtain a Delaunay complex in a Riemannian manifold $M$, take a finite collection of points $P\subset M$ and consider the Voronoi celluation consisting of cells $$V_p = \{x\in M~:~ d(x,p) \leq d(x,q)~\text{for all } p \neq q \in P\}$$ for $p\in P.$ Dual to the Voronoi celluation is the Delaunay complex. The cells of the Delauney complex are the convex hulls of tuples of points whose corresponding Voronoi cells have nonempty intersections. In \cite{Bois}, it is shown that if the collection of points $P$ satisfies certain density and separation conditions, then there is a quantifiably small perturbation of the point set $P$ whose Delaunay complex is a triangulation. The simplices of this triangulation are geodesic. The precise conditions are as follows.

Let $M$ be a closed Riemannian manifold with distance function $d$. Given a pair $1\geq\mu>0,~\e>0$, a $(\mu,\e)$-net is a collection $P$ of points in $M$ for which the following hold:

\begin{enumerate}
    \item ($P$ is $\e$-dense) For all $x\in M$, there is a $p\in P$ such that $d(x,p)<\e$.

    \item ($P$ is $\mu$-separated) All distinct $p,q\in P$ satisfy $d(p,q) \geq \mu\e$.

\end{enumerate}

Theorem 3 of \cite{Bois} says that if $\mu$ and $\e$ satisfy several inequalities relating to the injectivity radius and sectional curvatures, a $(\mu,\e)$-net can be perturbed to $(\mu’,\e’)$-net such that the resulting Delaunay complex is indeed a triangulation. This theorem, specialized to closed hyperbolic $n$-manifolds, becomes:
\begin{thm}

\cite{Bois} Let $M$ be a closed hyperbolic $n$-manifold and $P$ a $(\mu,\e)$-net such that \[\e \leq \min\left\{\frac{\inj(M)}{4},~\Psi(\mu)\right\},\] where $\inj(M)$ is the injectivity radius of $M$ and $\Psi$ is a function of the net parameter $\mu$. The function $\Psi$ is described in \cite{Bois} and is independent of the manifold $M$. Then there is a point set $P’$ that is a $(\mu’,\e’)$-net with resulting Delaunay complex a triangulation. Moreover, $\mu’$ and $\e’$ satisfy the following (see equation (2) in \cite{Bois}):
: \begin{align*}
     \e &\leq \e’ \leq \frac{5}{4}\e, \\
    \frac{2}{5}\mu &\leq \mu’ \leq \mu.
\end{align*}
\end{thm}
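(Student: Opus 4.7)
The plan is to reduce to the Euclidean Delaunay theory via geodesic normal coordinates. For any point $p$ of the net $P$, the hyperbolic metric on a ball $B(p,r)$ differs from a flat reference metric by $O(r^2)$, and when $\e \leq \inj(M)/4$ the Voronoi cell $V_p$ sits inside a geodesic ball of radius $O(\e)$ on which the exponential map is a diffeomorphism. This lets me transfer combinatorial questions about the Delaunay complex of $P$ in $M$ to questions about sphere-circumscription in the charts, with curvature-controlled error that scales with $\e$.

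In $\R^n$, the Delaunay complex of a finite point set realizes a triangulation of its convex hull exactly when the points are in general position: no $n+2$ of them share a common empty circumscribing sphere. This is a codimension-one condition, so a generic arbitrarily small perturbation achieves it. Transplanting via the normal charts, the set of degenerate configurations is a proper algebraic subset of the configuration space, and one can choose a perturbation $P'$ of $P$ in which each point moves by at most some fixed small fraction of $\mu\e$. The triangle inequality then yields the stated comparisons $\e \leq \e' \leq \tfrac{5}{4}\e$ and $\tfrac{2}{5}\mu \leq \mu' \leq \mu$, since moving each point by a $\mu\e$-small amount changes all pairwise distances by at most this amount.

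The hard part will be globalization: per-chart nondegeneracy does not immediately produce a triangulation of $M$, because the geodesic simplices coming from adjacent charts must agree on their overlaps without self-intersection or missed region. This requires a quantitative \emph{protection} argument, showing that each candidate Delaunay simplex's circumscribing ball misses all other points of $P'$ by a definite margin, and that this margin dominates the curvature-induced error incurred when passing between charts. The constraint $\e \leq \Psi(\mu)$ is precisely what secures such a uniform protection margin: when $\mu$ is large the points are well separated and there is ample room to achieve protection, so $\Psi(\mu)$ can be generous; when $\mu$ is small, one must pay by taking $\e$ very small so that the Euclidean approximation is tight enough to preserve the sphere-incidence combinatorics under perturbation. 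Making this protection quantitative, so that the Delaunay nerve of $P'$ lifts canonically to a geodesic triangulation of $M$, is the technical heart of the argument in \cite{Bois}.
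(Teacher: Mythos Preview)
The paper does not prove this statement: it is quoted directly from \cite{Bois} (Theorem~3 there, specialized to hyperbolic manifolds) and used as a black box. Your sketch is a reasonable outline of the strategy actually carried out in \cite{Bois}---normal-coordinate charts to compare with Euclidean Delaunay, perturbation to achieve protection, and the role of $\Psi(\mu)$ as the curvature-versus-separation tradeoff---so there is nothing to compare against in the present paper.
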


The separation and density conditions of a $(\mu’,\e’)$-net ensure that the resulting Delaunay triangulation has edge lengths lying in the closed interval $[\frac{2}{5}\mu \e ,2\e]$. We now specialize to the case $\mu=1$. Note that when $n=3$, $\Psi(1) = 2\times 3^{121.5}\times 5^{-81}$, which is roughly 45.15.

Set $\epsilon_0 = \min\{\e/10,\Psi(1)\}$ and define $\mathcal{G}_{\e}$ to be the space of hyperbolic $n$-simplices with edge lengths in the interval $[\frac{2\epsilon_0}{5} ,2\epsilon_0]$. Because the space of all hyperbolic $n$-simplices is parametrized by edge lengths, the space $\mathcal{G}_{\e}$ is compact. Proposition \ref{prop: existence} below establishes that every closed hyperbolic manifold $M$ with injectivity radius bounded below by $\e$ admits a triangulation whose simplices are isometric to those in $\mathcal{G}_\e$.

In a triangulation $K$, the combinatorial 1-neighborhood of a simplex is the union of the stars of the vertices of that simplex. A triangulation $K$ of a hyperbolic manifold $M$ is an \textbf{$\e$-deeply embedded triangulation} if:

\begin{enumerate}
    \item Every simplex is geodesic and contained in $\mathcal G_\e$,
    \item The combinatorial 1-neighborhood of every simplex lifts isometrically to $\H^n$.
\end{enumerate}

Throughout, when referring to deeply embedded triangulations, we generally suppress reference to some fixed $\e$.

\begin{prop} \label{prop: existence}
Let $M$ be a closed hyperbolic $n$-manifold and let $0<\e < \inj(M)$. Then there is a deeply embedded triangulation $K$ of $M$. That is, there is a geodesic triangulation $K$ of $M$ whose simplices come from $\mathcal G_{\e}$ such that the combinatorial 1-neighborhood of every simplex isometrically embeds in $\H^n$.
\end{prop}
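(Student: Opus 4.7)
The plan is to produce the triangulation by applying the Delaunay construction of Theorem 2.1 to a maximal $\epsilon_0$-separated set and then verify the deep embedding condition via a uniform diameter estimate for combinatorial $1$-neighborhoods.

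First I would construct a $(1,\epsilon_0)$-net $P\subset M$ by the standard greedy argument: take $P$ to be a maximal subset of $M$ with $d(p,q)\geq \epsilon_0$ for all distinct $p,q\in P$; maximality automatically forces $\epsilon_0$-density. Because $\epsilon_0=\min\{\e/10,\Psi(1)\}$ and $\e<\inj(M)$, we have $\epsilon_0\leq \inj(M)/4$ and $\epsilon_0\leq \Psi(1)$, so Theorem 2.1 applies. It produces a perturbed $(\mu',\e')$-net $P'$ whose Delaunay complex $K$ is a geodesic triangulation of $M$, with $\epsilon_0\leq \e'\leq \tfrac{5}{4}\epsilon_0$ and $\tfrac{2}{5}\leq \mu'\leq 1$. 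The separation and density properties of $P'$ force every edge of $K$ to have length in $[\tfrac{2}{5}\epsilon_0,\,2\epsilon_0]$, so each simplex of $K$ is isometric to an element of $\mathcal{G}_{\e}$, verifying condition (1).

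For condition (2), I would bound the diameter of the combinatorial $1$-neighborhood of an arbitrary simplex $\sigma=[v_0,\ldots,v_k]\in K$ uniformly. Because $\H^n$ is CAT$(0)$, the distance function is convex on pairs of geodesics, and hence the diameter of any geodesic simplex equals the maximal distance between its vertices; in our setting this is at most $2\epsilon_0$. Thus for each vertex $v$ of $K$, every simplex containing $v$ lies in $\overline{B}(v,2\epsilon_0)$, so $\overline{\s}(v)\subset \overline{B}(v,2\epsilon_0)$. Since the vertices $v_0,\ldots,v_k$ of $\sigma$ are themselves pairwise within $2\epsilon_0$, the combinatorial $1$-neighborhood $\bigcup_i \overline{\s}(v_i)$ is contained in $\overline{B}(v_0,4\epsilon_0)$ and in particular has diameter at most $6\epsilon_0\leq \tfrac{3\e}{5}<\inj(M)$. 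Any subset of $M$ with diameter strictly less than the injectivity radius lifts isometrically to $\H^n$, giving exactly the deep embedding condition.

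The main obstacle is the geometric step of passing from the vertex-distance bound supplied by Theorem 2.1 to a diameter bound on a geodesic simplex and its star, but CAT$(0)$ convexity of the distance function in hyperbolic space handles this cleanly. Everything else is bookkeeping: the quantitative window in Theorem 2.1 is tight enough that the perturbation from $P$ to $P'$ preserves membership of simplices in $\mathcal{G}_{\e}$ and preserves the diameter estimate used for the lifting step.
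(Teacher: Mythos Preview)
Your proof is correct and follows essentially the same route as the paper: build a $(1,\epsilon_0)$-net by a maximality argument, invoke Theorem~2.1 to obtain a Delaunay triangulation with edge lengths in $[\tfrac{2}{5}\epsilon_0,2\epsilon_0]$, and then use a diameter bound on stars to get the isometric lift. Your version is in fact a bit more explicit than the paper's---you justify the simplex diameter bound via CAT$(0)$ convexity and you track the full combinatorial $1$-neighborhood rather than only a single vertex star---but the underlying argument is the same.
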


\begin{proof} Set $\epsilon_0 = \min\{\e/10,\Psi(1)\}$.
 Take a maximal collection of points $P\subset M$ such that the balls $B_{\epsilon_0/2}(p)$ for $p\in P$ are all disjoint. By maximality, the $B_{\epsilon_0}(p)$ balls then cover $M$.
 Since the $B_{\epsilon_0/2}(p)$ balls are all disjoint, we have that for all $p,q\in P$, $d(p,q) \geq \epsilon_0 = \mu {\epsilon_0} $, so $P$ is $\mu$-separated. Since the $B_{\epsilon_0}(p)$ balls cover, every point $x\in M$ is ${\epsilon_0} $-close to some point $p\in P$, so $P$ is ${\epsilon} $-dense.
 The collection $P$ therefore is a $(\mu, {\epsilon_0})$-net, with $\mu = 1$ and ${\epsilon_0} $ satisfying the hypotheses of Theorem 2.1. Consequently, there is a perturbation of $P$ that is a $(\mu’,{\epsilon_0}’)$-net whose Delaunay complex is a triangulation. Since, as remarked above, the edge lengths of simplices in this Delaunay triangulation lies in the interval $[\frac{2}{5} {\epsilon_0} ,2 {\epsilon_0}]$, the simplices come from $\mathcal G_{\e} $.
 The edge length bound along with the fact ${\epsilon_0} < \inj(M)/10$ ensures the diameter of any vertex star (which will be less than 3 times the length of the longest edge of a simplex) will be less than $2 {\epsilon_0}$. Thus, the star of every vertex embeds isometrically in $\H^n$ via the local inverse of the exponential map. \end{proof}

We will also use cell complexes that are dual to deeply embedded triangulations. Every simplicial triangulation $K$ of a closed Riemannian manifold admits a dual celluation $K^*$ comprised of cells $\sigma^*$ dual to the simplices $\sigma$ of the triangulation $K$ in the following sense (for a reference, see \cite{bredon} chapter VI.6): Take the first barycentric subdivision $\tau(K)$ of $K$, then the $n$-cells of the dual celluation $K^*$ are the closed stars of the vertices of the original triangulation $K$ in the barycentric subdivision. This celluation is naturally triangulated by the the barycentric subdivision triangulation $\tau(K)$. Like $K$, the dual celluation can be uniformly controlled.

The controlled geometry of deeply embedded triangulations and their dual celluations primarily manifests in the compactness of $\mathcal G_\e$ and the following local bound.

\begin{prop} \label{prop: star bound}Let $M$ be a closed hyperbolic $n$-manifold of injectivity radius $\inj(M)>\e$ with a deeply embedded triangulation $K$. Then there is a positive constant $N = N(\e)$ such that the number of $k$-simplices in the star of a $j$-simplex is less than $N$.
\end{prop}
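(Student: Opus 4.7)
The plan is to lift the combinatorial $1$-neighborhood of the simplex isometrically into $\H^n$ using the defining property of a deeply embedded triangulation, and then count by a volume comparison.

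First, observe that the star of a $j$-simplex $\sigma$ is contained in the star of any vertex $v$ of $\sigma$, since any simplex containing $\sigma$ must contain $v$. The star of $v$ sits inside the combinatorial $1$-neighborhood of $\sigma$, so by the second condition in the definition of a deeply embedded triangulation it lifts isometrically to $\H^n$; I work entirely in this lift. Every simplex of $\mathcal{G}_\e$ has edges of length at most $2\epsilon_0$, and since a geodesic simplex in $\H^n$ has diameter equal to the diameter of its vertex set, each such simplex is contained in the metric ball $B(v, 2\epsilon_0) \subset \H^n$. Thus the lifted star of $v$ fits inside $B(v, 2\epsilon_0)$, whose hyperbolic volume $V(\e)$ depends only on $\e$.

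Next, bound the number of top-dimensional simplices. By compactness of $\mathcal{G}_\e$ together with the non-degeneracy of the simplices produced by Proposition \ref{prop: existence} (the Delaunay construction of \cite{Bois} yields simplices whose thickness is controlled by the net parameter $\mu = 1$), the hyperbolic $n$-volume function attains a positive minimum $v_{\min}(\e) > 0$ on the admissible simplices. Since the $n$-simplices in the star of $v$ have pairwise disjoint interiors and all lie in $B(v, 2\epsilon_0)$, their number is at most $V(\e)/v_{\min}(\e)$.

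Finally, every $k$-simplex in the star of $\sigma$ is a $k$-face of some $n$-simplex in the star of $v$, and each $n$-simplex has exactly $\binom{n+1}{k+1}$ faces of dimension $k$. Consequently, the number of $k$-simplices in the star of $\sigma$ is at most $\binom{n+1}{k+1} V(\e)/v_{\min}(\e)$, which depends only on $\e$ (and on $n, k$), giving the required $N = N(\e)$. The one delicate point is the uniform positive lower bound $v_{\min}(\e)$: although the parameter space $\mathcal{G}_\e$ as defined only records edge-length bounds and a priori allows degeneration, the simplices that actually appear in deeply embedded triangulations are quantifiably non-degenerate thanks to the protection estimates built into Theorem 2.1, and this is where the proof really uses more than compactness.
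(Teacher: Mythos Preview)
Your argument is correct and follows the same volume-packing strategy as the paper: reduce to the star of a vertex, lift to $\H^n$, and bound the number of top-dimensional simplices by comparing their total volume to that of a ball containing the star. The paper obtains its lower volume bound by a slightly different mechanism --- it invokes the hyperbolic law of cosines on the edge-length bounds to claim a uniform lower bound on the angle at $v$ between any two edges spanning a face, hence a uniform lower bound on the volume of each $n$-simplex intersected with a small ball about $v$ --- whereas you bound the full simplex volume directly via compactness. Your flagged concern about non-degeneracy is well placed and applies equally to the paper's angle argument: edge lengths in $[\tfrac{2\epsilon_0}{5},2\epsilon_0]$ do not by themselves preclude nearly-flat simplices in $\mathcal G_\e$ (e.g.\ a hyperbolic triangle with sides near $2\epsilon_0/5$, $2\epsilon_0$, $8\epsilon_0/5$ can have an arbitrarily small angle), so in either version the uniform lower bound ultimately rests on the thickness guarantees coming from the Delaunay construction of Theorem~2.1 rather than on the bare definition of $\mathcal G_\e$.
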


\begin{proof}

The edge length bounds provide a lower bound on the angle between two edges meeting at a vertex that span a face via the hyperbolic law of cosines. This implies that the number of $n$-simplices meeting at a vertex $v$ is bounded uniformly, since for any ball around the vertex, there is a uniform lower bound on the volume of the intersection of an $n$-simplex containing the vertex $v$ and the ball. It follows that there is an $N$ such that the number of $k$-simplices in the star of a vertex is less than $N$ for $k = 0,\dots, n$.\end{proof}

Proposition \ref{prop: star bound} and the compactness of the space $\mathcal G_\e$ of simplices together imply that the geometry and combinatorics of the 1-neighborhood of a simplex in a deeply embedded triangulation is uniformly controlled.
In particular, let $K$ be a deeply embedded triangulation of $M$. By Proposition \ref{prop: star bound}, any vertex in $K$ is contained in at most $N$ simplices. Therefore, there are finitely many possible finite simplicial complexes that appear as the combinatorial 1-neighborhood of a simplex in a deeply embedded triangulation. Let $\mathcal C$ be the finite set of such possible complexes.

For any complex $a \in \mathcal C$, say with $|a|$ many $n$-simplices, a hyperbolic structure on $a$ is given by identifying each $n$-simplex in $a$ with a model simplex in $\mathcal G_{\e}$ so that the face gluing maps are isometries. The possible geometric structures on $a$ are parametrized by a subspace $\mathcal S_\e(a)$ of $\mathcal G_{\e}^{|a|}$. Because the gluing conditions are closed, and because $\mathcal G_\e$ is compact, the space $\mathcal S_\e(a)$ is compact.
By taking the disjoint union over the finite list of possible complexes $a\in\mathcal C$, there is a compact space $\mathcal S_{\e}$ that parametrizes the geometry of the combinatorial 1-neighborhood of a simplex in a deeply embedded triangulation.

We now turn to relating closed geodesics in $M$ to cellular paths in $K$.

\begin{prop} \label{prop: loop multiplicity}
Let $M$ be a closed hyperbolic $n$-manifold with injectivity radius $\inj(M)>\e$ with a deeply embedded triangulation $K$. Let $\gamma$ be a closed geodesic curve in $M$. Then there is a constant $J = J(\e)$ such that the number $v$ of cells in the dual cell complex $K^*$ that $\gamma$ intersects (counted with multiplicity) satisfies \[v\leq J|\gamma|.\] \end{prop}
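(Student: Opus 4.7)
The plan is to pass to the universal cover $\H^n$ and reduce the problem to counting transverse crossings of a lifted geodesic arc with the lifted dual $(n-1)$-skeleton. Lift $\gamma$ to a geodesic arc $\tilde\gamma$ in $\H^n$ of length $|\gamma|$, and write $\tilde K^*$ for the lift of the dual celluation. Assuming $\gamma$ is in general position with respect to $K^*$ (which one can arrange by a small perturbation and limit, using upper semicontinuity of the multiplicity-counted cell-intersection number under perturbation), the quantity $v$ equals, up to a universally bounded additive constant, the number of transverse intersections of $\tilde\gamma$ with the $(n-1)$-skeleton $\tilde K^{*(n-1)}$, since each time $\tilde\gamma$ exits one cell it enters a unique neighboring one.

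To bound this intersection count, I would cover $\tilde\gamma$ by $\lceil|\gamma|\rceil+1$ geodesic balls of radius $1$ centered at unit-spaced points along $\tilde\gamma$. By the compactness of the parameter space $\mathcal{S}_\e$ of combinatorial $1$-neighborhoods of simplices, together with Proposition \ref{prop: star bound}, there is a constant $B=B(\e)$ such that any such unit ball meets at most $B$ dual cells of $\tilde K^*$, and the boundary of each dual cell is a union of at most $N=N(\e)$ barycentric subdivision $(n-1)$-simplices. Since $K$ is a geodesic triangulation and one may take the barycentric subdivision to be geodesic by choosing hyperbolic barycenters, each such boundary simplex lies inside a totally geodesic $(n-1)$-hyperplane of $\H^n$.

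The decisive hyperbolic input is that a geodesic in $\H^n$ meets any totally geodesic $(n-1)$-hyperplane in at most one point (transversely). This bounds the number of crossings of $\tilde\gamma$ with $\tilde K^{*(n-1)}$ inside each radius-$1$ ball by $BN$, and summing over the cover yields $v\leq BN(|\gamma|+2)+1$. Using the standard lower bound $|\gamma|\geq 2\inj(M)>2\e$ on the length of any closed geodesic in $M$, the additive constants can be absorbed into a single multiplicative constant $J=J(\e)$, producing the desired bound $v\leq J|\gamma|$. The step I expect to be most delicate is handling the non-generic case, where $\tilde\gamma$ could lie inside one of the boundary hyperplanes; once transversality is secured, the rest is pure bounded geometry together with the one-intersection property of hyperbolic geodesics and hyperplanes.
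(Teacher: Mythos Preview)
Your argument is correct and shares the paper's core strategy---cover the geodesic by balls of controlled radius, use the bounded local geometry of deeply embedded triangulations to cap the number of dual cells each ball meets, and absorb the additive constants using $|\gamma|>2\e$. The paper places $\e$-balls at successive crossing points $p_n$ and asserts directly that the subarc in each ball ``intersects at most $R(\e)$ simplices,'' then sums over the roughly $|\gamma|/2\e$ balls.

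Where you diverge is in how you justify the multiplicity bound inside each ball. The paper's sentence is informal on this point: knowing a ball meets $R(\e)$ \emph{distinct} dual cells does not immediately cap how many times a geodesic arc can re-enter one of them, since dual cells need not be geodesically convex. You close this gap with a genuinely hyperbolic ingredient: after choosing geodesic barycenters so that each barycentric $(n-1)$-simplex lies in a totally geodesic hyperplane of $\H^n$, a lifted geodesic can meet each such hyperplane transversally at most once, giving at most $BN$ crossings per unit ball. This buys you a clean, rigorous multiplicity count at the cost of the extra hypothesis that the barycentric subdivision be taken geodesic and the transversality caveat you flag. The paper's version is shorter and stays in $M$, but implicitly leans on a convexity-type fact it does not spell out; your version makes that step explicit.
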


\begin{proof}

Suppose $\gamma$ moves from an $n$-cell $\sigma$ to an $n$-cell $\sigma’$, intersecting the ($n-1)$-skeleton of $K^*$ at a point $p\in \sigma\cap \sigma’$. Consider the closed radius $\e$-ball at the point $p$, $V = \bar B_{\e}(p).$ Let $x$ be the point at which $\gamma$ enters $V$ and let $y$ be the point at which it exits $V$.
Then the geodesic subarc of $\gamma$ running from $x$ to $y$ has length $2\e$. Since the triangulation $K$ has simplices from $\mathcal G_{\e}$, the restrained combinatorics of the dual celluation ensures that the ball $V$ intersects a universally bounded number of dual cells. Let $R(\e)$ denote this bound.

Consider the sequence $x_n,y_n$ of points such that $x_1 = x$ and $y_1 = y$ from above for the first simplex crossing, and $x_n$ is obtained by taking the simplex crossing that happens after $y_n$. Then each pair $x_n,y_n$ corresponds to a geodesic sub arc of $\gamma$ that intersects at most $R(\e)$ simplices. Thus, $\nu \leq (\frac{|\gamma|}{2\e} +1)R(\e)$.
It therefore follows that $\frac{v}{R(\e)} 2\e \leq |\gamma| + 2\e.$ Since $\e \leq |\gamma|$, we have $|\gamma|+2\e\leq 3|\gamma|$, and the stated linear bound follows with $J = \frac{3R(\e)}{2\e}$. \end{proof}

The next result compares the lengths of closed geodesics in $M$ to approximating paths in the 1-skeleton of dual celluation $K^*$. To measure the complexity of paths in $K^*$, let $||\cdot||_G$ be the $\ell^1$-norm on chains and $\len(\cdot)$ the word length of the cellular path. For a cellular path $c$, let $||c||_G$ be the $\ell^1$-norm of the corresponding chain.

\begin{prop} \label{prop: length comparison} There is a constant $L = L(\e)>0$ such that for any closed geodesic curve $\gamma$ in $M$, there is a cellular path $c$ in $K^*$ homotopic to $\gamma$ such that $||c||_G\leq \len(c) \leq L|\gamma|$.
\end{prop}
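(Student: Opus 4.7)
The plan is to construct $c$ directly from the ordered sequence of $n$-simplices of $K$ traversed by $\gamma$, joining their barycenters by dual 1-cells. After perturbing $\gamma$ within its free homotopy class by an arbitrarily small amount so that it meets only the $(n-1)$-skeleton of $K$ transversally (which changes its length negligibly), $\gamma$ determines a cyclically ordered sequence $\tau_1,\dots,\tau_k$ of $n$-simplices of $K$ with consecutive pairs $\tau_i,\tau_{i+1}$ sharing a codimension-one face $F_i$. Each $F_i$ is dual to a 1-cell $e_i$ of $K^*$ that runs from the barycenter $b_i$ of $\tau_i$ through the barycenter $m_i$ of $F_i$ to the barycenter $b_{i+1}$ of $\tau_{i+1}$. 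Setting $c = e_1 e_2 \cdots e_k$ gives a cellular loop in $K^*$ of word length $k$.

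To verify $c \simeq \gamma$, I build the homotopy simplex by simplex. Parametrize both loops on $S^1$ so that $\gamma(t_i), c(t_i) = m_i \in F_i$ and so that the arcs $\gamma|_{[t_{i-1},t_i]}$ and $c|_{[t_{i-1},t_i]}$ lie in $\tau_i$. Since each $\tau_i \in \mathcal G_\e$ has diameter at most $2\epsilon_0 < \inj(M)$, it is a geodesically convex subset of $M$, and likewise each face $F_i$ is convex. On $[t_{i-1},t_i] \times [0,1]$ define $H(t,s)$ to be the point at parameter $s$ along the geodesic in $\tau_i$ from $\gamma(t)$ to $c(t)$. Continuity across $t = t_i$ follows because the geodesic from $\gamma(t_i)$ to $m_i$ stays inside the convex face $F_i \subset \tau_i \cap \tau_{i+1}$, so the straight-line homotopies in the two neighboring simplices agree on $F_i$.

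For the length bound, the inequality $\|c\|_G \leq \len(c) = k$ is immediate since the $\ell^1$-norm of the 1-chain associated to a cellular loop is at most the number of edge traversals. To bound $k$ by $L|\gamma|$ for some $L = L(\e)$, I apply the argument of Proposition~\ref{prop: loop multiplicity} with $n$-simplices of $K$ in place of top cells of $K^*$: at each face crossing of $\gamma$ one considers the ball of radius $\e$ around the crossing point; by compactness of $\mathcal G_\e$ together with Proposition~\ref{prop: star bound}, such a ball meets at most a uniformly bounded number of $n$-simplices of $K$, and iterating this estimate along $\gamma$ gives a linear upper bound on the total number of simplex transitions. The main obstacle is this length estimate, which relies crucially on the uniform local combinatorial control provided by the deeply embedded hypothesis; the homotopy construction itself is essentially automatic from the convexity of simplices in $\mathcal G_\e$.
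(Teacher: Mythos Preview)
Your proposal is correct and follows essentially the same strategy as the paper: track the sequence of top-dimensional cells traversed by $\gamma$, connect their centers by dual edges, and invoke the counting argument of Proposition~\ref{prop: loop multiplicity} for the length bound. The only difference is cosmetic---you phrase the construction on the $K$ side (sequence of $n$-simplices of $K$, face crossings dual to edges of $K^*$) whereas the paper works directly in $K^*$; your explicit straight-line homotopy inside each convex simplex is a cleaner justification of $c\simeq\gamma$ than the paper's sketch.
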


\begin{proof}

Fix a base point and orientation for $\gamma$ such that the base point lies on a face of a top dimensional cell. The curve $\gamma$ can be replaced by a homotopic curve whose length is bounded by a constant times the geodesic length of $\gamma$ and which intersects the boundary of every simplex at vertices. This follows from Proposition \ref{prop: loop multiplicity} which gives that there is a bound on the number of simplices $\gamma$ intersects (counting these intersections with multiplicity) that depends linearly on the length of $\gamma$ and the fact the simplices of the triangulation have bounded diameter. Using the orientation and basepoint, we obtain a sequence of vertices with line segments between them that lie entirely in a cell. We can further modify $\gamma$ by replacing these curve segments with curves that lie in the 1-skeleton by traversing the 1-simplex joining the two boundary vertices. Since the edges in the celluation $K^*$ have bounded length, this again adds bounded length to the curve. Let $c$ denote the cellular path we have constructed. By the previous considerations, there is a constant $L$ depending only on $\e$ giving the comparison $\len(c)\leq L|\gamma|$. The inequality $||c||_G\leq \len(c)$ is trivial.

\end{proof}

The geometry of geodesic simplices in hyperbolic manifolds can also be understood using barycentric coordinates via Thurston’s straightening map (see \cite{thurstonbook} page 124). Identify $\H^n$ with the upper sheet of the hyperboloid in Minkowski space $\R^{n,1}$ with quadratic form $Q = x_0^2 + x_1^2 + \cdots + x_{n-1}^2 - x_n^2$ and consider a singular simplex $\sigma:\Delta\to \H^n$.
Let $b_0, \dots, b_n:\Delta\to [0,1]$ be the barycentric coordinates on the standard Euclidean simplex $\Delta$ with vertices $e_0,\dots, e_n$. Then for $v\in \Delta$, write $v = \sum b_i(v)e_i$. The straightening of $\sigma$ is the singular simplex in $\H^n$ given by centrally projecting the affine simplex $\sum b_i(v)\sigma(v_i)$ from the origin to the upper sheet of the hyperboloid. This process endows each geodesic simplex with a natural barycentric coordinate.

If $\pi:\H^n\to M$ is the projection map and $\sigma$ is a singular simplex in $M$, let $\st(\sigma):\Delta\to M$ be the composition of the straightening of $\sigma$ applied to some lift of $\sigma$ and the projection map.
This is well-defined and independent of the lift because the isometry group of $\H^n$ acts linearly on $\R^{n,1}$ preserving the quadratic form $Q$.

For a geodesic simplex $\sigma$ in $\H^n$, let $V_{\sigma}:\sigma \to \Delta$ be the map from $\sigma$ to the standard simplex given by the barycentric coordinates induced by straightening. Geodesic simplices $\sigma,\sigma'$ can be compared using the composition $V_{\sigma’}^{-1}\circ V_{\sigma}$.
Using the straightening construction and the barycentric coordinates, one sees that the maps $V_{\sigma}$ depend continuously on $\sigma$ in the sense that if $\sigma$ is a straight simplex in $\H^n$ and $\sigma’$ is obtained by perturbing the vertices of $\sigma$, then the composition map $V_{\sigma’}^{-1}\circ V_{\sigma}$ is almost an isometry, where the failure to be an isometry is controlled by the size of the vertex perturbation.

\begin{prop}\label{prop: simplices biLipschitz} Let $\sigma$ and $\sigma’$ be geodesic simplices from $\mathcal G_\e$ embedded in $\H^n$. Then the map $V_{\sigma’}^{-1}\circ V_{\sigma}$ is $\kappa$-biLipschitz for some $\kappa = \kappa(\e)>0$ that does not depend on $\sigma$ and $\sigma’$.
\end{prop}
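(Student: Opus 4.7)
The plan is to exploit the compactness of $\mathcal G_\e$ together with the smooth dependence of Thurston's straightening on vertex data. For any single pair $(\sigma,\sigma')$, the composition $F_{\sigma,\sigma'} := V_{\sigma'}^{-1}\circ V_{\sigma}$ is a smooth diffeomorphism between two compact, non-degenerate geodesic simplices, so it has a finite biLipschitz constant $\kappa(\sigma,\sigma') = \max\bigl\{\sup_p\|DF_{\sigma,\sigma'}(p)\|_{op},\,\sup_p\|DF_{\sigma,\sigma'}^{-1}(p)\|_{op}\bigr\}$ with respect to the hyperbolic metric. The goal is to show that this quantity is bounded uniformly in $(\sigma,\sigma')\in\mathcal G_\e\times\mathcal G_\e$, after which we may take $\kappa(\e)$ to be that uniform bound.

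To set up a compact parameter space, I would first normalize using isometries of $\H^n$: since biLipschitz constants are invariant under pre- and post-composition with hyperbolic isometries, we may replace each simplex by a canonical isometric copy with (say) its first vertex at a fixed basepoint and its first edge along a fixed direction. Two normalized simplices are then determined by their ordered vertex tuples $(v_0,\dots,v_n)\in(\H^n)^{n+1}$, and the map from edge-length data (which parametrizes $\mathcal G_\e$) to these normalized tuples is continuous. Thus $\mathcal G_\e$ embeds continuously into a compact subset $\mathcal N_\e\subset (\H^n)^{n+1}$ of non-degenerate vertex configurations.

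Next, I would verify the joint continuity of $DF_{\sigma,\sigma'}(p)$ in $(\sigma,\sigma',p)$. The straightening parametrization $\Delta\to\H^n$ sending $t\mapsto$ projection of $\sum t_i v_i\in\R^{n,1}$ to the upper hyperboloid is manifestly smooth in the vertex data and in $t$; this gives $V_{\sigma'}^{-1}$ and its Jacobian as smooth functions of $(v_0',\dots,v_n')$ and barycentric coordinates. The inverse $V_\sigma$ and its Jacobian are then controlled by the inverse function theorem, provided the Jacobian of straightening is uniformly invertible on $\mathcal N_\e$. Together with Cramer's rule, this yields joint continuity of $\|DF_{\sigma,\sigma'}(p)\|_{op}$ and the analogous quantity for $DF^{-1}$ on the compact space $\{(\sigma,\sigma',p):(\sigma,\sigma')\in \mathcal N_\e\times\mathcal N_\e,\ p\in\sigma\}$. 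A continuous function on a compact set is bounded, which yields the desired uniform $\kappa = \kappa(\e)$.

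The main technical obstacle is ensuring the uniform non-degeneracy of simplices in $\mathcal G_\e$, which is what keeps the Jacobian of straightening uniformly bounded away from zero; this in turn is what makes the biLipschitz constant $\kappa(\sigma,\sigma')$ continuous (not merely upper semi-continuous) in the parameters. Uniform non-degeneracy follows from the compactness of $\mathcal G_\e$ asserted immediately above Proposition 2.3, since a degenerating sequence of simplices with edge lengths bounded into $[\tfrac{2\e_0}{5},2\e_0]$ would exit this compact parameter space. Once uniform non-degeneracy is in hand, the rest of the argument is a standard compactness-continuity packaging.
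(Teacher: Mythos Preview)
Your approach is essentially the same as the paper's: both arguments use that the biLipschitz constant depends continuously on the simplex and then invoke the compactness of $\mathcal G_\e$. The only cosmetic difference is that the paper factors through the standard Euclidean simplex, bounding the biLipschitz constant of each $V_\sigma:\sigma\to\Delta$ individually and then multiplying, whereas you work directly with the composition over the product $\mathcal G_\e\times\mathcal G_\e$; your version is more explicit about the smooth dependence of straightening on vertex data, which the paper leaves implicit.
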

\begin{proof}
The biLipschitz constant for the comparison map between any given simplex and the Euclidean simplex depends continuously on the simplex. The result then follows from the compactness of $\mathcal G_\e$.
\end{proof}

We also have uniform control over the geometry of the combinatorial 1-neighborhood of a simplex.

\begin{prop}\label{prop: stars biLipschitz} There is a constant $\mathcal L = \mathcal L(\e)$ such that  if $s$ and $s’$ are two complexes of the same combinatorial type in $\mathcal S_\e$, then $s$ and $s’$ are $\mathcal L$-biLipschitz equivalent.
\end{prop}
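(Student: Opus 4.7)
The plan is to build an explicit biLipschitz map $\Phi \colon s \to s'$ by gluing together the per-simplex comparison maps supplied by Proposition \ref{prop: simplices biLipschitz}. Since the set of combinatorial types $\mathcal C$ is finite, it suffices to produce, for each $a \in \mathcal C$, a biLipschitz constant uniform over pairs $s, s' \in \mathcal S_\e(a)$; the final constant $\mathcal L(\e)$ is then obtained by taking the maximum over $\mathcal C$.

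For each top-dimensional simplex $\sigma$ of the underlying abstract complex $a$, write $\sigma_s,\sigma_{s'} \in \mathcal G_\e$ for its two geometric realizations. Proposition \ref{prop: simplices biLipschitz} supplies a $\kappa(\e)$-biLipschitz map $\Phi_\sigma := V_{\sigma_{s'}}^{-1}\circ V_{\sigma_s} \colon \sigma_s \to \sigma_{s'}$. I would then verify that these maps glue across shared faces: if simplices $\sigma,\sigma'$ of $a$ share a face $\tau$, then since Thurston's straightening is defined by central projection of an affine span in $\R^{n,1}$ and this projection commutes with inclusion of the sub-affine-span corresponding to $\tau$, the restrictions $\Phi_\sigma|_{\tau_s}$ and $\Phi_{\sigma'}|_{\tau_s}$ both coincide with the face-level comparison map $V_{\tau_{s'}}^{-1}\circ V_{\tau_s}$. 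The simplex-wise maps therefore assemble into a continuous bijection $\Phi \colon s \to s'$ that is $\kappa(\e)$-biLipschitz on each closed $n$-simplex.

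To promote this to a global bound, I would equip $s$ and $s'$ with the intrinsic path metric induced by the hyperbolic metric on each geodesic simplex. Any rectifiable path in $s$ decomposes into finitely many segments, each lying in a single closed $n$-simplex, and $\Phi$ distorts each segment's length by a factor in $[\kappa(\e)^{-1},\kappa(\e)]$; summing shows $\Phi$ is $\kappa(\e)$-biLipschitz for the path metric, and the same reasoning applied to $\Phi^{-1}$ completes the equivalence. Setting $\mathcal L(\e) := \kappa(\e)$, which depends only on $\e$ by Proposition \ref{prop: simplices biLipschitz}, yields a constant independent of the combinatorial type.

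The main obstacle I anticipate is carefully verifying the face-compatibility of the simplex-wise comparison maps, i.e., checking that the barycentric coordinates coming from Thurston's straightening are natural under face inclusion. This requires unpacking the construction: a face $\tau \subset \sigma$ corresponds to the affine subspan of $\R^{n,1}$ obtained by requiring the remaining barycentric coordinates to vanish, and since the central projection from the origin to the hyperboloid commutes with inclusion of this subspan, both the straightening map and the barycentric coordinate map $V_\sigma$ restrict to the corresponding objects for $\tau$. Once this naturality is in hand, the rest of the argument is essentially formal compactness and elementary length estimates.
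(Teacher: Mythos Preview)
Your proposal is correct and follows essentially the same approach as the paper: both arguments glue the per-simplex barycentric comparison maps $V_\sigma$ across faces, using that these maps are compatible on shared faces because straightening is natural under face inclusion. The only cosmetic difference is that the paper factors through a fixed Euclidean model complex (mapping both $s$ and $s'$ to it via the glued $V_\sigma$'s) rather than composing directly as $V_{\sigma_{s'}}^{-1}\circ V_{\sigma_s}$, and the paper invokes the uniform bound on the number of simplices in $s$ rather than your path-metric argument; your version is slightly more explicit about the face-compatibility step and yields the cleaner constant $\mathcal L(\e)=\kappa(\e)$.
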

\begin{proof}
    Define maps from the combinatorial 1-neighborhood $s$ of a simplex $\sigma$ to an abstract Euclidean model by gluing the maps $V_{\sigma'}$ together according to the combinatorics of $s$. Since the gluing maps are isometries, this is well defined. Since the map restricted to each simplex $\sigma'$ is uniformly biLipschitz equivalent to the model simplex, and since there are a uniformly bounded number of simplices in $s$, it follows that $s$ is uniformly biLipschitz equivalent to the Euclidean model.
\end{proof}

Lastly, we note that the lower bound on volumes of simplices in deeply embedded triangulations implies the following.

\begin{prop} \label{prop: volume}There is a constant $T=T(\e)$ such that if $M$ is a closed hyperbolic $n$-manifold with $\inj(M)>\e$ and $K$ is a deeply embedded triangulation of $M$, then the number $V_K$ of simplices in $K$ satisfies $$V_K\leq T\vol(M).$$
\end{prop}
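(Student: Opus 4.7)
The plan is to count top-dimensional simplices using a uniform volume lower bound coming from compactness of $\mathcal G_\e$, and then to bound the count of lower-dimensional simplices in terms of the top-dimensional count using the local finiteness already established in Proposition \ref{prop: star bound}.

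First I would observe that every $n$-simplex of $K$ is isometric to an element of $\mathcal G_\e$. Since $\mathcal G_\e$ is compact and the hyperbolic volume of a nondegenerate geodesic simplex is continuous and strictly positive on $\mathcal G_\e$ (the lower edge length bound $\frac{2\epsilon_0}{5}$ rules out degeneration), there exists a constant $v_0 = v_0(\e) > 0$ such that every $n$-simplex of $K$ has volume at least $v_0$. Because the $n$-simplices tile $M$, the number of $n$-simplices in $K$ is at most $\vol(M)/v_0$.

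Next I would bound the number of lower-dimensional simplices. Every $k$-simplex with $k < n$ is a face of some $n$-simplex, and by Proposition \ref{prop: star bound} the star of each vertex — and hence each $n$-simplex — contains at most $N = N(\e)$ simplices of any fixed dimension. Summing over $n$-simplices, the total number of $k$-simplices is bounded by $N$ times the number of $n$-simplices, hence by $N \vol(M)/v_0$. Summing over $k = 0, 1, \dots, n$ gives $V_K \leq (n+1) N \vol(M)/v_0$, so the proposition holds with $T = (n+1)N/v_0$.

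The argument is essentially routine once the existence of the deeply embedded triangulation (Proposition \ref{prop: existence}) and the local star bound (Proposition \ref{prop: star bound}) are in hand; no step is a serious obstacle. The only subtle point is making sure the volume lower bound $v_0$ is genuinely uniform, which is immediate from the compactness of $\mathcal G_\e$ together with the continuity and positivity of hyperbolic volume on nondegenerate simplices.
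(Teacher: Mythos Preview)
Your proof is correct and takes essentially the same approach as the paper, which does not give a proof but simply notes that ``the lower bound on volumes of simplices in deeply embedded triangulations implies'' the result. Your argument spells out precisely this: a uniform volume lower bound on top-dimensional simplices from compactness of $\mathcal G_\e$, followed by a count of lower-dimensional faces via Proposition~\ref{prop: star bound}.

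One small caution: your parenthetical that ``the lower edge length bound $\frac{2\epsilon_0}{5}$ rules out degeneration'' is not quite right on its own, since edge lengths in $[\frac{2\epsilon_0}{5},2\epsilon_0]$ do allow arbitrarily thin simplices (e.g.\ a triangle with sides $a,a,2a-\delta$). The positivity of $v_0$ really requires that the simplices arising in deeply embedded triangulations stay away from the degenerate boundary of $\mathcal G_\e$; the paper implicitly assumes this (and the Delaunay construction of \cite{Bois} does guarantee such thickness bounds), but it is worth being aware that compactness of $\mathcal G_\e$ alone does not furnish the lower bound.
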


\subsection{Whitney forms}

The combinatorial geometry of the triangulation $K$ is related to the Riemannian geometry of $M$ by way of the Whitney form map $W:C^{\bullet}(K)\to L^2\Omega^{\bullet}(M)$ relating the cochain complex $C^{\bullet}(K)$, with $\R$ coefficients, to the $L^2$-de Rham complex. It will be useful to view $C^{\bullet}(K)$ as a subcomplex of the singular cochain complex. With this in mind, we often identify singular simplices with their images.

The Whitney map is readily defined using the basis for $C^{\bullet}(K)$ dual to the basis of simplices. This basis consists of the cochains $\delta_{\sigma}$ that take the value 1 on the simplex $\sigma$ and zero on all other simplices.
The Whitney form $W(\delta_{\sigma})$ associated to the cochain $\delta_{\sigma}$ dual to an  oriented simplex $\sigma = [v_0,\dots,v_q]$ is given by
$$W(\delta_{\sigma}) = q! \sum_{k=0}^q(-1)^kb_{k}db_0\wedge \cdots \wedge db_{k-1}\wedge db_{k+1}\wedge \cdots \wedge d b_q,$$
where $b_k:M\to [0,1]$ is the barycentric coordinate associated to the vertex $v_k$. See \cite{Dodziuk} for more details.

An $L^2$-form in the image of $W$  is called a Whitney form. The support of a Whitney form $W(\delta_{\sigma})$ is contained in the closed star of the simplex $\sigma$. The barycentric coordinates used to define the Whitney forms are not smooth, however, they are smooth in the compliment of the $(n-1)$-skeleton of $K$. One can define the exterior derivative of a Whitney form in a weak sense, which yields a differential that is well defined as an $L^2$-form. With this exterior derivative, the Whitney map becomes a chain map. For any cochain $f$ and simplex $\sigma$, the restriction of the Whitney cochain $\omega= W(f)$ to $\sigma$, denoted $\omega|_{\sigma}$, can be uniquely extended to a smooth form on the boundary of $\sigma$. This extension however is not unique when $\sigma$ lies in the boundary of multiple simplices. In addition to the restriction of Whitney forms, we have the restriction for cochains.
If $f= \sum a_i \delta_{\sigma_i}$ is a cochain and $\sigma$ is a simplex, then $f|_{\sigma} = \sum\limits_{\sigma_i\subset\sigma}a_i\delta_{\sigma_i}.$ This cochain restriction satisfies $\omega|_{\sigma} = W(f|_{\sigma})|_{\sigma}$.

A Whitney form associated to a geodesic simplex $\sigma$ in $M$ is the corresponding Whitney form on the standard Euclidean simplex pulled back to $\sigma$ via the map $V_\sigma$. Geometric norms on cochains determined by the Whitney map can be compared with various combinatorial norms. This is done using the $L^p$-change of variables formula for $k$-forms, see \cite{stern}.

\begin{prop} \label{prop: L2 comparisons}
Let $s\in\mathcal S_\e\cup\mathcal G_\e$. Let $W(f)$ be the Whitney form associated to a cochain $f\in C^{k}(s;\R)$. Let $||\cdot||$ be some fixed norm on the real vector space $C^{k}(s;\R)$ and let $||\cdot||_{p,s}$ be the $p$-norm associated to $s$ on  $\Omega^k(s)$, where $p = {\infty}$ or $p=2$.
Then there is a constant $\mathcal A = \mathcal A (\e,||\cdot||)>0$ such that $$\mathcal A^{-1}||W(f)||_{p,{s}}\leq  \mathcal ||f|| \leq \mathcal A||W(f)||_{p,s}.$$ The constant $\mathcal A$ only depends on the chosen norm and combinatorial type of $s,$ not on the geometric structure of $s$.
\end{prop}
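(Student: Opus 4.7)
The plan is to leverage the compactness of the parameter spaces $\mathcal{G}_\e$ and $\mathcal{S}_\e$ together with the equivalence of norms on finite-dimensional vector spaces. Since there are only finitely many combinatorial types in $\mathcal{C}$, it suffices to fix one combinatorial type $a$ and prove the bound uniformly over all geometric realizations of that type, then take a maximum at the end. Once $a$ is fixed, $C^k(a;\R)$ is a single finite-dimensional vector space, with basis the cochains dual to the fixed list of $k$-simplices of $a$, on which the given norm $\|\cdot\|$ is defined independently of the geometric realization $s$.

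For each realization $s \in \mathcal{S}_\e(a)$ (or $s \in \mathcal{G}_\e$), the Whitney map $W_s \colon C^k(a;\R) \to \Omega^k(s)$ is injective, since integration of $W_s(f)$ over the cochains of $a$ recovers $f$. Pulling back the $L^p$ norm therefore yields a second norm $\|f\|_s := \|W_s(f)\|_{p,s}$ on the same fixed vector space $C^k(a;\R)$. I would next show that $s \mapsto \|\cdot\|_s$ depends continuously on $s$, viewed as a map from the compact parameter space into the cone of norms on $C^k(a;\R)$. This is where Proposition \ref{prop: simplices biLipschitz} (together with Proposition \ref{prop: stars biLipschitz}) does the work: each basis Whitney form is an explicit polynomial expression in the barycentric coordinates of $s$, which vary continuously with the vertex positions via the straightening construction. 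Continuity of the induced Riemannian volume form then gives continuity of the $L^2$ norm, and the same pointwise-almost-everywhere expressions yield continuity of the essential supremum in the $p = \infty$ case.

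With continuity established, a standard compactness argument finishes the proof. The function $(s, f) \mapsto \|f\|_s / \|f\|$ is continuous and strictly positive on the product of the compact parameter space with the compact unit $\|\cdot\|$-sphere in $C^k(a;\R)$, so it attains a strictly positive minimum $\alpha_a$ and a finite maximum $\beta_a$. This yields $\alpha_a \|f\| \leq \|W_s(f)\|_{p,s} \leq \beta_a \|f\|$ uniformly over $s$ and $f$ within the fixed combinatorial type, and taking the worst constant over the finitely many combinatorial types in $\mathcal{C}$ produces the universal $\mathcal{A}(\e, \|\cdot\|)$ demanded by the statement.

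The main obstacle is making the continuity step precise, since Whitney forms are not smooth across the codimension-one strata on which the barycentric coordinates have corners. This is ultimately a minor technical matter: the uniform biLipschitz comparisons of Proposition \ref{prop: simplices biLipschitz} imply continuous dependence of the barycentric coordinates on vertex positions away from a measure-zero set, which is sufficient for both the $L^2$ and essential $L^\infty$ norms. Everything else is bookkeeping with the fixed dual basis of $C^k(a;\R)$.
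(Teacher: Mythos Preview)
Your compactness-plus-continuity argument is correct, but the paper takes a more direct route that sidesteps the continuity issue you flag at the end. Rather than varying $s$ continuously, the paper fixes one model $s_a$ per combinatorial type and uses the uniform biLipschitz comparison maps $\mu : s \to s_a$ from Propositions~\ref{prop: simplices biLipschitz} and~\ref{prop: stars biLipschitz} together with the $L^p$ change-of-variables formula for $k$-forms. Because Whitney forms are defined by pulling back the standard Euclidean Whitney forms through the barycentric maps $V_\sigma$, the comparison map $\mu$ literally intertwines $W_s(f)$ with $W_{s_a}(f)$, so change of variables gives $\|W(f)\|_{p,s} \le \mathcal{L}^{n/2}\|W(f)\|_{p,s_a}$ (for $p=2$) or $\mathcal{L}^k\|W(f)\|_{p,s_a}$ (for $p=\infty$), and similarly from below. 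The ratio $\|W(f)\|_{p,s_a}/\|f\|$ is then bounded above and below on the unit sphere of the single finite-dimensional space $C^k(s_a;\R)$, and one maximizes over the finitely many types.

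The payoff of the paper's approach is that it never needs to establish that $s \mapsto \|W_s(f)\|_{p,s}$ is continuous---only that it is uniformly comparable to the value at one point---so the non-smoothness of barycentric coordinates across codimension-one faces is a non-issue. Your approach works too, but the continuity step is exactly the delicate part you identify, and the paper simply avoids it. (Continuity arguments of the kind you sketch do appear later, in Lemma~\ref{lem: forms continuous} and Proposition~\ref{prop: smooth L2 comparison}, but there they are genuinely needed because the smoothed partitions of unity $\beta_i$ depend on the full combinatorial $1$-neighborhood and cannot be handled by a single change-of-variables step.)
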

\begin{proof}
By Proposition \ref{prop: stars biLipschitz}, there is a constant $\mathcal L$ (note $\kappa\leq \mathcal L$, so that if we’re working with simplices, $\mathcal L$ works as Lipschitz constant) such that any pair $s,s’\in \mathcal S_\e\cup\mathcal G_\e$ in the same combinatorial type are $\mathcal L$-biLipschitz equivalent.
For each combinatorial type, fix a model $s_a\in \mathcal S_\e\cup\mathcal G_\e$ and let $\mu:s\to s_a$ be the $\mathcal L$-biLipschitz comparison map described above. Then, because Whitney forms are obtained by pulling back the standard Whitney forms via the model map, we can apply the $L^2$-change of variables formula for $k$-forms and use the biLipschitz comparison to get $$\frac{||W(f)||_{2,{s}}}{||f||}\leq \mathcal L^{n/2}\frac{||W(f)||_{2,s_a}}{||f||}.$$ Similarly, applying the $L^{\infty}$-change of variables formula for $k$-forms, gives
$$\frac{||W(f)||_{{\infty},s}}{||f||}\leq \mathcal L^{k}\frac{||W(f)||_{\infty,s_a}}{||f||}.$$

For $p=2$, set $$\mathcal A_2 =  \mathcal L^{n/2}\max\limits_a \sup\limits_{g\in C^{k}(s_a)} \frac{||W(g)||_{2,s_a}}{||g||}$$ and for $p = \infty$, set $$\mathcal A_{\infty} = \mathcal L^{n} \max\limits_a\sup\limits_{g\in C^{k}(s_a)} \frac{||W(g)||_{2,s_a}}{||g||},$$
where the maximum runs over all combinatorial types $a$.
Then, $A = \max\{\mathcal A_2,\mathcal A_{\infty}\}$ gives the first inequality.
The second inequality is obtained from an identical argument via the lower bound in biLipschitz comparison. Let $\mathcal A$ be the maximum of these two constants.
\end{proof}

This estimate is enough to reproduce the statements in \cite{LS} for deeply embedded triangulations. However, to obtain the cleaner discrete-smooth eigenvalue comparison in Proposition \ref{prop:4.2}, we need to study a smooth analogue of the Whitney map. The smooth Whitney map was introduced by Dodzuik in \cite{dodzuik2}. The map is defined by replacing the barycentric coordinates with a smooth partition of unity indexed by the vertices of a triangulation. The particular partition of unity is provided by the following proposition.

\begin{prop} \label{prop: partition existence} (\cite{dodzuik2}, Lemma 2.11)
If $M$ admits a deeply embedded triangulation $K$, then there exists a $C^{\infty}$ partition of unity $\beta_i$ indexed by the vertices of $K$ and subordinate to the covering of $M$ by open stars of vertices of $K$ (indeed, compactly supported in each open star). Moreover, each $\beta_i$ has covariant derivatives satisfying the pointwise bound $|\nabla^k\beta_i|< C$ for some constant $C = C(\e)$, for $k\leq n$.
\end{prop}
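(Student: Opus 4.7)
The plan is to build smooth bumps $\psi_i$ supported in the open star of each vertex $v_i$ with uniform $C^n$ bounds, then normalize by setting $\beta_i = \psi_i/\sum_j \psi_j$. Two features from Section 2 are the core inputs: the deep embedding hypothesis, which lets every vertex star be treated inside $\H^n$ without worrying about wrap-around, and the compactness of the local parameter space $\mathcal S_\e$ together with Proposition \ref{prop: star bound}, which ensures only finitely many star types occur and that all their geometric realizations lie in a compact family.

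For the construction, fix a vertex $v$ and use the deep embedding to identify an open neighborhood of $\overline{\s}(v)$ with an open set in $\H^n$ containing the star polytope $U_v$. Shrink the open star by $\delta = \delta(\e)>0$ to obtain $U_v^\delta = \{x \in U_v : d(x, \partial U_v) > \delta\}$, and let $\chi_v$ be its indicator. Choose once and for all a nonnegative radial mollifier $\phi$ on $\H^n$ with integral $1$ and support in the $\delta/4$-ball about any chosen basepoint. Define $\psi_v(x) = \int_{\H^n} \chi_v(y)\phi(d(x,y))\, dy$, using the convolution on $\H^n$ coming from the isometric action of $\mathrm{Isom}(\H^n)$. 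Since $\phi$ is smeared over a ball of radius $\delta/4$, $\psi_v$ is smooth, compactly supported in the open set $\{d(\cdot, \partial U_v) > 3\delta/4\} \subset U_v$, and satisfies $\psi_v \equiv 1$ where $d(\cdot, \partial U_v) \geq 5\delta/4$. Descending to $M$ is legitimate provided $\delta$ is small enough that a $\delta$-neighborhood of the star still embeds under the covering map, which is guaranteed by the injectivity radius bound and the uniform diameter bound on stars from $\mathcal S_\e$.

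The derivative bounds come from the identity $\nabla^k \psi_v = \chi_v * \nabla^k \phi$: since $\phi$ is a fixed smooth function on $\H^n$, one has $|\nabla^k \psi_v(x)| \leq \|\nabla^k \phi\|_\infty \cdot \mathrm{vol}\, B_{\delta/4}(x)$, which is a constant depending only on $\e$ and $k$. For the lower bound on the sum, observe that the open star cover of $M$ admits a uniform Lebesgue number $\delta_0 = \delta_0(\e) > 0$: by compactness of $\mathcal S_\e$ and Proposition \ref{prop: stars biLipschitz}, every point $x \in M$ has $B_{\delta_0}(x) \subset \s(v_w)$ for some vertex $v_w$. Choosing $\delta$ with $2\delta \leq \delta_0$ forces $d(x, \partial U_w) \geq \delta_0 > 5\delta/4$, so $\psi_w(x) = 1$ and hence $\Psi(x) := \sum_i \psi_i(x) \geq 1$ everywhere. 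Setting $\beta_i = \psi_i/\Psi$ then inherits compactness of support in the open star of $v_i$, and the quotient rule combined with the local simplex count bound of Proposition \ref{prop: star bound} (which controls how many $\psi_j$ contribute at any point) yields uniform bounds $|\nabla^k \beta_i| < C(\e)$ for $k \leq n$.

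The main obstacle is technical rather than conceptual: one must verify that the convolution on $\H^n$ descends coherently to $M$, i.e.\ that the support of each $\psi_v$ lifts injectively so its pushforward to $M$ is unambiguous and smooth. This reduces to the statement that a uniformly small thickening of each open star embeds under $\H^n \to M$, which follows from the deep embedding property once $\delta$ is chosen smaller than both the injectivity radius margin and the minimum inradius of stars (both uniform in $\e$ by compactness of $\mathcal S_\e$). The compactness of $\mathcal S_\e$ is really doing all the work: it converts every bound that a priori depends on the local star geometry into a universal constant depending only on $\e$.
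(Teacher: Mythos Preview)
Your proof is correct and follows essentially the same mollify-then-normalize strategy as the paper. The only differences are cosmetic: the paper mollifies truncated barycentric coordinates $\bar b_i = \max\{0, ((n+2)b_i - 1)/(n+1)\}$ rather than the indicator of a shrunken star, and obtains the lower bound on the denominator directly from the algebraic identity $\sum_i \bar b_i \geq 1/(n+2)$ rather than through a Lebesgue-number argument; both routes invoke compactness of $\mathcal S_\e$ in the same way to make all constants depend only on $\e$.
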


\begin{proof}
Let $s\in\mathcal S_\e$ be the combinatorial 1-neighborhood of a simplex. Denote the vertices of $s$ by $v_0,\dots, v_n$ and let $b_i$ be the standard barycentric coordinates associated to the vertex $v_i$. Define

\[\bar b_i(x) =  \begin{cases}
       0 & b_i(x)\leq 1/(n+2), \\
       \frac{(n+2)b_i(x)-1}{n+1} & b_i(x) \geq 1/(n+2). \\

   \end{cases}
\] Observe that $\sum\limits_i \bar b_i(y) \geq \frac{1}{(n+2)}$. Define $$\delta(s) = \inf\limits_{\substack{x\in\supp(\bar b_i)\\y\in \d \s(v_i)}}d(x,y),$$ where $d$ is the distance function induced by the Riemannian metric.  Set $\delta = \frac{1}{2}\inf\limits_{s\in \mathcal S_\e} \delta(s)$ and notice $\delta>0$. For any point $x\notin \s(v_i)$, the ball $B_{\delta}(x)$ is disjoint from the support of $\bar b_i$.
Let $\eta$ be a smooth cutoff function such that $\eta(r) = 1$ when $|r|<\delta/2$ and $\eta(r) = 0$ when $|r|>3\delta/4$. The function $\eta(d(x,y))$ is smooth on the open star of a vertex, so the operator given by integrating against $\eta(d(x,y))$ is smoothing. Therefore, if we define $$\tilde b_i(x) = \int_{B_{3\delta/4}(x)}\eta(d(x,y))\bar b_i(y)dy,$$ the result is a smooth function.  Notice $\tilde b_i$ is supported in the interior of the star of $v_i$ by virtue of our choice of $\delta$. We now define smoothed barycentric partitions of unity for a smooth manifold with deeply embedded triangulation $K$ by normalizing the functions $\tilde b_i$ associated to the vertices of $K$:
$$\beta_i(x) = \left(\sum\limits_j \tilde b_j (x)\right)^{-1} \tilde b_i (x).$$ Notice that if $\tilde b_i(x)\neq 0$, then this normalizing sum really just runs over the vertices of $\s(v_i)$. This normalizing constant can be bounded from below:
\begin{align*}
    \sum\limits_j \tilde b_j(x) &\geq \sum\limits_j \int_{B_{\delta/2}(x)}\bar b_j(y)dy \\
    &= \int_{B_{\delta/2}(x)} \sum\limits_j\bar b_j(y)dy \\
    &\geq \vol(B_{\delta/2}(x))\frac{1}{(n+2)(n+1)}.
\end{align*}

The covariant derivative bound follows from repeated application of the quotient rule and the corresponding bounds for $\bar b_i$, which  depends only on the derivatives of cutoff function $\eta$ and the covariant derivatives of the metric. The choice of $\delta$ ensures each function $\beta_i$ is compactly supported in the star of $v_i$.
\end{proof}

Our aim now is to establish a version of Proposition \ref{prop: L2 comparisons} for these partitions of unity that will allow us to relate the geometric norms induced by the smooth Whitney map to combinatorial norms.

Let $M$ be a hyperbolic $n$-manifold with a deeply embedded triangulation $K$. Denote by $\s_0(\sigma)$ the combinatorial 1-neighborhood of an $n$-simplex $\sigma$ in $K$.
Fix some $n$-simplex $\sigma$ in $K$ and set $s = \s_0(\sigma)$. Because $K$ is deeply embedded, for any point $p\in s$, the ball $B = B_\e(p)$ contains $s$ and lifts isometrically to $\H^n$. Identify $s$ with such a lift. The functions $\beta_i$ associated to the vertices of $\sigma$ are supported in $s$ and their value in any simplex $\sigma$ depends only on the geometry of $s$. This enables us to isolate the local properties of the barycentric partition of unity functions.
By perturbing the vertices of $s$ in $\H^n$ and modifying the various simplices making up the complex $s$ accordingly, we can then see how these functions relate to the geometry of combinatorial 1-neighborhoods as encoded by the space $\mathcal S_\e$.

\begin{lem} \label{lem: partition of unity continuous}
Let $\sigma$ be an $n$-simplex with vertices $v_0,\dots,v_n$ from $\mathcal G_\e$ contained in combinatorial 1-neighborhood $s = \s_0(\sigma) \in \mathcal S_\e$ with additional vertices $v_{n+1},\dots,v_m$.
The functions $\beta_i$ associated to the vertices of $\sigma$ constructed in Proposition \ref{prop: partition existence} and their covariant derivatives $\nabla \beta_i$ vary continuously in $L^2(\H^n)$ when the perturbation of $s$ in $\mathcal S_\e$ is realized as above in $\H^n$.
\end{lem}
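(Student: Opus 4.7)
The plan is to track continuous dependence through each of the four stages of the construction of $\beta_i$ given in Proposition \ref{prop: partition existence}, and to exploit the fact that every $\beta_i$ under consideration is supported in $\s_0(\sigma) \subset B_\e(p)$, a region of uniformly bounded volume. Thus $L^2(\H^n)$-continuity of $\beta_i$ and $\nabla\beta_i$ will reduce to $L^\infty$-continuity on this uniformly bounded support region.

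At the first stage, by Proposition \ref{prop: simplices biLipschitz} and the continuous dependence of the straightening maps $V_\sigma$ on the positions of the vertices, the barycentric functions $b_i$ on each simplex of the perturbed complex vary continuously in $C^0$ (in fact smoothly on compact subsets of simplex interiors). The piecewise linear truncation $t \mapsto \max(0,\,((n+2)t - 1)/(n+1))$ is Lipschitz in its argument, so the functions $\bar b_i$ inherit $C^0$ continuity. Next, convolution against the smooth, compactly supported kernel $\eta(d(x,y))$ upgrades $C^0$ continuity of the input to $C^k$ continuity of the output for every $k$: derivatives can be passed onto the kernel to yield pointwise bounds of the form $|\nabla^k(\tilde b_i - \tilde b_i')(x)| \leq C_k \|\bar b_i - \bar b_i'\|_\infty$. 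Hence $\tilde b_i$ and all of its covariant derivatives depend $L^\infty$-continuously on the perturbation. The denominator $\sum_j \tilde b_j$ is bounded below by the explicit small-ball estimate in the proof of Proposition \ref{prop: partition existence}, and is bounded above since only a uniformly controlled number of summands contribute, so the normalization $\beta_i = \tilde b_i / \sum_j \tilde b_j$ and its covariant derivatives inherit $L^\infty$-continuous dependence via the quotient and product rules together with the uniform covariant derivative bounds already established.

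The main technical point is checking that these uniform bounds — the lower bound on $\sum_j \tilde b_j$, the upper bound on the number of contributing summands, and the pointwise bounds on derivatives of $\tilde b_i$ — remain uniform as $s$ varies over $\mathcal S_\e$. This is precisely where compactness of $\mathcal S_\e$ (established earlier in the section) is essential: the constants in Proposition \ref{prop: partition existence} depend only on $\e$, the fixed cutoff $\eta$, and the combinatorial type of $s$, of which there are only finitely many. Once these uniform bounds are secured, the $L^\infty$-continuity of $\beta_i$ and $\nabla\beta_i$ on the compactly supported set $\s_0(\sigma)$, combined with the uniform volume bound on that set, yields the asserted $L^2(\H^n)$-continuity.
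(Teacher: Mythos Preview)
Your proposal is correct and follows essentially the same route as the paper's proof: both track continuity through the stages $b_i \to \bar b_i \to \tilde b_i \to \beta_i$ and invoke the uniform lower bound on the normalizing sum together with the quotient rule. Your argument is in fact more explicit than the paper's, which simply asserts that ``from the definition, one sees'' the $\tilde b_i$ and $\nabla\tilde b_i$ vary continuously in $L^2$; you spell out the mechanism (Lipschitz truncation, passing derivatives onto the convolution kernel, reducing $L^2$ to $L^\infty$ on a set of bounded volume). The one point the paper makes explicit that you leave implicit is that for the outer vertices $v_k$ ($k>n$), the values $\tilde b_k(x)$ entering the normalization at points $x\in\supp\tilde b_i$ depend only on $s$ because $B_{3\delta/4}(x)\subset s$ --- but this is consistent with your framework and does not affect correctness.
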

\begin{proof}

As above, we can embed the combinatorial 1-neighborhood $s$ in $\H^n$ and the functions $\tilde b_i$ for each vertex $v_i$ of $\sigma$ described in Proposition \ref{prop: partition existence} are well defined smooth functions on $\H^n$ supported on $s$. To define the barycentric partition of unity, we also need the functions $\tilde b_k$ for vertices $v_k$ in $s$ that are not in $\sigma$ to be well defined on the support of the functions $\tilde b_i$ for vertices $v_i$ of $\sigma$.
For a vertex $v_k$ in $s$  that is not part of $\sigma$ and a point $x$ in the support of $\tilde b_i$ for $v_i$ a vertex of $\sigma$, we have that the ball $B_{3\delta/4}$ used in the definition of $\tilde b_k$ is contained in $s$, so $\tilde b_k(x)$ only depends on $s$.

From the definition, one sees that each function $\tilde b_i$ varies continuously in $L^2(\H^n)$ as the complex $s$ in $\H^n$ is varied by perturbing the vertices and modifying the various simplices making up the complex $s$ accordingly. One similarly can see from the definition of $\tilde b_i(x)$ that $\nabla \tilde b_i(x)$ varies continuously as $s$ is varied as above.
The functions $\beta_i$ in the barycentric partition of unity are defined by normalizing the functions $\tilde b_i$: $$\beta_i(x) = \left(\sum\limits_{v_k\in s^{(0)}} \tilde b_k (x)\right)^{-1} \tilde b_i (x) .$$ By the remark above, $\tilde b_k$ is well defined on the support of $\tilde b_i$ for every vertex $v_k$ of $s$. Since each $\tilde b_j$ and $\nabla \tilde b_j$ varies continuously with $s$, the same holds for $\beta_j$ and $\nabla \beta_j$.
\end{proof}

To a partition of unity indexed by the vertices of a triangulation and subordinate to the covering by open stars of vertices, one can define a generalized Whitney mapping, given by the same formula as the standard Whitney map but with the smooth barycentric partitions of unity in place of the standard barycentric coordinates. Let $\beta = (\beta_i)$ be the barycentric partition of unity. The Whitney form $W_{\beta}(\delta_{\sigma})$ associated to the cochain $\delta_{\sigma}$ dual to an  oriented simplex $\sigma = [v_0,\dots,v_q]$ is given by $$W_{\beta}(\delta_{\sigma}) = q! \sum_{k=0}^q(-1)^k\beta_{k}d\beta_0\wedge \cdots \wedge d \beta_{k-1}\wedge d \beta_{k+1}\wedge \cdots \wedge d \beta_q,$$ Like the standard Whitney map, these generalized Whitney maps satisfy:
\begin{enumerate}
	\item For a chain $a$ and cochain $f$ of the same degree, $\int_aW_{\beta}(f) = f(a).$
	\item For any cochain $f$, $dW_{\beta}(f) = W_{\beta}(df)$.
	\item If $p$ is contained in the interior of an  $n$-simplex $\sigma$, and any cochain $f$, $W_{\beta}(f)_p = W_{\beta}(f|_{\sigma})_p$.
\end{enumerate}

\begin{lem} \label{lem: forms continuous} Fix a simplex $\sigma\in\mathcal G_\e$ contained in its combinatorial 1-neighborhood $s\in \mathcal S_\e$.
The Whitney map $W_{\beta}: C^{\bullet}(\sigma)\to L^2\Omega^{\bullet}(\H^n)$ varies continuously as the geometry of the star $s$ varies in $\H^n$ as described above. Consequently, $||W_{\beta}(f)||_{2,s}$ and $|| W_{\beta}(f) ||_{2,\sigma}$ vary continuously with the geometric structure on $s$ in $\mathcal S_\e$.
 \end{lem}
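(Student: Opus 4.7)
The plan is to reduce the statement to the continuity of the individual building blocks of $W_\beta(\delta_\sigma)$. Expanding the defining formula $W_\beta(\delta_\sigma) = q!\sum_{k=0}^q (-1)^k \beta_k\, d\beta_0 \wedge \cdots \wedge d\beta_{k-1} \wedge d\beta_{k+1} \wedge \cdots \wedge d\beta_q$, one sees that $W_\beta(\delta_\sigma)$ is a finite sum of wedge monomials in the partition-of-unity functions $\beta_k$ and their exterior derivatives $d\beta_j$. Lemma \ref{lem: partition of unity continuous} tells us that each $\beta_i$ and each $\nabla\beta_i$ varies continuously in $L^2(\H^n)$ as the geometric structure of $s$ is perturbed in $\mathcal S_\e$, and Proposition \ref{prop: partition existence} supplies a uniform $L^\infty$-bound on each $\beta_i$ and $\nabla\beta_i$ that is independent of the specific structure. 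Since $\mathcal S_\e$ is compact, all of the relevant supports can be taken to lie in a fixed compact subset $\Omega \subset \H^n$ over which integration takes place.

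The intermediate step I would carry out is the elementary fact that if $f_n \to f$ and $g_n \to g$ in $L^2(\Omega)$ with uniform bounds $\|f_n\|_\infty, \|g_n\|_\infty \leq C$, then $f_n g_n \to fg$ in $L^2(\Omega)$; this follows from the identity $f_n g_n - fg = (f_n - f)g_n + f(g_n - g)$ and the $L^\infty$-bounds. An induction then extends this to any finite product, allowing me to conclude that each wedge monomial in the formula for $W_\beta(\delta_\sigma)$ varies continuously in $L^2\Omega^q(\H^n)$ as $s$ varies. Summing these finitely many terms and then extending linearly in $f$ yields the asserted $L^2$-continuity of the map $W_\beta$.

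For the consequence, the norm $\|W_\beta(f)\|_{2,s}$ equals the full $L^2$-norm of $W_\beta(f)$ on $\H^n$ because each $\beta_i$ is compactly supported inside $s$ by Proposition \ref{prop: partition existence}, so this norm inherits continuity directly from the $L^2$-continuity just established. For $\|W_\beta(f)\|_{2,\sigma}$ the domain $\sigma$ itself varies with $s$, so I would split the difference $\|W_\beta(f)\|_{2,\sigma}^2 - \|W_\beta(f)\|_{2,\sigma'}^2$ into a contribution from the overlap $\sigma \cap \sigma'$, controlled by $L^2$-continuity of the forms on $\Omega$, and a contribution from the symmetric difference $\sigma \triangle \sigma'$, controlled by the uniform $L^\infty$-bound on $W_\beta(f)$ together with the fact that $\vol(\sigma \triangle \sigma') \to 0$ as the vertices of $\sigma$ are perturbed. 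The main technical point, namely the product-continuity lemma with uniform $L^\infty$-control, is entirely elementary; so no serious obstacle is expected here. The content of the lemma is really just the careful assembly of Proposition \ref{prop: partition existence} and Lemma \ref{lem: partition of unity continuous}.
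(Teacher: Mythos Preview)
Your proposal is correct and follows essentially the same route as the paper: both reduce to the algebraic identity $fg - f'g' = (f-f')g + f'(g-g')$, invoke the $L^2$-continuity of $\beta_i$ and $d\beta_i$ from Lemma~\ref{lem: partition of unity continuous}, and use the uniform $L^\infty$-bounds from Proposition~\ref{prop: partition existence} to control the cross terms (you are in fact more explicit about this last point than the paper). For $\|W_\beta(f)\|_{2,\sigma}$, the paper phrases the varying-domain issue via the characteristic function $\xi_\sigma$ and its $L^2$-continuity, which is exactly your symmetric-difference argument in disguise.
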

\begin{proof} We work with the combinatorial 1-neighborhood $s$ embedded in a ball $B\subset \H^n$ as above. Because $\beta_i$ and $\nabla \beta_i$ vary continuously with $s$ and since $||\nabla\beta_i||_{2,B} $ is comparable to $||d\beta_i||_{2,B}$, it suffices to show that exterior products of $d\beta_i$ vary continuously in the $L^2$-norm. The degree 0 case and  degree 1 case are immediate from the continuity in Lemma \ref{lem: partition of unity continuous}. We treat only the degree 2 case as the other higher degree cases are handled similarly. Assume $||\beta_i - \beta_i’||_{2,B} < \epsilon$ and $||d\beta_i - d\beta_i’||_{2,B}<\e$. Then we have \begin{align*}
||d\beta_0\wedge d\beta_1 - d\beta’_0\wedge d\beta_1’||_{2,B} &=||d\beta_0\wedge d\beta_1 - d\beta’_0\wedge d\beta_1’ + d\beta_0 \wedge d\beta_1’ - d\beta_0 \wedge d\beta_1’||_{2,B} \\
&\leq ||d\beta_0\wedge d\beta_1  - d\beta_0 \wedge d\beta_1’||_{2,B} + || d\beta_0’ \wedge d\beta_1’ - d\beta_0\wedge d\beta_1’||_{2,B} \\
&= ||d\beta_0\wedge d(\beta_1 - \beta_1’)||_{2,B} + ||d(\beta_1’ - \beta_1)\wedge d\beta_1’||_{2,B}\\
&\leq C||d\beta_0||_{2,B} ||d(\beta_1-\beta_1’)||_{2,B} + C||d\beta_1’
||_{2,B} ||d(\beta_1’-\beta_1)||_{2,B},\\
&\leq 2C\epsilon, \text{~after increasing $C$.}
\end{align*}
This immediately gives that $||W_{\beta}(f)||_{2,s}$ varies continously with $s$. Let $\xi_{\sigma}$ be the characteristic function of $\sigma$ in $\H^n$. Then $\xi_{\sigma}$ varies continuously in $L^2$ when $\sigma$ is varied by perturbing its vertices. The norm $||W_{\beta}(f)||_{2,\sigma} = ||W_{\beta}(f)\xi_{\sigma}||_{2,B}$ therefore also varies continuously when $s$ is varied in $\mathcal S_\e$.
\end{proof}

\begin{prop} \label{prop: smooth L2 comparison} There is a constant $A=A(\e)>0$ such that if $\sigma\in \mathcal G_\e$ has combinatorial 1-neighborhood $s\in\mathcal S_\e$ and $f\in C^{\bullet}(\sigma)$ is any cochain, then there are comparisons $$ A^{-1}||W(f)||_{2,\sigma}\leq ||W_{\beta}(f)||_{2,\sigma}\leq A||W(f)||_{2,\sigma}.$$
\end{prop}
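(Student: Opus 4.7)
The plan is to compare both $||W(f)||_{2,\sigma}$ and $||W_{\beta}(f)||_{2,\sigma}$ separately to a fixed combinatorial norm $||\cdot||$ on the finite-dimensional vector space $C^{\bullet}(\sigma)$, and then combine the two comparisons. By Proposition \ref{prop: L2 comparisons} applied to $\sigma \in \mathcal{G}_\e$, one already has $\mathcal{A}^{-1}||W(f)||_{2,\sigma} \leq ||f|| \leq \mathcal{A}||W(f)||_{2,\sigma}$ with a uniform constant, so the task reduces to establishing the analogous two-sided comparison $c||f|| \leq ||W_{\beta}(f)||_{2,\sigma} \leq C||f||$ with $c, C$ depending only on $\e$.

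For this, I would mimic the compactness argument used in the proof of Proposition \ref{prop: L2 comparisons}. For each of the finitely many combinatorial types appearing in $\mathcal S_\e$, the map $(s,f)\mapsto ||W_{\beta}(f)||_{2,\sigma}$ is jointly continuous on the product of the compact set $\mathcal S_\e$ (restricted to that type) and the unit sphere $\{f\in C^{\bullet}(\sigma) : ||f||=1\}$: continuity in $s$ is exactly what Lemma \ref{lem: forms continuous} provides, and continuity in $f$ is automatic from linearity of $W_\beta$ on a finite-dimensional space. This domain is compact, so the function attains its minimum and maximum.

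The crux is verifying that the minimum is strictly positive, i.e.\ that $W_\beta$ is injective as a map from $C^{\bullet}(\sigma)$ to $L^2\Omega^{\bullet}(\sigma)$. This uses property (1) of the generalized Whitney map listed after Lemma \ref{lem: partition of unity continuous}: if $f = \sum_{\tau \subset \sigma} a_\tau \delta_\tau$ is nonzero with $a_{\tau_0}\neq 0$, then $\int_{\tau_0} W_{\beta}(f) = f(\tau_0) = a_{\tau_0} \neq 0$, so $W_{\beta}(f)$ cannot be the zero form on $\sigma$. Hence the continuous function is strictly positive on the compact unit sphere, yielding a uniform positive lower bound for each combinatorial type. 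Taking the worst of these constants over the finitely many combinatorial types produces constants depending only on $\e$, and combining with Proposition \ref{prop: L2 comparisons} gives the desired two-sided comparison.

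The main obstacle I anticipate is conceptual rather than computational: $W_{\beta}(f)|_\sigma$ depends on the geometry of the full combinatorial $1$-neighborhood $s$ (through the convolution-smoothing used to build the $\beta_i$), and not just on the isometry type of $\sigma$ itself. Without Lemma \ref{lem: forms continuous} the naive compactness argument would break down, so the continuous dependence of the smoothened Whitney forms on the geometric structure of the star is the essential analytic input. Once that is in hand, the rest is a standard equivalence-of-norms argument on a finite-dimensional space combined with the compactness of $\mathcal S_\e$.
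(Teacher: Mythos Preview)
Your proposal is correct and follows essentially the same approach as the paper: both compare $||W_\beta(f)||_{2,\sigma}$ to a fixed combinatorial norm (the paper uses $||\cdot||_E$) via the joint continuity from Lemma~\ref{lem: forms continuous} and the compactness of $\mathcal S_\e$, invoke injectivity of $W_\beta$ to ensure the infimum is strictly positive, and then combine with Proposition~\ref{prop: L2 comparisons} to pass to $||W(f)||_{2,\sigma}$. Your justification of injectivity via the integration property $\int_\tau W_\beta(f)=f(\tau)$ is slightly more explicit than the paper's one-line assertion, and your closing remark correctly identifies the dependence on the full star $s$ (not just $\sigma$) as the reason Lemma~\ref{lem: forms continuous} is the essential input.
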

\begin{proof}
    We embed $s$ in a ball $B$ in $\H^n$ as above. The $L^2$-norm induced by $\beta$ for a fixed geometric structure on $\sigma$ is continuous on the vector space of cochains $C^{\bullet}(\sigma)$. For any cochain $f\in C^{\bullet}(\sigma)$, the form $W_{\beta}(f)$ varies continuously in $L^2\Omega^{\bullet}(B)$ as the combinatorial 1-neighborhood $s$ varies in $\mathcal S_\e$.
    It follows that $||W_{\beta}(f)||_{2,\sigma}$ is continuous as a function on the component of $\mathcal S_\e \times C^{\bullet}(\sigma)$ corresponding to the combinatorial type of the combinatorial 1-neighborhood $s$.

    Since $W_{\beta}$ sends nonzero cochains to nonzero forms, for any $f\neq 0$, one has $0<||W_{\beta}(f)||_{2,\sigma}$. Let $||\cdot||_{E}$ be the usual $\ell^2$ norm on $C^{\bullet}(\sigma)$.
    The norm $\norm{\cdot}_E$ is fixed as the geometric structure on $\sigma$ varies. For $s’\in \mathcal S_\e$, let $\beta’$ be the corresponding  barycentric partition of unity defined by the combinatorial 1-neighborhood $s’$. By definition, each $s’\in\mathcal S_\e$ is the combinatorial 1-neighborhood of some simplex, let $\sigma’$ be this simplex.

    Using the continuity in Lemma \ref{lem: partition of unity continuous} and the compactness of $\mathcal S_\e$, we conclude the constants
    $$A_{\bullet} = \inf\limits_{s’\in \mathcal S_\e}\inf\limits _{\substack{f\in C^{\bullet}(\sigma’)\\ ||f||_{E} = 1}} ||W_{\beta’}(f)||_{2, \sigma’}$$
    and
    $$B_{\bullet}= \sup\limits_{s’\in \mathcal S_\e}\sup\limits_{\substack{f\in C^{\bullet}(\sigma’)\\ ||f||_{E} = 1}} ||W_{\beta’}(f)||_{2, \sigma’},$$
    are strictly positive real numbers independent of the particular geometric structure on $s$ or $\sigma$ giving the desired comparison between $||f||_E$ and $||W_{\beta}(f)||_{2,\sigma}$.

    We can then compare $||W(f)||_{2,\sigma}$ and $||f||_E$ using Proposition \ref{prop: L2 comparisons}. For $f\in C^{\bullet}(\sigma)$, Proposition \ref{prop: L2 comparisons} gives a constant $\mathcal A$ such that $\mathcal A^{-1}||W(f)||_{2,\sigma} \leq ||f||_{E}\leq \mathcal A||W(f)||_{2,\sigma}$.
    Combining these comparisons with the comparison of $||W_{\beta}(f)||_{2,\sigma} $ and $||f||_{E}$ above gives the claimed result for $||W_{\beta}(f)||_{2,\sigma} $.
\end{proof}

The upshot of this is that the smooth barycentric partition of unity induces a norm on the cochain complex of a simplex that locally is uniformly comparable to the $L^2$-norm induced by the standard barycentric partition of unity.  This gives an analogue of Proposition \ref{prop: L2 comparisons} for the $L^2$-norm induced by the barycentric partition of unity. We also require such a comparison for the $L^{\infty}$-norm. The upgraded version of Proposition \ref{prop: L2 comparisons} appears below as Proposition \ref{prop: smooth comparison}.

For a smooth manifold $Y$, possibly with boundary, we denote the Sobolev spaces of differential $\bullet$-forms by $H^k_{\nabla}\Omega^{\bullet}(Y)$ and their norms by $||\cdot||_{H^k_{\nabla}(Y)}$, where $$||\omega||_{H^k_{\nabla}} = \sum\limits_{i=0}^k||\nabla^i \omega||_2.$$ When $\bullet = 0$, we drop $\Omega^0$ from this notation.
When $Y$ has boundary, the marking $\mathring{H}^k_{\nabla}$ denotes the subspace of forms that are approximated by smooth forms supported in the interior of $Y$.

\begin{lem} \label{lem: constant R}
    There is a constant $R(\e)>0$ such that for any $n$-simplex $\sigma\in \mathcal G_\e$ with combinatorial 1-neighborhood $s\in\mathcal S_\e$, the map $W_{\beta}:C^{\bullet}(\sigma)\to H^{n}_{\nabla}\Omega^{\bullet}(B)$ satisfies
    $$||W_{\beta}(f)|_{\sigma}||_{H^n_{\nabla}(\sigma)} \leq ||W_{\beta}(f)||_{H^n_{\nabla}(B)} \leq R||W_{\beta}(f)||_{2,\sigma},$$ where we have identified the combinatorial 1-neighborhood $s$ isometrically with a domain $D$ in $\H^n$ and $B$ is a ball of radius $\e$ based at $p\in \sigma$.
\end{lem}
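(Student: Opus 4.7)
The first inequality is immediate: since $\sigma\subset s\subset B$ and the Sobolev integrand is nonnegative, restricting to the smaller domain $\sigma$ only decreases the norm. The second inequality is the substantive content, and my plan is to sandwich a finite-dimensional combinatorial norm on $C^\bullet(\sigma)$ between the two geometric quantities, then use compactness of $\mathcal S_\e$ to make all constants uniform.

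First I would bound $||W_\beta(f)||_{H^n_\nabla(B)}$ above by a combinatorial norm on $f$. Expanding $f=\sum_\tau f(\tau)\delta_\tau$ over the faces $\tau$ of $\sigma$ (whose number is uniformly bounded by Proposition \ref{prop: star bound}), linearity gives
$$W_\beta(f)=\sum_\tau f(\tau)\,W_\beta(\delta_\tau).$$
Each generator $W_\beta(\delta_\tau)$ is a fixed polynomial expression in the smooth functions $\beta_i$ and their differentials $d\beta_i$. The convolution smoothing in the proof of Proposition \ref{prop: partition existence} produces $C^\infty$ functions $\beta_i$ with $|\nabla^k\beta_i|$ pointwise controlled by a constant depending on $k$ and $\e$ (the stated bound for $k\leq n$ is only what was needed earlier; the same argument yields all orders). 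Applying the Leibniz rule through the wedge products and using the fixed volume of $B$ then yields a constant $C_1=C_1(\e)$ with
$$||W_\beta(f)||_{H^n_\nabla(B)}\leq C_1\,||f||_E,$$
where $||f||_E$ denotes the Euclidean $\ell^2$-norm on the finite-dimensional space $C^\bullet(\sigma)$.

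Next I would bound $||f||_E$ by $||W_\beta(f)||_{2,\sigma}$. This lower bound is already extracted inside the proof of Proposition \ref{prop: smooth L2 comparison}: by Lemma \ref{lem: forms continuous}, $W_\beta(f)$ depends continuously in $L^2(\sigma)$ on the combinatorial 1-neighborhood $s'\in\mathcal S_\e$, so compactness of $\mathcal S_\e$ together with injectivity of $W_\beta$ on cochains forces
$$A_\bullet=\inf_{s'\in\mathcal S_\e}\,\inf_{||f||_E=1}||W_{\beta'}(f)||_{2,\sigma'}>0,$$
a strictly positive constant depending only on $\e$. Hence $||f||_E\leq A_\bullet^{-1}||W_\beta(f)||_{2,\sigma}$, and composing with the previous bound gives the lemma with $R=C_1/A_\bullet$.

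The main technical point is the order of differentiability required in the first step: each $d\beta_j$ in $W_\beta(\delta_\tau)$ already carries one derivative, so bounding the full $H^n_\nabla$ norm of the form may call on $\nabla^{n+1}\beta_j$ on some factor, one more than Proposition \ref{prop: partition existence} explicitly records. I expect this to be only a cosmetic obstacle: the convolution against the fixed smooth cutoff $\eta$ used to define $\beta_i$ manifestly produces $C^\infty$ functions whose derivatives of arbitrary order are uniformly controlled by derivatives of $\eta$ and of the Riemannian metric, so the needed bound at order $n+1$ follows from exactly the argument of Proposition \ref{prop: partition existence}.
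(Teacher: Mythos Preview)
Your proof is correct and follows essentially the same route as the paper: bound $\|W_\beta(f)\|_{H^n_\nabla(B)}$ above by a combinatorial norm on $C^\bullet(\sigma)$ using the uniform covariant-derivative bounds on the $\beta_i$, then compare that combinatorial norm to $\|W_\beta(f)\|_{2,\sigma}$ via the finite-dimensional equivalence established in Proposition~\ref{prop: smooth L2 comparison}. The only cosmetic difference is that the paper uses the $\ell^1$ (Gromov) norm $\|f\|_{G,\sigma}$ as the intermediate combinatorial norm while you use the Euclidean $\ell^2$ norm $\|f\|_E$; since the number of faces of $\sigma$ is uniformly bounded these are uniformly equivalent, so nothing is lost. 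Your observation about needing $\nabla^{n+1}\beta_i$ is a genuine subtlety that the paper does not flag explicitly---its proof simply asserts $\|\nabla^j\omega_i\|_{2,B}<C$ from the covariant-derivative bounds---and your resolution (that the convolution construction in Proposition~\ref{prop: partition existence} in fact controls all orders) is the correct one.
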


\begin{proof}
The first inequality follows from the definition of the Sobolov norm.

We now observe that the covariant derivative bounds for a smooth barycentric partition of unity imply that $||W_{\beta}(f)||_{H^n_{\nabla}(B)}$ is bounded by some constant times $||f||_{G,\sigma}$, where $||\cdot||_{G,\sigma}$ is the $\ell^1$-norm on $C^{\bullet}(\sigma)$. For a cochain $f = \sum a_i \delta_{\sigma_i}$, let $\omega_i = W_{\beta}(\delta_{\sigma_i})$ so that $W_{\beta}(f) = \sum a_i \omega_i$. We can then compute,
\begin{align*}
	||W_{\beta}(f)||_{H^k_{\nabla}(B)} &= ||\sum a_i\omega_i||_{H^k_{\nabla}(B)}\\
	&\leq \sum_j||\nabla^j \sum_i  a_i\omega_i||_{2,B}\\
	&\leq  \sum_j\sum_i|a_i|||\nabla^j \omega_i||_{2,B}.
\end{align*}
Each summand above satisfies $||\nabla^j\omega_i||_{2,B}<C$ for a constant $C$ depending on the covariant derivative bounds of the barycentric partition of unity. There is a constant $T$ such that the number of $\bullet$-faces of an $n$-simplex is less than $T$.
Thus, \begin{align*}
 ||W_{\beta}(f)||_{H^k_{\nabla}(B)} &\leq \sum_j\sum_i|a_i|||\nabla^j \omega_i||_{2,B} \\
&\leq \sum_jC\sum_i|a_i|\\
&\leq TC\sum_i|a_i| \\
&= TC||f||_{G,\sigma}.
\end{align*}
 For a single simplex, this combinatorial $\ell^1$-norm is comparable to the $\beta$-induced $L^2$-norm by Proposition \ref{prop: L2 comparisons} and Proposition \ref{prop: smooth L2 comparison}. Thus, $$||W_{\beta}(f)||_{H^n_{\nabla}(B)}\leq R||W_{\beta}(f)||_{2,\sigma}.$$
\end{proof}

The following version of the Sobolev inequality, a consequence of Theorem 1 in \cite{cantor}, ensures that control over certain Sobolev norms implies pointwise norm control. Let $|\cdot|$ be the pointwise norm induced by the Riemannian metric.

\begin{thm} \label{thm: Cantor Sobolev}(Cantor-Sobolev)
Suppose $M$ is a hyperbolic $n$-manifold with injectivity radius bounded below by $\e$. Let $r<\e$, and $l\geq 0,~k\geq 0$ be such that $l + n/2 < k$. Then if $\omega$ is in the Sobolev space $H^k_{\nabla} \Omega^{\bullet}(M)$, there is a constant $C = C(r)$ such that for every $p\in M$, $$|\nabla^l\omega(p)|\leq C||\omega||_{H_{\nabla}^k(B_r(p))}.$$
\end{thm}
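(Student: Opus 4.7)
The plan is to reduce the statement to the standard Euclidean/Riemannian Sobolev embedding by exploiting two pieces of structure: the homogeneity of $\H^n$, and the injectivity radius lower bound, which ensures that balls of radius less than $\e$ embed isometrically. First I would fix $r<\e$ and, for an arbitrary point $p\in M$, use the exponential map to identify the ball $B_r(p)\subset M$ isometrically with a ball $B_r(\tilde p)\subset \H^n$. Since the hyperbolic metric tensor and all of its covariant derivatives are uniformly bounded on any fixed ball, the problem is reduced to a purely local Sobolev inequality on a ball of radius $r$ in $\H^n$.

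Next I would invoke Theorem~1 of \cite{cantor}, which states a Sobolev embedding on manifolds of bounded geometry: if $k>l+n/2$, then $H^k_{\nabla}\Omega^{\bullet}\hookrightarrow C^l\Omega^{\bullet}$ with a constant depending only on the geometry of the domain. Applied on $B_r(\tilde p)\subset \H^n$, this yields a pointwise estimate $|\nabla^l\omega(\tilde p)|\leq C(r,\tilde p)\,\|\omega\|_{H^k_{\nabla}(B_r(\tilde p))}$. The crucial observation is that because the isometry group of $\H^n$ acts transitively, the Sobolev constant on $B_r(\tilde p)$ is independent of the base point $\tilde p$, and hence depends only on $r$. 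Pulling back the estimate under the exponential map (which is an isometry and therefore preserves both pointwise norms and Sobolev norms) gives the claimed inequality on $M$ with a constant $C(r)$ that is uniform in $p\in M$.

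The only substantive issue is verifying that Cantor's theorem is formulated for differential forms rather than merely for scalar functions, and that the bounded geometry hypothesis is satisfied on $B_r(\tilde p)$. For forms this is standard: one writes $\omega$ in an orthonormal coframe on $B_r(\tilde p)$ coming from parallel transport (or any fixed smooth frame on a hyperbolic ball), notes that the transition matrices between $\nabla$-derivatives and ordinary derivatives of coefficient functions are uniformly bounded in terms of the curvature of $\H^n$, and then applies the scalar Sobolev embedding coefficient-by-coefficient. Bounded geometry on $B_r(\tilde p)$ is immediate since sectional curvature is constant $-1$ and the injectivity radius is at least $r$ inside the domain.

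Thus the proof is essentially a two-line appeal to \cite{cantor} together with the homogeneity of $\H^n$; I do not anticipate a genuine obstacle, only the bookkeeping of tracking the $r$-dependence of constants and confirming that the vector-valued version of Cantor's inequality applies. No step requires the restriction to $n=3$, and no use is made of the triangulation machinery from earlier in the section.
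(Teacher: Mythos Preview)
The proposal is correct and matches the paper's approach: the paper does not give a proof at all but simply states the result as ``a consequence of Theorem~1 in \cite{cantor},'' and your argument is precisely the natural unpacking of that citation---lift to a ball in $\H^n$ via the injectivity radius bound, apply Cantor's Sobolev embedding there, and use homogeneity of $\H^n$ to make the constant depend only on $r$. Your added discussion of frames for the vector-valued case is reasonable bookkeeping that the paper omits entirely.
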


This discussion provides us with the following upgraded version of Proposition \ref{prop: L2 comparisons}.

\begin{prop} \label{prop: smooth comparison}
Let $s\in\mathcal S_\e$ be the combinatorial 1-neighborhood of an $n$-simplex $\sigma \in\mathcal G_\e$. Let $\beta$ be the smooth barycentric partition of unity in Proposition \ref{prop: smooth L2 comparison} or the standard barycentric coordinate on $s$. Let $W_{\beta}(f)$ be the resulting generalized Whitney form associated to a cochain $f\in C^{\bullet}(\sigma;\R)$. Let $||\cdot||$ be some fixed norm on the real vector space $C^{\bullet}(\sigma;\R)$ and let $||\cdot||_{p,s}$ be the $p$-norm associated to $s$ on $\Omega^{\bullet}(s)$ and likewise for $||\cdot||_{p,\sigma}$, where $p = {\infty}$ or $p=2$.
Then there is a constant $\mathcal B = \mathcal B (\e,||\cdot||)>0$ (independent of $\sigma$ and $s$) such that $$\mathcal B ^{-1}|| W_{\beta}(f)||_{2,{\sigma}}\leq   ||f||\leq \mathcal B|| W_{\beta}(f)||_{2,\sigma}$$ and
$$\mathcal B^{-1}|| W_{\beta}(f) ||_{\infty, \sigma}\leq \mathcal B^{-1} || W_{\beta}(f)||_{\infty,s} \leq ||f||\leq \mathcal B|| W_{\beta}(f)||_{\infty,s}.$$
\end{prop}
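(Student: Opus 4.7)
The plan is to piece together both inequalities from material already assembled: the $L^2$ comparisons (Propositions \ref{prop: L2 comparisons} and \ref{prop: smooth L2 comparison}), the Sobolev control in Lemma \ref{lem: constant R}, and the pointwise Cantor--Sobolev bound (Theorem \ref{thm: Cantor Sobolev}), with uniformity of constants supplied by the compactness of $\mathcal{G}_\e$ and $\mathcal{S}_\e$.

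First, for the $L^2$ comparison $\mathcal{B}^{-1}\|W_{\beta}(f)\|_{2,\sigma}\leq \|f\|\leq \mathcal{B}\|W_{\beta}(f)\|_{2,\sigma}$: since $C^{\bullet}(\sigma;\R)$ is finite dimensional, the chosen norm $\|\cdot\|$ is equivalent to the standard $\ell^2$-norm $\|\cdot\|_E$ with constants depending only on the norm (not on the geometric structure of $\sigma$). Proposition \ref{prop: L2 comparisons} gives $\|W(f)\|_{2,\sigma}\asymp \|f\|_E$ and Proposition \ref{prop: smooth L2 comparison} gives $\|W_{\beta}(f)\|_{2,\sigma}\asymp \|W(f)\|_{2,\sigma}$; chaining these yields the desired comparison with constants depending only on $\e$ and the chosen norm.

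For the $L^{\infty}$ inequality, the middle inequality $\|W_{\beta}(f)\|_{\infty,\sigma}\leq \|W_{\beta}(f)\|_{\infty,s}$ is immediate from $\sigma \subseteq s$, and the right-hand inequality $\|f\|\leq \mathcal{B}\|W_{\beta}(f)\|_{\infty,s}$ follows easily from the $L^2$ bound and the uniform volume control on $\sigma\in\mathcal{G}_\e$ supplied by compactness:
\[\|f\|\leq \mathcal{B}\|W_{\beta}(f)\|_{2,\sigma}\leq \mathcal{B}\,\vol(\sigma)^{1/2}\|W_{\beta}(f)\|_{\infty,\sigma}\leq \mathcal{B}\,\vol(\sigma)^{1/2}\|W_{\beta}(f)\|_{\infty,s}.\]
The main obstacle is the remaining bound $\|W_{\beta}(f)\|_{\infty,s}\leq \mathcal{B}\|f\|$, where I would invoke Cantor--Sobolev. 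By Proposition \ref{prop: partition existence} the $\beta_i$ are $C^{\infty}$, so $W_{\beta}(f)$ is smooth on $M$; since each $\beta_i$ is compactly supported in the open star of $v_i$, the form $W_{\beta}(f)$ vanishes in a neighborhood of $\partial s$ and extends smoothly by zero to $M$. Applying Theorem \ref{thm: Cantor Sobolev} with $l=0$ and $k=n$ (so $l+n/2<k$ is automatic) yields, for any $q\in s$ and any $r<\e$,
\[|W_{\beta}(f)(q)|\leq C\|W_{\beta}(f)\|_{H^{n}_{\nabla}(B_r(q))}.\]
Because $W_{\beta}(f)$ is supported in $s$, and $s$ lifts isometrically into the ball $B$ from Lemma \ref{lem: constant R}, the right-hand side is bounded by $\|W_{\beta}(f)\|_{H^{n}_{\nabla}(B)}$, which Lemma \ref{lem: constant R} then bounds by $R\|W_{\beta}(f)\|_{2,\sigma}$, and the $L^2$ comparison finishes with $R\mathcal{B}\|f\|$. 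Taking the supremum over $q\in s$ and absorbing constants gives the stated bound.

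The main source of delicacy is ensuring the constants are genuinely independent of the particular $\sigma\in\mathcal{G}_\e$ and $s\in\mathcal{S}_\e$: this is handled by the compactness of these parameter spaces in the $L^2$ step, and by the uniform covariant-derivative bounds on the $\beta_i$ of Proposition \ref{prop: partition existence}, which underlie Lemma \ref{lem: constant R}. The final $\mathcal{B}$ is the maximum of the finitely many constants produced in the $L^2$ and $L^\infty$ arguments and the case of the standard barycentric map $W$ (already covered by Proposition \ref{prop: L2 comparisons}).
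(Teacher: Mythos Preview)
Your proposal is correct and follows essentially the same route as the paper: the $L^2$ comparison is assembled from Propositions \ref{prop: L2 comparisons} and \ref{prop: smooth L2 comparison}, and the $L^{\infty}$ comparison is obtained by applying Cantor--Sobolev (Theorem \ref{thm: Cantor Sobolev}) together with Lemma \ref{lem: constant R} to bound $\|W_{\beta}(f)\|_{\infty,s}$ by $\|W_{\beta}(f)\|_{2,\sigma}$, while the reverse direction comes from the trivial H\"older bound and uniform volume control. The paper phrases the Sobolev step by choosing the center of the ball $B$ at the point where the sup is attained, whereas you bound $\|W_{\beta}(f)\|_{H^n_{\nabla}(B_r(q))}$ by $\|W_{\beta}(f)\|_{H^n_{\nabla}(B)}$ via support considerations; both are valid and amount to the same thing. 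One small cosmetic point: the extension by zero should be to $\H^n$ (where $s$ and $B$ live via the isometric lift) rather than to $M$.
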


\begin{proof}
    Identify $s$ with a domain $D$ in $\H^n$ and let $B$ be a ball of fixed radius $r = r(\e)$ that contains $s$ as in Lemma 2.11 and assume the basepoint of the ball is the point at which $||W_{\beta}(f)||_{\infty, s}$ is realized.

    From the Sobolev inequality, Propositions \ref{prop: L2 comparisons} and \ref{prop: smooth L2 comparison}, and Lemma \ref{lem: constant R}, for any cochain $f$,
        \begin{align*}
        ||W_{\beta}(f)||_{\infty,\sigma}&\leq ||W_{\beta}(f)||_{\infty,s}\\
        &\leq C||W_{\beta}(f)||_{H^n_{\nabla}(B)}\\
        &\leq CR||f||_{G,\sigma} \\
        &\leq \mathcal ACR||W_{\beta}(f)||_{2,\sigma}.
        \end{align*}

    Then the comparison $$||W_{\beta}(f)||_{2,\sigma}\leq ||W_{\beta}(f)||_{2,s}\leq\sqrt{\vol(s)} ||W_{\beta}(f)||_{\infty,s}$$ implies the smooth barycentric partition of unity induced $L^2$-norm $||W_{\beta}(f)||_{2,\sigma} $ and $L^{\infty}$-norm $||W_{\beta}(f)||_{\infty,s}$ are comparable for any simplex $\sigma\in\mathcal G_\e$ with combinatorial 1-neighborhood
    $s\in \mathcal S_\e$.

    The only constant appearing in the comparison depending on $s$ is $\sqrt{\vol(s)}$, which can be uniformly bounded by a constant depending only on $\e$ by compactness of $\mathcal S_\e$. As remarked after the proof of Proposition \ref{prop: smooth L2 comparison}, the $L^2$-norm version of the desired estimate follows from Proposition \ref{prop: L2 comparisons} and Proposition \ref{prop: smooth L2 comparison}. The above comparisons of the $L^2$-norm and the $L^{\infty}$-norm imply the claim.
\end{proof}

\section{Norm estimates}
\label{sec:3}

In this section, we use deeply embedded triangulations and the Whitney maps described in the previous section to compare various geometric and combinatorial norms on forms and cochains. Throughout, let be $M$ a closed hyperbolic manifold of dimension $n>2$ with injectivity radius bounded below by $\e>0$ and a fixed deeply embedded triangulation $K$. Let $\beta$ be the smooth barycentric partition of unity associated to $K$.

We require various comparisons of the following norms on cochain and chain complexes associated to $M$ and $K$. The relevant norms are:

\begin{enumerate}
    \item The combinatorial Gromov norm $||\cdot||_G$ on any chain or cochain complex given by $||\sum a_i\sigma_i||_G = \sum|a_i|$ and $||\sum a_i\delta_{\sigma_i}||_G = \sum|a_i|$.
    \item The combinatorial Euclidean norm $||\cdot||_E$, which is the usual $\ell^2$ norm on chains and cochains given by $ ||\sum a_i\sigma_i||_E = \sqrt{\sum|a_i|^2}$ and  $ ||\sum a_i\delta_{\sigma_i}||_E = \sqrt{\sum|a_i|^2}$.
     \item The combinatorial max norm $||\cdot||_{\max}$ on any chain or cochain complex given by $||\sum a_i\sigma_i||_{\max} = \max|a_i|$.

    \item The Whitney induced $L^2$-norm $||~\cdot~||_2$ on the cochain complex $C^{\bullet}(K)$, given by $||f||_2 = \sqrt{\int_MW_{\beta}(f)\wedge\star W_{\beta}(f)}$.

    \item The Whitney induced $L^{\infty}$-norm $||\cdot||_{\infty}$ on the cochain complex $C^{\bullet}(K)$, given by taking the essential supremum of the pointwise Riemannian metric operator norms $||f||_{\infty} = \esssup\limits_{p\in M} ||W_{\beta}(f)_p||_{\infty}.$
\end{enumerate}

Given a norm $||\cdot||$ on the cochain complex $C^{\bullet}(K)$, let $||\cdot||^*$ denote the dual norm on the linear dual chain complex $C_{\bullet}(K)$ induced by the integration pairing: $$||a||^* = \sup\limits_{\substack{||f||=1\\ f\in C^{\bullet}(K)}} \int_a W_{\beta}(f).$$

\begin{prop} \label{prop: 3.1} There is a constant $B =  B(\e)>0$ such that the norms $||\cdot||_G$ and $||\cdot||_2$ on $C^{\bullet}(K)$ satisfy $$||\cdot||_G\leq B\sqrt{\vol(M)}||\cdot||_2,$$ and the norms $||\cdot||_G$ and $||\cdot||_2^*$ on $C_{\bullet}(K)$ satisfy  $$||\cdot||_G\leq B \sqrt{\vol(M)}||\cdot||_2^{*}.$$
\end{prop}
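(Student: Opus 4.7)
The plan is to localize each inequality to a single top-dimensional simplex of $K$ via Proposition \ref{prop: smooth comparison}, then sum the local estimates over the triangulation using Cauchy-Schwarz, with Proposition \ref{prop: volume} converting the total simplex count $V_K$ to a factor of $\vol(M)$. The essential feature of Proposition \ref{prop: smooth comparison} is that it delivers a constant $\mathcal{B}=\mathcal{B}(\e)$ that compares \emph{any} fixed combinatorial norm on $C^{\bullet}(\sigma)$ with $||W_\beta(\cdot)||_{2,\sigma}$, uniformly over $\sigma\in\mathcal G_\e$ and over the geometric structure on its combinatorial 1-neighborhood in $\mathcal S_\e$.

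For the cochain inequality, I would write $f = \sum_i a_i\delta_{\sigma_i}\in C^{\bullet}(K)$ and use that every simplex of $K$ lies in at least one top-dimensional simplex to obtain the overcount
$$||f||_G = \sum_i |a_i| \leq \sum_{\sigma}||f|_{\sigma}||_G,$$
where the sum on the right ranges over top simplices $\sigma$. Proposition \ref{prop: smooth comparison}, applied with the $\ell^1$-norm on $C^{\bullet}(\sigma)$, together with property (3) of the generalized Whitney map (so $W_\beta(f)|_{\sigma} = W_\beta(f|_{\sigma})|_{\sigma}$ in the interior of $\sigma$), gives $||f|_{\sigma}||_G \leq \mathcal{B}||W_\beta(f)||_{2,\sigma}$. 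Combining with Cauchy-Schwarz and $V_K\leq T\vol(M)$ yields
$$||f||_G \leq \mathcal{B}\sum_{\sigma}||W_\beta(f)||_{2,\sigma} \leq \mathcal{B}\sqrt{V_K}\,||f||_2 \leq \mathcal{B}\sqrt{T\vol(M)}\,||f||_2,$$
which is the claimed bound with $B = \mathcal{B}\sqrt{T}$.

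For the chain inequality I would use duality. Given $a = \sum_i c_i \sigma_i$, let $f = \sum_i \mathrm{sign}(c_i)\delta_{\sigma_i}$ be the associated sign cochain. Property (1) of the generalized Whitney map gives $\int_a W_\beta(f) = f(a) = \sum_i |c_i| = ||a||_G$, so the definition of the dual norm yields
$$||a||_G = \int_a W_\beta(f) \leq ||f||_2\,||a||_2^{*}.$$
To control $||f||_2$, note that on every top simplex $\sigma$ the restriction $f|_{\sigma}$ has coefficients in $\{-1,0,1\}$, so $||f|_{\sigma}||_{\max}\leq 1$. Applying Proposition \ref{prop: smooth comparison} with the max norm gives $||W_\beta(f)||_{2,\sigma}\leq \mathcal{B}'$ uniformly, and summing over top simplices yields $||f||_2^2 = \sum_\sigma ||W_\beta(f)||_{2,\sigma}^2 \leq \mathcal{B}'^2 V_K \leq \mathcal{B}'^2T\vol(M)$. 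Replacing $B$ by the larger of the two constants produced by the two arguments gives both inequalities.

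I do not anticipate a serious obstacle: the heavy lifting has already been done by Proposition \ref{prop: smooth comparison}, which absorbs the geometry into the constants via compactness of $\mathcal S_\e$. The only point requiring care is that we overcount (rather than recover $||f||_G$ exactly) when passing to restrictions; this is harmless because each simplex lies in at least one top simplex, and on the chain side the issue does not arise at all since the dual-norm argument works directly with the single test cochain $f$.
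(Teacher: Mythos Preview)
Your proof is correct and, for the cochain inequality, identical to the paper's. For the chain inequality you take a minor variant: the paper first proves the cochain estimate $||f||_2 \leq D'\sqrt{\vol(M)}\,||f||_{\max}$ (via the $L^\infty$ part of Proposition~\ref{prop: smooth comparison} and then $||f||_2 \leq \sqrt{\vol(M)}\,||f||_\infty$) and dualizes abstractly using $||\cdot||_G = ||\cdot||_{\max}^*$, whereas you exhibit the sign cochain explicitly as the extremizer for $||a||_{\max}^*$ and bound its $L^2$-norm simplex-by-simplex via the $L^2$ part of Proposition~\ref{prop: smooth comparison}. These are the same duality argument viewed from two sides; your route is slightly more concrete and avoids the detour through $||\cdot||_\infty$, while the paper's formulation makes the dual relationship between the two inequalities more visible.
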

\begin{proof}

 Let $f = \sum\limits_Fa_F\delta_F$ be a cochain.
 Then for any $n$-simplex $\sigma$, $f|_{\sigma} = \sum\limits_{F\subset \sigma}a_F\delta_F$ and $$||f||^2_2 = \sum\limits_{\sigma \in K^{(n)}} || W_{\beta}(f|_{\sigma})|_{\sigma}||_2^2 =\sum\limits_{\sigma \in K^{(n)}} ||W_{\beta}(f)|_{\sigma}||_2^2 .$$
Apply Proposition \ref{prop: smooth comparison} to obtain $D = \mathcal B(\e,||\cdot||_G)$. This gives $||f|_{\sigma}||_G\leq D ||f|_{\sigma}||_{2,\sigma},$ where $||\cdot||_{2,\sigma}$ is the $L^2$-norm on the simplex $\sigma$ associated to the smooth barycentric coordinate $\beta$.
Then, by applying the Euclidean $\ell^1$-$\ell^2$-comparison to the cochain complex and using the fact there is a constant $T$ such that the number of $n$-simplices in $K$ is less than $T\vol(M)$, we find
\begin{align*}
||f||_G  &\leq \sum\limits_{\sigma \in K^{(n)}} ||f|_{\sigma}||_G\\
		&\leq \sum\limits_{\sigma \in K^{(n)}}  D||f|_{\sigma}||_{2,\sigma}\\
	   &\leq D\sqrt{T\vol(M)\sum\limits_{\sigma \in K^{(n)}}||W_{\beta}(f)|_{\sigma}||_2^2}\\
	   &\leq D\sqrt{T\vol(M)}||f||_2.
\end{align*}
For the second inequality, notice $||\cdot||_G$ is the usual $\ell^1$-norm on a finite dimensional vector space, so its dual norm is the max norm $||\cdot||_{\max}$.
\color{black}

Apply Proposition \ref{prop: smooth comparison} and set $ D’ =\mathcal B(\e, ||\cdot||_{\max}),$ so that if $\sigma$ is the simplex in which $||f||_{\infty}$ is realized, then $$||f||_{\infty} = ||f|_{\sigma}||_{\infty}\leq  D’||f|_{\sigma}||_{\max}\leq D’||f||_{\max}.$$
Then, $$||f||_2 \leq\sqrt{\vol(M)} ||f||_{\infty} \leq D’ \sqrt{\vol(M)}||f||_{\max}.$$
Dualizing gives $$  ||\cdot||_G = ||\cdot||_{\max}^*\leq  D’ \sqrt{\vol(M)}||\cdot||_2^*,$$
since
\begin{align*}
	||a||_G &= ||a||_{\max}^*\\
		   &= \sup\limits_{||f||_{\max} \leq 1}\int_a W_{\beta}(f)\\
		   &=\sup\limits_{|| D’\sqrt{\vol(M)}f||_{\max} \leq 1}\int_{a} D’\sqrt{\vol(M)} W_{\beta} (f)\\
		   &\leq \sup\limits_{||f||_2\leq1}\int_{a} D’\sqrt{\vol(M)} W_{\beta} (f)\\
		   &=  D’\sqrt{\vol(M)} \sup\limits_{||f||_2\leq 1} \int_a W_{\beta} (f)\\
		   &= D’\sqrt{\vol(M)}||a||_2^*.
\end{align*}
Set $B = \max\{D\sqrt{T}, D’\}$ to obtain the claim.
 \end{proof}

Recall from Section 2 that there is a polyhedral celluation $K^*$ dual to $K$ that can be canonically subdivided into a triangulation $\tau(K)$. Equipping these dual complexes with the Gromov norm, we have the following two propositions relating these norms by the Poincar\'e duality and subdivision maps.

\begin{prop} \label{prop: 3.2} 
There is a constant $D = D(\e)$ such that for any $\bullet$-cochain $f\in C^{\bullet}(K)$ one has $||f||_2 \leq D||f||_G$.
\end{prop}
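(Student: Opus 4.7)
The plan is to bound $\|W_\beta(\delta_F)\|_2$ uniformly over all faces $F$ of all deeply embedded triangulations with parameter $\e$, and then invoke the triangle inequality on the basis expansion of $f$.

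First I would write $f = \sum_F a_F \delta_F$, so that by linearity $W_\beta(f) = \sum_F a_F W_\beta(\delta_F)$. The triangle inequality in $L^2$ then gives
\[
\|f\|_2 = \|W_\beta(f)\|_2 \leq \sum_F |a_F| \cdot \|W_\beta(\delta_F)\|_2,
\]
so it suffices to find a constant $D = D(\e)$ such that $\|W_\beta(\delta_F)\|_2 \leq D$ for every face $F$ of $K$. The constant $D$ will then provide the desired estimate, since $\sum_F |a_F| = \|f\|_G$.

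To produce this $D$, I would use the compactness arguments from Section 2. The form $W_\beta(\delta_F)$ is supported in the closed star of $F$, and its restriction to any top-dimensional simplex $\sigma \supset F$ depends only on the geometric structure of the combinatorial 1-neighborhood $\s_0(\sigma) \in \mathcal S_\e$ of that $\sigma$. By Lemma \ref{lem: forms continuous}, the quantity $\|W_\beta(\delta_F)|_\sigma\|_{2,\sigma}$ is a continuous function of $\s_0(\sigma) \in \mathcal S_\e$, and by Proposition \ref{prop: star bound} the number of such top-dimensional simplices meeting $F$ is bounded by $N = N(\e)$. Therefore $\|W_\beta(\delta_F)\|_2^2$ is a sum of at most $N$ continuous functions on the compact spaces $\mathcal S_\e$ (one per combinatorial type), each attaining a finite maximum. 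Taking $D$ to be the resulting uniform bound gives the proposition.

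There is no real obstacle here; the work has already been done in Section 2 by establishing compactness of $\mathcal S_\e$ and the continuity of the smoothened Whitney forms. A slightly different route, which I would keep as a backup, goes simplex by simplex: apply Proposition \ref{prop: smooth comparison} with the norm $\|\cdot\|_G$ on each top-dimensional $\sigma$ to obtain $\|W_\beta(f)|_\sigma\|_{2,\sigma} \leq \mathcal B \|f|_\sigma\|_G$, sum squares across $\sigma$, and then use the bounded number of faces of $\sigma$ together with Proposition \ref{prop: star bound} to absorb the multiplicity and return to $\|f\|_G^2$. This would yield the same conclusion with the explicit constant $\mathcal B\sqrt{NN'}$ rather than the purely compactness-derived $D$, but the direct triangle-inequality argument above is cleaner.
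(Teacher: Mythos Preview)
Your proposal is correct and follows essentially the same approach as the paper: write $f$ in the basis, apply the triangle inequality, and bound $\|W_\beta(\delta_F)\|_2$ uniformly in $F$ using the compactness results of Section~2. The only minor difference is bookkeeping: the paper obtains the uniform bound by invoking Proposition~\ref{prop: smooth L2 comparison} and the $L^2$ change-of-variables formula (yielding $D = A\mathcal L^{n/2}\|W(\delta_\sigma)\|_2$ for a fixed reference face $\sigma$), whereas you appeal directly to the continuity in Lemma~\ref{lem: forms continuous} together with Proposition~\ref{prop: star bound} and compactness of $\mathcal S_\e$.
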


\begin{proof} Let $f = \sum a_i\delta_{\sigma_i}$ be a $\bullet$-cochain. Then $||\omega||_G = \sum|a_i|$ and $||f||_2 \leq \sum |a_i| ||\delta_{\sigma_i}||_2.$ Then, for any fixed $\bullet$-simplex $\sigma$ that is a face of an $n$-simplex from $\mathcal G_e$, using the $L^2$-change of variables formula and Proposition \ref{prop: smooth L2 comparison} we can take $D = A\mathcal L^{n/2}||W(\delta_{\sigma})||_2$, so that $||\delta_{\sigma_i}||_2 \leq D.$

The comparison \[||f||_2\leq\sum |a_i| ||\delta_{\sigma_i}||_2 \leq D\sum |a_i| = D||f||_G \] then follows.
\end{proof}

\begin{prop} \label{prop: 3.3} 
The Poincar\'e duality map $\Phi:C^{\bullet}(K)\to C_{n-\bullet}(K^*)$ preserves the Gromov norm $$||f||_G= ||\Phi(f)||_G.$$
\end{prop}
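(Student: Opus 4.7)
The plan is to unpack what the Poincar\'e duality map $\Phi$ does on the natural bases of $C^{\bullet}(K)$ and $C_{n-\bullet}(K^*)$ and observe that the Gromov norm is just the $\ell^1$-norm with respect to these bases. Concretely, the Poincar\'e duality map sends a dual cochain basis element $\delta_\sigma$, for an oriented $k$-simplex $\sigma$ of $K$, to $\pm \sigma^*$, where $\sigma^*$ is the $(n-k)$-cell of $K^*$ dual to $\sigma$ (with orientation determined by the orientation of $M$ and of $\sigma$). This is a bijection between the basis $\{\delta_\sigma\}_{\sigma\in K^{(k)}}$ of $C^k(K)$ and the basis $\{\sigma^*\}_{\sigma\in K^{(k)}}$ of $C_{n-k}(K^*)$.

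Given any cochain $f = \sum_i a_i \delta_{\sigma_i}\in C^{\bullet}(K)$ written in the dual cochain basis with distinct simplices $\sigma_i$, the image is $\Phi(f) = \sum_i \pm a_i \sigma_i^*$, expressed in the cell basis of $C_{n-\bullet}(K^*)$ with distinct cells $\sigma_i^*$. By definition of the Gromov norm on either complex,
\[
||f||_G = \sum_i |a_i| = \sum_i |\pm a_i| = ||\Phi(f)||_G,
\]
which is the claimed equality.

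There is essentially no obstacle here beyond pinning down the convention used to define $\Phi$; the content of the proposition is that Poincar\'e duality, realized via the dual cell complex, carries the preferred basis of cochains to the preferred basis of dual chains up to sign, so it is tautologically an isometry of the $\ell^1$-norms built from these bases. The point of recording this as a separate proposition is to set up the chain of comparisons with Propositions \ref{prop: 3.1} and \ref{prop: 3.2}, where the Gromov norm on dual chains (or on cellular paths in $K^*$ as in Proposition \ref{prop: length comparison}) will be bridged back to geometric data like $|\gamma|$.
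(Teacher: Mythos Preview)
Your proof is correct and follows the same approach as the paper: both simply observe that $\Phi$ sends the basis element $\delta_{\sigma_i}$ to (a sign times) the basis element $\sigma_i^*$, so the $\ell^1$-norms agree. The paper's proof is the one-line version of yours, omitting the sign discussion and the contextual remarks.
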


\begin{proof}
  Let $f = \sum a_i\delta_{\sigma_i}$, then $\Phi(f) = \sum a_i (\sigma_i)^*$.
\end{proof}

\begin{prop} \label{prop: 3.4} 
Let $N$ be the constant from Proposition \ref{prop: star bound}, which bounds the number of simplices in the star of a simplex in a deeply embedded triangulation. Then the subdivision map $\tau: C_{2}(K^*)\to C_{2}(T)$ satisfies $$ || \tau(c)||_G \leq N ||c||_G.$$
\end{prop}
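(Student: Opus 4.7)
The strategy is a straightforward term-by-term estimate. Writing $c = \sum_i a_i \sigma_i^*$ with the $\sigma_i^*$ the distinct dual $2$-cells of $K^*$ and using linearity of $\tau$ together with the triangle inequality for $\|\cdot\|_G$, we obtain
$$
\|\tau(c)\|_G \leq \sum_i |a_i|\,\|\tau(\sigma_i^*)\|_G,
$$
so it suffices to prove that $\|\tau(\sigma^*)\|_G \leq N$ for an arbitrary dual $2$-cell $\sigma^*$. Because the subdivision map writes $\sigma^*$ as a signed sum of the $2$-simplices of $T = \tau(K)$ that make it up, with coefficients $\pm 1$, its Gromov norm is precisely the number of $2$-simplices of the barycentric subdivision contained in $\sigma^*$.

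The key combinatorial point is to identify these $2$-simplices. A dual $2$-cell $\sigma^*$ is dual to a codimension-$2$ simplex $\sigma$ of $K$, and by the standard description of the dual celluation in terms of the barycentric subdivision, the $2$-simplices of $T$ contained in $\sigma^*$ are exactly those formed by flags $\sigma = \sigma_0 < \sigma_1 < \sigma_2$ of simplices of $K$ beginning with $\sigma$ (i.e., formed by joining the barycenters of $\sigma$, $\sigma_1$, and $\sigma_2$). Since each $\sigma_i$ in such a flag strictly contains $\sigma$, all of them lie in the combinatorial star of $\sigma$ in $K$. Consequently the total number of $2$-simplices subdividing $\sigma^*$ is bounded above by the number of simplices in the star of $\sigma$.

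Applying Proposition \ref{prop: star bound}, this count is at most $N = N(\e)$, and hence $\|\tau(\sigma^*)\|_G \leq N$. Plugging back in gives $\|\tau(c)\|_G \leq N\|c\|_G$, which is the desired estimate. The only minor subtlety is keeping track of the flag-vs-simplex counting (strictly, a flag is determined by the ordered pair $(\sigma_1,\sigma_2)$), but since Proposition \ref{prop: star bound} furnishes a uniform bound on each dimension of simplices in the star of $\sigma$, any multiplicative overcounting can be absorbed into $N$ without affecting the form of the inequality.
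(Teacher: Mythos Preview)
Your proposal is correct and follows essentially the same approach as the paper's proof. The paper's argument is simply the terse observation that the number of sides of a dual $2$-cell $\sigma^*$ (for $\sigma$ an $(n-2)$-simplex of $K$) equals the number of $n$-simplices containing $\sigma$, which is bounded by $N$; your flag description $\sigma < \sigma_1 < \sigma_2$ is just a more explicit combinatorial encoding of the same count, and your closing remark about absorbing a constant factor into $N$ correctly addresses the same looseness present in the paper.
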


\begin{proof}
  The number of sides of a $2$-cell in $K^*$ dual to a $(n-2)$-simplex $\sigma$ in $K$ corresponds to the number of $n$-simplices in $K$ that contain $\sigma$. The number of such simplices is bounded by $N$.
\end{proof}

The following estimates are essential in comparing the first eigenvalue of the Whitney Laplacian to the genuine first eigenvalue. For this, we need to work with various Sobolev spaces to control the orthogonal projection of a Whitney form onto its coexact part. This discussion is the reason we use the smoothed Whitney forms in place of the standard ones.

We will require the following version of the Gaffney inequality, which follows from Lemma 2.4.10 in \cite{schwarz}. To simplify the following discussion, for a smooth manifold $Y$ possibly with boundary, we introduce an alternative Sobolev norm on $H^{k+1}_{\nabla}\Omega^{\bullet}(Y)$: $$||\omega||_{A^{k}(Y)}:= ||\omega||_{H^k_{\nabla}(Y)} + ||d\omega||_{H^k_{\nabla}(Y)} + ||d^*\omega||_{H^k_{\nabla}(Y)}.$$

Since $d$ and $d^*$ are bounded operators $H^{k+1}\Omega^{\bullet}_{\nabla}(Y)\to H^{k}\Omega^{\bullet\pm 1}_{\nabla}(Y)$, we immediately have that there is a constant $C$ such that $||\omega||_{A^{k}(Y)}\leq  C ||\omega||_{H^{k+1}_{\nabla}(Y)}.$

Recall that the marking $\mathring H$ denotes the subspace of given Sobolev space that is the closure of smooth functions supported in the interior.

\begin{lem}  \label{lem: 3.5} (Gaffney inequality) Let $Y$ be a smooth manifold with boundary. Let $\omega\in \mathring{H}^k_{\nabla} \Omega^{1}(Y)$. Then there is a constant $C = C(Y)>0$ such that $||\omega||_{\mathring{H}^k_{\nabla}(Y)}\leq C ||\omega||_{A^{k-1}(Y)}$.
\end{lem}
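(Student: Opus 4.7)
The plan is to prove Lemma \ref{lem: 3.5} by induction on $k$, with the base case $k=1$ being the classical Gaffney inequality for 1-forms, which in turn follows from the Bochner-Weitzenböck identity.

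For the base case $k=1$, recall that on 1-forms the Hodge Laplacian $\Delta_H = dd^* + d^*d$ obeys the Weitzenböck identity $\Delta_H \omega = \nabla^*\nabla \omega + \mathrm{Ric}(\omega)$. Because $\omega \in \mathring{H}^1_\nabla \Omega^1(Y)$ is approximated in the Sobolev norm by smooth 1-forms compactly supported in the interior of $Y$, integration by parts produces no boundary contribution and one obtains $\langle \Delta_H \omega, \omega\rangle_{L^2} = \|d\omega\|_{L^2}^2 + \|d^*\omega\|_{L^2}^2$. Pairing the Weitzenböck identity against $\omega$ and rearranging then yields
$$\|\nabla \omega\|_{L^2(Y)}^2 \leq \|d\omega\|_{L^2(Y)}^2 + \|d^*\omega\|_{L^2(Y)}^2 + C \|\omega\|_{L^2(Y)}^2,$$
where $C$ depends on a uniform lower bound for the Ricci curvature of $Y$. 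Adding $\|\omega\|_{L^2(Y)}^2$ to both sides gives the base case.

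For the inductive step, I would apply the standard a priori estimate for the Hodge-Dirac operator $D = d + d^*$, which is a first-order elliptic operator, namely $\|\omega\|_{\mathring{H}^{k+1}_\nabla} \leq C(\|D\omega\|_{\mathring{H}^k_\nabla} + \|\omega\|_{\mathring{H}^k_\nabla})$, valid for forms approximated by interior compactly supported ones. Commuting $D$ past the covariant derivatives making up $\nabla^k$ produces zeroth-order curvature correction terms, and combining this with the identity $\|D\omega\|_{L^2}^2 = \|d\omega\|_{L^2}^2 + \|d^*\omega\|_{L^2}^2$ at each order produces a bound of the form $\|D\omega\|_{\mathring{H}^{k-1}_\nabla} \leq C(\|d\omega\|_{H^{k-1}_\nabla} + \|d^*\omega\|_{H^{k-1}_\nabla} + \|\omega\|_{H^{k-1}_\nabla})$. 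The remaining $\|\omega\|_{\mathring{H}^{k-1}_\nabla}$ is absorbed by the inductive hypothesis, closing the induction.

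The main obstacle, and the reason the statement is attributed to Schwarz's \cite{schwarz} Lemma 2.4.10 rather than rederived here, lies in the careful bookkeeping of the curvature corrections generated by commuting $\nabla$ past $d$ and $d^*$ at each order, and in verifying that the interior-approximation condition encoded in the $\mathring{H}$ decoration is preserved through those commutations so that no boundary integrals re-enter. All constants ultimately depend only on $Y$ through uniform bounds on its curvature tensor and its derivatives, which is exactly the dependence asserted.
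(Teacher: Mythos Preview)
The paper does not supply its own proof of Lemma~\ref{lem: 3.5}; it is stated as a consequence of Lemma~2.4.10 in Schwarz's monograph \cite{schwarz} and then used as a black box. There is therefore no argument in the paper to compare yours against.

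Your sketch is a reasonable outline of how such a result is typically proved, and the base case via the Bochner--Weitzenb\"ock identity is clean and correct. One minor point: in your inductive step you write that bounding $\|D\omega\|_{H^{k-1}_\nabla}$ requires commuting $D$ past covariant derivatives and tracking curvature corrections. This is not needed for that particular bound, since $D\omega = d\omega + d^*\omega$ is already a decomposition into forms of different degrees and the triangle inequality gives $\|D\omega\|_{H^{k-1}_\nabla} \leq \|d\omega\|_{H^{k-1}_\nabla} + \|d^*\omega\|_{H^{k-1}_\nabla}$ immediately. The commutator bookkeeping you describe is instead what goes into \emph{proving} the elliptic estimate $\|\omega\|_{\mathring H^{k}_\nabla} \leq C(\|D\omega\|_{H^{k-1}_\nabla} + \|\omega\|_{H^{k-1}_\nabla})$ that you invoke as ``standard''; once that estimate is granted, the induction closes with no further commutation. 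Also watch the indexing: to prove the lemma as stated you want the elliptic estimate with exponents $k$ and $k-1$, not $k+1$ and $k$.
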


\begin{lem} \label{lem: 3.6} 
Let $B_0 = B_r(p)$ and $B_1= B_{r+\delta}(p)$ be a pair of concentric balls in $\H^n$ and let $\phi$ be a smooth bump function that is identically 1 on $B_0$ and vanishes in a neighborhood of $\d B_1$. There is a constant $C = C(\phi,k)$ that depends only on the norm of the covariant derivatives of $\phi$ up to order $k+1$ such that if $\omega \in \Omega^{1}(\H^n)$, then $$||\phi\omega||_{A^k(B_1)}\leq C ||\omega||_{A^k(B_1)}.$$
\end{lem}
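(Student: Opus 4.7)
The plan is to prove the three pieces making up the $A^k$-norm of $\phi\omega$ are each controlled by the corresponding pieces of $\|\omega\|_{A^k(B_1)}$ via iterated Leibniz rules. Recall that by definition
\[
\|\phi\omega\|_{A^k(B_1)} = \|\phi\omega\|_{H^k_\nabla(B_1)} + \|d(\phi\omega)\|_{H^k_\nabla(B_1)} + \|d^*(\phi\omega)\|_{H^k_\nabla(B_1)},
\]
so it suffices to bound each of the three summands separately.

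First I would handle the $H^k_\nabla$ term directly from the Leibniz rule for the Levi--Civita connection: $\nabla^i(\phi\omega)$ expands as a finite sum of contractions of $\nabla^j\phi$ with $\nabla^{i-j}\omega$ for $0\le j\le i$. Since each factor $\nabla^j\phi$ is pointwise bounded on $B_1$ in terms of $\|\phi\|_{C^{k}}$, pulling these factors out in $L^\infty$ gives
\[
\|\phi\omega\|_{H^k_\nabla(B_1)} \le C_1(\phi,k)\,\|\omega\|_{H^k_\nabla(B_1)},
\]
with $C_1$ depending only on the covariant derivatives of $\phi$ up to order $k$.

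Next I would treat the $d$ and $d^*$ terms by first commuting them with multiplication by $\phi$. For the differential, $d(\phi\omega) = d\phi\wedge\omega + \phi\, d\omega$, and for the codifferential on a $1$-form one has $d^*(\phi\omega) = \phi\, d^*\omega - \iota_{\nabla\phi}\omega$ (both identities are pointwise and use only that $\phi$ is smooth). Applying the Leibniz-type estimate from the previous paragraph to each of the four resulting terms yields
\[
\|d(\phi\omega)\|_{H^k_\nabla(B_1)} \le C_2(\phi,k)\bigl(\|\omega\|_{H^k_\nabla(B_1)} + \|d\omega\|_{H^k_\nabla(B_1)}\bigr),
\]
\[
\|d^*(\phi\omega)\|_{H^k_\nabla(B_1)} \le C_3(\phi,k)\bigl(\|\omega\|_{H^k_\nabla(B_1)} + \|d^*\omega\|_{H^k_\nabla(B_1)}\bigr),
\]
where both $C_2$ and $C_3$ depend only on the pointwise bounds on $\nabla^j\phi$ for $j\le k+1$ (the extra derivative coming from the $d\phi$ or $\nabla\phi$ factor, which is itself then differentiated up to $k$ times by the Leibniz expansion).

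Adding the three estimates and collecting terms gives the desired inequality with $C = C(\phi,k)$ depending only on $\|\nabla^j\phi\|_\infty$ for $j\le k+1$. The computation is essentially bookkeeping; the only mildly delicate point is tracking the extra derivative of $\phi$ produced by the commutators with $d$ and $d^*$, which is precisely why the constant is allowed to depend on covariant derivatives of $\phi$ up to order $k+1$ rather than only $k$.
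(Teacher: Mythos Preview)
Your proposal is correct and follows essentially the same approach as the paper: both compute the commutators $d(\phi\omega)=d\phi\wedge\omega+\phi\,d\omega$ and $d^*(\phi\omega)=\phi\,d^*\omega-\iota_{\nabla\phi}\omega$ (the paper writes the latter as $\phi\,d^*\omega+g(\nabla\phi,X_\omega)$, which is the same thing up to sign), and then bound every resulting term in $H^k_\nabla$ by iterating the Leibniz rule for $\nabla$ and pulling the $\phi$-derivative factors out in $L^\infty$. The paper spells out the $g(\nabla\phi,X_\omega)$ term a bit more explicitly via Cauchy--Schwarz, but your uniform Leibniz argument covers it just as well.
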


\begin{proof}
Notice that $d\phi\omega = d\phi\wedge\omega + \phi d \omega$ and $d^*\phi\omega = \phi d^*\omega + g(\nabla \phi, X_{\omega}),$ where $X_{\omega}$ is the vector field dual to $\omega$.
As a result, the triangle inequality yields \begin{align*}
||\phi\omega||_{A^k(B_1)} &\leq ||\phi d\omega||_{H^k_{\nabla}(B_1)} + ||\phi d^*\omega||_{H^k_{\nabla}(B_1)}+ ||\phi\omega||_{H^k_{\nabla}(B_1)} \\ &+ ||d\phi\wedge\omega||_{H^{k}_{\nabla}(B_1)} + ||g(X_{\omega},\nabla\phi)||_{H^k_{\nabla}(B_1)}.
\end{align*}

The estimate $||\alpha\wedge\beta||_2\leq ||\alpha||_{\infty}||\beta||_2$ implies $$||d\phi\wedge\omega||_{H^k_{\nabla}(B_1)} \leq C||\omega||_{H^k_{\nabla}(B_1)},$$ where the constant $C$ is given by the sum of the $||\cdot||_{\infty}$-norms of the covariant derivatives of the bump function $\phi$.
The same argument gives for any form $\xi$ that $||\phi\wedge\xi||_{H^k_{\nabla}(B_1)}\leq C ||\xi||_{H^k_{\nabla}(B_1)}$. Applying this estimate to $\phi d\omega, \phi d^*\omega$, and $\phi \omega$ handles all terms in the comparison save for $||g(X_{\omega},\nabla\phi)||_{H^{k}_{\nabla}(B_1)}$.
For this term, notice that $\nabla g(X_{\omega},\nabla\phi) = g(\nabla X_{\omega},\nabla \phi) + g( X_{\omega}, \nabla^2\phi).$ We therefore have the pointwise estimate

\begin{align*}
|\nabla g(X_{\omega},\nabla\phi) | &\leq |g(\nabla X_{\omega},\nabla \phi)| + |g( X_{\omega}, \nabla^2\phi)| \text{ by the triangle inequality,} \\
&\leq |\nabla \omega|^2|\nabla\phi|^2 + |\omega|^2|\nabla^2\phi|^2,
\end{align*}

by applying Cauchy-Schwarz and the musical isomorphism.
Integrating then gives the corresponding inequality for the Sobolev norm $H^1_{\nabla}$. Repeating this calculation for higher order covariant derivatives completes the proof.
\end{proof}

The two previous lemmas combine to give the following statement.
\begin{prop} \label{prop: 3.7} 
Let $B_0 = B_r(p)$ and $B_1= B_{r+\delta}(p)$ be a pair of concentric balls in $\H^n$ Then there is a constant $C = C(r,\delta)$ such that for any $\omega \in H^k_{\nabla}(B_1)$ one has $$||\omega||_{H^k_{\nabla}(B_0)}\leq C||\omega||_{A^{k-1}(B_1)}.$$
\end{prop}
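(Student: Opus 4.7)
The plan is to combine the Gaffney inequality (Lemma 3.5) with the bump function estimate (Lemma 3.6) via a standard cutoff argument. The role of the bump function is to convert an estimate on $\omega$ over $B_0$, where there is no control of boundary behavior, into an estimate on $\phi\omega$ over $B_1$, where $\phi\omega$ extends by zero to the boundary and therefore lies in the subspace $\mathring{H}^k_\nabla \Omega^1(B_1)$ to which Gaffney applies.

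First I would fix a smooth bump function $\phi:\H^n\to[0,1]$ that is identically $1$ on $B_0$ and vanishes in a neighborhood of $\d B_1$. Because $B_0$ and $B_1$ are concentric balls in $\H^n$ separated by distance $\delta$, one can choose $\phi$ so that the pointwise norms $|\nabla^j \phi|$ for $j \leq k+1$ are bounded by a constant depending only on $r$ and $\delta$ (for instance, by pulling back a standard radial bump function in normal coordinates centered at $p$). Given $\omega \in H^k_\nabla \Omega^1(B_1)$, the form $\phi\omega$ vanishes near $\d B_1$ and therefore belongs to $\mathring{H}^k_\nabla \Omega^1(B_1)$.

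Next I would chain the two inequalities. Since $\phi \equiv 1$ on $B_0$, we have
\[
\|\omega\|_{H^k_\nabla(B_0)} = \|\phi\omega\|_{H^k_\nabla(B_0)} \leq \|\phi\omega\|_{H^k_\nabla(B_1)} = \|\phi\omega\|_{\mathring{H}^k_\nabla(B_1)}.
\]
Applying Gaffney's inequality (Lemma \ref{lem: 3.5}) on $B_1$ to $\phi\omega \in \mathring{H}^k_\nabla \Omega^1(B_1)$ yields a constant $C_1 = C_1(r,\delta)$ with
\[
\|\phi\omega\|_{\mathring{H}^k_\nabla(B_1)} \leq C_1 \|\phi\omega\|_{A^{k-1}(B_1)}.
\]
Then applying Lemma \ref{lem: 3.6} with the same cutoff $\phi$ gives a constant $C_2$, depending only on the covariant derivative bounds of $\phi$ up to order $k$ (hence on $r$ and $\delta$), such that
\[
\|\phi\omega\|_{A^{k-1}(B_1)} \leq C_2 \|\omega\|_{A^{k-1}(B_1)}.
\]
Setting $C = C_1 C_2$ produces the claimed estimate.

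The only mildly subtle point is ensuring that the constant in Gaffney's inequality on $B_1$ truly depends only on $r+\delta$ (and not on any additional geometric data of $B_1$ beyond its radius), but since $B_1$ is a ball in the homogeneous space $\H^n$, all such balls of a fixed radius are isometric, so this poses no obstacle. The bump function construction and the application of the two previous lemmas are otherwise routine.
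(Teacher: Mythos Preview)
Your proposal is correct and follows essentially the same approach as the paper: introduce a cutoff $\phi$ equal to $1$ on $B_0$ and vanishing near $\d B_1$, use $\|\omega\|_{H^k_\nabla(B_0)} \leq \|\phi\omega\|_{\mathring{H}^k_\nabla(B_1)}$, apply Gaffney's inequality (Lemma~\ref{lem: 3.5}) to $\phi\omega$, and then invoke Lemma~\ref{lem: 3.6} to pass from $\|\phi\omega\|_{A^{k-1}(B_1)}$ to $\|\omega\|_{A^{k-1}(B_1)}$. Your remark about the Gaffney constant depending only on the radius via homogeneity of $\H^n$ is a nice clarification beyond what the paper states explicitly.
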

\begin{proof}
Let $\sqrt{C}$ be the maximum of the constants from Gaffney’s inequality and Lemma 3.6 with bump function $\phi$. Then for a form $\omega \in H^k_{\nabla}(B_1)$, one has $$||\omega||_{H^k_{\nabla}(B_0)}\leq ||\phi\omega||_{H^k_{\nabla}(B_1)},$$ since $\phi\omega|_{B_0} = \omega.$
Since $\phi\omega$ vanishes on $\d B_1$, Gaffney’s inequality and Lemma 3.6 give $$||\phi\omega||_{H^k_{\nabla}(B_1)} = ||\phi\omega||_{\mathring{H}^k_{\nabla}(B_1)}\leq \sqrt{C} ||\phi\omega||_{A^{k-1}(B_1)}\leq C||\omega||_{A^{k-1}(B_1)}.$$ Combining these two estimates gives the proposition.
\end{proof}

\begin{prop}\label{prop: 3.8} 
Let $M$ be a hyperbolic $n$-manifold with injectivity radius greater than $\e>0$. There is a constant $H(\e) = H >0$ depending only on $\e$ such that if the $L^2$-Hodge decomposition of a smooth 1-form $\omega\in \Omega^1$ has coexact part $\alpha$, then for any point $p\in M$ there is a ball $B\subset M$ centered at $p$ of radius determined by $\e$ such that for $k\geq n$ one has $$|
\nabla \alpha(p)|\leq H\left(||\omega||_{H^k_{\nabla}(B)} + ||\alpha||_{2,B}\right)$$ and consequently $$|
\nabla \alpha(p)|\leq H||\omega||_{H^k_{\nabla}(M)}.$$
\end{prop}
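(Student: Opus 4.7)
The plan is to combine the Cantor-Sobolev embedding (Theorem 2.14) with an iterated application of the Gaffney-type estimate of Proposition 3.7 to bootstrap pointwise control of $\nabla\alpha$ from $L^2$ control of $\omega$, $d\omega$, and $\alpha$ itself. First, I would write the $L^2$-Hodge decomposition $\omega = d\phi + \alpha + h$ on the closed manifold $M$, with $\alpha = d^*\eta$ the coexact part and $h$ harmonic. Because $M$ is closed, a harmonic 1-form is both closed and coclosed, so $dh = 0 = d^*h$, which gives $d\alpha = d\omega$ and $d^*\alpha = 0$. I then fix a nested family of concentric balls $B_0 \subset B_1 \subset \cdots \subset B_k = B$ centered at $p$, each of strictly smaller radius than the next and with outermost radius at most $\e/2$, so that $B$ lifts isometrically to $\H^n$ via the exponential map and Proposition 3.7 applies to each consecutive pair $(B_{j-1}, B_j)$.

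Since $1 + n/2 < k$ whenever $k \geq n \geq 3$, the Cantor-Sobolev inequality gives
$$|\nabla\alpha(p)| \leq C\,||\alpha||_{H^k_\nabla(B_0)}.$$
I then iteratively apply Proposition 3.7 to $\alpha$ across the nested balls. At the $j$th step, taking $m = k - j + 1$ yields
$$||\alpha||_{H^{m}_\nabla(B_{j-1})} \leq C\,||\alpha||_{A^{m-1}(B_j)} = C\left(||\alpha||_{H^{m-1}_\nabla(B_j)} + ||d\alpha||_{H^{m-1}_\nabla(B_j)} + ||d^*\alpha||_{H^{m-1}_\nabla(B_j)}\right).$$
The last two terms collapse to a bound on $||d\omega||_{H^{m-1}_\nabla(B_j)}$ using $d\alpha = d\omega$ and $d^*\alpha = 0$, and this is in turn controlled by $||\omega||_{H^k_\nabla(B)}$ since $d$ is a bounded first-order operator $H^k_\nabla \to H^{k-1}_\nabla$. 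Iterating this $k$ times whittles the $H^{\bullet}_\nabla$-norm of $\alpha$ down to its $L^2$-norm on $B$, producing
$$||\alpha||_{H^k_\nabla(B_0)} \leq C\left(||\omega||_{H^k_\nabla(B)} + ||\alpha||_{2,B}\right),$$
and combining with the Cantor-Sobolev step gives the first claimed inequality.

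The consequent global inequality is then immediate: since $\alpha$ is the $L^2$-orthogonal projection of $\omega$ onto the coexact summand, $||\alpha||_{2,B} \leq ||\alpha||_2 \leq ||\omega||_2 \leq ||\omega||_{H^k_\nabla(M)}$, while trivially $||\omega||_{H^k_\nabla(B)} \leq ||\omega||_{H^k_\nabla(M)}$. The most delicate point is ensuring that all of the $B_j$ nest inside a region small enough that Proposition 3.7 (which is stated in $\H^n$) applies after lifting through the exponential map; this is guaranteed by the choice of radii above. The resulting constant $H$ absorbs the $k$ iterative constants from Proposition 3.7, the Cantor-Sobolev constant, and the operator norm of $d$, and depends only on $\e$ and $n$ once $k$ is fixed, say $k = n$.
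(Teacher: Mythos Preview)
Your proposal is correct and follows essentially the same route as the paper: apply Cantor--Sobolev to pass from $|\nabla\alpha(p)|$ to $\|\alpha\|_{H^k_\nabla}$ on a small ball, then iterate Proposition~3.7 across a nested family of concentric balls, using $d\alpha=d\omega$ and $d^*\alpha=0$ at each step to trade one order of Sobolev regularity on $\alpha$ for control by $\|\omega\|_{H^k_\nabla(B)}$, until only $\|\alpha\|_{2,B}$ remains; the global consequence then follows from orthogonality of the Hodge decomposition. The only cosmetic difference is in the choice of ball radii, and your indexing of the bootstrap is slightly more explicit than the paper's.
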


\begin{proof}

The $L^2$-Hodge decomposition of $M$ determines an orthogonal decomposition of $\omega= \alpha + \eta + h$, where $\alpha = d^*A$, $\eta = db$, and $h$ harmonic. Cantor’s estimate (Theorem \ref{thm: Cantor Sobolev} implies at any point $p$ of $M$, $$|\nabla \alpha (p)| \leq C(r)||\alpha||_{H^k_{\nabla}(B_r(p))}$$ so long as $r<\inj(M)$ and $k > n/2 + 1$. Since we assume dimension $n>2$, any $k\geq n$ suffices.
Take a concentric family of balls $B_i = B_{r + i\delta}(p)$ where $r = 6\e$, and $k\delta + r < 10\e < \inj(M)$; note that $B_0$ contains the 0-star of $\sigma$. Let $\phi_i$ be a bump function that is identically one on $B_i$ and vanishes on $\d B_{i+1}$.

Letting $C_i$ be the maximum of the constant from Cantor’s estimate on the ball $B_i$ and the constant from Proposition \ref{prop: 3.7} for the balls $B_i\subset B_{i+1}$, we have
$$|\nabla \alpha (p)| \leq C_i||\alpha||_{H^k_{\nabla}(B_i)}
   \leq  C_i^2||\alpha||_{A^{k-1}(B_{i+1})}.$$
Since $\alpha$ is coexact, $$||\alpha||_{A^{k-1}(B_{i+1})} = ||\alpha||_{H^{k-1}_{\nabla}(B_{i+1})} + ||d\alpha||_{H^{k-1}_{\nabla}(B_{i+1})}.$$ Notice that $d\alpha = d\omega$, and that $d:H^k_{\nabla}(B_i)\to H^{k-1}_{\nabla}(B_i)$ is a bounded operator, say with operator norm $C$. Thus, we have $$||d\alpha||_{H^{k-1}_{\nabla}(B_{i+1})} \leq C||\omega||_{H^k_{\nabla}}.$$ As a result, we get the estimate $$||\alpha||_{A^{k-1}(B_{i+1})} \leq ||\alpha||_{H^{k-1}_{\nabla}(B_{i+1})} + C||\omega||_{H^{k}_{\nabla}(B_{i+1})}$$
Combining this with the estimate of $\nabla \alpha$ above gives $$|\nabla \alpha(p)|\leq C_i\left(||\omega||_{H^{k}_{\nabla}(B_{i+1})}+||\alpha||_{H^{k-1}_{\nabla}(B_{i+1})}\right).$$
We can repeat argument this using the family of balls $B_i$ to reduce the order of the Sobolev norm of $\alpha$ on the right-hand side until we obtain $$|\nabla \alpha(p)|\leq H\left(||\omega||_{H^k_{\nabla}(B_n)} + ||\alpha||_{2,B_n}\right),$$ where $H$ is obtained by combining all the constants appearing in the iterated calculation. Orthogonality of the Hodge decomposition implies $||\alpha||_2\leq ||\omega||_2$, and we clearly have $||\omega||_{2, B_n}\leq ||\omega||_{H^k_{\nabla}(M)},$ so we are done after increasing $H$ by 1.
\end{proof}

When the form in the previous proposition is a Whitney form, we can make the following refinement.

\begin{prop}\label{prop: 3.9} 
Let $M$ be a hyperbolic $n$-dimensional manifold with a deeply embedded triangulation $K$ and let $f\in C^1(K)$. There is a constant $H(\e) = H >0$ depending only on $\e$ such that if the $L^2$-Hodge decomposition of the generalized Whitney form $W_{\beta}(f)$ has coexact part $\alpha$,  then  $$
||\nabla \alpha||_{\infty}\leq H||W_{\beta}(f)||_2.$$
\end{prop}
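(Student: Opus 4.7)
The plan is to obtain the pointwise $L^\infty$ control on $\nabla\alpha$ directly from the second conclusion of Proposition \ref{prop: 3.8}, using only that the Sobolev norm of a generalized Whitney form on $M$ is globally controlled by its $L^2$ norm. Since Proposition \ref{prop: 3.8} applies to arbitrary smooth 1-forms, the only work is to prove the uniform inequality
\[
||W_\beta(f)||_{H^n_\nabla(M)} \leq R \, ||W_\beta(f)||_2
\]
for a constant $R = R(\e)$; everything else is immediate.

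First I would apply Proposition \ref{prop: 3.8} with $\omega = W_\beta(f)$ and $k = n$. Since the partition of unity $\beta$ is $C^\infty$ (Proposition \ref{prop: partition existence}), the Whitney form $W_\beta(f)$ lies in $H^n_\nabla\Omega^1(M)$, and the proposition gives for every $p \in M$ the estimate
\[
|\nabla\alpha(p)| \leq H \, ||W_\beta(f)||_{H^n_\nabla(M)}.
\]
The right-hand side does not depend on $p$, so taking the essential supremum yields $||\nabla\alpha||_\infty \leq H\, ||W_\beta(f)||_{H^n_\nabla(M)}$, reducing matters to comparing this Sobolev norm to $||W_\beta(f)||_2$.

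To carry out that comparison, I would decompose the global Sobolev norm along the top-dimensional simplices of $K$: because simplices meet only along lower-dimensional faces (measure zero),
\[
||W_\beta(f)||_{H^n_\nabla(M)}^2 = \sum_{\sigma \in K^{(n)}} ||W_\beta(f)|_\sigma||_{H^n_\nabla(\sigma)}^2.
\]
On each top-dimensional simplex, the interior identity $W_\beta(f)|_\sigma = W_\beta(f|_\sigma)|_\sigma$ from the list following Proposition \ref{prop: partition existence}, together with the simplex-local estimate of Lemma \ref{lem: constant R}, gives
\[
||W_\beta(f)|_\sigma||_{H^n_\nabla(\sigma)} \leq R \, ||W_\beta(f)||_{2,\sigma},
\]
where $R = R(\e)$ is universal by compactness of $\mathcal{G}_\e$ and $\mathcal{S}_\e$. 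Summing the squared inequality over all $n$-simplices and using additivity of the squared $L^2$ norm produces $||W_\beta(f)||_{H^n_\nabla(M)}^2 \leq R^2 ||W_\beta(f)||_2^2$.

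Combining the two bounds gives $||\nabla\alpha||_\infty \leq HR\,||W_\beta(f)||_2$, which is the claim after renaming the constant. There is essentially no obstacle: Proposition \ref{prop: 3.8} does the analytic heavy lifting, Lemma \ref{lem: constant R} does the combinatorial-to-analytic translation with $\e$-uniform constant, and the only point requiring care is verifying that the Sobolev norm decomposes additively over top-dimensional simplices, which holds because the $(n-1)$-skeleton of $K$ has measure zero and $W_\beta(f)$ is smooth on each open simplex.
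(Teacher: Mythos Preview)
Your proof is correct and essentially matches the paper's: both invoke Proposition \ref{prop: 3.8} and then bound the Sobolev norm of $W_\beta(f)$ simplex-by-simplex via Lemma \ref{lem: constant R}. The only variation is that you use the global conclusion of Proposition \ref{prop: 3.8} and sum over all $n$-simplices, whereas the paper uses the local (ball) conclusion and sums over the boundedly many simplices meeting that ball; one minor slip is that with the paper's Sobolev norm $\sum_i \|\nabla^i\cdot\|_2$ the squared norm does not literally decompose additively over simplices, but the two versions of the norm differ only by a factor depending on $n$, so this is harmless.
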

\begin{proof}
Let $\omega = W_{\beta}(f)$ be a smooth Whitney form. We will apply Proposition \ref{prop: 3.8} at a point $p$ at which $||\nabla\alpha||_{\infty}$ is realized. Proposition \ref{prop: 3.8} gives a ball $B$ about $p$ of radius depending only on $\e$ such that $$|
\nabla \alpha(p)|\leq H\left(||\omega||_{H^k_{\nabla}(B)} + ||\alpha||_{2,B}\right),$$ for a constant $H$ depending only on $\e$.
The ball $B$ intersects some uniformly bounded collection of $n$-simplices $\sigma’$ from $K$ where the constant depends only on $\e$; let $T = T(\e)$ be this bound. We can therefore estimate the norm of the Whitney form term by $$||\omega||_{H^k_{\nabla}(B)}\leq \sum\limits_{\sigma’\cap B \neq \emptyset} ||\omega|_{\sigma’}||_{H^k_{\nabla}(\sigma’)}.$$
Applying Lemma 2.13 to each summand in the previous estimate then gives
$$ ||\omega||_{H^k_{\nabla}(B)} \leq \sum\limits_{\sigma’\cap B_i \neq \emptyset} ||\omega|_{\sigma’}||_{H^k_{\nabla}(\sigma’)}\leq R\sum\limits_{\sigma’\cap B \neq \emptyset} ||\omega|_{\sigma’}||_2\leq R\sqrt{T}||\omega||_2.$$
Combining the above then gives that $$|\nabla \alpha(p)|\leq HR\sqrt{T}||\omega||_2 + H||\alpha||_{2,B}.$$ Clearly $$H||\alpha||_{2,B}\leq H||\alpha||_2 \leq H||\omega||_2,$$ so that after increasing $H$ to absorb the $R\sqrt{T}$ term we are done.
\end{proof}

\begin{prop}\label{prop: 3.10} 
Let $M$ be a hyperbolic $n$-manifold. Let $\omega\in \Omega^1(M).$ Assume there exists a constant $H$ such that $|\nabla \omega| \leq H$. Then there is a constant $C(H,\e)$ such that $||\omega||_{\infty}\leq C(H,\e) ||\omega||_2.$
\end{prop}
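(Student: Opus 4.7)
The plan is to turn the gradient bound into a Lipschitz control on the scalar function $|\omega|$ via Kato's inequality, then propagate the pointwise supremum into an $L^2$ lower bound over a geodesic ball of size determined by $\inj(M)>\e$ and $H$.

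First, Kato's inequality gives $|d|\omega||\leq|\nabla\omega|\leq H$ almost everywhere, so $|\omega|\colon M\to[0,\infty)$ is $H$-Lipschitz. Let $p\in M$ realize $M_0:=||\omega||_\infty=|\omega(p)|$, and set
\[
r:=\min\bigl\{M_0/(2H),\,\e/2\bigr\}.
\]
The Lipschitz estimate forces $|\omega|\geq M_0/2$ throughout the geodesic ball $B_r(p)$. Since $r\leq\e/2<\inj(M)$, this ball lifts isometrically to a hyperbolic ball of the same radius in $\H^n$, whose volume is bounded below by $c_n r^n$ for a dimensional constant $c_n$ (valid for $r\leq\e/2$). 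Integrating the pointwise lower bound,
\[
||\omega||_2^2\;\geq\;\int_{B_r(p)}|\omega|^2\;\geq\;\frac{M_0^2}{4}\,c_n r^n.
\]

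A case split on which term achieves the minimum in $r$ completes the argument. If $M_0\geq H\e$, then $r=\e/2$ and the displayed inequality immediately yields the linear comparison $M_0\leq (2/\sqrt{c_n(\e/2)^n})\,||\omega||_2$ with constant depending only on $\e$ and $n$. If $M_0<H\e$, then $r=M_0/(2H)$ and the inequality becomes $M_0^{n+2}\leq (2^{n+2}/c_n)\,H^n\,||\omega||_2^2$; combined with the a priori bound $M_0<H\e$, both regimes are absorbed into a single constant $C(H,\e)$ giving the claimed linear bound.

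The main obstacle is the second case, in which a direct application of the ball argument produces only the sublinear estimate $M_0\lesssim H^{n/(n+2)}||\omega||_2^{2/(n+2)}$; to package it as a genuine linear comparison one must exploit the standing upper bound $M_0<H\e$ to trade off powers of $M_0$ against the explicit constant $H\e$, so that the final constant $C(H,\e)$ depends polynomially on $H$ and inversely on $\e$. Apart from this case analysis, the proof is a straightforward combination of Kato's inequality with the hyperbolic volume expansion of a small geodesic ball.
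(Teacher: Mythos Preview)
Your approach mirrors the paper's almost exactly: both invoke Kato's inequality to make $|\omega|$ an $H$-Lipschitz scalar and then compare with a tent-type minorant on a geodesic ball around a point realising the supremum. The paper normalises to $\|\omega\|_\infty=1$ and uses the explicit minorant $\phi(x)=\max(1-Hd(x,p),0)$ on a normal chart, concluding with ``by scaling the unit norm case''; you instead take the constant lower bound $|\omega|\geq M_0/2$ on the ball of radius $M_0/(2H)$ and then split cases according to whether $M_0\geq H\e$.

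The gap is in your second case, and the device you propose cannot close it. From $M_0^{n+2}\leq c\,H^n\|\omega\|_2^2$ together with the \emph{upper} bound $M_0<H\e$ there is no way to extract a linear estimate $M_0\leq C(H,\e)\|\omega\|_2$: trading powers of $M_0$ for powers of $H\e$ would require a \emph{lower} bound on $M_0$ (so that $M_0^{n+2}\geq c'M_0^2$ with $c'$ depending only on $H,\e$), and that is precisely what is missing. Concretely, for small $\delta>0$ take a bump $1$-form $\omega_\delta$ of height $\delta$ supported in a ball of radius comparable to $\delta/H$ with $|\nabla\omega_\delta|\leq H$; then $\|\omega_\delta\|_\infty/\|\omega_\delta\|_2$ is of order $(H/\delta)^{n/2}\to\infty$ as $\delta\to0$, so the inequality as literally stated fails for general $\omega$ and no rearrangement of constants in case~2 will repair it. The paper's argument has the identical defect: scaling $\omega$ to unit sup-norm replaces the hypothesis $|\nabla\omega|\leq H$ by $|\nabla\omega'|\leq H/M_0$, so the constant actually produced is $C(H/M_0,\e)$ rather than $C(H,\e)$. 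You have correctly located the obstacle; what is missing is an honest acknowledgement that the sublinear bound is the best one can get from this hypothesis alone, and that either an additional lower bound on $M_0$ or a reformulated statement is needed.
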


\begin{proof} Assume $||\omega||_{\infty} = 1$ and is realized at the point $p$. By Kato’s inequality and the hypothesis, away from the zeros of $\omega$ one has $|\nabla|\omega||\leq |\nabla \omega| \leq H$. Fix a normal coordinate frame $x_0,\dots,x_{n-1}$ at $p$ of radius $2\e$. Define the function $\phi$ on this normal coordinate neighborhood by $\phi(x) = 1- Hd(x,p)$ for $ d(x,p) < 1/H$ and extend by zero.
Then $||\phi||_{\infty} = ||\omega||_{\infty}$ and $||\phi||_2 \leq ||\omega||_2$.
The claim then holds for $C(H,\e) = 1/||\phi||_2$ by scaling the unit norm case.
\end{proof}
\begin{prop}\label{prop: 3.11} 
There is a constant $C = C(\e)$ such that if $f \in C^1(K)$ and $\omega = W_{\beta}(f)  = \alpha + \eta$ where $\alpha$ is $L^2$-coexact and $\eta$ is closed, then $$||\alpha||_{\infty} \leq C ||\alpha||_2.$$
\end{prop}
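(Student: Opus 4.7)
The plan is to combine the Lipschitz bound on $\alpha$ from Proposition \ref{prop: 3.9} with the $L^\infty$-vs-$L^2$ comparison for forms with bounded gradient from Proposition \ref{prop: 3.10}, using homogeneity in $f$ to absorb the $f$-dependent constant that appears in Proposition \ref{prop: 3.9}.

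More precisely, if $f = 0$ the statement is trivial, so assume $f \neq 0$ and note that $\|W_\beta(f)\|_2 > 0$. Set $\tilde f = f/\|W_\beta(f)\|_2$. Then $W_\beta(\tilde f) = \omega/\|W_\beta(f)\|_2$ has $L^2$-norm equal to $1$, and its $L^2$-Hodge decomposition is
\[
W_\beta(\tilde f) \;=\; \tilde\alpha + \tilde\eta, \qquad \tilde\alpha \;=\; \frac{\alpha}{\|W_\beta(f)\|_2}, \quad \tilde\eta \;=\; \frac{\eta}{\|W_\beta(f)\|_2}.
\]
Applying Proposition \ref{prop: 3.9} to $\tilde f$ yields a uniform pointwise gradient bound
\[
\|\nabla \tilde\alpha\|_\infty \;\leq\; H \, \|W_\beta(\tilde f)\|_2 \;=\; H,
\]
with $H = H(\e)$ independent of $f$. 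This is the key step that makes Proposition \ref{prop: 3.10} usable: without the normalization, the Lipschitz constant of $\alpha$ would depend on $f$, and the constant $C(H,\e)$ appearing in Proposition \ref{prop: 3.10} is not uniform in $H$ (inspection of its proof shows it behaves roughly like $H^{n/2}$ for large $H$).

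With the uniform bound in hand, Proposition \ref{prop: 3.10} applied to the $1$-form $\tilde\alpha$ gives a constant $C = C(H,\e) = C(\e)$ depending only on $\e$ such that
\[
\|\tilde\alpha\|_\infty \;\leq\; C \, \|\tilde\alpha\|_2.
\]
Multiplying both sides by $\|W_\beta(f)\|_2$ cancels the normalizations and yields
\[
\|\alpha\|_\infty \;\leq\; C \, \|\alpha\|_2,
\]
as desired. The main (and only) conceptual point is recognizing the linearity in $f$ of the desired inequality so that the seemingly non-uniform bound from Proposition \ref{prop: 3.9} can be leveraged; no further estimates beyond the cited propositions are needed.
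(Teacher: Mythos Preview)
Your proof is correct and follows essentially the same argument as the paper: normalize so that $\|W_\beta(f)\|_2 = 1$, apply Proposition~\ref{prop: 3.9} to get a uniform bound $\|\nabla\alpha\|_\infty \le H(\e)$, then apply Proposition~\ref{prop: 3.10} and rescale. The only difference is cosmetic ordering; the ideas and the cited ingredients are identical.
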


\begin{proof}
Assume that $||f||_2 = 1$. By Proposition \ref{prop: 3.9}, $||\nabla \alpha ||_{\infty} \leq H||f||_2 = H$.  Proposition \ref{prop: 3.10} gives a constant $C = C(H(\e))$ (so this really just depends on $\e$) such that $||\alpha||_{\infty}\leq C||\alpha||_2$. If $f$ does not have unit $L^2$-norm, then either $f=0$, in which case the result is trivial, or else $f = \lambda f’$ for some unit $L^2$-norm cochain $f’$ and positive number $\lambda$.
The coexact part of $W_{\beta}(f’)$ is $\alpha ‘ = \alpha/\lambda$. Hence, $||\alpha’||_{\infty}\leq C||\alpha’||_2$, and the result follows. \end{proof}

\section{The upper bound}
\label{sec:4}
In this section we prove Theorem A, which states that in a closed hyperbolic 3-manifold $M$ the first positive eigenvalue of the Hodge Laplacian acting on coexact 1-forms is bounded above by a multiple of the stable isoperimetric ratio $\rho(M)$. The background results of this section all hold in any dimension greater than 2, however the proof of Theorem A makes use of Poincar\'e duality to relate 1-forms and surfaces, this forces us to restrict Theorem A to the 3-dimensional setting.

The cochain results of the previous section are connected to spectral geometry via the inner product induced by the Whitney map associated to a triangulation and barycentric coordinate: $$\langle f, g\rangle = \int_M W_{\beta}(f)\wedge\star W_{\beta}(g),$$ which along with the corresponding norm $||\cdot||_2$, determine a Hodge theory for the cochain complex $C^{\bullet}(K)$. This inner product determines a codifferential $$d^*_W:C^{\bullet}(K)\to C^{\bullet-1}(K)$$ which, as the adjoint of the standard differential, satisfies $\langle d f, g\rangle = \langle f , d_W^* g \rangle.$
The corresponding Whitney Laplacian $\Delta_W:C^{\bullet}(K)\to C^{\bullet}(K)$ is then given by the standard formula $\Delta_W = dd_W^*+d_W^*d.$ This inner product was introduced using the standard barycentric coordinates in \cite{Dodziuk}.

This Laplacian decomposes the space $C^{\bullet}(K)$ into harmonic, exact, and coexact components: $$C^{\bullet}(K)\cong H^{\bullet}(M) \oplus dC^{\bullet-1}(K) \oplus d_W^* C^{\bullet+1}(K).$$ This combinatorial Hodge decomposition serves as a good approximation of the $L^2$-Hodge decomposition of $M$, though it does not capture the $L^2$-Hodge decomposition exactly. In particular, the Whitney coexact chains may not be $L^2$-coexact.

We begin by relating the Whitney and the Riemannian coexact eigenvalues.
\begin{lem} \label{lem:4.1}
Let $M$ be a closed Riemannian $n$-manifold with triangulation $K$ and an associated barycentric partition of unity $\beta$. Give the cochain complex the Whitney $L^2$-norm induced by the Whitney map determined by $\beta.$ Likewise, give the chain complex the dual norm $||\cdot||_2^*$ determined by the integration pairing. Then for every coexact cochain $f\in d_W^*C^2(K)$, there is an exact chain $a\in \d C_2(K)$ of unit norm such that $||f||_2 = \int_aW_{\beta}(f).$
\end{lem}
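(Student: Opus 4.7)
The statement is a Riesz-type duality fact, so my plan is to construct the chain $a$ via the natural isomorphisms associated to the Whitney inner product and the integration pairing, then verify each required property in turn. The finite-dimensionality of $C^{\bullet}(K)$ is what makes everything clean.

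First, I would record two nondegenerate pairings on $C^{1}(K)$. On the one hand, the Whitney inner product $\langle \cdot,\cdot\rangle$ gives a Riesz isomorphism $C^{1}(K)\to C^{1}(K)^{*}$. On the other hand, from property (1) of the generalized Whitney map, $\int_{\sigma} W_{\beta}(\delta_{\tau})=\delta_{\sigma\tau}$, so the integration pairing identifies $C_{1}(K)$ with $C^{1}(K)^{*}$. Composing these two identifications produces an isomorphism $\Psi\colon C^{1}(K)\to C_{1}(K)$ characterized by
\[
\int_{\Psi(f)} W_{\beta}(g)=\langle f,g\rangle \qquad \text{for all } g\in C^{1}(K).
\]
Given $f\in d_{W}^{*}C^{2}(K)$ nonzero (the zero case is vacuous), set $a=\Psi(f/\|f\|_{2})$. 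Cauchy--Schwarz applied to $\langle f,g\rangle$ shows the supremum defining $\|a\|_{2}^{*}$ equals $1$, with equality attained at $g=f/\|f\|_{2}$, and plugging $g=f$ into the defining identity gives $\int_{a}W_{\beta}(f)=\langle f,f\rangle/\|f\|_{2}=\|f\|_{2}$. So the first two requirements (unit dual norm and realization of $\|f\|_{2}$) are immediate.

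The only content is showing $a\in\partial C_{2}(K)$. Write $f=d_{W}^{*}h$ for some $h\in C^{2}(K)$. For any cocycle $g\in Z^{1}(K)$,
\[
\int_{a}W_{\beta}(g)=\frac{\langle f,g\rangle}{\|f\|_{2}}=\frac{\langle d_{W}^{*}h,g\rangle}{\|f\|_{2}}=\frac{\langle h,dg\rangle}{\|f\|_{2}}=0.
\]
Since $\int_{a}W_{\beta}(g)=g(a)$, this says that $a$ annihilates $Z^{1}(K)$ under the natural cochain-chain pairing. Over the field $\mathbb{R}$, standard chain complex linear algebra gives $B_{1}(K)=(Z^{1}(K))^{\perp}$ inside $C_{1}(K)$ (the inclusion $B_{1}\subset (Z^{1})^{\perp}$ is the Stokes-type computation $\langle\partial b,g\rangle=\langle b,dg\rangle=0$, and the reverse inclusion follows from a dimension count using $\dim H_{1}=\dim H^{1}$). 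Therefore $a\in B_{1}(K)=\partial C_{2}(K)$, completing the proof.

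The only potential subtlety is the duality $B_{1}=(Z^{1})^{\perp}$; I expect to just cite it as a standard fact about chain complexes of finite-dimensional vector spaces, since no geometric input beyond the Whitney pairing and the definition of $d_{W}^{*}$ as the adjoint of $d$ enters. No further estimates from Sections 2 or 3 are needed for this lemma.
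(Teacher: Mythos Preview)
Your proof is correct and uses essentially the same ingredients as the paper: the identification $\partial C_2(K)=\mathrm{Ann}(Z^1(K))$ via Stokes and a dimension count, together with the finite-dimensional duality. The only organizational difference is that the paper phrases the argument abstractly---identifying $d_W^*C^2(K)\cong C^1(K)/Z^1(K)$, taking duals to get $(d_W^*C^2(K))^*\cong\partial C_2(K)$, and then invoking double duality to realize $\|f\|_2$ as a supremum over unit-norm exact chains---whereas you construct $a$ explicitly via the Riesz map $\Psi$ and verify each property directly.
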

\begin{proof}

The cochain Hodge decomposition from the Whitney inner product gives the orthogonal decomposition $$C^1(K) = H^1(M) \oplus d_W^*C^2(K) \oplus dC_0(K).$$ Let $Z^1(K) = H^1(M)\oplus d C_0(K)$. Identify $C_1(K)$ with $C^1(K)^*$ via the integration pairing. The composition of the inclusion and quotient map determines an isomorphism $d_W^*C^2(K) \to C^1(K)/Z^1(K) $ that allows us to identify these spaces. If $\text{Ann}$ assigns to a subspace its annihalator, then there is also an isomorphism $(C^1(K)/Z^1(K))^*\to \text{Ann}(Z^1(K)).$ By Stokes’ theorem and dimension counting, $\text{Ann}(Z^1(K)) = \d C_2(K)$.
Thus, the dual of $d_W^*C^2(K)$ is exactly $\d C_2(K)$. The dual norm of an element $a \in \d C_2(K)$ is given by $$||a||_2^* = \sup\limits_{\substack{f\in C^1(K)\\||f||_2\leq 1}}\int_a W_{\beta}(f).$$
If $f$ has unit $L^2$-norm and $f = g + h$ where $g\in d_W^*C^2(K)$ and $h\in Z^1(K)$, then orthogonality implies $||g||_2\leq 1.$ Whence,
$$||a||_2^* = \sup\limits_{\substack{f = g + h\in C^1(K)\\||f||_2\leq 1}}\int_a W_{\beta}(g) = \sup\limits_{\substack{g\in d^*_WC^2(K)\\||g||_2\leq 1}}\int_a W_{\beta}(g).$$ The isometric identification of $(d^*_WC^2(K),||\cdot||_2)$ with its double dual therefore implies we can compute the norm of an element $f\in d^*_WC^2(K)$ via the integration pairing integrating only against chains in $\d C_2(K)$:
$$||f||_2= \sup\limits_{\substack{a \in \d C_2(K)\\||a||_2^* = 1}}\int_aW_{\beta}(f).$$ In particular, for any coexact cochain $f\in d^*_WC^1(K)$, there exists an exact chain $a$ with $||a||^*_2=1$ and $$\int_aW_{\beta}(f) = ||f||_2.$$
\end{proof}

\begin{prop} \label{prop:4.2}
Let $\lambda$ denote the first eigenvalue for the Hodge Laplacian acting on coexact $1$-forms and let $\lambda_W$ denote the first eigenvalue for the Whitney Laplacian acting on coexact 1-cochains associated to a deeply embedded triangulation $K$ and barycentric partition of unity $\beta$. There is a constant $G = G(\e)$ such that $$\lambda \leq G \vol(M)\lambda_W.$$
\end{prop}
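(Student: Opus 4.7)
The plan is to test the variational characterization of $\lambda$ on the coexact part of the smoothed Whitney image of a Whitney eigenform. Let $f\in d_W^*C^2(K)$ be a first eigenform for $\Delta_W$ with $\|f\|_2=1$, and write the $L^2$-Hodge decomposition of $W_\beta(f)$ as $W_\beta(f)=\alpha+\eta+h$, with $\alpha$ coexact, $\eta$ exact, and $h$ harmonic. Since $W_\beta$ is a chain map, $dW_\beta(f)=W_\beta(df)$, and because $\eta$ and $h$ are closed, $d\alpha=dW_\beta(f)=W_\beta(df)$; in particular $\|d\alpha\|_2^2=\|df\|_2^2=\lambda_W\|f\|_2^2=\lambda_W$. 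The Rayleigh characterization gives
\[
\lambda\leq \frac{\|d\alpha\|_2^2}{\|\alpha\|_2^2}=\frac{\lambda_W}{\|\alpha\|_2^2},
\]
so the proposition reduces to the lower bound $\|\alpha\|_2^2\geq (G\vol(M))^{-1}$.

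To produce this lower bound I would exploit the dual pairing built in Lemma \ref{lem:4.1}: choose an exact chain $a\in\partial C_2(K)$ with $\|a\|_2^*=1$ realizing $\|f\|_2=\int_a W_\beta(f)$. Since $a$ is a boundary, Stokes' theorem annihilates the exact part $\eta$ and, because $h$ is closed, also the harmonic part $h$, leaving
\[
1=\|f\|_2=\int_a W_\beta(f)=\int_a\alpha.
\]
Bounding this integral by the $\ell^1$-norm of $a$ times the $L^\infty$-norm of $\alpha$ times the uniform upper bound on the length of simplices in the deeply embedded triangulation gives
\[
1=\Bigl|\int_a\alpha\Bigr|\leq 2\epsilon_0\,\|a\|_G\,\|\alpha\|_\infty.
\]
Now Proposition \ref{prop: 3.1} applied to the exact chain $a$ yields $\|a\|_G\leq B\sqrt{\vol(M)}\|a\|_2^*=B\sqrt{\vol(M)}$, while Proposition \ref{prop: 3.11} controls $\|\alpha\|_\infty\leq C\|\alpha\|_2$. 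Substituting, we obtain
\[
1\leq 2\epsilon_0 BC\sqrt{\vol(M)}\,\|\alpha\|_2,
\]
and hence $\|\alpha\|_2^2\geq (2\epsilon_0 BC)^{-2}\vol(M)^{-1}$. Combining with the Rayleigh bound above gives $\lambda\leq G\vol(M)\lambda_W$ with $G=4\epsilon_0^2B^2C^2$, a constant depending only on $\epsilon$.

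The main obstacle is the lower bound on $\|\alpha\|_2$: a priori the smoothed Whitney form $W_\beta(f)$ could have most of its $L^2$ mass in the exact or harmonic subspaces, and $\alpha$ could be much smaller than $f$. The resolution is that $f$ is \emph{Whitney coexact}, which via Lemma \ref{lem:4.1} forces a chain $a$ that tests the norm of $f$ purely through $\alpha$; the estimates of Section \ref{sec:3}, particularly the pointwise control of Proposition \ref{prop: 3.11} (which is precisely why we use the smoothed Whitney map instead of the piecewise-smooth one), then transfer this pairing into an $L^2$ lower bound at the cost of a single factor of $\vol(M)$.
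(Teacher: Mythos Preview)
Your argument is correct and is essentially identical to the paper's proof: both test the Rayleigh quotient on the coexact part $\alpha=p(W_\beta(f))$ of the smoothed Whitney image of a first eigen-cochain, use Lemma~\ref{lem:4.1} to find an exact chain $a$ with $\int_a W_\beta(f)=\|f\|_2$, observe that only $\alpha$ survives this pairing, and then combine Proposition~\ref{prop: 3.1} (to bound $\|a\|_G$ by $B\sqrt{\vol(M)}$) with Proposition~\ref{prop: 3.11} (to replace $\|\alpha\|_\infty$ by $C\|\alpha\|_2$) to obtain the required lower bound on $\|\alpha\|_2$. The only cosmetic differences are your normalization $\|f\|_2=1$ and your explicit use of $2\epsilon_0$ for the edge-length bound where the paper writes $E$.
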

\begin{proof}

The main issue here is that a Whitney coexact cochain will not generally map to an $L^2$-coexact form. This potentially adds a closed term to the denominator in the Whitney Rayleigh quotient, causing the Whitney Rayleigh quotient to be smaller than the Riemannian Rayleigh quotient. However, this failure can be controlled.

Let $f$ be a coexact eigen-cochain with eigenvalue $\lambda_W$. Set $\omega = W_{\beta}(f)\in\Omega^1(M)$, so that $\frac{||d\omega||_2^2}{||\omega||_2^2} = \lambda_W$. Let $p:\Omega^1(M)\to \Omega^1(M)$ be the $L^2$-orthogonal projection onto coexact forms. Let $a\in C_1(K)$ be the unit norm exact chain that realizes the norm of $f$ by integration given by Lemma 4.1. Then using that $d\omega = d(p(\omega))$ and the fact $a$ is exact, we obtain $$||f||_2 = ||\omega||_2 = \int_{a}\omega = \int_{a}p(\omega).$$

Using 3.11, we have $||p(\omega)||_{\infty}\leq C||f||_2.$ Hence, $||f||_2\leq C\text{len}(a)||p(\omega)||_2.$ We can therefore obtain a lower bound on $||p(\omega)||_2$ by bounding $\text{len}(a)$ from above. Applying Proposition \ref{prop: 3.1} to the chain $a$ above gives $$\mathcal ||a||_G\leq B \sqrt{\vol(M)} ||a||_2^* =  B \sqrt{\vol(M)}.$$ Since the lengths of the edges in the triangulation are bounded, we conclude the length of the support of $a$ is bounded.
Take $E$ to be the length of the largest edge possible in a deeply embedded triangulation, so that $\text{len}(a)\leq BE\sqrt{\vol(M)}$
Then we have obtained $$||\omega||_2 = \int_{a} \omega = \int_{a}p(\omega) \leq ||p(\omega)||_2  BCE\sqrt{\vol(M)}.$$ Setting $G = ( BCE)^2$ and using that $\omega$ is a Whitney eigenform, we obtain the result by the following short computation:
\[
 \lambda \leq \frac{||d\omega||_2^2}{||p(\omega)||_2^2}
 \leq G\vol(M)\frac{||d\omega||_2^2}{||\omega||_2^2}
 = G\vol(M)\lambda_W.
\]
\end{proof}

\begin{remark}
Note that the above estimate in fact holds for the first positive eigenvalue since the first positive eigenvalue $\lambda$ is the minimium of the first  eigenvalue of the Laplacian acting on functions and the first eigenvalue of the Laplacian acting on coexact 1-forms. The first eigenvalue $\lambda_f$ for the Laplacian acting on functions automatically satisfies the comparison $\lambda_f \leq \lambda_W$, as can be seen by studying the Rayleigh quotient and noticing that the estimate above controlling the projection in the denominator is immaterial in the function case.
\end{remark}

We are now ready to introduce stable commutator length, a thorough reference for which is \cite{Calegari}. For a group  $\G$, let  $\Gamma’$ denote the commutator subgroup and define the rational commutator subgroup to be $$\G_{\Q}’ = \text{Ker}(\Gamma\to \Gamma^{ab}\otimes \Q).$$ Note that when $\G$ is the fundamental group of a manifold, these subgroups correspond to the integrally nullhomologous and rationally nullhomologous loops respectively. The commutator length of an element $\gamma\in \Gamma’$, denoted ${\tt{cl}}(\gamma)$ is the shortest word length of $\gamma$ with respect to the generating set of all commutators.
The stable commutator length for $\gamma\in \G’_{\Q}$ is then defined to be \[\scl(\gamma) = \inf\limits_{m\geq 1}\frac{{\tt{cl}}(\gamma^m)}{m}.\] Topologically, stable commutator length corresponds to the stable complexity of a surface bounding a nullhomologous curve. In particular, for $\gamma\in \Gamma’_{\Q}$, one has $$\scl(\gamma) = \inf\left\{\frac{\chi_-(S)}{2m}~:~S \text{ with }\d S = \gamma^m \text{ and $S$ with no closed components}\right\},$$
where for a connected surface $S$ we define $\chi_-(S) = \max\{0,-\chi(S)\}$, and extend this additively to disconnected surfaces.
There is another natural complexity measure for loops in $\G’_\Q$, the Gersten filling norm. For a loop $\gamma\in \G’_\Q$, $\fill(\gamma)$ is the infimum of the Gromov norm $\frac{||A||_G}{m}$ for all singular 2-chains $A$ bounding a 1-cycle representing a singular fundamental class of $\gamma^m$. A fundamental theorem of Bavard relates the filling norm to the stable commutator length.

\begin{thm} \label{thm:4.3} (\cite{Bavard}) For any group element $\gamma$, there is an equality: \[\scl(\gamma) = 4\fill(\gamma).\]
\end{thm}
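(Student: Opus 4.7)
The plan is to split the proof into two inequalities, $4\fill(\gamma)\leq\scl(\gamma)$ and $\scl(\gamma)\leq 4\fill(\gamma)$, each of which requires a different approach.

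For the first inequality, I would start with a connected admissible surface $S$ bounding $\gamma^m$ whose $-\chi(S)$ is close to the infimum defining $2m\,\scl(\gamma)$. Pushing the relative fundamental class $[S,\d S]$ into the target space via the defining map of $S$ produces a singular 2-chain whose boundary is a fundamental cycle for $\gamma^m$. Using Gromov's computation of the relative simplicial volume of hyperbolic surfaces, or equivalently the existence of a sequence of efficient simplicial representatives with real coefficients, the Gromov norm of this 2-chain can be taken arbitrarily close to the relative simplicial volume $\|S,\d S\|$, which in turn is proportional to $|\chi(S)|$. Passing to the infimum over admissible surfaces yields the desired upper bound on $\fill(\gamma)$ with the correct constant.

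For the reverse inequality, the naive attempt of turning a filling 2-chain back into a bounding surface runs into serious trouble: a general filling chain has real coefficients, admits cancellations that do not correspond to a coherent gluing pattern, and carries no intrinsic manifold structure. The standard workaround, originally due to Bavard, is to pass through the dual theory of quasimorphisms in two steps. First, one establishes the quasimorphism form of Bavard duality, $\scl(\gamma)=\tfrac{1}{2}\sup_q q(\gamma)/D(q)$, by a Hahn-Banach argument: viewing commutator length as a seminorm on the commutator subgroup, one separates $\gamma$ from balls of small commutator length by a linear functional, which then extends to a homogeneous quasimorphism with controlled defect. Second, one identifies the defect norm on homogeneous quasimorphisms with the predual of the filling norm via the natural pairing $(q,A)\mapsto \delta q(A)$ between the bounded 2-cocycle $\delta q$ and a filling 2-chain $A$ for $\gamma^m$; the defect of $q$ bounds the pairing and matches the filling norm up to the stated constants after rescaling.

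The main obstacle is this second direction, which is nonconstructive at its heart: the Hahn-Banach step only produces an existence statement for the witnessing quasimorphism with no explicit description. Since no new ingredients beyond the classical theorem are required here, I would not reprove Bavard's result from scratch but rather invoke the argument from Calegari's monograph \cite{Calegari}, Chapter 2. The virtue of the filling-norm formulation in our setting is that it puts $\scl$ in direct contact with the combinatorial cochain techniques developed in Sections 2 and 3, which is what will be exploited in the proof of Theorem A.
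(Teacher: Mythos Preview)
Your proposal is correct and aligns with the paper's approach: the paper does not reprove Bavard's theorem either but simply refers the reader to Lemma 2.69 in \cite{Calegari}, exactly as you conclude you would do. Your outlined sketch of the two inequalities (surfaces and relative simplicial volume for one direction, Hahn--Banach and quasimorphism duality for the other) is the standard argument found in that reference, so there is nothing to add.
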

For proof, see for instance Lemma 2.69 in \cite{Calegari}.

\begin{remark} Let $B(\Gamma)$ be the $\R$-vector space of 1-boundaries. Then stable commutator length can be extended to a psuedo-norm on $B(\Gamma)$. After identifying chains with vanishing psuedo-norm, Bavard duality, which relates the filling norm to quasimorphisms and their defect norm, becomes a genuine functional analytic duality theorem. One could define the stable isoperimetric ratio in this chain setting, and the results of this paper would go through for that (smaller) ratio as well.
\end{remark}

We can now prove the main theorem of this section.

\begin{mainthm}\label{thm:A} Let $M$ be a closed hyperbolic 3-manifold  with injectivity radius bound below by $\e$.  There is a constant $A = A(\e)$ that only depends on $\e$ such that for any nontrivial boundary $\gamma \in \Gamma’_{\Q}$, one has
$$\sqrt{\lambda} \leq A \vol(M)\frac{|\gamma|}{\scl(\gamma)},$$
 where $\lambda$ is the first coexact eigenvalue of the Hodge Laplacian on $\Omega^1(M)$.
\end{mainthm}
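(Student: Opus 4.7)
By Proposition \ref{prop:4.2}, $\sqrt{\lambda}\leq \sqrt{G\,\vol(M)\,\lambda_W}$, so it suffices to prove $\sqrt{\lambda_W}\leq C\sqrt{\vol(M)}\,|\gamma|/\scl(\gamma)$ for a constant $C=C(\e)$; taking $A=C\sqrt{G}$ then gives the theorem. The plan is to construct a test coexact 1-cochain for the Whitney Laplacian whose Rayleigh quotient is small, by Poincar\'e-dualizing a cellular filling 2-chain for a cellular approximation of $\gamma$.

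By Proposition \ref{prop: length comparison}, approximate $\gamma$ by a cellular 1-cycle $c\in C_1(K^*)$ with $\|c\|_G\leq L|\gamma|$. Since $c$ is homotopic to $\gamma$, it is rationally nullhomologous, so there exist $m\geq 1$ and $B\in C_2(K^*;\Z)$ with $\partial B=mc$. Set $g:=\Phi^{-1}(B)\in C^1(K;\Q)$ and $h:=\Phi^{-1}(c)\in C^2(K)$; by Proposition \ref{prop: 3.3}, $dg=\pm mh$ and Poincar\'e duality preserves Gromov norm, so $\|g\|_G=\|B\|_G$ and $\|h\|_G=\|c\|_G$. The numerator of the Rayleigh quotient is then immediate via Propositions \ref{prop: 3.2} and \ref{prop: length comparison}:
$$\|dg\|_2 = m\|h\|_2\leq mD\|h\|_G = mD\|c\|_G\leq mDL|\gamma|.$$

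For the denominator, the coexact part $g^\perp$ of $g$ is the $L^2$-minimizer among all $g'\in C^1(K;\R)$ with $dg'=mh$. Any such $g'$ equals $\Phi^{-1}(B')$ with $\partial B'=mc$, so $\|g'\|_G = \|B'\|_G \geq m\,F(c)$, where $F(c):=\inf\{\|B''\|_G:B''\in C_2(K^*;\Q),\ \partial B''=c\}$ denotes the rational cellular filling norm of $c$. Combining with Proposition \ref{prop: 3.1} gives $\|g'\|_2 \geq mF(c)/(B\sqrt{\vol(M)})$, and passing to the infimum yields $\|g^\perp\|_2\geq mF(c)/(B\sqrt{\vol(M)})$. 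The crux is to show $F(c)$ is bounded below by a multiple of $\scl(\gamma)$: every cellular filling of $mc$ in $K^*$ is a particular singular 2-chain in $M$, and subtracting $m$ copies of the straightened homotopy cylinder $H$ between $\gamma$ and $c$, whose Gromov norm satisfies $\|H\|_G\leq C_H|\gamma|$, turns it into a singular filling of $m\gamma$; Bavard duality (Theorem \ref{thm:4.3}), namely $\fill(\gamma)=\scl(\gamma)/4$, then yields $F(c)\geq \scl(\gamma)/4-C_H|\gamma|$.

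Plugging into the Rayleigh inequality,
$$\sqrt{\lambda_W}\leq \frac{\|dg\|_2}{\|g^\perp\|_2}\leq \frac{4DLB\sqrt{\vol(M)}\,|\gamma|}{\scl(\gamma)-4C_H|\gamma|},$$
which in the regime $\scl(\gamma)\geq 8C_H|\gamma|$ gives the desired estimate with concrete constant $C=8DLB$; in the complementary regime the ratio $|\gamma|/\scl(\gamma)$ is bounded below by $1/(8C_H)$ and $\sqrt{\lambda}$ is crudely bounded above in terms of $\e$ and $\vol(M)$ via the uniformly bounded operator norm of the Whitney Laplacian, allowing the inequality to be satisfied by enlarging $A$. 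The principal obstacle is the comparison $F(c)\geq \scl(\gamma)/4-C_H|\gamma|$: carefully straightening the thin homotopy strip between $\gamma$ and $c$ inside $M$ and controlling its Gromov norm uniformly in the injectivity radius, as well as ensuring that restricting to cellular fillings in $K^*$ rather than general singular 2-chains does not inflate the filling norm beyond a controlled constant factor.
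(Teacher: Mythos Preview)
Your overall strategy matches the paper's: reduce to the Whitney Laplacian via Proposition~\ref{prop:4.2}, cellularize $\gamma$ in $K^*$, pass through Poincar\'e duality to obtain a coexact test 1-cochain, and control its Rayleigh quotient using Bavard's theorem. The paper proceeds slightly more directly: with $a\in C_1(K^*)$ the fundamental cycle of $c$, it takes the unique $\omega\in d_W^*C^2(K)$ with $d\omega=\Phi^{-1}(a)$, sets $A=\Phi(\omega)\in C_2(K^*)$ (so $\partial A=a$), and chains
\[
\scl(\gamma)=4\fill(\gamma)\leq 4\|\tau(A)\|_G\leq 4N\|A\|_G=4N\|\omega\|_G\leq 4NB\sqrt{\vol(M)}\,\|\omega\|_2\leq \frac{4NB\sqrt{G}\,\vol(M)}{\sqrt{\lambda}}\,\|d\omega\|_2,
\]
finishing with $\|d\omega\|_2\leq D\|c\|_G\leq DL|\gamma|$. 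Your projected cochain $g^\perp$ is exactly this $\omega$ (up to the extraneous factor $m$), so the two arguments coincide in substance.

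The detours in your version are unnecessary and contain a small gap. First, what you flag as the ``principal obstacle'' is not one: since $c$ and $\gamma$ are freely homotopic, $\scl(c)=\scl(\gamma)$ and hence $\fill(c)=\fill(\gamma)$, so any singular filling of a fundamental cycle of $c$ already witnesses the upper bound for $\fill(\gamma)$. There is no need for the homotopy cylinder $H$, no subtractive term $-C_H|\gamma|$, and therefore no complementary-regime case split. Second, you assert that a cellular 2-chain in $K^*$ ``is a particular singular 2-chain in $M$'', but the 2-cells of $K^*$ are polygons, not simplices; one must first apply the subdivision map $\tau$ of Proposition~\ref{prop: 3.4}, at the cost of a factor $N$ in Gromov norm. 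With these two corrections your estimate becomes simply $F(c)\geq \scl(\gamma)/(4N)$, and the argument collapses to the paper's single chain of inequalities.
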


\begin{proof}
First note that since stable commutator length and geodesic length are both multiplicative under powers, it suffices to show the claim for an integrally nullhomologous loop $\gamma$.

Fix a deeply embedded triangulation $K$ of $M$ and denote by $\lambda_W$ the first eigenvalue of the Whitney Laplacian $\Delta_W$ acting on $d^*_WC^2(K)$ associated to a smooth barycentric partition of unity. Notice that the Hodge decomposition ensures that zero is not an eigenvalue of this operator. Let $c:S^1\to M$ be a cellular path in the 1-skeleton of $K^*$ representing the loop $\gamma$, constructed as in Proposition  \ref{prop: length comparison}.
Let $T$ be a triangulation of $K^*$. Let $a\in C_1(K^*)$ be the fundamental cycle for $\gamma$ corresponding to the path $c$ in $C_1(K^*)\subset C_1(T)\subset C_1^{\text{sing}}(M)$.
If $\Phi:C^2(K)\to C_1(K^*)$ is the Poincar\'e duality map, then $\Phi^{-1}(a)$ is an exact 2-cochain. We can therefore choose $\omega\in d^*_WC^2(K)$ with $d\omega = \Phi^{-1}(a)$. Setting $A = \Phi(\omega)$ in $C_2(K^*)$, we have $\d A = a$ and $||A||_G = ||\omega||_G.$
Since $\lambda_W$ is nonzero, we have $||\omega||_2 \leq \frac{||d\omega||_2}{\sqrt{\lambda_W}}$.  Proposition \ref{prop:4.2} implies  \[||\omega||_2\leq \frac{\sqrt G\sqrt{\vol(M)} ||d\omega||_2}{\sqrt{\lambda}}.\] By Bavard’s theorem relating the filling norm to stable commutator length (Theorem 4.3), our choice of $A$, and Proposition \ref{prop: 3.4} we find that
$$\scl(\gamma) = 4\fill(\gamma) \leq 4||\tau(A)||_G \leq 4N||A||_G = 4N||\omega||_G,$$ where, as in Proposition \ref{prop: 3.4}, $\tau$ is the triangulation map relating the cellular chain $A$ to the subdivided simplicial chain in $C_2(T)$. Consequently,
\begin{align*}
                  \scl(\gamma) &\leq 4N ||\omega||_G\\
                 &\leq 4NB\sqrt{\vol(M)}||\omega||_2 ~\text{by Proposition \ref{prop: 3.1},}\\
                 &\leq  4NB\sqrt G \vol(M)\frac{||d\omega||_2}{\sqrt{\lambda}} ~\text{by above computation,}\\
                 &\leq 4NB\sqrt G \vol(M)\frac{D||d\omega||_G}{\sqrt{\lambda}}~\text{by Proposition  \ref{prop: 3.2},} \\
                 &=4NB\sqrt G \vol(M)\frac{D||\d  A||_G}{\sqrt{\lambda}}~\text{by Proposition  \ref{prop: 3.3},} \\
                 &=  4NB\sqrt G \vol(M)\frac{D||c||_G}{\sqrt{\lambda}} \text{ by construction of $\d A$,} \\
                 &\leq 4NB\sqrt G \vol(M)\frac{DL|\gamma|}{\sqrt{\lambda}}~\text{by Proposition  \ref{prop: length comparison},} \\
                 &= 4NB\sqrt G DL\vol(M)\frac{|\gamma|}{\sqrt{\lambda}}.
\end{align*}
Setting $A=4NB\sqrt G DL$ and rearranging, we are done.
\end{proof}

\section{The lower bound}
\label{sec:5}

We now turn to proving the lower bound on the first coexact eigenvalue of the 1-form Laplacian that constitutes Theorem B. Unlike Theorem A, we prove this eigenvalue comparison without a dimension constraint. The line of proof follows that of Theorem 1.3 in \cite{LS}.

In \cite{LS}, the authors obtain the following estimate controlling the $L^2$-norm of coclosed forms.  Note that this estimate does not depend on the fundamental domain coming from a deeply embedded triangulation.

\begin{prop} \label{prop:5.1} (Proposition 5.4 in \cite{LS})
Let $\eta$ be a 1-form on $M$ and $\mathcal D\subset \H^n$ any fundamental domain. Then,
$$||\eta||_2^2 \leq \emph{Area}(\d \mathcal D)||\eta||_{\infty}\left(3\pi||d\eta||_{\infty} + \max_i\left|\int_{\gamma_i}\eta\right|\right) + \frac{1}{2}||d\eta||_{\infty}||\eta||_2\sqrt{\vol(M)},$$
where the $\gamma_i$ are the geodesics in the homotopy class of the loops representing the side pairing transformations of the fundamental domain $\mathcal D$.
\end{prop}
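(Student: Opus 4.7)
The plan is to exploit the contractibility of a fundamental domain $\mathcal{D}\subset\H^n$ to rewrite $\eta$ (pulled back to $\mathcal{D}$) as a primitive plus a $d\eta$-controlled error, then apply Stokes' theorem to transfer the $L^2$-norm calculation to a boundary integral along $\partial\mathcal{D}$ and absorb the remaining terms into Cauchy--Schwarz estimates. Concretely, fix a basepoint $p_0\in\mathcal{D}$ and let $K$ denote the Poincar\'e cone-homotopy operator built from the geodesic rays out of $p_0$; it satisfies $dK+Kd=\mathrm{id}$. Applied to $\eta$, this yields $\eta=d\psi+K(d\eta)$ on $\mathcal{D}$, where $\psi:=K\eta$ is the path-integral primitive. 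Pointwise one has $|\psi(q)|\leq\|\eta\|_\infty\, d(p_0,q)$, and, after a standard hyperbolic-space estimate on the action of $K$ on a $2$-form, the sharper pointwise bound $|K(d\eta)|\leq\tfrac12\|d\eta\|_\infty$ that accounts for the constant $\tfrac12$ in the claim.

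Expanding,
\[
\|\eta\|_2^2 \;=\; \int_{\mathcal{D}} d\psi\wedge\star\eta \;+\; \int_{\mathcal{D}} K(d\eta)\wedge\star\eta.
\]
The second term is bounded by Cauchy--Schwarz by $\|K(d\eta)\|_2\|\eta\|_2\leq\tfrac12\|d\eta\|_\infty\sqrt{\vol(M)}\,\|\eta\|_2$, which is exactly the last summand of the claimed inequality. For the first term, the Leibniz rule $d(\psi\star\eta)=d\psi\wedge\star\eta+\psi\,d\star\eta$ and Stokes' theorem give
\[
\int_{\mathcal{D}} d\psi\wedge\star\eta \;=\; \int_{\partial\mathcal{D}}\psi\,\star\eta \;-\; \int_{\mathcal{D}}\psi\,d\star\eta,
\]
and the interior error should either be absorbed by a symmetric cone construction applied to $\star\eta$ (replacing $\psi\,d\star\eta$ by a new boundary term of the same type) or rewritten so only $d\eta$ appears, since $\|\eta\|_2^2$ is symmetric in $\eta$ and $\star\eta$.

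The decisive step is the analysis of the boundary integral. The facets of $\partial\mathcal{D}$ are paired by side identifications $g_i\in\pi_1(M)$; on identified points $q\leftrightarrow g_iq$, the form $\star\eta$ descends to $M$ while $\psi$ picks up the discrepancy $\psi(q)-\psi(g_iq)$. By construction this discrepancy equals the integral of $\eta$ over the concatenation of the radial geodesic from $p_0$ to $q$, a short arc realizing $g_i$, and the radial geodesic from $g_iq$ back to $p_0$; deforming this loop to the closed geodesic $\gamma_i$ in its homotopy class and applying Stokes to the resulting filling disk, the discrepancy becomes $\int_{\gamma_i}\eta$ plus an error bounded by $\|d\eta\|_\infty$ times the area of the filling disk. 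Since each filling geodesic triangle in $\H^n$ has area at most $\pi$ and at most three such triangles suffice, the correction is at most $3\pi\|d\eta\|_\infty$. Integrating over $\partial\mathcal{D}$ and using $|\star\eta|\leq\|\eta\|_\infty$ then produces the stated contribution $\mathrm{Area}(\partial\mathcal{D})\,\|\eta\|_\infty\bigl(3\pi\|d\eta\|_\infty+\max_i|\int_{\gamma_i}\eta|\bigr)$.

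The main obstacle is the side-pairing bookkeeping on $\partial\mathcal{D}$: one must check that the discrepancy $\psi(q)-\psi(g_iq)$ really can be written, uniformly in $q$, as an integral along (a representative of) $\gamma_i$ with only a $3\pi\|d\eta\|_\infty$ error, and that the filling disks can be chosen coherently as $q$ varies over a facet so that their areas integrate to a constant times $\mathrm{Area}(\partial\mathcal{D})$. Once this geometric lemma is in hand, the rest of the proof is routine Cauchy--Schwarz together with the pointwise estimates on $\psi$ and $K(d\eta)$ obtained from the cone-operator formulas.
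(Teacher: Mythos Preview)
First, note that the paper does not itself prove this proposition; it is quoted from \cite{LS} and used as a black box. So there is no ``paper's own proof'' to compare against here.

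That said, your outline has a genuine gap, and it is tied to an issue with the statement as recorded in the paper. As written, the inequality is claimed for an \emph{arbitrary} 1-form $\eta$, but this is false: take $\eta=df$ exact with $f$ nonconstant. Then $d\eta=0$ and $\int_{\gamma_i}\eta=0$ for every closed $\gamma_i$, so the right-hand side vanishes while $\|\eta\|_2^2>0$. The paper's own framing sentence (``controlling the $L^2$-norm of coclosed forms'') and its later use in Proposition~\ref{prop:5.7} make clear that the intended hypothesis is $d^*\eta=0$. This is exactly the hypothesis that rescues your argument: the term you could not control,
\[
\int_{\mathcal D}\psi\, d\!\star\!\eta \;=\; \pm\int_{\mathcal D}\psi\,\star d^*\eta,
\]
vanishes identically when $\eta$ is coclosed, and your vague suggestions for ``absorbing'' or ``rewriting'' it are unnecessary (and, as stated, do not work --- there is no way to trade $d^*\eta$ for $d\eta$ without a hypothesis). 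You should simply add $d^*\eta=0$ to the statement and drop that term.

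With the coclosed hypothesis in place, the rest of your outline is sound and is essentially the argument one expects: primitive $\psi(q)=\int_{p_0}^q\eta$ along geodesics, Stokes to $\partial\mathcal D$, facet pairing to produce the discrepancy $\psi(q)-\psi(g_iq)$, and a geodesic pentagon $(g_iq,p_0,q,a,g_ia)$ (with $a$ on the axis of $g_i$) whose $g_i$-paired sides cancel by $\Gamma$-invariance of $\tilde\eta$, leaving $\int_{\gamma_i}\eta$ plus an area error bounded by $3\pi\|d\eta\|_\infty$. One remaining point to tighten: your asserted pointwise bound $|K(d\eta)|\le\tfrac12\|d\eta\|_\infty$ is not ``standard'' --- the cone operator on a $2$-form in $\H^n$ yields a factor of $\tanh(r/2)<1$ (from integrating $\sinh(tr)/\sinh(r)$), not $\tfrac12$. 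Either verify the sharper constant directly from the formulas in \cite{LS}, or carry a constant $c\le 1$ through; the downstream applications are insensitive to this.
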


Studying the terms in the estimate of Proposition \ref{prop:5.1} for a coexact $\lambda$-eigenform provides a lower bound on $\lambda$ given later as Theorem B. The essential idea is that after applying an $L^2$-$L^{\infty}$ norm comparison, all but one summand on the right-hand side (the integral term), has a $||d\eta||_2$ term. In particular, if $\eta$ is a unit norm eigenform, the right-hand side almost has a $\sqrt{\lambda}$ term in every summand. Our aim, then, is to replace the integral term with something that looks like $||d\eta||_{\infty} (\rho(M)^{-1}+\text{stuff})$, where the stuff is polynomial in the volume of $M$ with constants that depend only on the lower bound on injectivity radius.

\begin{lem} \label{lem:5.2}Let $a$ be the lift to $\H^n$ of the cellular approximation of a geodesic loop and let $\gamma$ be the (oriented) lift of the same geodesic loop. If $\tilde \eta$ is the pullback to $\H^n$ of a 1-form $\eta\in \Omega^1(M)$, then $$\left|\int_{a}\tilde\eta - \int_{\gamma}\tilde \eta\right| \leq \pi||d\eta||_{\infty}||a||_G.$$ \end{lem}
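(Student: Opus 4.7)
The idea is to invoke Stokes' theorem on a $2$-chain $S$ in $\H^n$ with $\partial S = a - \gamma$, then estimate its area using Gauss-Bonnet. Since $a$ and $\gamma$ are both lifts of the same based loop in $M$, they share endpoints in $\H^n$: say $v_0$ and $v_N = g\cdot v_0$ for $g$ the corresponding deck transformation. Write the successive vertices of the piecewise-geodesic cellular path $a$ as $v_0,v_1,\ldots,v_N$, where $N = \|a\|_G$ is the number of $1$-cells of $K^*$ traversed by $a$ (taken to be geodesic segments, which is possible in the deeply embedded setting of Section~2).

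Next, I would construct the bounding $2$-chain as a fan of geodesic triangles based at $v_0$. For $i=1,\ldots,N-1$, let $T_i$ be the oriented geodesic triangle with vertices $v_0,v_i,v_{i+1}$, and set $S = \sum_{i=1}^{N-1} T_i$. With standard orientation conventions, the internal diagonals $v_0 v_i$ for $2 \leq i \leq N-1$ each appear in two consecutive triangles with opposite signs and cancel, leaving $\partial S = a - \gamma$. Stokes' theorem then yields
\[
\int_a \tilde\eta - \int_\gamma \tilde\eta \;=\; \int_S d\tilde\eta,
\]
and the right-hand side is bounded in absolute value by $\|d\tilde\eta\|_{\infty}\cdot\mathrm{Area}(S) = \|d\eta\|_{\infty}\cdot\mathrm{Area}(S)$, using that the covering projection $\H^n\to M$ is a local isometry and hence preserves pointwise norms.

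Finally, Gauss-Bonnet gives that any geodesic triangle in $\H^n$ has area equal to $\pi$ minus the sum of its interior angles, hence at most $\pi$. Therefore $\mathrm{Area}(S) \leq (N-1)\pi < \pi\|a\|_G$, which combined with the previous estimate yields the claimed inequality. The main bookkeeping obstacle is verifying the boundary computation $\partial S = a - \gamma$ with consistent orientations in the fan; the small cases (e.g.\ $N \leq 1$, where $a$ itself is a geodesic segment and so coincides with $\gamma$) are trivial, and the geometric bound is immediate from Gauss-Bonnet.
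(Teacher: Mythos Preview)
Your overall approach---Stokes' theorem on a fan of geodesic triangles, with Gauss--Bonnet bounding each triangle's area by $\pi$---is the same as the paper's. However, there is a gap: you assume that the lifts $a$ and $\gamma$ share endpoints in $\H^n$. This is not justified and is generically false. The cellular approximation is a path in the $1$-skeleton of $K^*$, so its endpoints are vertices of $K^*$; the closed geodesic generically does not pass through any such vertex. Even after choosing the lifts to be equivariant under the same deck transformation $g$, the lift of $a$ runs from some vertex $v_0$ to $gv_0$, while the lift of $\gamma$ runs from some point $x$ on the axis of $g$ to $gx$, with $x\neq v_0$ in general. Thus the fan $\sum T_i$ you build has boundary $a - [v_0,v_N]$, and the geodesic segment $[v_0,v_N]$ is not $\gamma$.

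The paper's proof repairs this in two steps. First, it builds a $(\|a\|_G+3)$-gon $Q$ from $\|a\|_G + 1$ triangles by fanning from $x$ (the start of $\gamma$) rather than from $v_0$, and including one extra triangle with vertices $x$, $gx$, and the endpoint $y$ of $a$; the boundary of $Q$ then consists of $a$, $-\gamma$, and two connecting arcs. These two arcs are related by the deck transformation $g$, and since $\tilde\eta$ is $g$-invariant the integrals over them cancel, giving $\int_{\partial Q}\tilde\eta=\int_a\tilde\eta-\int_\gamma\tilde\eta$ and hence the bound $\pi\|d\eta\|_\infty(\|a\|_G+1)$. Second, to remove the additive $+1$, the paper applies the same estimate to the iterates $ma$ and $\gamma^m$, divides by $m$, and lets $m\to\infty$. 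Your argument would be complete with these two additions.
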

\begin{proof}
Let $x$ be the starting point of $\gamma$, let $a_i$ be the geodesic arcs of $a$ (so that the word corresponding to the cellular path $a$ is the word $a_1\cdots a_{||a||_G}$), and let $y$ be the end point of $a_{||a||_G}$. Let $Q$ be the piecewise geodesic $(||a||_G+3)$-gon obtained by taking the union of the triangles $\text{convex hull}(a_i,x)$ and the triangle $\text{convex hull}(\gamma,y)$. Since $Q$ is the union of $(||a||_G + 1)$ geodesic triangles of area bounded by $\pi$, we have the upper bound of $(||a||_G+1)\pi$ for the area of $Q$.
Then, since  $$
\left|\int_a\tilde\eta - \int_{\gamma}\tilde\eta\right| = \left|\int_{\d Q} \tilde\eta\right|  \leq \pi||d\eta||_{\infty}(||a||_G+1),$$ where the first equality follows from the deck transformation invariance of $\tilde \eta$, which makes the integral over $\d Q\setminus(a\cup\gamma)$ vanish.
If one then considers the cellular
s $ma$ and the geodesic $\gamma^m$ integrated over the form $\frac{1}{m}\tilde\eta$, one gets
$$ \left|\int_a\tilde\eta - \int_{\gamma}\tilde\eta\right| = \frac{1}{m}\left|\int_{ma}\tilde\eta - \int_{\gamma^m}\tilde \eta\right|\leq \frac{1}{m}\pi||d\eta||_{\infty}(||ma||_G+1).$$
Taking the limit as $m\to\infty$ then results in  $$\left|\int_{a}\tilde\eta - \int_{\gamma} \tilde\eta\right|\leq\pi||d\eta||_{\infty}||a||_G,$$ proving the lemma.
\end{proof}

\begin{lem}\label{lem:5.3}
There is a constant $B_0 = B_0(\e)$ such that $\diam(M)\leq B_0\vol(M)$.
\end{lem}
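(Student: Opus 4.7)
The plan is to use a standard ball-packing argument along a diameter-realizing geodesic. Fix two points $p, q \in M$ with $d(p,q) = \diam(M)$, and let $\gamma:[0,\diam(M)]\to M$ be a unit-speed minimizing geodesic from $p$ to $q$. Choose points $p_i = \gamma(i\e)$ for $i = 0, 1, \ldots, k$, where $k = \lfloor \diam(M)/\e\rfloor$. Because the $p_i$ lie on a minimizing geodesic with consecutive spacing exactly $\e$, any two distinct points $p_i$ and $p_j$ satisfy $d(p_i, p_j) \geq \e$.

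The key point is that since $\inj(M) > \e$, the open balls $B_{\e/2}(p_i) \subset M$ are embedded, and by the triangle inequality they are pairwise disjoint. Each such ball lifts isometrically to a ball of the same radius in $\H^n$, so each has volume equal to $v_n(\e/2)$, the volume of a radius-$\e/2$ ball in hyperbolic $n$-space. Since the $B_{\e/2}(p_i)$ are disjoint embedded subsets of $M$, summing their volumes gives
\[
(k+1)\,v_n(\e/2) \;\leq\; \vol(M).
\]
Combining this with $\diam(M) \leq (k+1)\e$ yields
\[
\diam(M) \;\leq\; \frac{\e}{v_n(\e/2)}\,\vol(M),
\]
so the lemma holds with $B_0 = B_0(\e) = \e/v_n(\e/2)$.

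There is no real obstacle here; the only care required is ensuring the balls $B_{\e/2}(p_i)$ are genuinely disjoint and embedded, which follows immediately from the injectivity radius hypothesis together with the fact that consecutive $p_i$ are spaced exactly $\e$ apart along a minimizing geodesic. One could alternatively deduce the statement from Proposition \ref{prop: volume}, by noting that the diameter of $M$ is bounded by the diameter of the $1$-skeleton of a deeply embedded triangulation times a constant, and then using that the number of simplices grows linearly in $\vol(M)$, but the direct ball-packing argument above is cleaner.
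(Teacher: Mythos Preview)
Your proof is correct. The ball-packing argument is sound: points spaced $\e$ apart along a minimizing geodesic are genuinely at pairwise distance $\geq \e$ (any subarc of a minimizing geodesic is minimizing), and the injectivity radius bound $\inj(M) > \e$ guarantees each $B_{\e/2}(p_i)$ is isometric to the corresponding hyperbolic ball, hence has volume exactly $v_n(\e/2)$.

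Your route differs from the paper's. The paper argues via the deeply embedded triangulation: it invokes Proposition~\ref{prop: volume} to bound the number of simplices by $T\vol(M)$, notes each simplex has diameter at most some $C_0 = C_0(\e)$, and concludes $\diam(M)\leq C_0 T\vol(M)$ since $M$ is connected and any two points can be joined by a chain of adjacent simplices. This is precisely the alternative you sketch at the end. Your direct argument is more elementary and self-contained --- it does not rely on the triangulation machinery of Section~2 --- while the paper's version has the minor advantage of keeping all constants tied to the triangulation framework already in use throughout. Both yield the same linear dependence on $\vol(M)$ with an $\e$-dependent constant, which is all that is needed downstream.
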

\begin{proof}
	First note that there is a constant $T = T(\e)$ such that the number of simplices in $M$ is bounded by $T\vol(M)$ and that each simplex from a deeply embedded triangulation has bounded diameter, say bounded by $C_0$. With $B_0 = C_0T$, one has that $B_0\vol(M)$ bounds the diameter of $M$, as desired.
\end{proof}

\begin{lem}\label{lem:5.4}
Let $\eta\in\Omega^1(M)$ be a 1-form and $\gamma$ a rationally nullhomologous loop in $M$. Then integrating over the geodesic in the free homotopy class satisfies $$\left|\int_{\gamma}\eta \right|\leq 2 \pi||d\eta||_{\infty}\scl(\gamma).$$
\end{lem}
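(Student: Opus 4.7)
The plan is to invoke the topological characterization $\scl(\gamma) = \inf\{\chi_-(S)/(2m) : \d S = \gamma^m,\ S \text{ connected}\}$ stated in the introduction, combined with Stokes' theorem and a Gauss--Bonnet area bound on a suitable geometric representative of the bounding surface.

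For $\varepsilon > 0$, I would choose a connected surface $S$ with $\d S = \gamma^m$ for some $m \geq 1$ satisfying $\chi_-(S)/(2m) < \scl(\gamma) + \varepsilon$. The key geometric step is to replace this topological surface by a pleated representative $f \colon S \to M$ in the same relative homotopy class, so that $f(\d S)$ traces the closed geodesic $\gamma$ a total of $m$ times and the pulled-back metric $f^*g$ on the smooth locus of $S$ has Gaussian curvature identically $-1$. Since the boundary maps as a geodesic (hence has vanishing geodesic curvature), Gauss--Bonnet gives
$$\text{Area}(S, f^*g) = -2\pi\chi(S) \leq 2\pi\,\chi_-(S).$$

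With this area bound, Stokes' theorem applied to the pleated representative yields
$$m\int_\gamma \eta = \int_{\d S} f^*\eta = \int_S d(f^*\eta),$$
and bounding $d\eta$ pointwise gives
$$m\left|\int_\gamma \eta\right| \leq ||d\eta||_{\infty}\, \text{Area}(S, f^*g) \leq 2\pi ||d\eta||_{\infty}\,\chi_-(S).$$
Dividing by $m$, using the choice of $(S,m)$, and then sending $\varepsilon \to 0$ would produce the desired comparison between $|\int_\gamma \eta|$ and $\scl(\gamma)$ with constants of the claimed order.

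The main obstacle I anticipate is producing a geometric representative whose area is controlled by $\chi_-(S)$ and whose boundary maps as a geodesic, while preserving the topology so that the same $\chi_-(S)$ governs both sides. Thurston's pleated surface theorem handles this cleanly in hyperbolic 3-manifolds, but one has to be careful when $S$ is compressible or has essential annular pieces in the boundary. A cleaner alternative, which sidesteps pleated surfaces entirely, is to use Bavard duality (Theorem \ref{thm:4.3}): one straightens any singular 2-chain $A$ with $\d A$ a fundamental cycle for $\gamma^m$ into a sum of geodesic 2-simplices, each of hyperbolic area at most $\pi$, obtaining $m|\int_\gamma \eta| \leq \pi ||d\eta||_{\infty}\, ||A||_G$, and then passes to the infimum over filling chains to convert $\text{fill}(\gamma)$ into the corresponding multiple of $\scl(\gamma)$.
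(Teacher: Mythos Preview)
Your first route via pleated surfaces is essentially correct but loses a factor of $2$: from $m|\int_\gamma\eta|\le 2\pi\|d\eta\|_\infty\chi_-(S)$ and $\chi_-(S)/(2m)\to\scl(\gamma)$ you obtain $4\pi\|d\eta\|_\infty\scl(\gamma)$, not $2\pi$. This is harmless for the downstream use in Proposition~\ref{prop:5.5}, where all constants are absorbed, but it does not recover the lemma as stated.

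Your second route has a genuine gap. After straightening a singular $2$-chain $A$, the boundary $\partial\,\st(A)=\st(\partial A)$ is a piecewise-geodesic cycle through the original vertices of $\partial A$; it is \emph{not} the closed geodesic representative of $\gamma^m$, so Stokes gives $\int_{\st(\partial A)}\eta$, which is not $m\int_\gamma\eta$. The asserted inequality $m|\int_\gamma\eta|\le\pi\|d\eta\|_\infty\|A\|_G$ therefore does not follow, and indeed combined with $\scl=4\fill$ it would yield the constant $\pi/4$, which is too strong.

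The paper's proof sidesteps both issues by using the other half of Bavard duality. The map $q(\gamma)=\int_{\text{closed geodesic}}\eta$ is a homogeneous quasimorphism (the de Rham quasimorphism), and its defect is bounded by $\pi\|d\eta\|_\infty$ because $q(\gamma_1)+q(\gamma_2)-q(\gamma_1\gamma_2)$ is controlled by the integral of $d\eta$ over a geodesic triangle in $\H^n$. Then $\scl(\gamma)\ge\tfrac12|q(\gamma)|/D(q)$ gives the inequality with the exact constant $2\pi$ in one line. Your surface-based approach is a legitimate alternative that makes the geometry more explicit, at the cost of a weaker constant and the technicalities you anticipated; the quasimorphism argument is the standard shortcut here.
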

\begin{proof}

This follows from Bavard duality and the fact that $\int_{\gamma}\eta$, where the integral is over the geodesic in the free homotopy class of $\gamma$, is a quasimorphism with defect bounded by $\pi||d\eta||_{\infty}$ (see \cite{Calegari}, page 21).
\end{proof}

The key estimate allowing us to replace the integral term with one involving the stable isoperimetric constant $\rho(M)$ is the following proposition; compare with Proposition 5.24 in \cite{LS}.

\begin{prop} \label{prop:5.5} Let $\eta$ be a 1-form on $M$. Then there is a harmonic form $h$ and a constant $L_0 = L_0(\e)>0$ such that for every closed geodesic $\alpha$ in $M$, one has $$\left|\int_{\alpha}(\eta - h)\right|\leq |\alpha| L_0\vol(M)^{3/2} ||d\eta||_{\infty}\left(\rho(M)^{-1} + 1 \right).$$
\end{prop}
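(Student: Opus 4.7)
The plan is to apply Bavard duality to a de Rham quasimorphism derived from $\eta - h$, where $h$ is chosen to kill the homomorphism component on $H_1(M;\R)$. First I would consider the quasimorphism $q_\eta : \pi_1(M) \to \R$ defined by $q_\eta(\gamma) = \int_{\bar\gamma}\eta$, with $\bar\gamma$ the geodesic in the conjugacy class of $\gamma$. Since geodesic triangles in $\H^n$ have area at most $\pi$, Stokes' theorem gives defect $D(q_\eta) \leq \pi||d\eta||_\infty$. The homogenization $\bar q_\eta$ restricts to a homomorphism on $H_1(M;\Z)/\mathrm{torsion}$ by Bavard's bound on commutators and homogeneity, hence determines a class in $H^1(M;\R)$; take $h$ to be the harmonic representative of that class. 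Then $q := q_\eta - q_h$ has the same defect bound and its homogenization $\bar q$ vanishes on the image of $H_1(M;\Z)$ in $\Gamma^{\mathrm{ab}}\otimes\R$.

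For the closed geodesic $\alpha$, $\int_\alpha(\eta - h) = q(\alpha)$ and $|q(\alpha) - \bar q(\alpha)| \leq D(q) \leq \pi||d\eta||_\infty$. When $\alpha$ is rationally nullhomologous, Bavard duality gives $|\bar q(\alpha)| \leq 2D(q)\scl(\alpha) \leq 2\pi ||d\eta||_\infty |\alpha|/\rho(M)$, yielding the target estimate without even needing the $\vol(M)^{3/2}$ factor. Otherwise, I would replace $\alpha$ by a cellular approximation $a$ in $K^*$ of length $\leq L|\alpha|$ using Proposition 2.6, with the comparison error controlled by Lemma 5.2. I then build cellular 1-cycles $b_1,\ldots,b_m$ in $K^*$ representing a basis of $H_1(M;\Q)$; combining Proposition 3.1 with the diameter bound $\diam(M) \leq B_0\vol(M)$ from Lemma 5.3 and the standard linear bound $b_1(M) \lesssim \vol(M)$, these can be chosen so that the total $G$-norm of the basis is $O(\vol(M)^{3/2})$. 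Subtracting an appropriate integer combination of the $b_i$ from $a$ produces a rationally nullhomologous cellular 1-cycle to which the Bavard argument of the previous paragraph applies, contributing the $\rho(M)^{-1}$ term; the residual contribution from the basis correction is bounded by $D(q)$ times the $G$-norm of the correction, since $\bar q(b_i) = 0$ by construction of $h$, and gives the ``$+1$'' in the stated bound.

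The main obstacle, and the source of the $\vol(M)^{3/2}$ factor, is producing a homological basis in $K^*$ with controlled total $G$-norm: one needs cellular representatives whose per-cycle norm scales like $\sqrt{\vol(M)}$ (via the dual norm comparison of Proposition 3.1 applied to 1-cycles built from harmonic forms through Poincar\'e duality) and whose count scales linearly with $\vol(M)$. The remaining bookkeeping tracks how the various cellular norm estimates of Section 3 and the quasimorphism inequality interact when combining $a$ with the basis correction; the role of the $\diam(M) \leq B_0 \vol(M)$ estimate is to ensure that even the cellular corrections can be realized by paths of length polynomial in $\vol(M)$, which is what couples the $|\alpha|$ factor to the $\vol(M)^{3/2}$ factor in the final inequality.
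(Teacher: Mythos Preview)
Your overall architecture matches the paper's: decompose a cellular approximation of $\alpha$ into a harmonic piece (killed by the choice of $h$) and a rationally nullhomologous piece, then feed the latter into Bavard duality via the de Rham quasimorphism. But two steps in your outline do not go through as written.

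First, the claim that the homogeneous quasimorphism $\bar q_\eta$ ``restricts to a homomorphism on $H_1(M;\Z)/\mathrm{torsion}$'' and hence determines a cohomology class is false: homogeneous quasimorphisms are class functions and are homomorphisms on abelian subgroups, but they do \emph{not} descend to $H_1$ in general (if they did, $\scl$ would vanish identically). The paper sidesteps this by choosing $h$ at the chain level: fix Euclidean-harmonic cycles $c_1,\dots,c_m$ spanning $H_1$ in $C_1(K^*)$ and take $h$ to be the unique harmonic form with $\int_{c_i}(\eta-h)=0$. With this choice the harmonic part contributes \emph{exactly} zero, so there is no ``residual contribution from the basis correction'' at all, and in particular no need to bound the total $G$-norm of a basis or invoke $b_1(M)\lesssim\vol(M)$.

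Second, and more seriously, you never control $\|c\|_G$ where $c = a - \sum n_i b_i$ is your nullhomologous remainder. You bound $\sum_i\|b_i\|_G$, but what is needed is $\sum_i|n_i|\,\|b_i\|_G$, and the coefficients $n_i$ are not addressed; without this you cannot couple the final bound to $|\alpha|$. The paper's mechanism here is the key technical point: take the Euclidean Hodge decomposition $a = a_h^E + \partial S$ on $C_1(K^*)$, so that the $\ell^2$-minimizing property of $a_h^E$ gives $\|a_h^E\|_E \le \|a\|_E \le \|a\|_G$, and then the $\ell^1$--$\ell^2$ comparison yields $\|\partial S\|_G \lesssim \sqrt{\vol(M)}\,\|a\|_G \lesssim \sqrt{\vol(M)}\,|\alpha|$. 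The $\vol(M)^{3/2}$ therefore arises as $\sqrt{\vol(M)}$ (from this projection estimate) times $\diam(M)\lesssim\vol(M)$ (from concatenating the loops supporting $\partial S$ into a single based loop $\beta$ to which $\rho(M)$ applies), not from $b_1(M)\cdot\sqrt{\vol(M)}$ as you suggest. You gesture at the chain-to-loop step in your last sentence, but it is precisely this step, together with the harmonic projection bound, that produces both the $|\alpha|$ factor and the correct volume exponent.
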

\begin{proof}

If $M$ is a $\Q$-homology sphere, $\alpha$ is rationally nullhomologous and $h$ can only be 0. Lemma \ref{lem:5.4} gives $$\left|\int_{\alpha}\eta\right|\leq 2\pi\scl(\alpha)||d\eta||_{\infty}.$$
By multiplying the right-hand side by $|\alpha|/|\alpha|$, this becomes $$ \left|\int_{\alpha}\eta\right|\leq 2\pi |\alpha|\frac{\scl(\alpha)}{|\alpha|}||d\eta||_{\infty}\leq 2\pi|\alpha|\rho(M)^{-1}||d\eta||_{\infty}.$$
If $W$ is the minimal volume hyperbolic $n$-manifold (in dimension 3, this is the Weeks manifold, see\cite{minvol}, more generally it is know that in dimension $n\geq4$ that the set of hyperbolic volumes is discrete in $\R$, see \cite{minvol2}), the claim follows with $L_0 = \frac{2\pi}{\vol(W)^{3/2}}$.

Thus, we assume $M$ has nontrivial real homology classes. Fix a basepoint $x_0\in M$. Take a basis $c_1,\dots, c_n$ of harmonic 1-chains for  $C_1(K^*)$ using the Euclidean inner product on $C_1(K^*)$. Harmonic chains are norm minimizing for the induced $\ell^2$-norm. Let $||\cdot||_E$ denote this norm. Let $h$ be the (unique) harmonic form that satisfies $\int_{c_i}\eta - h = 0$ for each $i$. Let $a$ be a cellular path in $K^*$ approximating $\alpha$, as in Proposition \ref{prop: length comparison}, so $||a||_G \leq L|\alpha|$ and identify the cellular path $a$ with the chain it represents.

Then, using the Hodge decomposition induced by the Euclidean inner product, we get $a = a_h^E + \d S,$ where $a_h^E$ is harmonic with respect to the Euclidean inner product on the chain complex $C_1(K^*)$ and $S$ is some 2-chain. Since $\d$ and the Euclidean adjoint $\d^*_E$ have integral bases, and since $a$ is integral, $\d S$ is a rational 2-chain. Recall that there is a constant $T=T(\e)$ such that the number of 2-simplices in $K^*$ is bounded by $T\vol(M)$. A short computation then shows,
\begin{align*}
||\d S||_G &= ||a - a_h^E||_G\\
		&\leq ||a||_G+||a_h^E||_G\\
		&\leq ||a||_G+\sqrt{T\vol(M)}||a_h^E||_E, \text{~by the Euclidean $\ell^1$-$\ell^2$ norm comparison,}\\
		&\leq ||a||_G + \sqrt{T\vol(M)}||a||_E, \text{~ as $a_h^E$ is $\ell^2$-norm minimizing in its class},\\
		&\leq ||a||_G + \sqrt{T\vol(M)}||a||_G, \text{~by the Euclidean $\ell^1$-$\ell^2$ norm comparison},\\
		&= (\sqrt{T\vol(M)}+1 )||a||_G.
\end{align*}
Because there is a minimal volume hyperbolic 3-manifold, we can increase $T$ so that we can write the above as $||\d S||_G\leq T\sqrt{\vol(M)}||a||_G$. Additionally, since there is a universal upper bound on the length of an edge in $K^*$, there is a constant $E>0$, such that the geodesic length $|\gamma|$ of a loop $\gamma$ satisfies $|\gamma|\leq E\len(c)$ for any cellular path $c$ in $K^*$ homotopic to $\gamma$.

Since $\d S$ is a rational cycle, take $N>0$ to be an integer so that $N\d S$ is integral. Then one can glue together oriented copies of the edges on which $\d S$ is supported along their boundaries to obtain a (non unique) collection of closed cellular loops $b_1,\dots,b_m$ whose union represents the cycle $N\d S$.  Fix a vertex $v_i$ in each loop $b_i$. Note that by construction, $\len(b_i) = ||b_i||_G$ for each $i$. Let $\tau_i$ be the geodesic arc connecting the basepoint $x_0$ to $v_i$ and $\tau_i^{-1}$ the oppositely oriented geodesic arc. Define the curve $b$ to be the path $$\tau_1b_1\tau_1^{-1}\tau_2b_2\tau_2^{-1}\cdots \tau_mb_m\tau_m^{-1}.$$
Let $\beta$ be the geodesic loop through $x_0$ homotopic to $b$. Notice $\sum_i ||b_i||_G = ||b||_G$, where $||b||_G$ is meant in the sense of the norm on singular chains, where the $\tau^{\pm 1}$ terms cancel. This gives a possibly trivial element of $\Gamma_\Q’$ whose length is bounded as follows:

\begin{align*}|\beta| \leq |b| &= \sum_i (2|\tau_i| + |b_i|) \\ &\leq 2\diam(M)m + E||b||_G \\ &\leq (2\diam(M) + E)||b||_G \\ &\leq (2B_0\vol(M) + E)||b||_G,
\end{align*}
where we use that $m \leq ||b||_G$, the diameter bound of Lemma \ref{lem:5.3}, along with the remarks in the above discussion.

Since $$\frac{1}{N}||b||_G = ||\d S||_G \leq T\sqrt{\vol(M)}||a||_G,$$ and  $||a||_G\leq L|\alpha|$, we obtain $$\frac{||b||_G}{N} \leq TL\sqrt{\vol(M)}|\alpha|.$$ As a result, $$\frac{|\beta|}{N}\leq TL(2B_0\vol(M)+E)\sqrt{\vol(M)}|\alpha|.$$
We compute,
\begin{align*}
\left|\int_{\alpha}\eta-h\right| &= \left|\int_{\alpha-a_h} \eta-h \right|\text{~since $a_h$ is in the span of the $c_i$, and $\int_{c_i}\eta-h$ = 0,}\\
&\leq\left| \int_{\d S}\eta- h\right| + \left|\left(\int_{\alpha} \eta - h\right) - \left(\int_{a}\eta-h\right)\right| \\
&\leq\left| \int_{\d S}\eta- h\right| + \pi||d\eta||_{\infty}||a||_G,\text{~by Lemma \ref{lem:5.2},} \\
&= \frac{1}{N}\left|\int_{N\d S}\eta-h\right| + \pi||d\eta||_{\infty}||a||_G\\
&= \frac{1}{N}\left|\int_{b}\eta-h\right| + \pi||d\eta||_{\infty}||a||_G,\text{~since $b$ abelianizes to $N\d S$,}\\
&\leq \frac{1}{N}\left(\left|\int_{\beta}\eta-h\right| + \left|\int_b(\eta-h) - \int_{\beta}(\eta-h)\right|\right) + \pi||d\eta||_{\infty}||a||_G\\
&\leq \frac{1}{N}\left|\int_{\beta}\eta-h\right| + \frac{1}{N}\pi||d\eta||_{\infty}||b||_G + \pi||d\eta||_{\infty}||a||_G,\text{~by Lemma \ref{lem:5.2}}.
\end{align*}

If $\beta$ is trivial, then the integral term $|\int_{\beta}\eta-h| $ vanishes, and we can replace that term with $$TL\pi |\alpha| ||d\eta||_{\infty}\sqrt{\vol(M)}\rho(M)^{-1} $$ to obtain (after using our estimate for $||b||_G/N$ and $||a||_G \leq L|\alpha|$)

\begin{align*}
\left|\int_{\alpha}\eta-h\right| &\leq TL\pi|\alpha|||d\eta||_{\infty}\sqrt{\vol(M)}\left(\rho(M)^{-1}+1\right) + \pi L ||d\eta||_{\infty}|\alpha|\\
&\leq TL\pi|\alpha|||d\eta||_{\infty}\sqrt{\vol(M)}\left(\rho(M)^{-1}+1\right) + \frac{\vol(M)^{3/2}}{\vol(W)^{3/2}}\pi L ||d\eta||_{\infty}||\alpha|\\
&\leq TL\pi|\alpha|||d\eta||_{\infty}\frac{\vol(M)^{3/2}}{\vol(W)}\left(\rho(M)^{-1}+1\right) + \frac{\vol(M)^{3/2}}{\vol(W)^{3/2}}\pi L ||d\eta||_{\infty}|\alpha|,
\end{align*}
where in the last line we again use the minimal volume hyperbolic 3-manifold $W$ to replace $\sqrt{\vol(M)}$ with $\vol(M)^{3/2}.$
Setting $$L_0 = 2\max\{\frac{2\pi BDL}{\vol(W)},\frac{\pi L}{\vol(W)^{3/2}}\}$$ and factoring gives the result.

Assume now that $\beta$ is nontrivial. Combining the above estimates yields

\begin{align*}
\left|\int_{\alpha}\eta-h\right| &\leq \frac{|\beta|}{N} \frac{1}{|\beta|} \left|\int_{\beta}\eta-h\right| + \frac{1}{N}\pi||d\eta||_{\infty}||b||_G + \pi L ||d\eta||_{\infty}|\alpha| \\
&\leq TL(2B_0\vol(M)+E)\sqrt{\vol(M)}|\alpha| \frac{1}{|\beta|} \left|\int_{\beta}\eta-h\right| \\
&~~~~+ \pi||d\eta||_{\infty} TL\sqrt{\vol(M)}|\alpha | + \pi L ||d\eta||_{\infty}|\alpha|
 \\
&= TL\sqrt{\vol(M)}|\alpha|\left((2B_0\vol(M) + E)\frac{1}{|\beta|}\left|\int_{\beta}\eta-h\right|  + \pi||d\eta||_{\infty}\right) \\
&~~~~+ \pi L ||d\eta||_{\infty}|\alpha|.
\end{align*}

Since the geodesic $\beta$ is nullhomologous, Lemma \ref{lem:5.4} implies $$\left|\int_{\beta}(\eta-h) \right|\leq 2\pi||d\eta||_{\infty}\scl(\beta).$$ Replacing the integral term with this estimate and using that $\frac{\scl(\beta)}{|\beta|}\leq\rho(M)^{-1}$ gives

\begin{align*}
\left|\int_{\alpha}\eta-h\right| \leq |\alpha| TL\sqrt{\vol(M)} ||d\eta||_{\infty}\left(2\pi(B_0\vol(M) + E)\rho(M)^{-1}\right)+ \pi) \\
+ \pi L ||d \eta||_{\infty}|\alpha|.
\end{align*}

Again using the existence of a minimal volume hyperbolic $n$-manifold, one can replace $B_0$ with the constant $B_1 = 2B_0 + E/\vol(W)$ since $B_1\vol(M) > 2B_0\vol(M) + E$. Then, after combining constants in the first summand (and using that $2\pi > \pi$ to pull out the terms containing $\pi$) into a single constant $L_1$, one obtains:

$$\left|\int_{\alpha}(\eta - h)\right|\leq |\alpha| L_1\vol(M)^{3/2} ||d\eta||_{\infty}\left(\rho(M)^{-1} + 1 \right) + \pi L_1 ||d\eta||_{\infty}|\alpha|. $$

Set $L_0 =2\max\{L_1,\frac{\pi L}{\vol(W)^{3/2}}\}$ and multiply the second summand by $\vol(M)^{3/2}$ to obtain the claim.

\end{proof}

\begin{lem}  \label{lem:5.6} Let $M$ have deeply embedded triangulation $K$ and let $\tilde K$ be the pullback of this triangulation to $\H^n$.
Then there is a fundamental domain $\mathcal D\subset \H^n$ for $M$ that is a union of simplices from $\tilde K$ such that the diameter of $\mathcal D$ satisfies $\diam(\mathcal D) \leq 3\diam(M).$ \end{lem}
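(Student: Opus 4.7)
The plan is to construct $\mathcal D$ by selecting a single lift of each simplex of $K$ inside $\tilde K$, with the selection favoring lifts close to a fixed basepoint $p_0 \in \H^n$. Concretely, I would fix such a $p_0$ and, for every simplex $\sigma$ of $K$, pick a lift $\tilde\sigma \in \tilde K$ whose barycenter $b_{\tilde\sigma}$ is at minimal distance from $p_0$ (breaking ties arbitrarily), then set
\[
\mathcal D = \bigcup_{\sigma \in K} \tilde\sigma.
\]

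The first task is to verify that $\mathcal D$ is a fundamental domain for the deck action. Surjectivity of $\pi|_{\mathcal D} \colon \mathcal D \to M$ is immediate because every simplex of $K$ contributes its chosen lift. For disjointness of interiors of translates, I would suppose $x \in \mathcal D^{\circ} \cap \gamma \mathcal D^{\circ}$ for some nontrivial deck transformation $\gamma$; then $x$ lies in the interior of two chosen lifts $\tilde\sigma_1$ and $\gamma \tilde\sigma_2$, and since distinct simplices of $\tilde K$ have disjoint interiors, $\tilde\sigma_1 = \gamma \tilde\sigma_2$. Both are lifts of the same simplex of $K$, so uniqueness of the chosen lift forces $\tilde\sigma_1 = \tilde\sigma_2$, whence $\gamma$ stabilizes $\tilde\sigma_1$; this is impossible by torsion-freeness and freeness of the $\pi_1(M)$-action on $\H^n$.

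For the diameter estimate, I would use that $\diam(M) \geq 2\inj(M) > 2\e$. For any simplex $\sigma$ of $K$, lifting a minimizing geodesic from $\pi(p_0)$ to $b_\sigma$ yields a lift of $b_\sigma$ within distance $\diam(M)$ of $p_0$, so by minimality of the chosen lift one has $d(b_{\tilde\sigma}, p_0) \leq \diam(M)$. For $x \in \tilde\sigma_x$ and $y \in \tilde\sigma_y$ in $\mathcal D$, two triangle inequalities through the barycenters give
\[
d(x,y) \leq d(x, b_{\tilde\sigma_x}) + d(b_{\tilde\sigma_x}, p_0) + d(p_0, b_{\tilde\sigma_y}) + d(b_{\tilde\sigma_y}, y) \leq 2\diam(M) + 2\delta,
\]
where $\delta$ is the maximal diameter of any simplex in $\mathcal G_\e$. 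The edge-length bound gives $\delta \leq 2\epsilon_0 \leq \e/5$, and combining with $\diam(M) > 2\e$ shows $2\delta < \diam(M)$, yielding $\diam(\mathcal D) \leq 3 \diam(M)$.

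The main obstacle is the fundamental-domain verification, specifically ruling out any overlap between a chosen lift and a deck-translate of another chosen lift. This relies on both the uniqueness of the chosen lift per simplex (excluding two distinct chosen lifts that are translates of each other) and the torsion-freeness of $\pi_1(M)$ (excluding any nontrivial element stabilizing a simplex). Once this step is in hand, the diameter bound follows from a routine triangle-inequality calculation using the minimality of the chosen barycenter distances and the uniform smallness of the simplices relative to $\diam(M)$.
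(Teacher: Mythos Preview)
Your approach is essentially the paper's: choose one lift per top-dimensional simplex with barycenter within $\diam(M)$ of a fixed basepoint, then bound $\diam(\mathcal D)$ via the triangle inequality through that basepoint and the simplex diameters. One slip: the inequality $\diam(M)\ge 2\inj(M)$ is not valid in general (already the round sphere has $\diam=\inj$, and no such bound is standard for closed hyperbolic manifolds), but you only need $\diam(M)\ge\inj(M)>\e$, which does hold and still yields $2\delta\le 2\e/5<\diam(M)$. The paper additionally arranges the chosen lifts so that $\mathcal D$ is connected, which matters downstream when speaking of side-pairing transformations in Proposition~\ref{prop:5.1}; your nearest-barycenter rule need not produce a connected $\mathcal D$, though the lemma as stated does not demand connectedness.
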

\begin{proof}
Fix a top dimensional simplex $\sigma_0\in K^{(n)}$ and let $\tilde \sigma_0$ be a lifted copy in $\H^n$. Let $\tilde x_0$ be the barycenter of $\tilde \sigma_0$. For every other top dimensional simplex $\sigma$ in $K^{(n)}$ there is a lift 	$\tilde \sigma$ whose barycenter $\tilde x_{\sigma}$ is within $\diam(M)$ of $\tilde x_0$. Choose one such lift for every $\sigma$ in such a way the resulting fundamental domain $\mathcal D$ is connected. Then the diameter of the fundamental domain satisfies $\diam(\mathcal D)\leq \diam(M) + 2e$, where $e$ is the maximum distance from the barycenter of a simplex in a deeply embedded triangulation to its boundary. Clearly $e<\diam(M)$, so the lemma immediately follows.
\end{proof}

Now, assume $M$ has a fixed deeply embedded triangulation and let $\mathcal D$ be a fundamental domain as in Lemma \ref{lem:5.6}. Let $\gamma_i$ be the geodesics in the free homotopy class of the side pairing transformations of the fundamental domain $\mathcal D$, and notice by construction $|\gamma_i|\leq 3\diam(M)$. With this, we modify the estimate in Proposition \ref{prop:5.1}to obtain the following.

\begin{prop}\label{prop:5.7}
Let $\eta$ be a coclosed 1-form on $M$. Let $h$ be the harmonic form of Proposition \ref{prop:5.5} associated to $\eta$. Then for a constant $A_0 = A_0(\e)>0$, the following holds:
\begin{align*}
||\eta-h||_2^2 \leq A_0\vol(M)||\eta-h||_{\infty}\left(3\pi||d\eta||_{\infty} + 3L_0B_0\vol(M)^{5/2} ||d\eta||_{\infty} \left(\rho(M)^{-1} + 1\right)\right) \\ + \frac{1}{2}||d\eta||_{\infty}||\eta-h||_2\sqrt{\vol(M)}.
\end{align*}

\end{prop}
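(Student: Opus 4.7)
The plan is to apply Proposition \ref{prop:5.1} directly to the 1-form $\eta - h$ rather than to $\eta$ itself, using the fundamental domain provided by Lemma \ref{lem:5.6}, and then control the three ingredients on the right-hand side of Proposition \ref{prop:5.1}: the boundary area of the fundamental domain, the sup norm of $d(\eta - h)$, and the integrals over the geodesic representatives of the side pairing transformations.

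First, I would observe that since $h$ is harmonic we have $d(\eta - h) = d\eta$, so every $d(\eta-h)$ appearing after applying Proposition \ref{prop:5.1} to $\eta - h$ can be immediately replaced by $d\eta$. This is what produces the $||d\eta||_\infty$ terms in the target inequality and explains why no Sobolev-type estimate on $h$ is needed.

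Next I would bound $\mathrm{Area}(\partial \mathcal D)$, where $\mathcal D$ is the fundamental domain from Lemma \ref{lem:5.6}. Since $\mathcal D$ is a union of exactly one lift of each top-dimensional simplex of the deeply embedded triangulation $K$, its boundary is a union of codimension-one faces drawn from a compact family (because the geometry of simplices comes from the compact space $\mathcal G_\e$), and the total number of such faces is at most a fixed multiple of the number of simplices, which by Proposition \ref{prop: volume} is at most $T\vol(M)$. Hence $\mathrm{Area}(\partial \mathcal D) \leq A_0 \vol(M)$ for a constant $A_0 = A_0(\e)$. Simultaneously, every side pairing loop of $\mathcal D$ is realized by a path running between barycenters of simplices in $\mathcal D$, so the geodesic in its free homotopy class satisfies $|\gamma_i| \leq \diam(\mathcal D) \leq 3\diam(M) \leq 3 B_0 \vol(M)$ by Lemma \ref{lem:5.6} and Lemma \ref{lem:5.3}.

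Now I apply Proposition \ref{prop:5.5} with $\alpha = \gamma_i$ to the form $\eta$; this produces exactly the harmonic form $h$ in the statement and gives
\[
\left|\int_{\gamma_i}(\eta - h)\right| \leq |\gamma_i|\, L_0 \vol(M)^{3/2}\, ||d\eta||_\infty \left(\rho(M)^{-1} + 1\right) \leq 3 B_0 L_0 \vol(M)^{5/2}||d\eta||_\infty\left(\rho(M)^{-1}+1\right).
\]
Plugging the boundary-area bound and this integral estimate into Proposition \ref{prop:5.1} applied to $\eta - h$ yields the claimed inequality. The only mildly delicate point is the area bound for $\partial \mathcal D$: one must carefully check that the fundamental domain produced in Lemma \ref{lem:5.6} really does have boundary built out of faces of a $\mathcal G_\e$-controlled triangulation, so that both a uniform per-face area bound and the volume-linear bound on the number of faces are available. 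Everything else is direct substitution, using $d(\eta-h) = d\eta$ to rewrite the final term $\tfrac{1}{2}||d(\eta-h)||_\infty ||\eta-h||_2 \sqrt{\vol(M)}$ in the form stated.
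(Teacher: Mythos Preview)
Your proposal is correct and follows essentially the same route as the paper: apply Proposition \ref{prop:5.1} to $\eta-h$ with the fundamental domain from Lemma \ref{lem:5.6}, use $d(\eta-h)=d\eta$, bound $\mathrm{Area}(\partial\mathcal D)$ by $A_0\vol(M)$ via the compactness of $\mathcal G_\e$ and Proposition \ref{prop: volume}, bound $|\gamma_i|\leq 3B_0\vol(M)$ via Lemmas \ref{lem:5.6} and \ref{lem:5.3}, and insert the estimate of Proposition \ref{prop:5.5} for the integral term. Your explicit remark that $d(\eta-h)=d\eta$ is a helpful clarification that the paper leaves implicit.
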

\begin{proof}

Let $\gamma_i$ realize the maximum among the integrals $\int_{\gamma_i}\eta$.
Substitute the estimate of Proposition \ref{prop:5.5} for the integral term in Proposition \ref{prop:5.1} applied to the coclosed form $\eta-h$ and the fundamental domain $\mathcal D$ to obtain
\begin{align*}
    ||\eta-h||_2^2 \leq \text{Area}(\d \mathcal D)||\eta-h||_{\infty}
    \left(3\pi||d\eta||_{\infty} +L_0\vol(M)^{3/2} ||d\eta||_{\infty} |\gamma_i| \left(\rho(M)^{-1} +1 \right)\right) \\ + \frac{1}{2}||d\eta||_{\infty}||\eta-h||_2\sqrt{\vol(M)}.
\end{align*}

Then, replace $|\gamma_i|$ with $3B_0\vol(M)$, using Lemma \ref{lem:5.6} and Lemma \ref{lem:5.3}.
Lastly, since there is an upper bound on the area of a face of any simplex in $\mathcal G_\e$ (a consequence of the bounds on the dihedral angles), the total area of the boundary of a complex made from no more than $T\vol(M)$ simplices from $\mathcal G_\e$ is bounded by $A_0\vol(M)$ for a constant $A_0$ depending on $\e$. Substituting this estimate for the $\text{Area}(\d \mathcal D)$ term completes the proof.
\end{proof}

\begin{prop}\label{prop:5.8} (Proposition 2.2 of \cite{LS}) Let $M$ be a closed hyperbolic n-manifold with $\inj(M)>\e$. Assume the first positive eigenvalue $\lambda$ of the Laplacian acting on coexact 1-forms is less than some fixed constant $H>0$. Then there is a constant $C(H,\e)>0$ such that for a coexact $\lambda$-eigenform $\omega$, one has $$||\omega||_{\infty}\leq C(H,\e)||\omega||_2.$$
\end{prop}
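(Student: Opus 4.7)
The plan is to combine interior elliptic regularity for the eigenvalue equation with the Cantor--Sobolev inequality (Theorem \ref{thm: Cantor Sobolev}) already used in Section 3. Since $\omega$ is a coexact $\lambda$-eigenform, it satisfies the elliptic second-order equation $\Delta \omega = \lambda \omega$ with $\lambda \leq H$. The local geometry of $M$ is uniformly controlled: for any $p \in M$, the ball $B_{r}(p)$ with $r < \e$ lifts isometrically to a ball in $\H^n$. Thus all constants appearing in elliptic estimates on such balls are intrinsic to $\H^n$ and depend only on $r$ (hence on $\e$).

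First, I would exploit the Weitzenböck formula on a hyperbolic $n$-manifold. Since the Ricci curvature is $-(n-1)$, one has $\nabla^*\nabla \omega = \Delta \omega + (n-1)\omega = (\lambda + n-1)\omega$. Pairing with a cutoff $\phi^2 \omega$ and integrating by parts on a pair of concentric balls $B_0 = B_{r_0}(p) \subset B_1 = B_{r_1}(p) \subset B_\e(p)$ yields $\|\nabla \omega\|_{2, B_0}^2 \leq C(H,\e)\|\omega\|_{2, B_1}^2$. This is the base case of the iteration.

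Next, I would bootstrap using the eigenvalue equation to control higher Sobolev norms. For each $k \geq 0$, by differentiating $\Delta \omega = \lambda \omega$ and applying the standard interior elliptic regularity estimate on nested concentric balls (or, equivalently, iterating the Weitzenböck calculation with cutoffs), I obtain constants $C_k = C_k(H,\e)$ such that $\|\omega\|_{H^k_{\nabla}(B_{r_0}(p))} \leq C_k \|\omega\|_2$ for some fixed small radius $r_0 = r_0(\e) < \e$. Each step of the bootstrap gains two derivatives at the cost of shrinking the ball slightly and picking up a factor involving $\lambda \leq H$, and after finitely many steps we reach any desired order $k$.

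Finally, I would take $k > n/2$ and apply Theorem \ref{thm: Cantor Sobolev} with $l=0$ to obtain a pointwise bound $|\omega(p)| \leq C(\e)\|\omega\|_{H^k_{\nabla}(B_{r_0}(p))}$. Chaining this with the Sobolev estimate from the previous step and taking the supremum over $p$ yields $\|\omega\|_\infty \leq C(H,\e)\|\omega\|_2$, as required. The main technical obstacle is tracking constants through the iteration so that they depend only on $H$ and $\e$; this is routine because each step uses only the uniform local geometry of hyperbolic space and a factor of $(\lambda + n - 1)$ bounded by $(H + n - 1)$.
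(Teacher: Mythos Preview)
The paper does not prove this proposition; it is simply quoted from \cite{LS} (Proposition 2.2 there) and used as a black box. So there is no proof in the paper to compare against.

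Your proposed argument is correct and is essentially the standard one. The Weitzenb\"ock identity $\nabla^*\nabla\omega = (\lambda+n-1)\omega$ gives the first derivative gain, interior elliptic regularity on balls of radius less than $\e$ (which lift isometrically to $\H^n$, so the constants depend only on $\e$) bootstraps to any $H^k_\nabla$ norm with constants depending only on $H$ and $\e$, and then Theorem \ref{thm: Cantor Sobolev} with $l=0$ and $k>n/2$ converts this into a pointwise bound. One small point worth making explicit: in the bootstrap step you write ``differentiating $\Delta\omega = \lambda\omega$,'' but covariant derivatives do not commute with $\Delta$; the commutator involves curvature and its derivatives, which on $\H^n$ are uniformly bounded, so the lower-order terms are harmless. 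Alternatively, and more cleanly, you can avoid this by invoking the standard interior elliptic estimate $\|\omega\|_{H^{k+2}_\nabla(B_0)} \leq C(\e)\big(\|\Delta\omega\|_{H^k_\nabla(B_1)} + \|\omega\|_{H^k_\nabla(B_1)}\big)$ directly and iterating, which gives $\|\omega\|_{H^{k+2}_\nabla(B_0)} \leq C(\e)(\lambda+1)\|\omega\|_{H^k_\nabla(B_1)}$ at each step.
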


\begin{prop}\label{prop:5.9}
Let $M$ be a closed hyperbolic n-manifold with $\inj(M)> \e$. Let $\lambda < H$ be the first positive eigenvalue for the Hodge Laplacian acting on coexact 1-cochains. Then the following holds:
\begin{align*}\frac{1}{\sqrt{\lambda}} \leq A_0\vol(M)C(H,\e)^2
    \left(3\pi+ 3L_0B_0\vol(M)^{5/2}\left(\rho(M)^{-1} + 1\right)\right) \\
    + \frac{C(H,\e)}{2}\sqrt{\vol(M)}.
\end{align*}
\end{prop}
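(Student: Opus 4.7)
The plan is to apply Proposition~\ref{prop:5.7} to a unit $L^2$-norm coexact $\lambda$-eigenform $\omega$, replace each $L^{\infty}$-norm on the right-hand side by a multiple of $\sqrt{\lambda}$ using the $L^{\infty}$-$L^2$ comparison of Proposition~\ref{prop:5.8}, and then rearrange to isolate $1/\sqrt{\lambda}$.

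First, I would normalize $\omega$ so that $\|\omega\|_2 = 1$ and let $h$ be the harmonic form associated to $\omega$ via Proposition~\ref{prop:5.5}. The form $\omega - h$ is still coclosed, so Proposition~\ref{prop:5.7} applies. Since coexact 1-forms are $L^2$-orthogonal to harmonic forms, $\|\omega - h\|_2^2 = 1 + \|h\|_2^2 \geq 1$, and the eigenvalue equation yields $\|d\omega\|_2^2 = \langle d^{*}d\omega,\omega\rangle = \lambda$.

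Next, I would use Proposition~\ref{prop:5.8} and its natural variants to establish the two pointwise bounds $\|\omega - h\|_{\infty} \leq C(H,\e)$ and $\|d\omega\|_{\infty} \leq C(H,\e)\sqrt{\lambda}$. The first follows from applying Proposition~\ref{prop:5.8} to $\omega$ and absorbing the contribution of $h$, while the second comes from observing that $\lambda^{-1}d\omega$ is an eigenform for $\Delta$ on 2-forms with eigenvalue $\lambda$, so the Moser-iteration argument underlying Proposition~\ref{prop:5.8} delivers the comparison for $d\omega$ as well. Substituting these into the inequality of Proposition~\ref{prop:5.7} and factoring $\sqrt{\lambda}$ out of each summand gives
\[\|\omega - h\|_2^2 \leq \sqrt{\lambda}\,X + \tfrac{1}{2}C(H,\e)\sqrt{\lambda\,\vol(M)}\,\|\omega - h\|_2,\]
with $X = A_0\vol(M)\,C(H,\e)^2\bigl(3\pi + 3L_0 B_0\vol(M)^{5/2}(\rho(M)^{-1}+1)\bigr)$. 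Dividing through by $\sqrt{\lambda}\,\|\omega - h\|_2^2$ and using $\|\omega - h\|_2 \geq 1$ produces the stated inequality.

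The main obstacle is establishing these two pointwise bounds cleanly, since Proposition~\ref{prop:5.8} is stated only for coexact 1-eigenforms. For $\|d\omega\|_{\infty}$ the extension to the 2-form setting is routine via the eigenform $\lambda^{-1}d\omega$ (or, in dimension three, by applying the original proposition to a suitable coclosed companion of $\star d\omega$). For $\|\omega - h\|_{\infty}$ the extra step is bounding $\|h\|_{\infty}$: since $h$ is a harmonic 1-form determined by the periods of $\omega$ against a fixed basis of harmonic 1-cycles, its $L^\infty$-norm can be controlled in terms of $\|\omega\|_{\infty}$ by a period-comparison argument on the finite-dimensional space $\mathcal{H}^{1}(M)$, with constants depending only on the injectivity radius lower bound $\e$.
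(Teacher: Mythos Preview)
Your overall structure matches the paper's proof: apply Proposition~\ref{prop:5.7} to a coexact $\lambda$-eigenform, convert every $L^\infty$-norm to an $L^2$-norm via Proposition~\ref{prop:5.8}, pull out a factor of $\sqrt{\lambda}$, and divide. The treatment of $\|d\omega\|_\infty$ is fine.

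The one real problem is your justification of $\|\omega - h\|_\infty \leq C(H,\e)$. The period-comparison you propose would produce constants depending on $M$, not just on $\e$: the lengths of the basis cycles $c_i$ and the norm comparison on the $b_1(M)$-dimensional space $\mathcal{H}^1(M)$ both vary with the manifold, and there is no reason the periods $\int_{c_i}\omega$ (hence $\|h\|_2$ or $\|h\|_\infty$) should be controlled by $\|\omega\|_2 = 1$ alone. Indeed $\|h\|_2$ can be arbitrarily large compared to $\|\omega\|_2$, so the inequality $\|\omega - h\|_\infty \leq C(H,\e)\|\omega\|_2$ you are aiming for need not hold with a uniform constant.

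The fix, which is what the paper does, is to bound $\|\omega - h\|_\infty$ by $C(H,\e)\|\omega - h\|_2$ rather than by $C(H,\e)\|\omega\|_2$. Since $h$ is harmonic it is a $0$-eigenform, so the elliptic estimate underlying Proposition~\ref{prop:5.8} gives $\|h\|_\infty \leq C(H,\e)\|h\|_2$; combined with $\|\omega\|_\infty \leq C(H,\e)\|\omega\|_2$ and the orthogonality bounds $\|\omega\|_2,\ \|h\|_2 \leq \|\omega - h\|_2$, one obtains $\|\omega - h\|_\infty \leq 2C(H,\e)\|\omega - h\|_2$. Likewise $\|d\omega\|_\infty \leq C(H,\e)\sqrt{\lambda}\,\|\omega\|_2 \leq C(H,\e)\sqrt{\lambda}\,\|\omega - h\|_2$. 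Now every term on the right of Proposition~\ref{prop:5.7} carries a factor $\sqrt{\lambda}\,\|\omega - h\|_2^2$, and dividing both sides by this quantity gives the statement directly, with no need for the normalization $\|\omega\|_2 = 1$ or the inequality $\|\omega - h\|_2 \geq 1$.
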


\begin{proof}Let $\eta$ be a $\lambda$ coexact eigenform.
Applying the Sobolev type estimate of Proposition \ref{prop:5.8} to each instance of the sup norm in Proposition \ref{prop:5.7} and using that $||d\eta||_2=\sqrt{\lambda}||\eta||_2 \leq \sqrt{\lambda}||\eta - h||_2$, where the inequality follows from the orthogonality of the Hodge decomposition, gives

\begin{align*}||\eta-h||_2^2 \leq A_0\vol(M) C(H,\e) ^2 \sqrt{\lambda} ||\eta-h||_{2}^2 \left(3\pi + 3L_0B_0\vol(M)^{5/2}\left(\rho(M)^{-1}
    + 1\right)\right) \\ + \frac{C(H,\e)}{2}\sqrt{\lambda} ||\eta-h||_2^2\sqrt{\vol(M)}.
\end{align*}

Dividing both sides by $\sqrt{\lambda} ||\eta-h||_2^2$ then gives \begin{align*} \frac{1}{\sqrt{\lambda}}\leq A_0\vol(M) C(H,\e) ^2\left(3\pi+ 3L_0B_0\vol(M)^{5/2}\left(\rho(M)^{-1} + 1\right)\right) \\ + \frac{C(H,\e)}{2}\sqrt{\vol(M)}.\end{align*}\end{proof}

Rearanging the terms and combining constants (which again requires the existence of a minimal volume hyperbolic $n$-manifold) in the previous proposition and applying a geometric estimate of Calegari and a systolic inequality due to Sabourau leads to the main theorem of this section.

\begin{mainthm}\label{thm:B}
Let $M$ be a closed hyperbolic $n$-manifold with $\inj(M) > \e$. Let $\lambda$ be the first positive eigenvalue for the Laplacian acting on coexact 1-forms and let $H > \lambda$. Then there is a constant $P(H,\e)>0$ such that $$ \frac{P\rho(M)}{\vol(M)^{7/2+1/n}}\leq\sqrt{\lambda}.$$
\end{mainthm}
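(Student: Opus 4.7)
The plan is to derive Theorem B directly from Proposition \ref{prop:5.9} by absorbing the $+1$ in the factor $\rho(M)^{-1}+1$. First I would distribute the right-hand side of Proposition \ref{prop:5.9} into its summands and repeatedly use the existence of a minimum volume closed hyperbolic $n$-manifold to bound every lower-order polynomial in $\vol(M)$ by a constant multiple of $\vol(M)^{7/2}$. Combining all such constants, one obtains
$$\frac{1}{\sqrt{\lambda}} \leq C_1(H,\e,n)\,\vol(M)^{7/2}\bigl(\rho(M)^{-1}+1\bigr).$$
The remaining task is to exhibit a constant $C_2 = C_2(\e,n)$ such that $1 \leq C_2\,\vol(M)^{1/n}\rho(M)^{-1}$, or equivalently such that $\rho(M) \leq C_2\,\vol(M)^{1/n}$.

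To produce this upper bound on $\rho(M)$, I would combine Sabourau's systolic inequality with Calegari's length versus $\scl$ estimate from \cite{length}. Sabourau's inequality provides a rationally nullhomologous closed geodesic $\gamma_0 \in \Gamma'_\Q \setminus\{1\}$ of length $|\gamma_0| \leq c_S\,\vol(M)^{1/n}$ for a constant $c_S = c_S(n)$. Calegari's estimate, specialized to a hyperbolic manifold with $\inj(M)>\e$, yields a uniform positive lower bound $\scl(\gamma_0) \geq c_C(\e) > 0$ for curves in this short-length regime. Dividing these,
$$\rho(M) \leq \frac{|\gamma_0|}{\scl(\gamma_0)} \leq \frac{c_S}{c_C}\,\vol(M)^{1/n}.$$

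Substituting this into the first displayed inequality above and using the minimum volume manifold once more to absorb the constant $1$ into $C_2\vol(M)^{1/n}$, the parenthesized factor $\rho(M)^{-1}+1$ is dominated by $C_3\vol(M)^{1/n}\rho(M)^{-1}$, so
$$\frac{1}{\sqrt{\lambda}} \leq C_1(H,\e,n)\,C_3(\e,n)\,\vol(M)^{7/2+1/n}\rho(M)^{-1}.$$
Taking reciprocals and setting $P := 1/(C_1C_3)$ gives the claimed bound $P\rho(M)/\vol(M)^{7/2+1/n} \leq \sqrt{\lambda}$.

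The main obstacle is verifying that Sabourau's and Calegari's results really do deliver a \emph{single} short curve $\gamma_0$ on which both estimates apply simultaneously, with the constants depending only on the allowed parameters $\e$ and $n$. In particular, Sabourau's inequality must supply a rationally nullhomologous geodesic (otherwise $\rho(M)$ is not controlled by $\gamma_0$), and Calegari's estimate must give a $\scl$ lower bound that is uniform across all hyperbolic geodesics of length $\lesssim \vol(M)^{1/n}$ in manifolds with $\inj > \e$. If the literature only provides these estimates in weaker or less compatible forms, one may need an auxiliary argument---for instance, replacing $\gamma_0$ by a bounded-length perturbation, a suitable integer multiple, or a modification using a nearby rationally nullhomologous representative---to produce a curve where both ingredients apply simultaneously and yield the desired upper bound on $\rho(M)$.
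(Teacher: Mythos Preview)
Your proposal is correct and follows essentially the same route as the paper: rearrange Proposition~\ref{prop:5.9} to isolate $\rho(M)^{-1}+1$, then bound $\rho(M)\lesssim \vol(M)^{1/n}$ by combining Sabourau's commutator systolic inequality (which indeed yields an integrally nullhomologous geodesic $\gamma_0$ with $|\gamma_0|\le c\,\vol(M)^{1/n}$) with Calegari's estimate. The only adjustment is bibliographic: the relevant Calegari input is the inequality $\tfrac{|\gamma|}{\scl(\gamma)}\le \mu(1+|\gamma|)$ from the proof of Theorem~3.9 in the monograph \cite{Calegari} (with $\mu$ depending only on $n$), which together with $|\gamma_0|\ge 2\inj(M)>2\e$ gives the uniform lower bound $\scl(\gamma_0)\ge \tfrac{2\e}{\mu(1+2\e)}$ you were seeking, so your flagged obstacle dissolves.
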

\begin{proof}
First we rearrange the previous proposition and combine constants into one constant $P$ to get the estimate $$\frac{P\rho(M)}{(1+\rho(M))\vol(M)^{7/2}}\leq\sqrt{\lambda}.$$ We need an estimate of Calegari’s (see the proof of Theorem 3.9 in \cite{Calegari}, the estimate at the bottom of page 58) which gives that for a genus $g$ surface $S$ with boundary $\d S = \gamma^m$,
one has $$\frac{m|\gamma|}{12g-6}\leq 4\mu + \frac{2\pi}{3\mu} + 2|\gamma|,$$ where $\mu$ depends only on the dimension $n$. Since $\chi_-(S)\geq2g-1,$ we get $$\frac{2m|\gamma|}{\chi_-(S)}\leq 24\left(4\mu + \frac{2\pi}{3\mu} + 2|\gamma|\right).$$
Since this is true for any surface $S$ bounding a power of $\gamma$, we obtain $$\frac{|\gamma|}{\scl(\gamma)}\leq 24\left(4\mu + \frac{2\pi}{3\mu} + 2|\gamma|\right).$$
We also have the commutator systolic inequality of Sabourau from Theorem 1.4 in \cite{Sab}, which bounds the shortest nontrivial integrally nullhomologous loop $\gamma\in \Gamma’$ by $$|\gamma|\leq c\vol(M)^{1/n},$$ for a dimensional constant $c$.

Both the inequality of Calegari and the systolic inequality involve a dimensional constant; let $\mu$ be the maximum of these constants in dimension $n$ and write Calegari’s inequality as $\frac{|\gamma|}{\scl(\gamma)} \leq \mu(1+|\gamma|)$. Then we get $$ \rho(M)\leq \frac{|\gamma|}{\scl(\gamma)}\leq \mu(1 + |\gamma|) \leq \mu (1+ \mu\vol(M)^{1/n}).$$
Inserting this upper bound into the denominator of the above rearranged estimate above gives $$\frac{P\rho(M)}{(1 + \mu(1+\mu\vol(M)^{1/n})\vol(M)^{7/2}}\leq \frac{P\rho(M)}{(1+\rho(M))\vol(M)^{7/2}} \leq \sqrt{\lambda}.$$ We can then increase $P$ to allow us to pull out the volume term and absorb $\mu$, thereby obtaining the desired estimate.
\end{proof}

\section{An example}
\label{sec:6}
\begin{figure}[H]

\includegraphics[scale=.6]{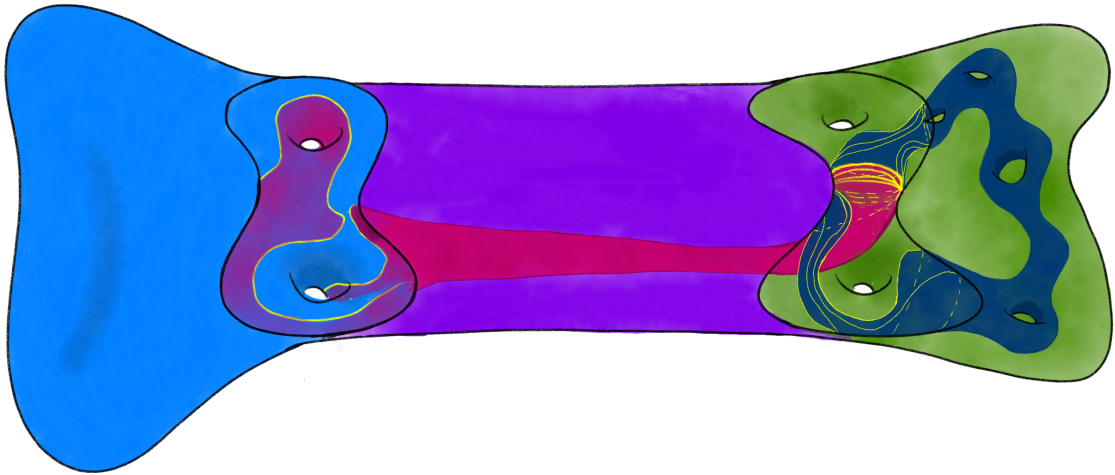}
\centering
\caption{ A sketch of the example constructed in this section. The manifold decomposes into three pieces, left and right caps and a middle region. The loop drawn on the boundary of the left cap only bounds surfaces of high topological complexity that are at least partly contained in the right cap. This ensures the isoperimetric ratio of that loop is very small.}
\label{fig:example_sketch}
\end{figure}

The aim of this section is to show that the first positive eigenvalue of the 1-form Laplacian can vanish exponentially fast in relation to volume. This contrasts the behaviour of the first positive eigenvalue of the Laplacian on functions.

Our construction is similar to that in \cite{BD}. Essentially, we choose a hyperbolic 3-manifold with totally geodesic boundary and glue it to itself using a particular psuedoAnosov with several useful properties. By \cite{BMNS}, this family has geometry that up to bounded error can be understood in terms of a simple model family. Using this model family, we show that one can find curves with uniformly bounded length whose stable commutator length grows exponentially in the volume. We then use the spectral gap upper bound in Theorem A to conclude the first positive eigenvalue vanishes exponentially fast.

Throughout this section, we need to compare geodesic lengths in different submanifolds of a given manifold $M$. Let $|\cdot|_{X}$ denote the geodesic length of a homotopy class of curves relative endpoints in a manifold $X$ and $\text{length}(\cdot)$ be the length in $M$ of the curve. Similarly, when we compute stable commutator length for the fundamental group of a manifold $X$, which may or may not be a submanifold of $M$, we denote it $\scl_X$.

We will need that for certain curves, $\scl$ is comparable to length. We begin with a simple but essential technical lemma.
\begin{figure}[H]
\labellist
\small\hair 2pt
 \pinlabel {$a_0$} [ ] at 830 1100
 \pinlabel {$a_1$} [ ] at 1300 1100
 \pinlabel {$b_0$} [ ] at 830 1250
 \pinlabel {$b_1$} [ ] at 1300 1250
 \pinlabel {$t_1$} [ ] at 380 1290
 \pinlabel {$t_0$} [ ] at 380 1120
\endlabellist
\centering
\includegraphics[width = 15cm]{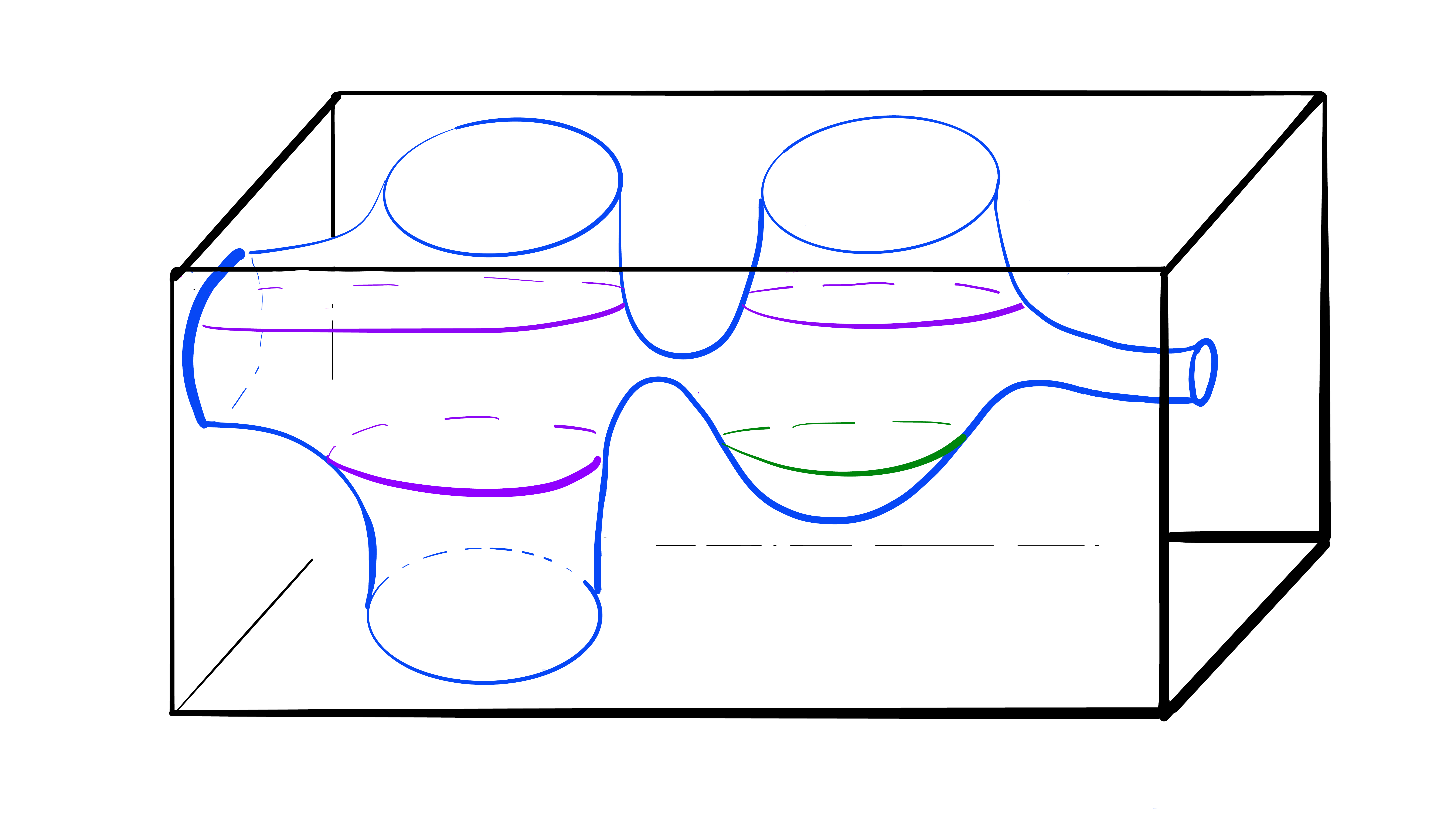}
\caption{ Illustrating Lemma \ref{lem:6.1} , the rectangular base of the figure is part of the totally geodesic surface $S$ and the box is the corresponding part of the tubular neighborhood $N_\e(S)$ foliated by surfaces $S_t$ parallel to $S$. Drawn in the box is the surface $\Sigma$, which is transverse the foliation except at isolated points. The multicurve $a_0\cup a_1$ is part of a single level set $S_{t_0} \cap \Sigma$, but only $a_0$ is part of the curve $c_{t_0}$ described in the lemma, whereas the multicurve $b_0\cup b_1$ forms the multicurve $c_{t_1}$ in the lemma.}
\label{fig:box}
\end{figure}

\begin{lem} \label{lem:6.1} Let $M$ be a compact hyperbolic 3-manifold with totally geodesic boundary $ \d M = S$. Let $\e$ be smaller than the injectivity radius of $M$ and such that $N_\e(S)$ is an embedded tubular neighborhood. Let $\{S_t\}$ be the leaves of the foliation of $N_\e(S)$ by surfaces equidistant from $S$.  Let $\Sigma$ be a smooth incompressible proper not necessarily immersed surface in $M$ that is transverse to the foliation $\{S_t\}$ except at isolated points. Let $c = \d \Sigma$. By transversality, for generic $t$ the multicurve $c_t$ given by the part of $S_t\cap \Sigma$ that cobounds a subsurface of $\Sigma$ with $c = c_0$ is a smooth multicurve.  Let $T$ be the set (of full measure) of all $t\in[0,\e)$ such that $c_t$ is a smooth multicurve. Since $S$ is totally geodesic, each multicurve $c_t$ is homotopic to a possibly degenerate geodesic multicurve $\gamma_t$ in $S$. Let $\Sigma_\e$ be the part of $\Sigma$ contained in $N_\e(S)$. Then for $C = 1/\e$, we have that $$\inf\limits_{t\in T} |\gamma_t|_{S} \leq C \emph{Area}(\Sigma_\e).$$
\end{lem}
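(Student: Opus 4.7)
The plan is to apply a coarea-type inequality to the distance function from $S$ restricted to $\Sigma_\e$, then compare lengths of curves living in the equidistant leaves $S_t$ with lengths of their geodesic representatives in $S$ via the normal projection.

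Concretely, define the signed distance function $t(x) = d(x,S)$ on $N_\e(S)$. Since $N_\e(S)$ is an embedded tubular neighborhood of the totally geodesic surface $S$, the function $t$ is smooth with $|\nabla t| \equiv 1$ (its level sets are the leaves $S_t$). Restricting $t$ to $\Sigma$ and applying the smooth coarea formula (valid almost everywhere on $\Sigma$, since the tangency locus with the foliation is a discrete set of measure zero), we obtain
\[
\mathrm{Area}(\Sigma_\e) \geq \int_{\Sigma_\e} |\nabla_\Sigma t|\, dA = \int_0^\e \mathcal{H}^1(S_t \cap \Sigma_\e)\, dt,
\]
using $|\nabla_\Sigma t| \leq |\nabla t| = 1$. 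Since $c_t \subseteq S_t \cap \Sigma_\e$ for $t \in T$, this yields $\mathrm{Area}(\Sigma_\e) \geq \int_T \mathrm{length}(c_t)\, dt$.

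Next, I would compare $\mathrm{length}(c_t)$ with $|\gamma_t|_S$. Let $\pi: N_\e(S) \to S$ be the normal projection. Because $S$ is totally geodesic, Fermi coordinates put the hyperbolic metric in the warped-product form $dt^2 + \cosh^2(t)\, \pi^* g_S$, so $\pi$ is $1$-Lipschitz (in fact $\cosh(t)^{-1}$-Lipschitz on $S_t$). Thus $\pi(c_t)$ is a smooth multicurve in $S$ with $\mathrm{length}_S(\pi(c_t)) \leq \mathrm{length}(c_t)$, and it is freely homotopic to $c_t$ in $N_\e(S)$; since $S \hookrightarrow N_\e(S)$ is a homotopy equivalence, $\pi(c_t)$ is freely homotopic in $S$ to the geodesic multicurve $\gamma_t$. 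As $\gamma_t$ is the geodesic representative, $|\gamma_t|_S \leq \mathrm{length}_S(\pi(c_t)) \leq \mathrm{length}(c_t)$ (degenerate components of $\gamma_t$ simply contribute zero). Combining,
\[
\e \cdot \inf_{t \in T} |\gamma_t|_S \leq \int_T |\gamma_t|_S\, dt \leq \mathrm{Area}(\Sigma_\e),
\]
which gives the claim with $C = 1/\e$.

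The only mildly delicate point is justifying the coarea identity when $\Sigma$ has isolated tangencies with the foliation and is only immersed rather than embedded. Neither is a genuine obstacle: tangencies form a measure-zero subset of $\Sigma$, the coarea formula is a local statement that pulls back through the immersion (with multiplicity, which only strengthens the inequality), and $|\nabla_\Sigma t| \leq 1$ holds pointwise wherever it is defined.
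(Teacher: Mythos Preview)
Your argument is correct and follows essentially the same route as the paper: coarea applied to the distance-to-$S$ function on $\Sigma_\e$, followed by the observation that total geodesicity of $S$ makes the normal projection $1$-Lipschitz so that $|\gamma_t|_S \le |c_t|_{S_t}$. The paper compresses this into two sentences, while you spell out the Fermi-coordinate justification and the handling of isolated tangencies and non-embeddedness; these elaborations are sound and add no new ideas beyond what the paper invokes.
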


\begin{proof} The coarea formula implies the inequality $\inf\limits_{t\in T} |c_t|_{S_t} \leq C \text{Area}(\Sigma_\e)$.  Since $S$ is totally geodesic, for all $t\in T$, we have $|\gamma_t|_S\leq |c_t|_{S_t}.$  \end{proof}
Note that in the previous lemma, when $\inf\limits_{t\in T} |\gamma_t|_S$ is zero, because $\Sigma$ is incompressible and any loop with length less than $\inj(M)$ bounds a disk, every component of $\Sigma$ can either be homotoped to be disjoint from $N_\e(S)$ or be contained in $S$.

The next proposition requires a notion of geometric complexity for homology classes. For any compact Riemannian manifold $M$ one can define the stable norm on the first homology of $M$ (see \cite{Gromovmetric} Section 4C). The mass of a Lipschitz 1-chain $\alpha = \sum_i t_i\alpha_i$ in $M$ is defined to be $\text{mass}(\alpha) = \sum_i |t_i|\text{length}(\alpha_i).$ The mass of a class $a\in H_1(M)$ is then the infimal value of the mass of a chain $\alpha$ representing $a$.
For a class $a\in H_1(M)$, the stable norm of $a$ is then given by $$||a||_{s,M} = \inf\limits_{m>0}\frac{\text{mass}(m a)}{m}.$$

Stable commutator length can also be generalized to geodesic multicurves (see Section 2.6 of \cite{Calegari}), which can naturally be viewed as Lipschitz chains. Suppose $\gamma_i\in \pi_1 M$ and $ \sum_i n [\gamma_i] = 0$ in $H_1(M)$. Let $\gamma$ be the geodesic multicurve, which is not necessarily simple, consisting of the geodesic loops determined by $\gamma_i$. Say a surface $f:S\to M$ is admissible of degree $n(S)$ if it has no closed components and $\d S$ is a union of circles $S^1_i$ with $f|_{S^1_i}$ a degree $n(S)$ cover of $\gamma_i$.
Then we define stable commutator length of $\gamma$ to be $$\scl(\gamma) = \inf\limits_{S \text{ admissible}} \frac{\chi_-(S)}{2n(S)}.$$ When $\gamma$ is a single loop, this definition agrees with the usual definition of stable commutator length.

 \begin{prop} \label{prop:6.2} Let $M$ be a compact oriented hyperbolic 3-manifold with totally geodesic boundary $\d M = S$. Let $\gamma$ be a geodesic multicurve in $S$ that is rationally nullhomologous in $M$. Then there is a constant $D> 0$ depending only on $M$ such that $$||[\gamma]||_{s,S}\leq D\scl_M(\gamma).$$ \end{prop}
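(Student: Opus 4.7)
My plan is to bound the stable norm by bounding it for a single admissible surface and then taking infimum. Fix $\epsilon < \inj(M)$ small enough that $N_\epsilon(S)$ is an embedded tubular neighborhood, and let $f\colon \Sigma \to M$ be admissible of degree $n$ for $\gamma$ (so $\partial\Sigma$ covers $\gamma$ with total degree $n$). I aim to show
$$\|[\gamma]\|_{s,S} \leq \frac{2\pi}{\epsilon}\cdot \frac{\chi_-(\Sigma)}{n} = \frac{4\pi}{\epsilon}\cdot\frac{\chi_-(\Sigma)}{2n},$$
after which taking infimum over admissible $\Sigma$ gives the proposition with $D = 4\pi/\epsilon$, a constant depending only on $\inj(M)$ and the embedding radius of $S$.

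First I would replace $f$ by a minimal (or pleated) representative in its relative homotopy class. Because $\gamma$ is geodesic in $S$ and $S$ is totally geodesic, $\gamma$ is also geodesic in $M$, so the boundary condition is preserved. The Gauss equation for a minimal surface in a hyperbolic $3$-manifold forces the intrinsic Gaussian curvature to be $\leq -1$, and the geodesic boundary contributes zero geodesic curvature, so Gauss--Bonnet gives
$$\mathrm{Area}(\Sigma) \leq -2\pi\chi(\Sigma) \leq 2\pi\chi_-(\Sigma).$$
Compressible or degenerate (disk/annulus) components can be discarded without increasing $\chi_-/n$, since disks cannot occur (a geodesic in a hyperbolic manifold is not null-homotopic) and annulus components do not contribute to the homology class on the boundary.

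After a small transversality perturbation of $f$ relative to the foliation $\{S_t\}$ of $N_\epsilon(S)$, I apply Lemma \ref{lem:6.1} to obtain a generic $t\in(0,\epsilon)$ such that the slice $c_t \subset \Sigma \cap S_t$ cobounding with $c_0 = \partial\Sigma$ is a smooth multicurve, homotopic in $S$ to a geodesic multicurve $\gamma_t$, and
$$\inf_{t\in T} |\gamma_t|_S \leq \frac{1}{\epsilon}\mathrm{Area}(\Sigma_\epsilon) \leq \frac{2\pi}{\epsilon}\chi_-(\Sigma).$$
The key homological observation is that $c_t$ and $c_0$ cobound a subsurface of $\Sigma$ inside $N_\epsilon(S)$, and $N_\epsilon(S)$ deformation retracts onto $S$; since $[c_0] = [\partial\Sigma] = n[\gamma]$ in $H_1(S)$ by admissibility, we get $[\gamma_t] = n[\gamma]$ in $H_1(S)$. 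The geodesic chain $\gamma_t$ then witnesses
$$n\cdot\|[\gamma]\|_{s,S} = \|n[\gamma]\|_{s,S} \leq \mathrm{mass}(\gamma_t) = |\gamma_t|_S \leq \frac{2\pi}{\epsilon}\chi_-(\Sigma),$$
and dividing by $n$ and taking infimum over admissible $\Sigma$ completes the proof.

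The main obstacle I anticipate is justifying the Gauss--Bonnet area bound rigorously for possibly immersed admissible surfaces $f\colon\Sigma\to M$; this requires invoking either a minimal surface representative (using that $M$ has totally geodesic boundary and $\gamma$ is geodesic, so standard existence results apply) or a pleated surface representative in the spirit of Thurston, along the lines of Chapter 2 of \cite{Calegari}. A secondary subtlety is the case when $\inf|\gamma_t|_S = 0$: by the remark after Lemma \ref{lem:6.1}, components of $\Sigma$ can then be pushed off $N_\epsilon(S)$ or into $S$, and one must check that the resulting pieces still account correctly for the boundary class $n[\gamma]$. This case only sharpens the bound, however, since it forces $\|[\gamma]\|_{s,S}=0$.
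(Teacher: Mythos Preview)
Your proof is correct and follows essentially the same route as the paper: take an admissible surface nearly realizing $\scl$, bound its area by $\chi_-$, apply Lemma~\ref{lem:6.1} to find a short slice curve, and use that this slice is homologous to $n[\gamma]$ in $H_1(S)$. The only substantive difference is in how the area bound $\mathrm{Area}(\Sigma)\lesssim \chi_-(\Sigma)$ is obtained: the paper triangulates $\Sigma$ with one vertex per boundary component and straightens, getting $\mathrm{Area}\le 3\pi\chi_-$, while you invoke a minimal or pleated representative together with Gauss--Bonnet to get $\mathrm{Area}\le 2\pi\chi_-$. Your version gives a sharper constant but leans on heavier existence machinery; the paper's straightening argument is more elementary and self-contained. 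One small point you gloss over that the paper handles explicitly: the transversality perturbation may increase area, so you should carry a $+\delta$ error through and absorb it at the end (using that $\scl_M(\gamma)$ is bounded away from zero when $[\gamma]\neq 0$ in $H_1(S)$), rather than claiming a clean $D=4\pi/\epsilon$.
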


 \begin{proof} If $\gamma$ is nullhomologous in $S$, then the left hand side is zero and the inequality holds. Assume now that $[\gamma]\neq 0\in H_1(S)$. Fix $\delta>0$. Let $\Sigma_m$ be an incompressible admissible surface for $\gamma$ of degree $m = n(S)$ such that $\chi_-(\Sigma_m)/2m -\scl(\gamma) < \delta$. We can triangulate $\Sigma_m$ so that there is a single vertex on each boundary component. This triangulation has $4g +3b - 4$ faces, where $g$ is the genus of $\Sigma_m$ and $b$ the number of boundary components. We can then straighten this triangulation to obtain a piecewise totally geodesic triangulated surface. Replace $\Sigma_m$ with this surface. Since every face of this triangulation of $\Sigma_m$ is geodesic, every face has area at most $\pi$. Since there are $4g+3b - 4$ faces and $\chi_-(\Sigma_m) = 2g-2 + b$, we can estimate  $$\text{Area}(\Sigma_m) \leq 3\pi\chi_-(\Sigma_m).$$

We can perturb $\Sigma_m$ to obtain a smooth surface $\Sigma’_m$ that it is transverse the foliation of $N_\e(S)$ except at isolated points and in doing so increase the area by less than $\delta$. Let $\gamma_t$ be the family of multicurves in Lemma \ref{lem:6.1}  applied to $\Sigma’_m$. Since each curve $\gamma_t$ cobounds a surface in $S$ with $\d \Sigma_m$, they are homologous, thus $||[\gamma_t]||_{s,S} = m||[\gamma]||_{s,S}$. Since $||[\gamma_t]||_{s,S}\leq |\gamma_t|_S$,
Lemma \ref{lem:6.1}  implies that $$m||[\gamma]||_{s,S}\leq C\text{Area}(\Sigma_m’) \leq C \text{Area}(\Sigma_m) + C\delta \leq 3C\pi\chi_-(\Sigma_m) + C\delta.$$
From this we get  $$||[\gamma]||_{s,S}\leq 6C\pi\chi_-(\Sigma_m)/2m + C\delta/m\leq 6C\pi\scl_M(\gamma) + 6C\pi\delta +C\delta/m.$$
Since the stable commutator length of a nontrivial rational commutator is bounded away from zero by a constant only depending on $M$, by Theorem 3.9 in \cite{Calegari}, we can replace $C$ with a larger constant $D$ such that $$||[\gamma]||_{s,S}\leq D\scl_M(\gamma),$$ as desired. \end{proof}

 We now introduce the family of manifolds that we use in our construction. The family $\{W_n\}$ of manifolds we study are easily understood using the model manifold theory of \cite{BMNS}. In particular, there is a $K$-biLipschitz map between $W_n$ and a model manifold $M_n$, where $K$ is independent of $n$. The base of the construction is Thurston’s tripus manifold $W$ (see \cite{thurstonbook}, Section 3.3.12), a hyperbolic manifold with totally geodesic boundary, and a psuedoAnosov homeomorphism $f$ of the boundary surface $\d W$. The model manifold $ M_n$ is a degree $n$ cyclic cover of the mapping torus $M_{f}$ cut open along a fiber with two oppositely oriented copies of $W$, denoted $W^+$ and $W^-$ glued as described in \cite{BMNS} Section 2.15 to the two boundary components of the cut open mapping torus. This decomposes $W_n$ into three pieces, a product region $S\times [0, n]$ and the caps $W^+$ and $W^-$ in a metrically controlled way. It will be convenient to set $M^+ = W^+\subset M_n$ and $M^- = W^-\subset M_n$ when talking about the caps of the model manifold $M_n$ for fixed $n$, and to let $W^+$ and $W^-$ denote the images of these spaces under the natural inclusion into $W_n$.

 \vspace{1cm}
\begin{figure}[H]
\labellist
\small\hair 2pt
 \pinlabel {$M^+$} [ ] at 950 1100
 \pinlabel {$M^-$} [ ] at 2000 1100
 \pinlabel {$S\times[0,n]$} [ ] at 1500 1600
\endlabellist
\centering
\includegraphics[scale=.15]{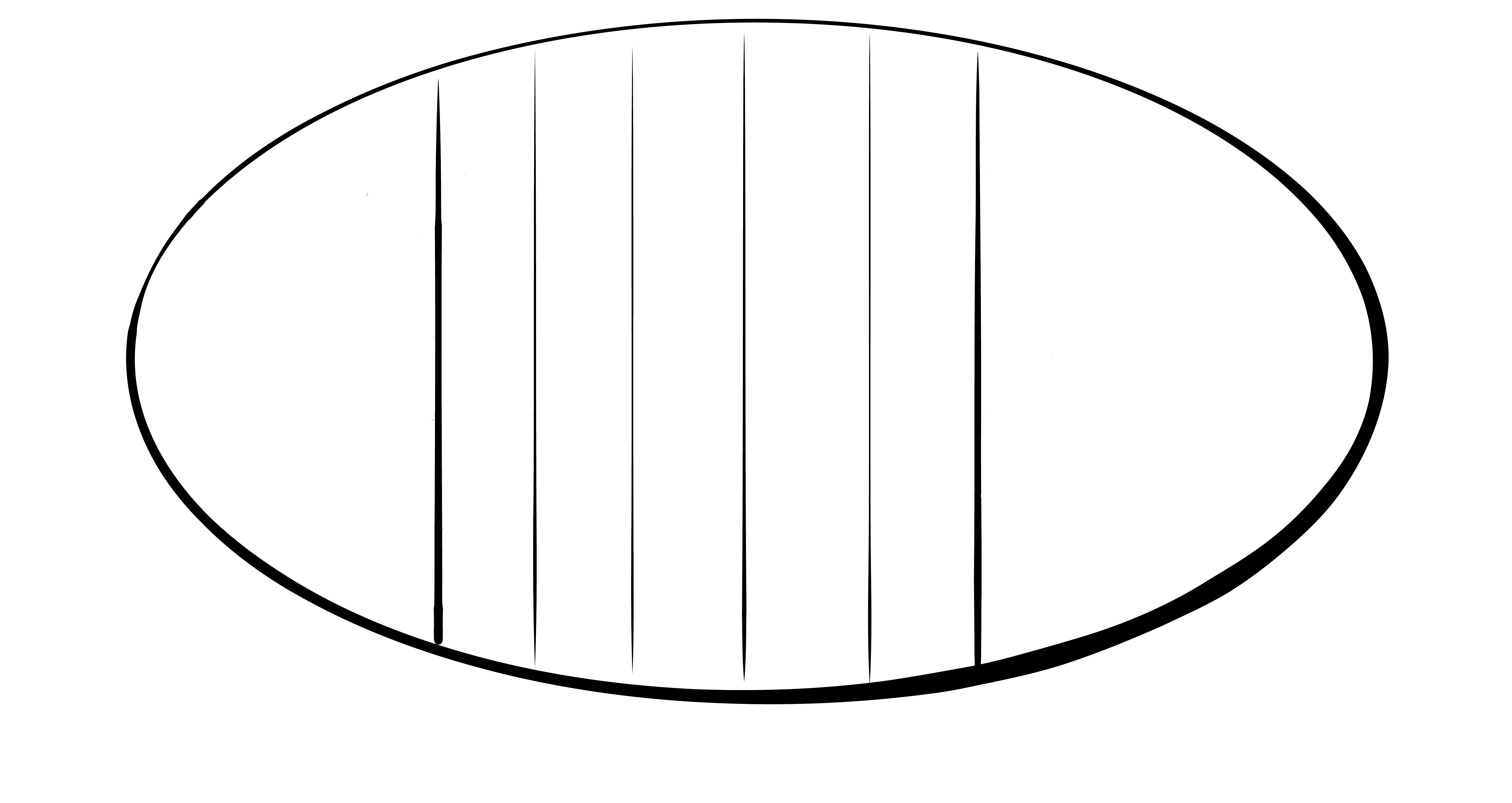}
\caption{ A schematic picture of the model manifold $M_n$ with caps $M^+$ and $M^-$ two oppositely oriented copies of the tripus manifold.}
\label{fig:caps_schematic}
\end{figure}

Given a multicurve $c$ in $M^{\pm}$, we say $c$ \textbf{bounds on both sides} if there are incompressible surfaces $S^+$ in $M^+$ and $S^-$ in $M^-$ both with boundary homotopic to $c$.

We encode the construction and its essential properties in the following proposition.

\begin{prop}  \label{prop:6.3}There is a family $\{W_n\}$ of closed hyperbolic 3-manifolds with injectivity radius uniformly bounded below and volume growing linearly in $n$ constructed from the tripus and a pseudoAnosov $f$ as described above. Each manifold $W_n$ is $K$-biLipschitz equivalent to the model manifold $M_n$ for some constant $K$ independent of $n$. Any homologically nontrivial loop in $H_1(\d W^{\pm})$ that bounds a surface in $M^{\pm}$ cannot bound on both sides. The pseudoAnosov $f$ is such that for any nonzero class $a\in H_1(\d W^+)$, the stable norm of $f_*^n(a)$ grows exponentially.
\end{prop}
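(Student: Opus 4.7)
The plan is to construct $W_n$ as $W^+\cup_{f^n} W^-$, the result of gluing two oppositely oriented copies of Thurston's tripus manifold $W$ along their boundaries via the pseudo-Anosov $f^n:\d W\to\d W$, for a carefully chosen pseudo-Anosov $f$. The first two assertions of the proposition will follow from the model manifold theorem of \cite{BMNS}; the third reduces to an algebraic condition on $f$ and the Lagrangian $L:=\ker(H_1(\d W;\Q)\to H_1(W;\Q))$; and the fourth reduces to a spectral condition on the induced automorphism $f_*$ of $H_1(\d W)$.

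For the biLipschitz model statement, provided $f$ has sufficiently long subsurface projections in the sense required by \cite{BMNS}, which can always be arranged by replacing $f$ by a large enough power, the main theorem of that paper produces a $K$-biLipschitz equivalence between $W_n$ and the explicit model $M_n=W^+\cup(S\times[0,n])\cup W^-$ with $K$ independent of $n$. The model has volume linear in $n$ and injectivity radius bounded below by constants depending only on $W$ and $f$, and these bounds transfer to $W_n$ under the biLipschitz map.

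The no-double-bounding condition is the heart of the argument. If $c\subset\d W^+$ is nontrivial in $H_1(\d W^+)$ and bounds a surface in $M^+\cong W$, then $[c]\in L$. Pushing through the product region identifies $c$ with a curve on $\d W^-$ representing the class $f_*^n[c]$ via the gluing, so bounding in $M^-$ forces $f_*^n[c]\in L$ as well, placing $[c]\in L\cap f_*^{-n}(L)$. I therefore require $L\cap f_*^{-n}(L)=0$ for every $n\geq 1$, which will force $[c]=0$ and contradict nontriviality. This is arranged by choosing $f$ so that (a) the characteristic polynomial of $f_*$ on $H_1(\d W;\Z)\cong\Z^4$ is irreducible over $\Q$, ruling out proper invariant rational subspaces; (b) $L$ is transverse in the Lagrangian Grassmannian to the stable $2$-plane of $f_*$, ensuring $L\cap f_*^{-n}(L)=0$ for all sufficiently large $n$; and (c) the finitely many remaining small-$n$ intersections vanish, which is checked directly. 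For the exponential growth of the stable norm, irreducibility forces the spectral radius $\lambda$ of $f_*$ to be an algebraic integer of degree $4$, and I arrange $\lambda>1$. A Galois argument then shows that for every nonzero rational class $a\in H_1(\d W^+;\Q)$, the component of $a$ along the $\lambda$-eigendirection is nonzero, since its vanishing would force all Galois-conjugate components to vanish and hence $a=0$. Thus $\|f_*^n(a)\|\geq c\lambda^n$ in any norm on $H_1(\d W^+;\R)$, including the stable norm $\|\cdot\|_{s,S}$ coming from the fixed hyperbolic metric on $\d W^+$.

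The main obstacle is producing a single concrete $f$ meeting all three demands at once: the BMNS largeness hypothesis, irreducibility of the $H_1$-action with spectral radius exceeding $1$, and the finite list of small-$n$ intersection checks $L\cap f_*^n(L)=0$. Following the computational approach of Brock--Dunfield in \cite{BD}, I would choose $f$ from a Thurston construction via a pair of filling multicurves on $\d W$, compute the induced integer symplectic automorphism of $H_1(\d W;\Z)$ together with the subspace $L$ explicitly, and verify all of the algebraic conditions by a finite direct calculation. Compatibility of the BMNS hypothesis with the irreducibility condition is potentially delicate since irreducibility need not be preserved under taking powers; navigating this compatibility by selecting an initial $f$ whose relevant powers all satisfy the required arithmetic is the primary technical hurdle.
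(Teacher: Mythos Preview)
Your overall architecture is sound, but the paper takes a markedly simpler route to the two algebraic conditions, and comparing the approaches is instructive.

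For the ``no double bounding'' condition you fix a single Lagrangian $L=\ker(H_1(\partial W;\Q)\to H_1(W;\Q))$ and seek a pseudo-Anosov with $L\cap f_*^{-n}(L)=0$ for all $n\geq 1$, which you propose to enforce via irreducibility of the characteristic polynomial of $f_*$ together with a transversality check against the stable $2$-plane and a finite verification for small $n$. The paper instead exploits the freedom in marking the two boundary components separately: it chooses identifications $S\to\partial W^{\pm}$ so that, in the common coordinates $\langle e_1,e_2,e_3,e_4\rangle$ on $H_1(S)$, the bounding Lagrangian is $\langle e_3,e_4\rangle$ for $W^+$ but $\langle e_1,e_2\rangle$ for $W^-$. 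It then takes $f_*$ to be the \emph{block-diagonal} symplectic matrix
\[
F=\begin{pmatrix}2&1&0&0\\1&1&0&0\\0&0&1&-1\\0&0&-1&2\end{pmatrix},
\]
which preserves the splitting $\langle e_1,e_2\rangle\oplus\langle e_3,e_4\rangle$. A class bounding in $W^+$ lies in $\langle e_3,e_4\rangle$; since $F^n$ preserves this subspace, its image on $\partial W^-$ still lies in $\langle e_3,e_4\rangle$, which is complementary to the $W^-$ Lagrangian $\langle e_1,e_2\rangle$. Thus double bounding is impossible for \emph{every} $n$ with no case analysis and no irreducibility hypothesis. This also dissolves the compatibility worry you flag at the end: the block structure is manifestly stable under powers, so passing to a high power of $f$ to satisfy the BMNS hypotheses costs nothing.

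For exponential growth of the stable norm, your Galois/cyclic-vector argument is correct under your irreducibility hypothesis, but the paper's reducible $F$ gives the same conclusion more cheaply: each $2\times 2$ block is Anosov with spectral radius $(3+\sqrt{5})/2$, so every nonzero class in $H_1(S)$ has a nonzero component in at least one block, and that component grows like $((3+\sqrt{5})/2)^n$. Equivalence of norms on $H_1(S;\R)$ then transfers this to the stable norm. In short, your proposal would work but trades a one-line structural trick (swap the markings, take $f_*$ block diagonal) for a more delicate existence argument; the paper's choice buys uniformity in $n$ and removes the technical hurdle you identified.
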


\begin{proof}
Let $W$ be Thurston’s tripus manifold, a compact hyperbolic 3-manifold with totally geodesic boundary a genus 2 surface for which the inclusion map $H_1(\d W;\Z)\to H_1(W;\Z)$ is onto.
The homology of the boundary $\d W$ decomposes as the direct sum of rank 2 submodules $U$ and $V$, where $V\subset H_1(\d W)$ is the image of the boundary map $\d : H_2(W,\d W;\Z)\to H_1(\d W;\Z)$ (which is also the kernel of the inclusion $H_1(\d W)\to H_1(W)$) and $U$ is a compliment of $V$ (note that the inclusion map $H_1(\d W)\to H_1(W)$ restricted to $U$ is an isomorphism).
Let $S$ be a genus 2 surface, which we will use to mark the boundaries of $W^{+}$ and $W^-$. Assume $H_1(S;\Z)$ is generated by $e_1,~e_2,~e_3,~e_4$. Choose a marking $S\to \d W^+$ so in $W^+$ one has $U = \langle e_1,e_2\rangle $ and $V = \langle e_3, e_4\rangle$.
Similarly, choose a marking $S\to \d W^-$ so that in $W^-$ one has $V = \langle e_1,e_2\rangle $ and $U = \langle e_3, e_4\rangle$. We then define $$W_n = W^+\cup_{f^n}W^-$$ where $f:S\to S$ is a pseudo-Anosov that acts on $H_1(S)$ by the symplectic matrix\[ F = \begin{pmatrix}
 2 &  1 & 0 & 0 \\
 1 & 1 & 0 & 0 \\
 0 & 0 & 1 & -1\\
 0 & 0 & -1 & 2
\end{pmatrix} \] For the existence of such a pseudoAnosov mapping class, see the proof Lemma 7.1 in \cite{BD}. This matrix preserves the subspace decomposition above, and so ensures that every curve in $\d W^{\pm}$ that is not nullhomologous in $\d W^{\pm}$ but bounds a surface in $M^{\pm}$ cannot bound on both sides.

The mapping class $f$ acts as an Anosov matrix on $U$ and $V$. This ensures the standard Euclidean $\ell^2$-norm $||F^n(a)||_E$ of an element $a\in H_1(S)$ grows exponentially in $n$ (indeed, for our choice of $F$, it grows like $(\frac{3+\sqrt{5}}{2})^n$). Since norms on finite dimensional real vector spaces are comparable, there is a constant comparing the stable norm induced by the metric inherited from $W$ to the standard Euclidean $\ell^2$-norm on $H_1(S)$.

Lemma 7.3 in \cite{BD} explains how Theorem 8.1 in \cite{BMNS} implies that for large $n$ the manifolds $W_n$ admit a $K$-biLipschitz diffeomorphism $\mu$ from the model manifold $ M_n$ as described above. After increasing $K$, we can drop the large $n$ condition. This then also implies the linear volume growth and injectivity radius bounds.
\end{proof}

\begin{remark}
Using the model manifold, one can easily estimate the Cheeger constant of $W_n$, which will decay like $1/n$.
\end{remark}

\begin{mainthm}  \label{thm:C} The family $W_n$ of closed hyperbolic 3-manifolds from Proposition \ref{prop:6.3}  has 1-form Laplacian spectral gap that vanishes exponentially fast in relation to volume:
$$\sqrt{\lambda(W_n)}\leq B\vol(W_n)e^{-r\vol(W_n)},$$
where $r$ and $B$ are positive positive constants and $\lambda(W_n)$ is the first positive eigenvalue of the 1-form Laplacian on $W_n$.
\end{mainthm}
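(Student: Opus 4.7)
The plan is to apply Theorem~\ref{thm:A} to each $W_n$ with a short rationally nullhomologous loop $\gamma$ whose stable commutator length grows exponentially in $n$. Because the diffeomorphism $\mu\colon M_n\to W_n$ of Proposition~\ref{prop:6.3} is $K$-biLipschitz and stable commutator length is a topological invariant of a conjugacy class, the whole construction will be carried out in the model $M_n$ and transferred back via $\mu$. Since $\vol(W_n)$ grows linearly in $n$, exponential decay in $n$ will yield exponential decay in $\vol(W_n)$.

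\emph{Construction of $\gamma$.} Mayer--Vietoris for $W_n = W^+\cup_{f^n}W^-$, combined with the fact that $F$ preserves the splitting $U\oplus V$ and that $V_+\cap V_- = \langle e_3,e_4\rangle\cap\langle e_1,e_2\rangle = 0$, shows that the map $H_1(S)\to H_1(W^+)\oplus H_1(W^-)$ is injective, so $H_1(W_n;\Q) = 0$ and every loop in $W_n$ is rationally nullhomologous. Let $\gamma\subset M^+$ be a closed geodesic representing a nonzero class in $H_1(W^+;\Z)$; since $W^+$ is a fixed compact manifold, its length is bounded by a constant $L$ independent of $n$. Let $\alpha_0\in U_+$ denote the nonzero integer class with $\iota_+(\alpha_0) = [\gamma]$.

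\emph{Lower bound on $\scl$.} Given any admissible surface $\Sigma$ in $M_n$ for $\gamma^{m}$, made transverse to $\partial M^-$, set $\Sigma^- = \Sigma\cap M^-$ and $c = \Sigma\cap\partial M^-$. After surgering away inessential circles of $c$ (possible because $\partial M^-$ is incompressible in both $W^-$ and the complementary region), $\Sigma^-$ has no disk components, and additivity of $\chi$ under cutting along essential circles gives $\chi_-(\Sigma)\geq\chi_-(\Sigma^-)$. Since $\Sigma^-$ bounds $c$ in $M^-$, the class $c_- := [\phi_-^{-1}(c)]\in H_1(S)$ lies in $V_-$; on the $M^+$ side, $\iota_+(c_+) = -m[\gamma]$ for $c_+ := [\phi_+^{-1}(c)]$, so $c_+\in -m\alpha_0+V_+$, and the gluing gives $c_- = F^n(c_+)$. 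Using that $F$ preserves $U_+\oplus V_+$ and that, as subspaces of $H_1(S)$, one has $U_+ = V_-$ and $V_+ = U_-$, the condition $c_-\in V_-$ forces $c_+ = -m\alpha_0$ and hence $c_- = -m\,F^n(\alpha_0)$. Applying Proposition~\ref{prop:6.2} to $W^-$ (as a standalone compact hyperbolic manifold with totally geodesic boundary) with the geodesic representative of $c$, and combining with $\scl_{W^-}(c)\leq \chi_-(\Sigma^-)/2$, yields
$$m\,\|F^n(\alpha_0)\|_{s,S} \;=\; \|c_-\|_{s,S} \;\leq\; D\,\scl_{W^-}(c) \;\leq\; \tfrac{D}{2}\chi_-(\Sigma^-) \;\leq\; \tfrac{D}{2}\chi_-(\Sigma).$$
Taking the infimum over admissible $\Sigma$ gives $\scl_{W_n}(\mu(\gamma)) = \scl_{M_n}(\gamma) \geq \|F^n(\alpha_0)\|_{s,S}/D$.

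\emph{Exponential growth and conclusion.} The block of $F$ acting on $U$ has Perron eigenvalue $\lambda = (3+\sqrt{5})/2 > 1$ with irrational eigenvectors, so the nonzero integer vector $\alpha_0$ has nonvanishing expanding component and $\|F^n(\alpha_0)\|_E$ grows like $\lambda^n$; since the stable norm on the fixed surface $S$ is comparable to the Euclidean norm, $\|F^n(\alpha_0)\|_{s,S}\geq C\lambda^n$. Combined with $\vol(W_n)\leq K' n$ from Proposition~\ref{prop:6.3}, this gives $\lambda^n\geq e^{r\vol(W_n)}$ for $r = (\log\lambda)/K'$. Theorem~\ref{thm:A} applied to $\mu(\gamma)\subset W_n$, whose length is at most $KL$, then yields
$$\sqrt{\lambda(W_n)} \;\leq\; A\vol(W_n)\frac{|\mu(\gamma)|}{\scl_{W_n}(\mu(\gamma))} \;\leq\; B\vol(W_n)\,e^{-r\vol(W_n)},$$
as required. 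The main obstacle is the middle step: the algebraic condition in Proposition~\ref{prop:6.3} that $F$ preserves the splitting $U\oplus V$ with $V_+\cap V_- = 0$ is precisely what pins down the class of the interface multicurve to equal $-m\,F^n(\alpha_0)$ exactly, leaving the bounding surface no freedom to sidestep the exponential cost of crossing into $W^-$ with a class that has grown under the Anosov action of $F^n$.
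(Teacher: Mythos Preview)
Your proof is correct and follows the same overall strategy as the paper: pick a loop of bounded length in the cap region, use Proposition~\ref{prop:6.2} together with the Anosov action of $F$ to force $\scl$ to grow exponentially in $n$, and then invoke Theorem~\ref{thm:A}. The one noteworthy difference is in the execution of the key step controlling the interface multicurve $c=\Sigma\cap\partial M^-$: the paper does a geometric case analysis (surface contained in $M^-$, surface reaching the product region, surface crossing into $M^+$) and then appeals to the packaged ``cannot bound on both sides'' clause of Proposition~\ref{prop:6.3} to conclude that $c^-=\partial\Sigma_m^- - \alpha_n$ is nullhomologous in $\partial M^-$, whereas you bypass the case analysis entirely by computing directly from the $F$-invariance of the splitting $U\oplus V$ that $c_+$ is forced to equal $-m\alpha_0$ on the nose, hence $c_-=-mF^n(\alpha_0)$. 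Your argument is a bit more streamlined; the paper's version makes the role of the ``bounds on both sides'' obstruction more visibly geometric. Both routes use the same ingredients and arrive at the same inequality $\|F^n(\alpha_0)\|_{s,S}\le D\,\scl_{M_n}(\gamma)$.
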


\begin{proof}
We continue using notation introduced in the previous propositions. Take $\gamma$ in $\d M^+\subset M_n$ to be an embedded geodesic loop representing the class $e_1 \in U \subset H_1(\d W^+)$. Recall from the proof of Proposition \ref{prop:6.3} that $\gamma\subset \d M^+$ does not bound a surface in $M^+$ but that $f^n(\gamma)\subset \d M^-$ bounds a surface in $M^-$. Let $\alpha_n = f^n(\gamma)\subset \d M^- \subset M_n$. Note that $\alpha_n$ and $\gamma$ are isotopic in $M_n$.

Fix $\delta>0$. Consider some positive integer $m$ and  incompressible surface $\Sigma_m$ bounding $\alpha_n^m$ in $M_n$ with $\chi_-(\Sigma_m)/2m - \scl_{M_n}(\gamma) < \delta$ and which minimizes $\chi_-$ among surfaces with boundary $\alpha_n^m$. We can then replace $\Sigma_m$ with a homotopic surface that pushes the boundary of $\Sigma_m$ into the interior of $M^-$ and which intersects $\d M^-$ transversely and essentially in both $\d M^-$ and $\Sigma_m$.
We can then attach an annulus to $\Sigma_m$ cobounding $\alpha_n^m$ and the boundary of the modified surface $\Sigma_m$. This new $\Sigma_m$ bounds $\alpha_n^m$ with a collar neighborhood of the boundary contained entirely in $M^-$ and intersects $\d M^-$ transversely in a union of loops essential in both $\Sigma_m$ and $\d M^-$.

We focus on the portion of $\Sigma_m$ that lies in $M^-$. Define $\Sigma^-_m =\Sigma_m\cap M^-$. If $\Sigma_m$ is contained in $M^-$, then Proposition \ref{prop:6.2}  applied to $\alpha_n^m$ in $M^-$ implies that
$$||[\alpha_n^m]||_{s,\d M^-} = m||[\alpha_n]||_{s,\d M^-} \leq m\scl_{M^-}(\alpha_n)\leq D\chi_-(\Sigma_m^-) = D\chi_-(\Sigma_m) ,$$
where $||\cdot||_{s, \d M^-}$ is the stable norm of $H_1(\d M^-)$. Since $\chi_-(\Sigma_m)/m- \scl_{M_n}(\gamma)\leq \delta$, we conclude $$||[\alpha_n]||_{s,\d M^-}\leq D\scl_{M_n}(\gamma) + D\delta.$$

Our goal now is to get this same estimate for the other possible ways $\Sigma_m$ sits in $M_n$.
\vspace{1cm}
\begin{figure}[H]
\labellist
\small\hair 2pt
 \pinlabel {$M^+$} [ ] at 900 1560
 \pinlabel {$S\times[0,n]$} [ ] at 1480 1650
 \pinlabel {$M^-$} [ ] at 2000 1560
 \pinlabel {$\alpha$} [ ] at 1750 1050
\endlabellist
\centering
\includegraphics[scale=.2]{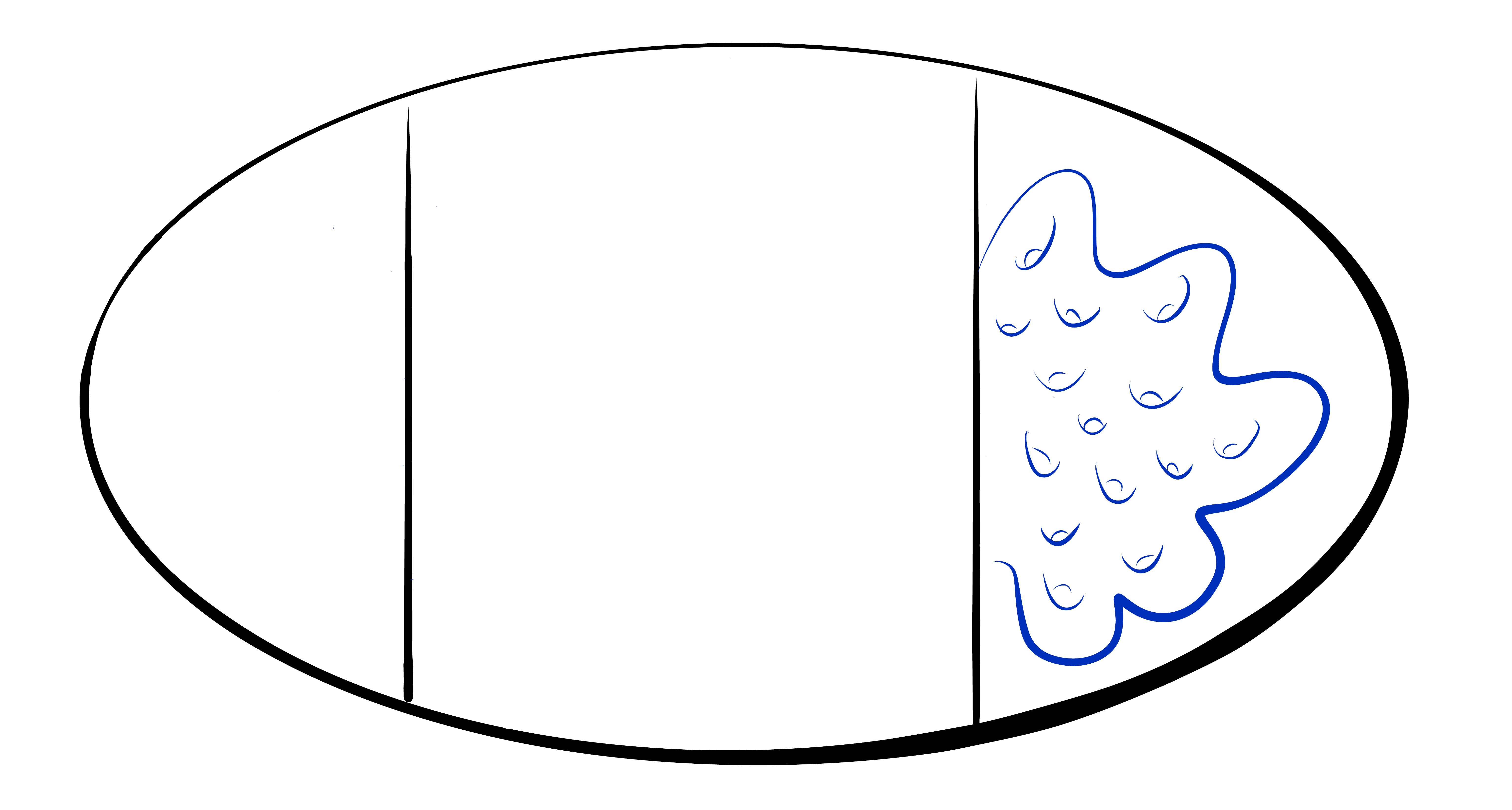}
\caption{ A schematic picture of the simplest case of a surface bounding $\alpha$ in $M^-$.}
\label{fig:alpha_bounds}
\end{figure}

Consider the case that $\Sigma_m$ does not lie entirely in $M^-$. There are two possibilities. The first involves the surface $\Sigma_m$ passing into the product region but not intersecting $M^+$. In this case the surface can be homotoped to lie in $M^-$, so that Proposition \ref{prop:6.2} applies, giving the desired estimate as in the previous case.
\vspace{2cm}
\begin{figure}[H]
\labellist
\small\hair 2pt
 \pinlabel {$M^+$} [ ] at 900 1560
 \pinlabel {$S\times[0,n]$} [ ] at 1480 1650
 \pinlabel {$M^-$} [ ] at 2000 1560
 \pinlabel {$\alpha$} [ ] at 1750 850
\endlabellist
\centering

\includegraphics[scale=.2]{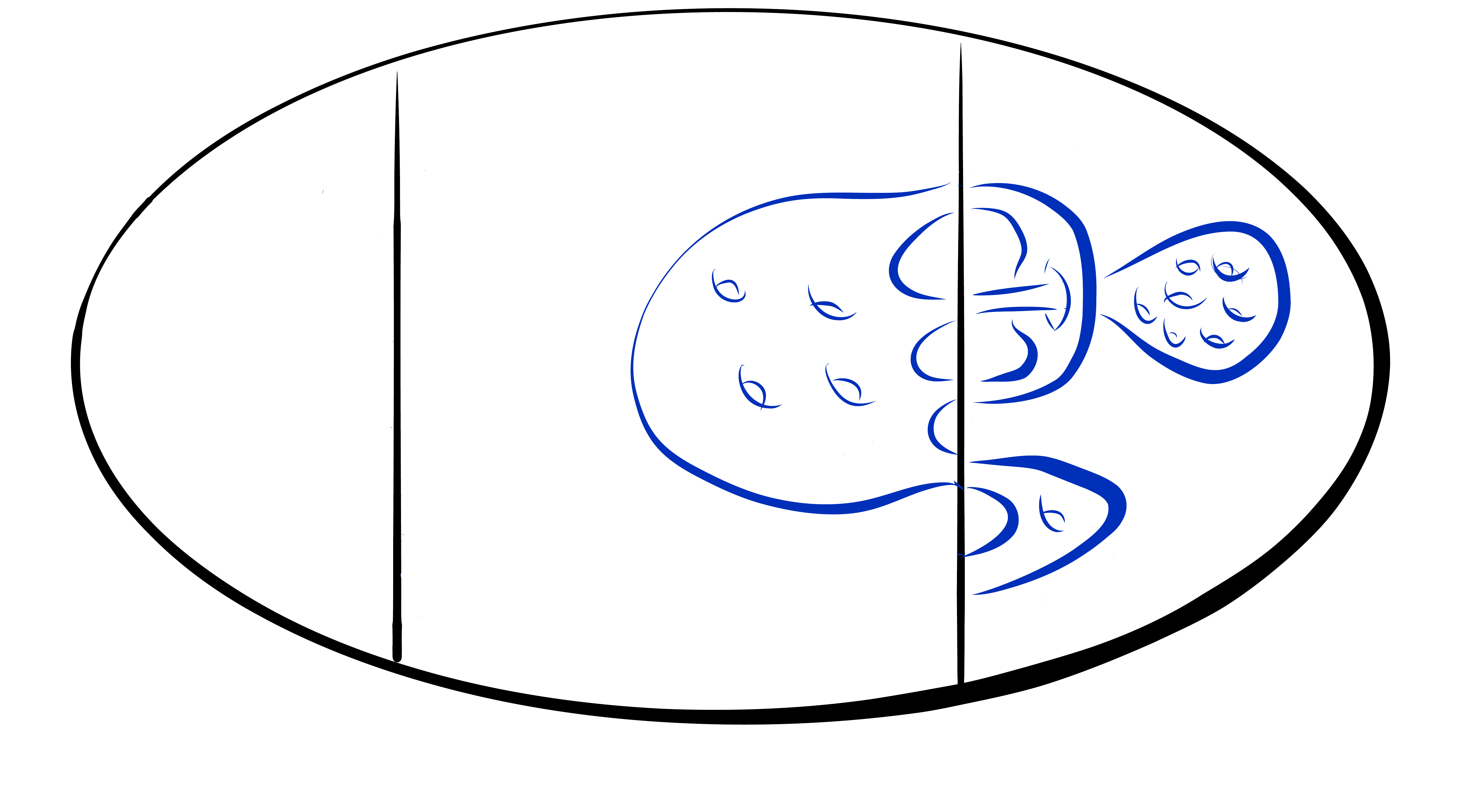}
\caption{ A schematic picture of a surface bounding $\alpha$ that passes back into $M^-$ but does not pass into $M^+$.}
\label{fig:pass_back_schematic}
\end{figure}

\vspace{2cm}
\begin{figure}[H]
\labellist
\small\hair 2pt
 \pinlabel {$M^+$} [ ] at 900 1560
 \pinlabel {$S\times[0,n]$} [ ] at 1480 1650
 \pinlabel {$M^-$} [ ] at 2000 1560
 \pinlabel {$\alpha$} [ ] at 1750 850
 \pinlabel {$c_1$} [ ] at 1750 1285
 \pinlabel {$c_0$} [ ] at 1750 1070

\endlabellist
\centering

\includegraphics[scale=.2]{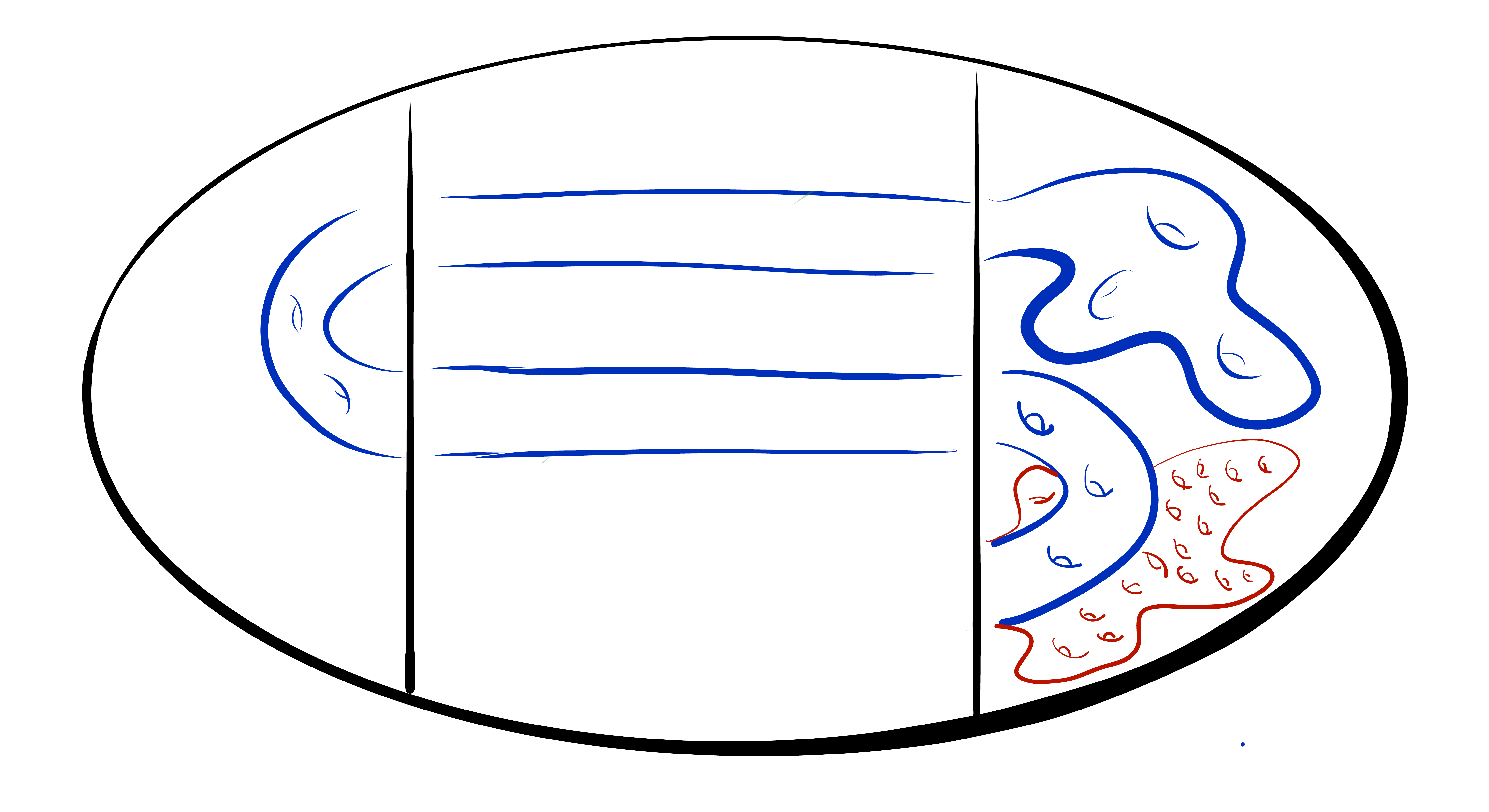}
\caption{ A schematic picture of a surface bounding $\alpha$ that passes back into $M^+$. Notice the multicurve $c^- = c_0\cup c_1$ bounds surfaces in $M^+$ and $M^-$, so is homologically trivial. }
\label{fig:bounds_both_sides}
\end{figure}

The second possibility concerns the surface $\Sigma_m$ crossing through the product region into $M^+$ with an essential intersection with $\d \Sigma^+$. In this case, we will see that the surface $\Sigma_m^-$ has boundary homologous to $\alpha_n^m$, which will allow us to apply Proposition \ref{prop:6.2} to obtain the desired estimate. By construction, a sufficiently small collar $C$ of the boundary $\d \Sigma_m$ in $\Sigma_m$ maps into $M^-$, so in particular, a subsurface of $\Sigma_m^-$ has some boundary component that maps to $\alpha_n^m$. That boundary component can be closed by attaching a surface $S^-$ that bounds $\alpha_n^m$ in $M^-$ to $\Sigma_m$.
From this, we see that the multicurve $c^- = \d \Sigma^-_m -\alpha_n $ bounds surfaces in $M^+$ and $M^-$.
Thus by Proposition \ref{prop:6.3}, $c^-$ must be homologically trivial in $\d M^{-}$. Let $x = \d\Sigma_m^- = c^- + \alpha_n^m$. Since $c^-$ is nullhomologous, $||[x]||_{s,\d M^-} = ||[\alpha_n^m]||_{s,\d M^-} = m||[\alpha_n]||_{s,\d M^-}.$
By Proposition \ref{prop:6.2} , $||[x]||_{s,\d M^-}\leq D\scl_{M^-}(x).$ Since $x$ is essential in $\Sigma_m$, we get that $\chi_-(\Sigma_m^-) \leq \chi_-(\Sigma_m)$, then using that $\chi_-(\Sigma_m)/2m - \delta \leq \scl_{M_n}(\alpha_n)$,
we obtain $\scl_{M^-}(x) \leq \chi_-(\Sigma_m^-)/2 \leq m\scl_{M_n}(\alpha_n) + \delta m$.
Putting this all together and dividing by $m$, we get that $$||[\alpha_n]||_{s,\d M^-}\leq D\scl_{M_n}(\alpha_n) + D\delta.$$

We therefore have in each case that there is a constant $D$ independent of $n$ such that $$||[\alpha_n]||_{s,\d M^-}\leq D\scl_{M_n}(\alpha_n) + D\delta.$$ By Proposition \ref{prop:6.3} , $||[\alpha_n]||_{s,\d M^-} = ||[f^n(\gamma)]||_{s,\d M^+}$ grows exponentially in $n$.
Thus for some constants $B>0$ and $r >0$, we have $$Be^{rn}\leq D\scl_{M_n}(\gamma) + D\delta,$$ where we use that $\gamma$ and $\alpha_n$ are homotopic in $M_n$. Using the injectivity radius lower bound and Theorem 3.9 of \cite{Calegari}, we can increase $D$ and drop the additive constant in this inequality. By Proposition \ref{prop:6.3} , the volume growth of the $W_n$ is proportional to $n$ so there is a constant $C$ such that $\vol(W_n)\leq Cn$.
Additionally, using the $K$-biLipschitz comparison of Proposition \ref{prop:6.3} , the length of $\gamma$ in $W_n$ is bounded from above by $2K|\gamma|_{W}$, where $W$ is the tripus. As a result, Theorem A implies that the spectral gap for the 1-form Laplacian of the manifolds $W_n$ vanishes exponentially fast in $n$.
In particular, we have \[\sqrt{\lambda(W_n)} \leq A\vol(W_n)\frac{|\gamma|_{W_n}}{\scl_{W_n}(\gamma)} \leq 2KACB^{-1}D|\gamma|_Wne^{-rn},\] so the result holds after redefining $B$ to be $2KACB^{-1}D|\gamma|_W.$

\end{proof}

%%%%%%%%%%%%%%%%%%%%%%%%%%%%%%%%%%%%%%%%%%%%%%%%%%%%%%%%%%%%%%%%%%%%%%%%%%%%%%%
% BIBLIOGRAPHY
%
\bibliographystyle{alpha}

\bibliography{stableisohodge}

\end{document}